\numberwithin{figure}{section}
\theoremstyle{plain}
\newtheorem{thm}{Theorem}[section]
\newtheorem{theoreme}{Theorem}[section]
\newtheorem{prop}[thm]{Proposition}
\newtheorem{proposition}[thm]{Proposition}
\newtheorem{lemma}[thm]{Lemma}
\numberwithin{equation}{section}
\theoremstyle{remark}
\newtheorem{rmq}[thm]{Remark}
\newtheorem{rem}[thm]{Remark}
\newtheorem{dfn}[thm]{Definition}
\newcommand{\eps}{\varepsilon}
\renewcommand\Re{\mathrm{Re}\,} \renewcommand\Im{\mathrm{Im}\,}
\newcommand\R{{\mathbb R}} \newcommand\N{{\mathbb N}}
\newcommand\C{{\mathbb C}} 
\renewcommand\P{{\mathbf P}}
\newcommand\Ai{{\mathrm Ai}}
 \newcommand{\cqfd}{\mbox{ }
\hfill$\Box$} 
\renewcommand{\Im}{  \text{Im}   }
\renewcommand{\Re}{  \text{Re}   }
 \def\imp{\Longrightarrow}
 \def\cdotv{\raise 2pt\hbox{,}}
\renewcommand\Re{\mathrm{Re}\,} \renewcommand\Im{\mathrm{Im}\,}
\def\@tvsp{\mathchoice{{}\mkern-4.5mu}{{}\mkern-4.5mu}{{}\mkern-2.5mu}{}}
\def\ltrivert{\left|\@tvsp\left|\@tvsp\left|}
\def\rtrivert{\right|\@tvsp\right|\@tvsp\right|}
 \def\imp{\Longrightarrow}
 \def\cdotv{\raise 2pt\hbox{,}}
\newcommand{\anh}{{a,N,h}}
\newcommand{\anb}{{a,N,\hbar}}
\newcommand{\phiN}{\phi_{a,N,\hbar}}
\newcommand{\anl}{{a,N,\lambda}}
\begin{document}

\title[Dispersion for the wave equation inside strictly convex domains]{Dispersion for the wave equation inside strictly
  convex domains I: the Friedlander model case}
  \author{Oana Ivanovici}
  \address{Laboratoire J. A. Dieudonn\'e, UMR CNRS 7351\\
    Universit\'e de Nice Sophia-Antipolis\\
    Parc Valrose\\
    06108 Nice Cedex 02\\
    France} \email{oana.ivanovici@unice.fr}

  \author{Gilles Lebeau}
  \address{Laboratoire J. A. Dieudonn\'e, UMR CNRS 7351\\
    Universit\'e de Nice Sophia-Antipolis\\
    Parc Valrose\\
    06108 Nice Cedex 02\\
    France and Institut universitaire de France} \email{gilles.lebeau@unice.fr}

  \author{Fabrice Planchon}
  \address{Laboratoire J. A. Dieudonn\'e, UMR CNRS 7351\\
    Universit\'e de Nice Sophia-Antipolis\\
    Parc Valrose\\
    06108 Nice Cedex 02\\
    France and Institut universitaire de France}
  \email{fabrice.planchon@unice.fr} \thanks{The third author was
    partially supported by A.N.R. grant SWAP} 
\date{\today}

\maketitle

\begin{abstract}
We consider a model case for a strictly convex domain
$\Omega\subset\mathbb{R}^d$ of dimension $d\geq 2$ with smooth
boundary $\partial\Omega\neq\emptyset$ and we describe dispersion for
the wave equation with Dirichlet boundary conditions. More
specifically, we obtain the optimal fixed time decay rate for the
smoothed out Green function: a $t^{1/4}$
loss occurs with respect to the boundary less case, due to repeated
occurrences of swallowtail
type singularities in the wave front set.
\end{abstract}

\section{Introduction}
Let us consider solutions of the linear wave
equation on a manifold $(\Omega,g)$, with (possibly empty) boundary $\partial
\Omega$: 
\begin{equation} \label{WE} 
\left\{ \begin{array}{l}
   (\partial^2_t-
 \Delta_g) u(t, x)=0,  \;\; x\in \Omega
\\ u(0, x) = u_0(x), \; \partial_t u(0,x)=u_1(x),
\\ 
 u(t,x)=0,\quad x \in \partial \Omega\,, \end{array} \right.
 \end{equation}
where $\Delta_g$ denotes the Laplace-Beltrami operator on $\Omega$.

When dealing with the Cauchy problem for nonlinear wave equations, one
starts with perturbative techniques and faces the difficulty of controlling the size of
solutions to the linear equation in terms of the size of the initial
data. Of course, one has to quantify this notion of size by specifying
a suitable (space-time) norm.  It turns out that, especially at low
regularities, mixed norms of type $L^p _t L^q _x$ are particularly
useful. Moreover, the arguments leading to such estimates turn out to
be useful when considering spectral cluster estimates, which are of
independent interest (see \cite{smso06}).

On any smooth
Riemannian manifold without boundary, the following set of so-called
Strichartz estimates
holds for solutions of the wave
equation \eqref{WE} (for $T<\infty$)
\begin{equation}\label{SE}
\|u\|_{L^q(0,T) L^r(\Omega)}\leq
C_T \bigl(\,||u_0||_{\dot{H}^{\beta}(\Omega)} +
||u_1||_{\dot{H}^{\beta-1}(\Omega)} \bigr)\,,
\end{equation}
where, if $d$ denotes the dimension of the manifold, we have
$\beta=d(\frac 12 -\frac 1r)-\frac 1q$ (which is consistent with
scaling) and where the pair $(q,r)$ is wave-admissible, i.e.
\begin{equation}\label{admw}
q\geq 2\,,\quad \frac 2q +\frac{d-1}{r}\leq \frac{d-1}{2}\, \quad (
q>2 \text{ if } d=3
 \text{ and } q\geq 4 \text{ if } d=2).
\end{equation}
When equality holds in \eqref{admw} we say that the pair $(q,r)$ is sharp wave-admissible.
Here $\dot H^{\beta}$ denotes the (homogeneous) $L^2$ Sobolev space over $\Omega$. Such inequalities were long ago established  for
Minkowski space, where they hold globally in time ($T=+\infty$). Their
local in time version may be generalized to any $(\Omega,
g)$ where $g$ is smooth (thanks to the finite speed of propagation),
while global in time estimates require stronger geometric requirements
of global nature on the metric.

The canonical path leading to such Strichartz estimates is to obtain a
stronger, fixed time, dispersion estimate, which is then combined with
energy conservation, interpolation and $TT^\star$ arguments to obtain
\eqref{SE}. Let us denote by $e^{\pm it\sqrt{-\Delta_{\mathbb{R}^d}}}$ the
half-wave propagators in flat space, and $\psi\in C_{0}^\infty
(]0,\infty[)$.  The following dispersion inequality holds:
\begin{equation}\label{disprd}
\|\psi(-h^{2}\Delta_{\mathbb{R}^d})e^{\pm it\sqrt{-\Delta_{\mathbb{R}^d}}}\|_{L^1(\mathbb{R}^d)\rightarrow L^{\infty}(\mathbb{R}^d)}\leq C(d)h^{-d}\min\{1,(h/|t|)^{\frac{d-1}{2}}\}.
\end{equation}
Our aim in the present paper is to obtain these estimates inside domains. In fact, \cite{gle06} outlines a roadmap to
prove such a  dispersion estimate, on a finite time interval, for
solutions of \eqref{WE} inside a strictly convex domain $(\Omega,g)$
of dimension $d\geq 2$. A complete description of the geometry of the
(semi-classical) wave front set is provided for the solution to \eqref{WE} with
initial data $(u_0, u_1)=(\delta_a, 0)$, where $a\in\Omega$ is a point
sufficiently close to the boundary (depending on the scale $h$). This
wave front set has caustics developing in arbitrarily small times and
this induces a loss of $1/4$ in \eqref{disprd} for the $h/|t|$ factor.

In the present work, we aim at completing the roadmap by constructing
a suitable parametrix for such a solution and then proving dispersion
for the approximated solution. It should be noted that parametrices
have been available for the boundary value problem for a long time,
see \cite{meta85,mesj78,esk77}, as a crucial tool
to establish propagation of singularities for the wave equation on
domains. However, while efficient at proving that singularities travel
along the (generalized) bi-characteristic flow, they do not seem
strong enough to obtain dispersion, at least in the presence of
gliding rays. In the outside of a strictly convex obstacle (no gliding rays), the
Melrose-Taylor parametrix was utilized in \cite{smso95} to
prove Strichartz estimates hold as in the $\mathbb{R}^d$ case. All
other positive results (\cite{blsmso08} and references therein) rely instead on reflecting the
metric across the boundary and considering a boundary less manifold
with a Lipschitz metric across an interface, and then using the
machinery originally developed for low regularity metrics
\cite{sm98,tat02} and spectral cluster estimates \cite{smso06}. Such constructions do away with multiply
reflected rays by suitable microlocalizations: one ends up working on
a possibly very small time interval, depending on the incidence of the
wave packet under consideration, such that all corresponding rays are
only reflected once. Summing these intervals induces (scale-invariant)
losses, which get worse with dimension; while Strichartz estimates are
obtained in a more direct way in \cite{blsmso08}, one can observe that the corresponding
dispersion estimate would have at most $1/t$ decay for $d\geq 4$, as
the argument is blind to the full dispersion which should occur in
tangential directions. On the other hand, negative results were
obtained in \cite{doi,doi2}, where a special solution is
constructed, propagating a cusp across multiple reflections and
providing a counterexample to the sharp Strichartz estimates
\eqref{SE}, for $r>4$. This special solution is constructed via a
microlocal parametrix which utilizes the Melrose one, and our present
construction generalizes this special example while retaining most of
its useful features.\\

Before stating our main result, we briefly introduce the
Friedlander's model domain of the half-space
$\Omega_d=\{(x,y)|x>0, y\in\mathbb{R}^{d-1}\}$ with Laplace operator given by
$$\Delta_g=\partial^{2}_{x}+(1+x)\Delta_{y}.$$ 

By rotational symmetry, we
will eventually reduce to the two-dimensional case $\Omega_2$.
\begin{rmq}
For the metric $g=dx^2+(1+x)^{-1}dy^2$, the Laplace-Beltrami operator is
$\triangle_{g,0}=(1+x)^{1/2}\partial_{x}(1+x)^{-1/2}\partial_{x}+(1+x)\Delta_{y}$
which is self adjoint with the volume form $\sqrt{\text{det}
  g}\,dxdy=(1+x)^{-1/2}dxdy$. The Friedlander's model uses instead
the Laplace operator associated to the Dirichlet form $\int \vert
\nabla_{g}u\vert^2\,dxdy
=\int (\vert \partial_{x}u\vert^2+(1+x)\vert\partial_{y}u\vert^2)dxdy$ and is self adjoint with  volume form $dxdy$. As a model, the Friedlander operator $\Delta_{g}$ is better than the Laplacian
$\Delta_{g,0}$ since it allows explicit computations. Clearly,
manifold $({\Omega_d},g)$ is a strictly convex domain : In fact, on the geodesic flow
starting at $x=0,y=y_{0}, \xi_{0}^2+\eta_{0}^2=1,\xi_{0}\in ]0,1[$, one has
$x(s)=2s\xi_{0}-s^2\eta_{0}^2$. Moreover, $(\Omega_2,g)$ may be seen
as a simplified model for the disk
$D(0,1)$ with polar coordinates $(r,\theta)$, where $r=1-x/2$ and
$\theta=y$.  Multiply reflected light rays become periodic curves in
the $y$ variable, as illustrated on Figure \ref{rays}.
\end{rmq}
\begin{rmq}
We will always work with the Dirichlet boundary condition. The Neumann boundary condition
can be handled exactly in the same way, providing the same results:
one simply modifies the reflexion coefficient in our parametrix construction and replaces zeros of the Airy
function $Ai$ by zeros of its derivative $Ai'$.   
\end{rmq}
\begin{figure}
	\begin{center}
		\includegraphics[height=3.5cm]{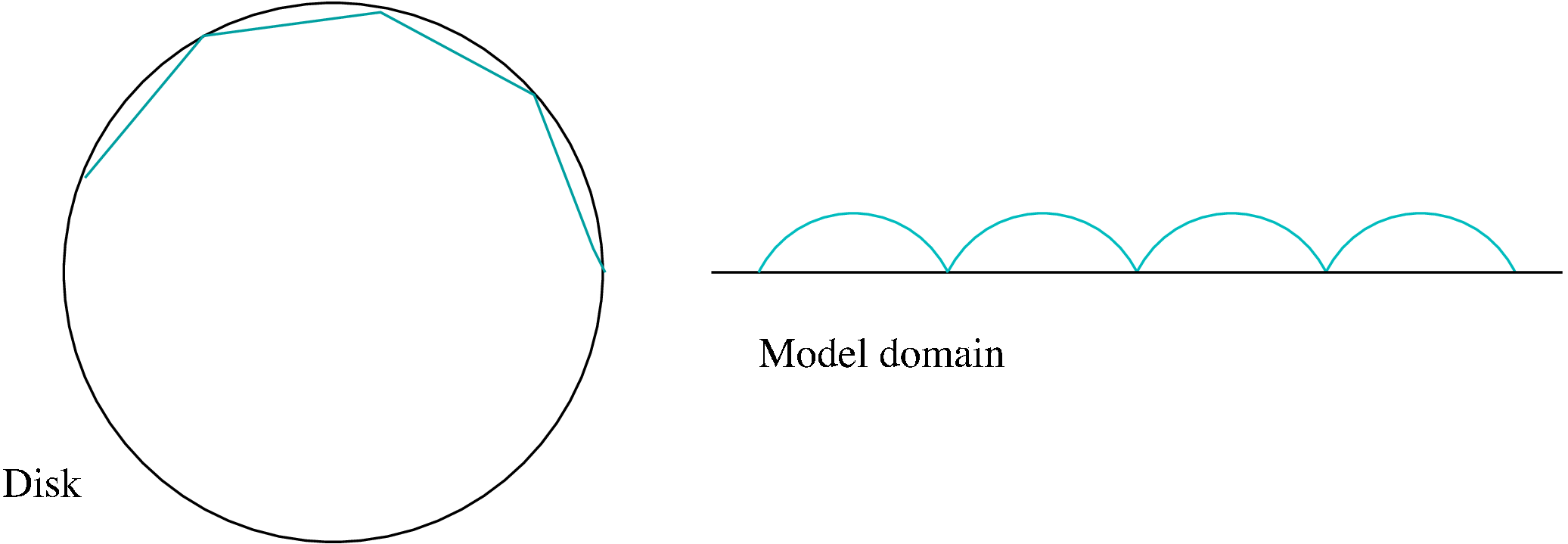}
	\end{center}
	\caption{Light rays in the Friedlander model}
        \label{rays}
\end{figure}
We are now in a position to state our main result.
\begin{thm}\label{disper}
Let $d\geq 2$. There exists 
$C>0,T_{0}>0$ such that for every $a\in ]0,1]$, $h\in (0,1]$ and $t\in (0,T_{0}]$, the solution $u_a$   to \eqref{WE} with data
$(u_0,u_1)=(\delta_a,0)$  where   $\delta_a$ is the Dirac mass at point $(a,0,\cdots,0)\in \Omega_d$, satisfies   
\begin{equation}\label{dispco}
|\psi(-h^2 \triangle_{g})u_{a}(t,x)|\leq C h^{-d}\min(1,(h/t)^{\frac{d-2}{2}+\frac{1}{4}}).
\end{equation}
\end{thm}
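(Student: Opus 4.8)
The argument rests on the exact separability of the Friedlander operator. Performing a Fourier transform in the tangential variable $y$ and using that, on $\{x>0\}$ with the Dirichlet condition at $x=0$, the operator $-\partial_x^2+|\eta|^2(1+x)$ has eigenvalues $|\eta|^2+|\eta|^{4/3}\omega_k$ (where $0<\omega_1<\omega_2<\cdots$ are the moduli of the zeros of $\mathrm{Ai}$, so that $\mathrm{Ai}(-\omega_k)=0$) with $L^2$-normalised eigenfunctions proportional to $\mathrm{Ai}(|\eta|^{2/3}x-\omega_k)$, one obtains the closed formula
\[
\psi(-h^2\Delta_g)u_a(t,x,y)=c_d\int_{\R^{d-1}}e^{iy\cdot\eta}\sum_{k\ge1}\psi\big(h^2(|\eta|^2+|\eta|^{4/3}\omega_k)\big)
\cos\big(t\sqrt{|\eta|^2+|\eta|^{4/3}\omega_k}\big)\,e_k(x,\eta)\,e_k(a,\eta)\,d\eta .
\]
The cutoff forces $|\eta|\sim 1/h$ and $1\le k\lesssim h^{-1}$, so the sum is finite; crude estimates on this expression (or the semiclassical bound $\|\psi(-h^2\Delta_g)\|_{L^1\to L^\infty}\lesssim h^{-d}$ together with energy conservation) give the trivial bound $h^{-d}$, which is the content of the $\min$ for $t\lesssim h$. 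It then remains to establish the decaying estimate for $t\gtrsim h$.

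I would next reduce to $d=2$. Since $\delta_a$ is supported at $y=0$ and $\Delta_g$ is invariant under rotations of $y$, $u_a$ depends on $y$ only through $r=|y|$; writing the $\eta$-integral in polar coordinates $\eta=\rho\omega$ and integrating over $\omega\in S^{d-2}$ produces a Bessel factor $\sim (r\rho)^{-(d-3)/2}J_{(d-3)/2}(r\rho)$. Its large-argument asymptotics, combined with a stationary/non-stationary phase argument in the radial variable $\rho$ (the relevant phase being non-degenerate in these $d-2$ extra directions, $r\rho\lesssim 1$ being harmless), contributes precisely a gain $(h/t)^{(d-2)/2}$, reducing everything to the two-dimensional estimate $|\psi(-h^2\Delta_g)u_a(t,x,y)|\lesssim h^{-2}(h/t)^{1/4}$ on $\Omega_2$. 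The regime $a\gtrsim h^{2/3}$, where the turning point of the Airy factors is far from the relevant region and the boundaryless-type analysis applies, yields an even better rate; the genuinely difficult case is $a\lesssim h^{2/3}$, i.e. a source inside the curvature boundary layer.

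The heart of the matter is the two-dimensional sum, where, following the roadmap of \cite{gle06}, one builds a parametrix by trading the sum over Airy eigenvalues for a sum over the number $N$ of reflections of the underlying light ray. Using the integral representation of $\mathrm{Ai}$ and the oscillatory asymptotics of $\mathrm{Ai}(-s)$ in the illuminated region, a Poisson summation (equivalently a contour deformation) in $k$ rewrites $\sum_k$ as $\sum_{N\ge0}I_N$, where the $N$-th term $I_N$ is an oscillatory integral whose phase $\Phi_N$ is a regularization of the length of the broken geodesic from $(a,0)$ to $(x,y)$ undergoing $N$ reflections at $\partial\Omega$ (the number $N$ ranging up to $\sim t/\sqrt a$ in the near-tangential regime where the caustics concentrate). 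One separates the transverse-incidence modes --- few reflections, essentially a free wave with a fold caustic, handled as in \eqref{disprd} --- from the tangential ones, and for the latter carries out stationary phase in $I_N$: the critical points of $\Phi_N$ are degenerate, the bifurcation set of the family $\{\Phi_N\}$ over the parameters $(t,x,y,N)$ being of swallowtail type, and the worst decay over this caustic set, attained along its cuspidal ($A_3$) edges, is exactly $(h/t)^{1/4}$ --- a $1/4$ loss with respect to the boundaryless rate $(h/t)^{1/2}$. One then verifies that the remaining contributions, and in particular the sum over $N$, add up without any further power or logarithmic loss.

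The main obstacle is precisely this last step: obtaining stationary phase estimates that are uniform in all the parameters $(t,x,a,y,N)$ and across the transitions between fold, cusp and swallowtail behaviour. Concretely, this requires putting the phases $\Phi_N$ into catastrophe normal forms with remainders that can be absorbed, identifying for which $(N,t,x,a)$ each degeneracy occurs, applying the appropriate quantitative van der Corput / degenerate stationary phase lemmas, and then summing the resulting bounds over $N$ so that the unavoidable loss incurred by superposing many reflected waves is exactly the claimed $1/4$ and no more. The remaining technical points are the patching of the microlocal pieces (transverse versus tangential incidence, and the boundary layer $a\lesssim h^{2/3}$ versus $a\gtrsim h^{2/3}$) and the control of the Airy asymptotics, uniformly up to the turning points, that enters the construction of $I_N$.
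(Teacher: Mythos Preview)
Your outline captures the right geometric picture (swallowtail caustics, reflection-indexed parametrix, reduction to $d=2$), but the regime analysis is inverted and this creates a genuine gap. The reflection parametrix $\sum_N I_N$ you describe requires a large semiclassical parameter $\lambda=a^{3/2}/h$ for the stationary phase in each $I_N$; this forces $a\gg h^{2/3}$ (the paper pushes it to $a\geq h^{4/7-\eps}$). That regime is \emph{not} a boundaryless situation: even with the source outside the $h^{2/3}$-layer the wave undergoes $\sim a^{-1/2}$ reflections in time $O(1)$, and the swallowtail analysis is carried out exactly here. Conversely, for $a\lesssim h^{2/3}$ the parameter $\lambda$ is bounded, stationary phase in the $I_N$ collapses, and your proposed mechanism breaks down. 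The paper handles the small-$a$ regime (in fact $a\leq h^{1/2}$) by an entirely different argument: keep the gallery-mode sum $\sum_k$, split at $k\sim \max(h^{-1/4},t^{-1})$, bound the low-$k$ piece by Cauchy--Schwarz and a pointwise estimate on $\sum_{k\leq L}k^{-1/3}\mathrm{Ai}^2(\cdot-\omega_k)$ (a Sobolev-type bound giving $h^{-2}h^{1/3}L^{1/3}$), and for the high-$k$ piece expand each Airy factor via $A_\pm$ and apply a one-variable van der Corput in $\eta$ (yielding $(h/t)^{1/3}$, better than $1/4$).

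For the regime where your parametrix does apply, two points you pass over quickly are the crux of the paper's work. First, with up to $a^{-1/2}$ reflected waves one cannot afford to bound the $N$-sum by its length times a sup: the paper proves (Lemma~\ref{lemgl2}) that for any fixed $(T,X,Y)$ at most $O(1)$ values of $N$ have the Lagrangian ${\bf\Lambda}_{a,N,h}$ projecting near that point, so non-contributing $N$ are $O(h^\infty)$ by non-stationary phase in $\eta$; this, not the individual swallowtail estimate, is what prevents a further loss when summing. Second, the swallowtail does \emph{not} produce the nominal $A_4$ rate of a genuine $\zeta^5$-integral; because the phase is of the form $\eta\,\Phi_C(z,\zeta)$ with a cusp $\Phi_C$ and an extra spectral variable $\eta$, the two-variable degenerate stationary phase (Lemma~\ref{lemstat2}(b)) gives $\lambda^{-3/4}$, which unwinds to the $a^{1/8}h^{1/4}$ loss. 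Your Poisson-summation route from $\sum_k$ to $\sum_N$ is a legitimate alternative to the paper's direct construction of $v_N$, but it will produce the same oscillatory integrals with the same restriction $\lambda\gg 1$, hence the same need for a separate small-$a$ argument.
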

The dispersion estimate \eqref{dispco} may be compared to
\eqref{disprd}: we notice a $1/4$ loss in the $h/t$ exponent, which
we may informally relate to the presence of caustics in arbitrarily
small times if $a$ is small. Such caustics occur because optical rays are no longer
diverging from each other in the normal direction, where less
dispersion occurs as compared to the $\mathbb{R}^d$ case. 
We will prove in fact a slightly better estimate than \eqref{dispco}: the 
$(h/t)^{1/4}$ factor may be replace by $h^{1/4}+(h/t)^{1/3}$ for $a\leq h^{1/2}$
( Proposition \ref{propadisp})
and by $(h/t)^{1/2}+a^{1/8}h^{1/4}$ for $a\geq h^{4/7 -\varepsilon}$ ( Theorem \ref{thL1}). In fact, we
can track the caustics and therefore our estimate is optimal for $a\geq h^{4/7 -\varepsilon}$.
\begin{thm}\label{disperoptimal}
Let $d\geq 2$ and $u_a$ be the solution to \eqref{WE} with data
$(u_0,u_1)=(\delta_a,0)$. Let $h\in (0,1]$ and $a\geq h^{4/7
  -\varepsilon}$. There exists a constant $C>0$ and a finite sequence
$(t_n)_n$, $1\leq n\leq \min(a^{-1/2}, a^{1/2}h^{-1/3})$ with $t_n\sim
4n \sqrt a$, such that
\begin{equation}\label{dispcooptimal}
h^{-d}(h/t_n)^{\frac{d-2}{2}}n^{-1/4} a^\frac 1 8 h^{1/4}\sim a^\frac
1 4 h^{-d}(h/t_n)^{\frac{d-2}{2}+\frac 1 4}\lesssim
 |\psi(-h^2 \triangle_{g})u_{a}(t_n,a)|\,.
\end{equation}
\end{thm}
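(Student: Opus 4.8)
The plan is to isolate, inside the parametrix for $u_a$ constructed in the body of the paper, the contribution at the space--time point $(t_n,(a,0,\dots,0))$ of the part of the wave front set that has been reflected exactly $n$ times off $\partial\Omega_d$: this piece carries a swallowtail singularity, whose amplitude we evaluate precisely, and we check that it is not cancelled by the remaining contributions. A preliminary step is to reduce to $d=2$. Separating variables in $y\in\mathbb{R}^{d-1}$, $\psi(-h^2\triangle_g)u_a$ is a superposition over the tangential frequency $\eta$ of two--dimensional solutions depending only on $(x,a,|\eta|)$; evaluating at $y=0$ and performing a non--degenerate stationary phase in the angular variables $\eta/|\eta|$ (the large parameter $t_n/h$ is indeed large since $a\geq h^{4/7-\varepsilon}$) produces exactly the factor $(h/t)^{(d-2)/2}$, and reduces \eqref{dispcooptimal} to the two--dimensional lower bound $|G_h(t_n,a)|\gtrsim h^{-2}\,n^{-1/4}a^{1/8}h^{1/4}$, where $G_h(t,x)=\psi(-h^2\triangle_g)u_a(t,x,0)$ in $\Omega_2$.

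Next, using the explicit spectral decomposition of the Friedlander model (eigenvalues $|\eta|^2+|\eta|^{4/3}\omega_k$ with $-\omega_k$ the zeros of $\Ai$, and Airy eigenfunctions) followed by a Poisson summation in the index $k$, I would write $G_h(t,x)=\sum_{N\geq 1}I_N(t,x)$ where each $I_N$ is an oscillatory integral with an Airy--type phase $\Phi_N$. The times $t_n\sim 4n\sqrt a$ are precisely those at which the $n$--fold reflected wave refocuses at the source depth $x=a$ (with $y=0$), and there, by the description of the wave front set in \cite{gle06}, $\Phi_n$ has a degenerate critical point of swallowtail type. I would bring $\Phi_n$ to its swallowtail normal form by a change of variables whose Jacobian and unfolding parameters depend explicitly on $a$ and $n$; at $(t_n,a)$ the unfolding parameters sit exactly at the cusp of the swallowtail, i.e.\ at the point where the corresponding model oscillatory integral attains its largest size. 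After checking that the principal amplitude of the parametrix does not vanish there, one obtains $|I_n(t_n,a)|\sim h^{-2}\,n^{-1/4}a^{1/8}h^{1/4}$, which is the claimed lower bound after the dimensional reduction and which is consistent with the improved upper bound $(h/t)^{1/2}+a^{1/8}h^{1/4}$ of Theorem \ref{thL1}.

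Finally I would show that $\sum_{N\neq n}I_N(t_n,a)=o\big(|I_n(t_n,a)|\big)$, so that the $n$--th term genuinely dominates and no destructive interference can spoil the estimate. For $N$ far from $n$ the phase $\Phi_N$ is non--stationary at $(t_n,a)$, giving an $O(h^\infty)$ contribution; for $N$ near $n$ the critical point of $\Phi_N$ is less degenerate (a fold, or non--degenerate) and the resulting integral is smaller than $|I_n(t_n,a)|$ by a positive power of one of the small parameters $h/a$ or $h^{1/3}a^{-1/2}$; summing over $1\leq N\leq\min(a^{-1/2},a^{1/2}h^{-1/3})$ is where the two constraints on $n$ in the statement enter, the first keeping $t_N\leq T_0$ and the second keeping the $N$--th swallowtail resolved at the appropriate ($\sim h^{2/3}$) scale. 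The main obstacle is exactly this last step together with the sharp evaluation of $I_n$: one must sum uniformly in $N$ and rule out cancellation, and it is here that the hypothesis $a\geq h^{4/7-\varepsilon}$ is essential, since it guarantees both that the swallowtails attached to distinct values of $N$ are separated (at scale $\gg h^{2/3}$) and that the $n$--th one is non--degenerate enough for the model integral to reach its predicted maximum; for smaller $a$ these caustics overlap and no single term can be isolated, which is also why the theorem is stated only in this range.
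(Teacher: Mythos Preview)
Your overall strategy---reduce to $d=2$, write the solution as a sum over reflections, pick out the swallowtail at $t_n\sim4n\sqrt a$ for the $n$-th piece, and show the remaining $N$ do not interfere---is exactly the paper's. Two implementation details differ.

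First, you reach the decomposition $\sum_N$ by Poisson summation over the Airy zeros; the paper instead builds the parametrix directly (Proposition~\ref{p1}), splitting $\Ai=A_++A_-$ on the boundary and summing the resulting geometric series in $A_-/A_+$. Both routes yield the same sum over reflected waves.

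Second, and more substantively, you propose reducing $\Phi_n$ to the swallowtail normal form $\zeta^5/5+\cdots$. The paper avoids this: after a non-degenerate stationary phase in $z$ (Lemma~\ref{lemgl4}) and a rescaling, the $n$-th piece becomes a \emph{two}-variable integral with phase $\mathcal G_a$ (see \eqref{gl52quadter}) whose Hessian has rank one at the critical point corresponding to $(p,q)=(0,0)$, i.e.\ $x=a$, $t=4n\sqrt{a(1+a)}$. The sharp two-sided bound $|I|\sim\Lambda^{-3/4}$ then follows from the elementary Lemma~\ref{lemstat2}(b), which needs only $X'(0)=0$, $X''(0)\neq0$ on the caustic curve together with ellipticity of the symbol (your ``principal amplitude does not vanish''). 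The swallowtail is the geometry of the Lagrangian projection to $(T,X,Y)$; the analytic estimate is that of a Pearcey-type degeneracy in the fibre variables. Your normal-form route would work in principle but requires tracking more changes of variables than the paper's shortcut.

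For non-interference you are on the mark: Lemma~\ref{lemgl2} bounds the number of relevant $N$ at any point by a universal constant, Lemma~\ref{lemgl5} kills the others as $O(\tilde\lambda^{-\infty})$, and for the $O(1)$ neighbours $N\neq n$ one has $(p,q)\neq(0,0)$ (since $\tilde T=T/4N\neq\sqrt{1+a}$), so Lemma~\ref{lemstat2}(a) gives the better $\Lambda^{-5/6}$, smaller than the main term by $\Lambda^{-1/12}$. The constraint $n\leq a^{1/2}h^{-1/3}$ is precisely $\Lambda=\lambda/n^3\geq1$, as you say.
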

As a byproduct, we get that even for $t\in ]0,T_{0}]$ with $T_{0}$ small,
the $1/4$ loss is unavoidable for $a$ comparatively small to $T_0$ and
independent of $h$. We will see soon that this optimal loss is due to swallowtail type singularities in the wave
front set of $u_a$. 
\begin{rmq}
  Note that when $a=h^\gamma$, where we gain from the
  factor $a^{1/8}$, the loss in \eqref{dispcooptimal}
  is still greater than the usual dispersive estimate in the flat
  case: this requires $\gamma >2/3$ whereas we have $\gamma
  >4/7-\varepsilon$. Moreover, in this range, the loss is also greater
  than the loss which would occur if we had only cusp singularities.
\end{rmq}
\begin{rmq}
It follows from our proof that Theorem \ref{disper} and \ref{disperoptimal} holds true 
if one replaces $\psi(-h^2 \triangle_{g})u_{a}(t,x)$ by 
$\psi(hD_{t})e^{\pm it\sqrt{-\triangle_{g}}}\delta_{x=a,y=0}$ with $\psi\in C_{0}^\infty(\R^*)$.
\end{rmq}

As a consequence of \eqref{dispco} and classical arguments, we
obtain the following set of Strichartz estimates.
\begin{thm}\label{thStri}
  Let $u$ be a solution of \eqref{WE} on the model domain $\Omega_d$,
  $d\geq 2$. Then there exists $T$ such that
\begin{equation}\label{SEF}
\|u\|_{L^q(0,T) L^r(\Omega)}\leq
C_T \bigl(\,||u_0||_{\dot{H}^{\beta}(\Omega)} +
||u_1||_{\dot{H}^{\beta-1}(\Omega)} \bigr)\,,
\end{equation}
for $(d,q,r)$ satisfying 
\[
\frac{1}{q}\leq(\frac{d-2}{2}+\frac 14)(\frac{1}{2}-\frac{1}{r})\,,
\]
and $\beta$ is dictated by scaling.
\end{thm}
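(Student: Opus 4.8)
The plan is to derive Theorem \ref{thStri} from the fixed-time dispersion estimate \eqref{dispco} by the standard $TT^\star$ machinery, exactly as in the boundaryless case, so the only thing that changes is the exponent appearing in the decay rate. First I would reduce to the homogeneous estimate for the half-wave propagator: writing $u = \cos(t\sqrt{-\triangle_g})u_0 + \frac{\sin(t\sqrt{-\triangle_g})}{\sqrt{-\triangle_g}}u_1$, it suffices to prove that $W(t) = e^{it\sqrt{-\triangle_g}}$ maps $\dot H^\beta \to L^q_t L^r_x$ with the stated $\beta$. By Littlewood--Paley decomposition and scaling, one localizes at frequency $h^{-1}$ with $\psi(-h^2\triangle_g)$ and must show $\|\psi(-h^2\triangle_g)W(t)f\|_{L^q_tL^r_x} \lesssim h^{-\beta}\|f\|_{L^2}$, then sum over dyadic $h$; here the inhomogeneous low-frequency part is harmless on a finite time interval $[0,T]$.

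Next I would interpolate the dispersion bound with the trivial $L^2\to L^2$ bound. The key input is that, by Theorem \ref{disper} together with a $TT^\star$ argument applied to the operator $T f = \psi(-h^2\triangle_g)W(t)f$, whose kernel at times $t,s$ is $\psi(-h^2\triangle_g)^2 W(t-s)$ evaluated with a Dirac source, we get
\begin{equation}
\|\psi(-h^2\triangle_g)W(t)\psi(-h^2\triangle_g)W(s)^\star\|_{L^1\to L^\infty} \lesssim h^{-d}\min\big(1,(h/|t-s|)^{\frac{d-2}{2}+\frac14}\big).
\end{equation}
Combined with the $L^2\to L^2$ bound of size $1$, the Keel--Tao abstract theorem (or direct Hardy--Littlewood--Sobolev on the bilinear form) applies with dispersion exponent $\sigma = \frac{d-2}{2}+\frac14$ in place of the usual $\frac{d-1}{2}$. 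This yields $\|Tf\|_{L^q_tL^r_x}\lesssim h^{-\alpha}\|f\|_{L^2}$ for all $(q,r)$ with $q\geq 2$, $\frac1q \leq \sigma(\frac12-\frac1r)$ (and the endpoint $q=2$ excluded in the degenerate $2\sigma=1$ case as usual), where $\alpha = d(\frac12-\frac1r)-\frac1q$; matching powers of $h$ after undoing the scaling forces $\beta = d(\frac12-\frac1r)-\frac1q$, which is the scaling-critical exponent. Substituting $\sigma=\frac{d-2}{2}+\frac14$ gives precisely the admissibility condition $\frac1q \leq (\frac{d-2}{2}+\frac14)(\frac12-\frac1r)$ stated in the theorem.

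The genuinely new point — and the only place where domain issues enter — is that the abstract $TT^\star$/Keel--Tao argument only needs the dispersive decay of the kernel and the energy estimate, both of which hold on $(\Omega_d,g)$: the energy estimate $\|W(t)f\|_{L^2}=\|f\|_{L^2}$ is immediate from self-adjointness of $\triangle_g$ with Dirichlet conditions, and the dispersive decay is exactly \eqref{dispco}. One should check that the semigroup property $W(t)W(s)^\star = W(t-s)$ still holds and that the kernel of $\psi(-h^2\triangle_g)W(t-s)$ with a point source is controlled by \eqref{dispco} uniformly in the source point $a\in(0,1]$ and in $y$ — this uses the translation invariance in $y$ and the fact that Theorem \ref{disper} is uniform in $a$, which lets one reduce an arbitrary source point to one of the form $(a,0,\dots,0)$. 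I expect the main (and essentially only) obstacle to be purely bookkeeping: assembling the dyadic pieces over $h\in 2^{-\N}$ with the correct $\beta$, handling the finite-time truncation so that low frequencies and the cosine/sine split cost nothing, and treating the endpoint exclusion when $2\sigma = 1$; none of this is deep, and it is standard once \eqref{dispco} is in hand.
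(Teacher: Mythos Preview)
Your approach is correct and matches the paper's: the authors explicitly say that Theorem~\ref{thStri} follows from \eqref{dispco} via classical $TT^\star$ arguments, and your outline is precisely that derivation. The one technical point the paper singles out (in a remark in the appendix) is that, when summing the dyadic pieces to pass from frequency-localized estimates to the full $L^q_tL^r_x$ bound, you need the embedding $\dot B^{0,2}_r\hookrightarrow L^r$ for $r>2$ on the domain $\Omega_d$; this is not automatic on domains and the paper cites a Mikhlin--H\"ormander type multiplier result (Alexopoulos, via \cite{ip08}) to justify it---so the step you labeled ``bookkeeping'' does have one genuine ingredient you should name.
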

In dimension $d=2$ the known range of admissible indices for which
sharp Strichartz hold is in fact slightly larger, see
\cite{blsmso08}. However, in larger dimensions $d\geq 3$, Theorem
\ref{thStri} improves the range of indices for which sharp Strichartz
do hold, and does so in a uniform way with respect to dimension, in
contrast to \cite{blsmso08}. On the other hand, our results are, for now,
restricted to a model case of strictly convex domain, while \cite{blsmso08}
applies to any domain. One may use
the model case analysis to extend estimates to any smooth strictly
convex domain, as in the counterexample situation
\cite{doi2}. This issue will be addressed elsewhere.
\begin{rmq}
  One conjectures that the loss in Strichartz estimates in
  \cite{doi} are optimal. This would heuristically match a $1/6$
  loss in the dispersion estimate. We plan to address this issue in
  future work, by proving that the worst time-space
  points $(t_n,a)$ may be suitably averaged over.
\end{rmq}
One may then make good use of such Strichartz estimates for the local
(and global) Cauchy theory of nonlinear wave equations. We provide one simple example.
\begin{thm}
The energy critical wave equation $\Box_g u+|u|^{\frac{4}{d-2}} u=0$ with data $(u_0,u_1)\in H^1_0(\Omega_d)\times L^2(\Omega_d)$ has unique global in time
    solutions for $3\leq d\leq 6$.
\end{thm}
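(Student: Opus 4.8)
The plan is to treat $\Box_g u+|u|^{4/(d-2)}u=0$ as the defocusing energy-critical wave equation on $\Omega_d$ and to follow the classical two-step scheme: local well-posedness by a contraction argument built on the Strichartz estimates of Theorem~\ref{thStri}, then globalisation using conservation of the energy
\[
E[u](t)=\tfrac12\int_{\Omega_d}\bigl(|\partial_t u|^2+|\nabla_g u|^2\bigr)\,dxdy+\tfrac{d-2}{2d}\int_{\Omega_d}|u|^{2d/(d-2)}\,dxdy ,
\]
which is conserved (the Dirichlet condition kills the boundary terms in the energy identity) and positive, hence coercive on $H^1_0(\Omega_d)\times L^2(\Omega_d)$. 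The globalisation would follow the Grillakis--Shatah--Struwe non-concentration argument, in the form already used for energy-critical waves on three-dimensional domains, transplanted to $\Omega_d$.

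For the local theory I would set $F(u)=|u|^{4/(d-2)}u$ and use the wave-admissible pair $(q,r)=\bigl(\tfrac{d+2}{d-2},\tfrac{2(d+2)}{d-2}\bigr)$, which has scaling exponent $\beta=1$ and lies in the range of Theorem~\ref{thStri}. Combining that theorem with the energy inequality gives, on a time slab $I=[0,T]$,
\[
\|u\|_{L^q(I)L^r(\Omega_d)}+\|(u,\partial_t u)\|_{C(I;\,H^1_0\times L^2)}\lesssim \|(u_0,u_1)\|_{H^1_0\times L^2}+\|F(u)\|_{L^1(I)L^2(\Omega_d)} ,
\]
while H\"older's inequality yields $\|F(u)\|_{L^1(I)L^2}=\|u\|_{L^q(I)L^r}^{(d+2)/(d-2)}$ together with the analogous Lipschitz-type bound for $F(u)-F(v)$. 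For $3\le d\le 6$ the exponent $\tfrac{d+2}{d-2}\ge 2$, so $F$ is $C^1$ with Lipschitz derivative, and the Duhamel map contracts on a small ball of $C(I;H^1_0)\cap C^1(I;L^2)\cap L^q(I)L^r$ once $T$ (or, via a slicing argument, the free-evolution Strichartz norm on $I$) is small enough. This produces a unique maximal solution on $[0,T^*)$ and the continuation criterion: $T^*<\infty$ forces $\|u\|_{L^q([0,T^*))L^r(\Omega_d)}=\infty$.

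To globalise I would argue by contradiction. If $T^*<\infty$, then finite speed of propagation for $\Box_g$ with Dirichlet conditions, together with conservation of the positive energy, makes the energy of $u$ inside a backward light cone with vertex at a blow-up point monotone in $t$, with energy flux through the cone's mantle tending to $0$ as $t\uparrow T^*$. Rescaling around this vertex at the energy-critical scaling and passing to the limit --- using the local theory and the $C^{1,1}$ regularity of $F$ to control the errors --- yields a finite-energy solution on a light cone with vanishing flux; a Morawetz/multiplier estimate then forces this limit to vanish. Hence $u$ cannot concentrate, $\|u\|_{L^q([0,T^*))L^r}<\infty$, contradicting the continuation criterion, so $T^*=\infty$. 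Running the argument backward in time and invoking uniqueness in the Strichartz class gives the stated global solution.

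I expect the globalisation step to be the main obstacle, for two reasons. First, the non-concentration/Morawetz argument must be made compatible with the boundary: the light-cone localisation has to respect the Dirichlet condition, and --- crucially --- the boundary terms arising when the Morawetz multiplier is tested against the equation must have the favourable sign, which is exactly where the geometry of the model (flat boundary, convexity encoded in $g$) enters. Second, the higher-dimensional cases are delicate because of the low regularity of $F$, which limits the stability estimates in the rescaling argument and is the reason the result is confined to $3\le d\le 6$; by contrast the local step is routine once the pair above has been checked against Theorem~\ref{thStri}.
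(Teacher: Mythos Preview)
Your local theory is correct: the diagonal pair $(q,r)=\bigl(\tfrac{d+2}{d-2},\tfrac{2(d+2)}{d-2}\bigr)$ does lie in the admissible range of Theorem~\ref{thStri} for every $3\le d\le 6$ (the condition reduces to $\tfrac{d-2}{d+2}\le\tfrac{2d-3}{2(d+2)}$, i.e.\ $-4\le -3$), the scaling gives $\beta=1$, and the contraction closes as you describe.

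Where you and the paper part company is the globalization. The paper does \emph{not} run a Morawetz/Grillakis--Shatah--Struwe argument; it follows \cite{BLP}, whose non-concentration mechanism controls the \emph{potential energy} $\|u\|_{L^{2d/(d-2)}}$ inside backward cones via a flux/trace estimate rather than a multiplier identity. To plug into that scheme one needs the nonlinear estimate to carry an explicit factor of $\|u\|_{L^\infty_t L^{2d/(d-2)}_x}$, and that is precisely why the paper abandons the diagonal pair and picks dimension-specific exponents: e.g.\ in $d=4$ one takes $(q,r)=(11/5,22/3)$ and splits $|u|^2u=|u|^{4/5}\cdot|u|^{11/5}$, the first factor being controlled by $H^1_0\hookrightarrow L^4$; in $d=6$ one needs a Besov refinement. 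Your estimate $\|F(u)\|_{L^1L^2}=\|u\|_{L^qL^r}^{(d+2)/(d-2)}$ has no such factor, so the \cite{BLP} machinery does not apply to it as written.

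Your Morawetz route is a legitimate alternative, but it is not free on $(\Omega_d,g)$: the multiplier must be adapted to $\Delta_g=\partial_x^2+(1+x)\Delta_y$ rather than the flat Laplacian, and the favourable sign of the boundary term is a statement about the second fundamental form in this metric, not about the Euclidean shape of $\{x=0\}$ (which is flat). The rescaling step likewise has to cope with the $x$-dependence of the coefficients. None of this is obviously fatal, but it is genuine extra work that the paper bypasses entirely by recycling \cite{BLP}. In short, you trade a cleaner, dimension-uniform local step for a harder globalization; the paper makes the opposite trade.
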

In the small data case, the result follows directly from the
previously obtained set of Strichartz estimates. Appendix \ref{nonlin}
provides details on how to combine these new estimates with arguments
from \cite{BLP} to obtain the large data case.
\subsection{Light propagation, heuristics and degenerate oscillatory integrals}
In \cite{gle06} the second author sketched the main steps of a proof
of \eqref{dispco} and
gave a full description of the geometry of the wave front set. In this
work, we provide a complete construction of a suitable parametrix for
the wave equation, which we then utilize to obtain decay estimates by
(degenerate) stationary phases.

Recall that, at time $t>0$, one expects the wave propagating from the source of light
to be highly concentrated around the sphere of radius $t$. For a
variable coefficients metric, one can make good of this heuristic as
long as two different light rays emanating from the source do not
cross: in other words, as long as $t$ is smaller than the injectivity
radius. One may then construct parametrices using oscillatory
integrals, where the phase encodes the geometry of the wave front.

In our situation, the geometry of the wave front becomes singular in
arbitrarily small times, depending on the frequency of the source and its distance
to the boundary. In fact, a caustic appears right
between the first and the second reflexion of the wave front, as
illustrated on Figure \ref{figpropS} and Figure \ref{figqueueS} (which
is a zoomed version at the relevant time scale). Therefore, we are to
investigate concentration phenomena (``caustics'') that may occur near
the boundary.  Geometrically, caustics are defined as envelopes of
light rays coming from our source of light. Each ray is tangent to the
caustic at a given point. If one assigns a direction on the caustic,
it induces a direction on each ray. Each point outside the caustic
(and in the sunny side of the caustic)
lies  on a ray which has left the caustic and also lies on a ray
approaching the caustic. Each curve of constant phase has a cusp where
it meets the caustic.

At the caustic point we expect light to be singularly
intense. Analytically, caustics can
be characterized as points were usual bounds on oscillatory integrals
are no longer valid. Oscillatory integrals with caustics have enjoyed
much attention: their asymptotic behavior is known to be driven by the
number and the order of those of their critical points which
are real. Let us consider an
oscillatory integral

\begin{equation}\label{oscint}
u_{h}(z)=\frac{1}{(2\pi h)^{1/2}}\int_{\zeta}e^{\frac ih\Phi(z,\zeta)}g(z,\zeta,h) d \zeta,\quad z\in \mathbb{R}^d,\quad \zeta\in\mathbb{R},\quad h\in (0,1].
\end{equation}
We assume that $\Phi$ is smooth and that $g(.,h)$ is compactly
supported in $z$ and in $\zeta$. If there are no critical points of
the map $\zeta\rightarrow \Phi(z,\zeta)$, so that
$\partial_{\zeta}\Phi\neq 0$ everywhere in an open neighborhood of the
support of $g(.,h)$, then repeated integration by parts (i.e. non
stationary phase) yields that
$|u_h(z)|=O(h^N)$, for any $N>0$.

If there are non-degenerate critical points, where
$\partial_{\zeta}\Phi=0$ but $\det (\partial^2_{\zeta_j
  \zeta_k}\Phi)\neq 0$, then the method of stationary phase applies
and yields $\|u_h(z)\|_{L^{\infty}}=O(1)$. The corresponding canonical
form is a Gaussian phase.

If there are degenerate critical points, we define them to be
caustics, as $\|u_h(z)\|_{L^{\infty}}$ is no longer uniformly
bounded. The order of a caustic $\kappa$ is defined as the infimum of
$\kappa'$ such that $\|u_h(z)\|_{L^{\infty}}=O(h^{-\kappa'})$. 

The most simple degenerate phase beyond the Gaussian is
$\Phi_F(z,\zeta)=\frac{\zeta^3}{3}+z_1\zeta+z_2$, which corresponds to a fold
with order $\kappa=\frac 16$. A typical example is the Airy function. The caustic is given by
$z_{1}=0$ and the illuminated side is $z_{1}<0$. The next canonical form  is given by a phase function which is a polynomial of
degree $4$, namely $\Phi_C(z,\zeta)=\frac{\zeta^4}{4}+z_1\frac{\zeta^2}{2}+z_2\zeta+z_3$
whose order is $\kappa=\frac 14$; its associated integral is called
Pearcey's function and it produces a cusp singularity on the caustic which is parametrized by
$z_{1}=-3s^2, z_{2}=2s^3$.

 Finally, we
conclude this brief overview with the swallowtail integral (which is
an oscillatory integral with four coalescing saddle points) whose
canonical form is given by a polynomial of degree $5$,
$\Phi_S(z,\zeta)=\frac{\zeta^5}{5}+z_1\frac{\zeta^3}{3}+z_2\frac{\zeta^2}{2}+z_3\zeta+z_4$:
the caustic surface of the swallowtail is defined by the condition
that two or more \textit{real} saddle points are equal: it is pictured
on Figure \ref{picswallow}. In the event
that two simple saddle points undergo confluence when $z\rightarrow
z_0$, then the uniform asymptotic behavior of \eqref{oscint} contains
terms involving the Airy function and its derivatives multiplied by
powers of $h^{-\frac 12+\frac 13}$; the caustic surface is smooth
($z_1<0$ on the figure). 
\begin{figure}
	\begin{center}
		\includegraphics[height=6cm]{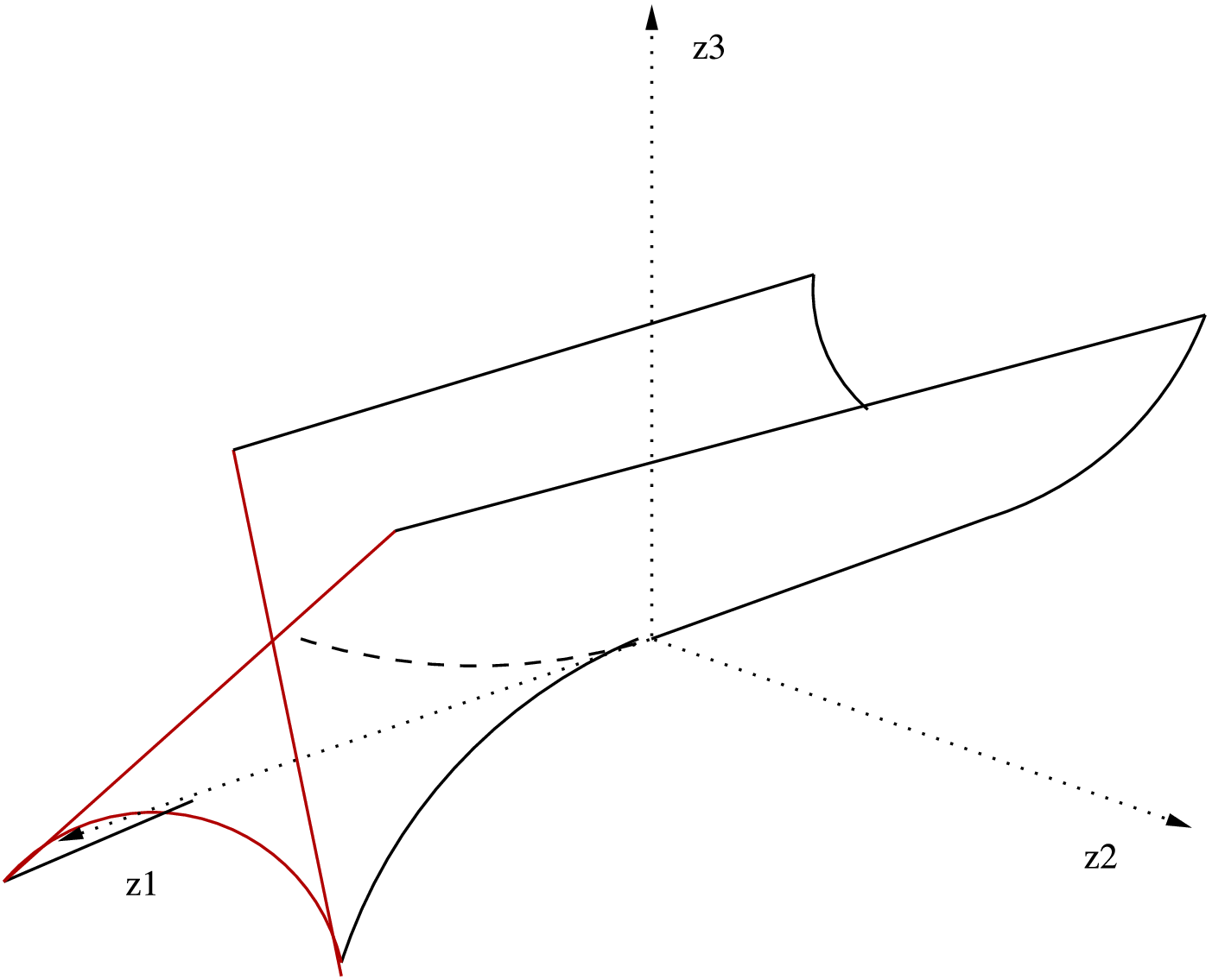}
	\end{center}
	\caption{The caustic for the swallowtail catastrophe.}
\label{picswallow}
\end{figure}
If three simple saddles coalesce
as $z\rightarrow z_0$, then the uniform asymptotic behavior of
\eqref{oscint} can be described by terms containing the Pearcey
function and its first-order derivatives, each multiplied by a power
of $h^{-\frac 12+\frac 14}$; the caustic surface has cusps (two of
them in the $z_1>0$ region on the figure). The swallowtail enters the picture when
four simple saddle points of \eqref{oscint} undergo confluence as
$z\rightarrow z_0$ (which is $z_0=0$ on the figure). We refer to
\cite{Berry} for a very nice presentation, both from the mathematical
and the physical point of vue, of degenerate oscillatory integrals and
their relation to Thom's theory of catastrophes.

 Such integrals will play a crucial role in the
proof of Theorem \ref{disper}. Between two consecutive reflections of
the wave propagating along the boundary, we shall construct a
parametrix of the form
$$
u(z,h)=\frac 1 {h^{2}} \int_{\R^2} e^{\frac i h \Phi(z,\zeta,\eta)}
g(z,\eta,\zeta,h)\,d\zeta d\eta,
$$
 where the phase is essentially $\Phi(z,\zeta,\eta)\approx\eta\Phi_C(z,\zeta)$, with $z=(t,x,y)$,
$\eta/h$ is the Fourier variable associated to the tangential variable
$y$ and
$\zeta=\xi/\eta$ where $\xi/h$ is the Fourier variable associated to
the normal variable $x$. Note we may restrict to $\eta\in (1/2,2)$ which
corresponds precisely to waves propagating along the boundary and
explains the $(\eta,\zeta)$ parametrization for the oscillatory integral. For a particular value $z_S$
of $z=(t,x,y)$, this phase will have a saddle point of order $4$; it corresponds to $
\partial_\eta \Phi=0=\Phi_C(z_S,\zeta)$ and $\partial_\zeta
\Phi_C=\partial^2_\zeta \Phi_C=\partial^3_\zeta\Phi_C=0$: the
geometric picture is that of a swallowtail singularity, but the decay
loss is that of the Pearcey's integral, i.e. $h^{1/4}$. For $z\neq
z_S$, our oscillatory integral will have
only critical points of order at most $3$, corresponding to $\partial_\eta \Phi=0=\Phi_C(z_S,\zeta)$ and $\partial_\zeta
\Phi_C=\partial^2_\zeta \Phi_C=0$: the picture is, at worst, that of cusps and
the loss is that of the Airy function, i.e. $h^{1/6}$. Finally, we
notice Figures \ref{picswallow} and \ref{figqueueS} picture the same
singularity formation: in \ref{picswallow}, up to translations,
$z_1=t$, $z_2=-x$ and $z_3=y$; $z_1<0$ corresponds to the (smooth)
refocusing wave front in  the left part of 
\ref{figqueueS} while two cusps form on the right part after the
swallowtail singularity.

\subsection{An outline of the proof}
Let us mention the main ideas of the proof of Theorem \ref{disper}. 
First, we may reduce to the two dimensional case, as the tangential
directions will produce the usual decay factor when we integrate them
out, see Section \ref{dim3}.

 Let $h\in (0,1]$ be a small parameter
($1/h$ will later be the spectral frequency) and $0< a\ll1$ the
distance of the source to the boundary. We assume $a$ to be small as
we are interested in highly reflected waves, which we do not observe
if the waves do not have time to reach the boundary.

From the spectral analysis which will be recalled in Section \ref{wgm}, we have an
explicit representation for the Green function associated to the
half-wave initial value problem with a Dirac at $(a,b)$ as initial
condition at time $s$:
\begin{equation}
  \label{eq:green}
  G((x,y,t),(a,b,s))=\sum_{k\geq 1} \int_{\R} e^{\pm i(t-s)\sqrt
 { \lambda_k(\eta)}} e^{i(y-b)\eta} e_k(x,\eta)e_k(a,\eta) \,d\eta
\end{equation}
where $\lambda_k(\eta)=\eta^2+\eta^{4/3}\omega_{k}$, with $-\omega_{k}$ a zero of the Airy function  and the $e_k(x,\eta)$ are explicit, real-valued functions which
are defined in Section \ref{wgm}. We now record several remarks that will
be of help later and relate to various phase space localizations.
\begin{rmq}\label{remspectal}
  We may perform a spectral localization at
  $\lambda_k(\eta)\sim h^{-2}$, which corresponds to inserting a
  smooth, compactly supported away from zero $\psi_{2}(h \sqrt{\lambda_k(\eta)})$; on the flow,
this is nothing but $\psi_{2}(h D_t)$ and this smoothes out the Green
function. Then we are dealing with a semi-classical boundary value problem with small parameter 
$h$. With the usual notations $\tau={h\over i}\partial_{t}$, $\eta={h\over i}\partial_{y}$, 
$\xi={h\over i}\partial_{x}$, the characteristic set of our operator is given by
$$\tau^2 =\xi^2+(1+x)\eta^2$$
The hyperbolic (resp. elliptic) subset of the cotangent bundle of the boundary $x=0$
is $\vert \tau\vert > \vert \eta\vert$, (resp. $\vert \tau\vert < \vert \eta\vert$) and
the gliding subset is  $\vert \tau\vert=\vert \eta\vert$. From 
$\tau^2=(hD_{t})^2=h^2\lambda_k(D_{y})$, one gets at the symbolic level on the micro-support
of any gallery mode associated to $\omega_{k}$ (see Section \ref{wgm}
for a definition of gallery modes)
\begin{equation}\label{gbz}
\eta^{4/3}h^{2/3}\omega_{k}=\xi^2+x\eta^2\,.
\end{equation}
\end{rmq}
\begin{rmq}\label{remspectraly}
We may also localize with $\psi_{1}(h D_{y})$, with $\psi_{1}\in C_{0}^\infty(]0,\infty[)$, which correspond to
a Fourier localization along the tangential (i.e. $y$) direction (notice
such a truncation is easily seen to commute with the equation, hence
the flow). Since we are not interested with waves transverse to the boundary, 
we may and will assume that on the support of 
$\psi_{1}(h\eta)\psi_{2}(h \sqrt{\lambda_k(\eta)})$ one has $k\leq \varepsilon h^{-1}$
with $\epsilon$ small.
This is compatible with \eqref{gbz} since $\omega_{k}\simeq k^{2/3}$ and 
$k\leq \varepsilon h^{-1}$ is equivalent to $\vert \xi \vert \lesssim \varepsilon^{2/3}$.
This fact  will later have its importance when $a\leq h^{1/2}$.
\end{rmq}
\begin{rmq}\label{remtransverse}
  Irrespective of the position of $a$ relative to $h$,
the remaining part of the Green function,  will be essentially
transverse
and see at most one reflexion for $t\in [0,T_{0}]$, with $T_{0}$ small 
(depending on the above choice of $\varepsilon$). Hence, it can be dealt with as in
\cite{bss08} to get the free space decay and we will ignore it in the
upcoming analysis. 
\end{rmq}
\begin{rmq}\label{symax}
Finally, the symmetry of $G$ (or its suitable spectral truncations) with respect to $x$ and $a$ will
be of great importance: it allows us to
restrict the computation of the $L^\infty$ norm to the region $0\leq x\leq a$.
\end{rmq}

Now, we consider initial data $u_0(x,y)=\psi_{2}(h\sqrt{-\triangle_{g}})\psi_{1}(hD_y)\delta_{x=a,y=0}$ where the $\psi_{j}$ are those of Remark \ref{remspectraly}.
 We will use different arguments depending on the respective position
 of $a$ and $h$.

The first case is $a\gg h^{4/7}$: there, we follow ideas of \cite{doi} and write a parametrix for the wave equation as a
superposition of localized waves for which we can compute the wave front set and hence the
singularities that appear at different times and locations. The
construction of \cite{doi} has to be significantly altered to allow
for the range $h^{4/7}\ll a\leq h^{1/2}$, with a phase which is less
explicit but prevents amplifying factors at each reflexion that induced the $a>h^{1/2}$ restriction in \cite{doi}.

The second case corresponds to data for which the distance $a$
to the boundary is such that $0<a\lesssim h^{1/2}$: we write the
contribution of our data which is localized in a $h^{1/4}$ cone of
tangential
directions as the $L^2(\Omega)$ orthogonal sum of whispering gallery
modes and prove that after a time $t$ the corresponding wave remains
frequency localized in the same cone of directions of size $h^{1/4}$, at
least up to smooth remainders. While not quite as strong as a
microlocal propagation of singularities result, this allows for the
use of Sobolev embedding theorem to recover the ``dispersion'' by
using the size of the Fourier support. The contribution of data
corresponding to directions with angles with the boundary greater than $h^{1/4}$ may be dealt with separately,
using a crude parametrix construction, as they involve only cusp-type singularities.

Notice that there is an overlap between the two regions: in fact the
parametrix construction obviously provides better bounds in the
overlap region, both in size (we gain an $a^{1/8}$ factor in the worst
case) and position (the swallowtail occurs exactly once in between two
consecutive reflexions). Had we reproduced the parametrix construction from
\cite{doi}, we would have an epsilon loss in the dispersion estimate
because of the $a\sim h^{1/2}$ region. We thought it was of
independent interest to quantify how ``far'' below $h^{1/2}$ the
construction could be pushed while retaining the most interesting
features of \cite{doi}.
\begin{rmq}
Figure \ref{figpropS} illustrates the propagation of (part of) the wavefront set
of the Dirac data; the second picture is a zoomed version of the first
one and shows in detail the formation of the swallowtail singularity
for the part of the wave front moving along directions which are
initially tangent to the boundary.
\begin{figure}
	\begin{center}
		\includegraphics[width=13cm, height=3cm]{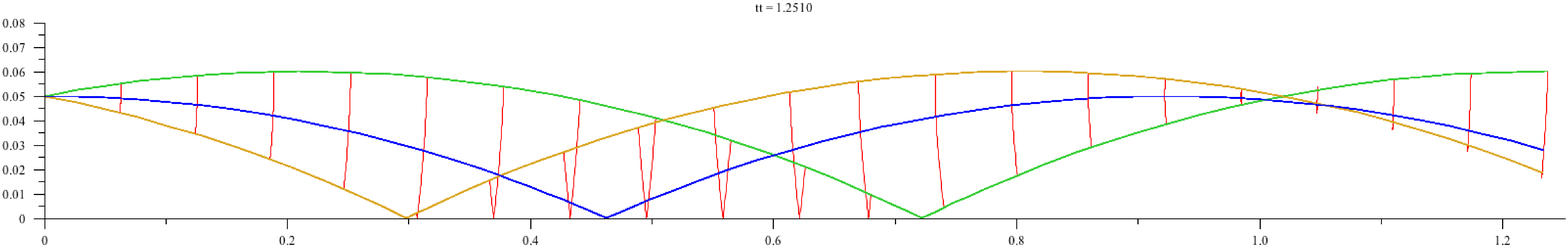}
	\end{center}
	\caption{Propagation of the wavefront}
\label{figpropS}
\end{figure}

\begin{figure}
	\begin{center}
		\includegraphics[width=15cm]{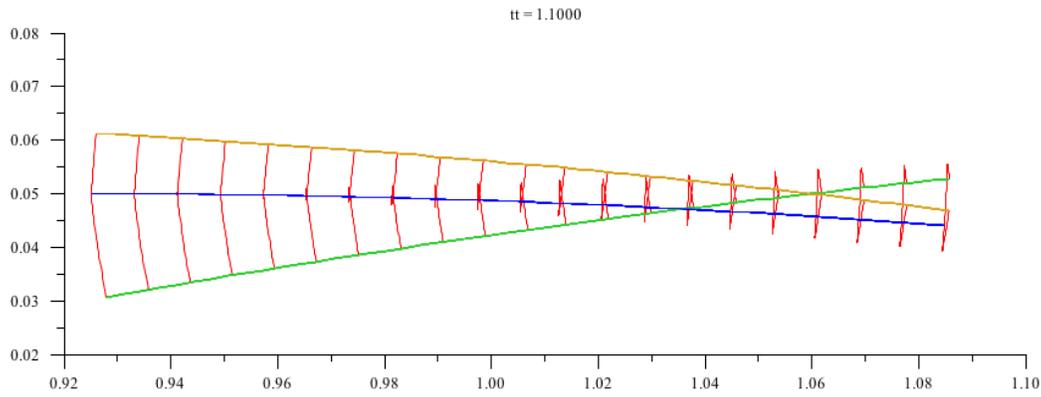}
	\end{center}
	\caption{The formation of a swallowtail singularity just after
          the first reflection (zoomed image)}
\label{figqueueS}
\end{figure}
\end{rmq}
\noindent
Finally, Theorem \ref{disper} is obtained for $a\geq h^{4/7 -\varepsilon}$ in Section
\ref{secttg1}, Theorem \ref{thL1}, and for $a\leq h^{1/2}$ in Section \ref{secgal},
Proposition \ref{propadisp}.
Theorem \ref{disperoptimal} is obtained in section
\ref{secttg1}, as a remark at the end of the proof of Proposition \ref{propL2}.

\section{Parametrix for $a> h^{\frac{4}{7}}$}\label{secttg1}

 \par \noindent
 
 This section is devoted to the construction, modulo $O(h^\infty)$,
  of the Green function in the case
 $a\geq h^{\frac{4}{7}-\varepsilon}$. The Green function is represented 
 in Proposition \ref{p1} as a superposition
 of $O(a^{-1/2})$ reflected waves. We give a precise analysis  of the Lagrangian in the phase space
 associated to each
 reflected wave. This geometric analysis allows us to track the degeneracy of the phases
 when we apply phase stationary arguments. Our main dispersive
 estimate will be  Theorem \ref{thL1}.
   
Let us set $\hbar=h/\eta$ and $\P=(-i
\hbar\partial_x)^2+1+x-(-i\hbar \partial_t)^2$. For $a\geq 0$, we denote by $\Lambda_a\subset T^*\R$
the Lagrangian
\begin{equation}
  \Lambda_a=\{ (t',\tau') \ \exists \theta\in \R \ \text{s.t.}\ \  t'=-2\theta\sqrt{1+a+\theta^2},\,\tau'=\sqrt{1+a+\theta^2}\}\,.
\end{equation}
 The set $\Lambda_a$
may be parametrized by $t'$. Let $\psi_a(t')$ be the unique function 
 such that $\psi_a(0)=0$ and
$\Lambda_a=\{(t',\psi_a'(t')\}$.
Let us set $\rho=1+a$ and $\theta=\sqrt \rho z$, then
$(t')^2=4\rho^2(z^2+z^4)$, from which we get
$$
2z^2+1=\sqrt{1+(t')^2/\rho^2} \imp \psi_a'^2=\rho(1+z^2)=\frac{\rho}{2}
(1+\sqrt{1+(t')^2/\rho^2})
$$
and as $\psi'_a=\tau'>0$,
\[
 \psi_a'=\sqrt\rho(1+t'^2/(8\rho^2)+O(t'^4))\,;
\]
finally, by integration, as $\psi_a(0)=0$,
\begin{equation}
  \label{eq:1}
   \psi_a(t')=\sqrt\rho(t'+\frac{t'^3}{24\rho^2}+O(t'^5))
\end{equation}
\subsection{A singular integral representation for the data}
We start by a suitable decomposition of the smoothed Dirac as an
inverse Fourier transform of a superposition of Airy functions.
\begin{lemma}
\label{l1}
Let $\chi_1\in C^\infty_0((-\theta_0,\theta_0))$ with small
$\theta_0$. There exists a symbol $\sigma_0(t',\hbar)$ of degree $0$ with an
asymptotic expansion in $\hbar$, i.e.
$$
\forall N,\,k \,,\, \exists\, C_N \, \text{ s.t. }\,\,
\sup_{t'}\bigl\vert \partial^k_{t'} \bigl(\sigma_0(t',\hbar)-\sum_{0\leq
  j\leq N} \sigma_{0,j}(t')\hbar^j\bigr) \bigr\vert\leq
C_N\hbar^{N+1}\,,
$$
which is supported in a neighborhood of $t'=0$ and with the following properties: let
\begin{equation}
  \label{eq:u0}
  \tilde u_0(t,x,h,\hbar)=\frac 1 {2\pi \hbar}\int e^{\frac i \hbar
    (\zeta(t-t')+s(x+1-\zeta^2)+\frac{s^3}3)}e^{\frac i \hbar \psi_a(t')} 
  \sigma_0(t',\hbar)\frac{dt'dsd\zeta}{(2\pi h)^2} 
\end{equation}
then $\tilde u_0$ is such that, for $x>-1$,
\begin{enumerate}
\item The wave front set of $\tilde u_0$ is included in $\tau>0$. In
  fact,
$$
\textrm{WF}_h(\tilde u_0)\subset \{\tau\in [\sqrt{1+a},\tau_0]\},
$$
where $\tau_0$ is related only to the size of the support of
$\sigma_0$ in $t'$. Moreover,
$$
\P \tilde u_0=0.
$$
\item The initial data $\tilde u_0(0,x,h,\hbar)$ is a smoothed out
  Dirac, that is
$$
\tilde u_0(0,x,h,\hbar)=\frac 1 {(2\pi h)^2} \int e^{\frac i \hbar
  (x-a)\theta} \chi_1(\theta)\,d\theta+O_{C^\infty}(h^\infty)\,.
$$
\end{enumerate}
\end{lemma}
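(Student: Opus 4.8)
The plan is to read \eqref{eq:u0} as a WKB superposition: the inner $s$-integral turns $e^{\frac i\hbar(s(x+1-\zeta^2)+s^3/3)}$ into the generalized Airy eigenfunction $\Ai\bigl(\hbar^{-2/3}(x+1-\zeta^2)\bigr)$ of $\P$ at time-frequency $\zeta$, the $\zeta$-integral is the spectral superposition over that continuous parameter, and the factor $e^{\frac i\hbar\psi_a(t')}\sigma_0(t',\hbar)$, integrated in the auxiliary variable $t'$, supplies the spectral coefficient, $\psi_a$ being arranged so that the phase lies on $\Lambda_a$. That $\P\tilde u_0=0$ is an exact identity: with $\Phi=\zeta(t-t')+s(x+1-\zeta^2)+\tfrac{s^3}3+\psi_a(t')$ one has $\partial_t\Phi=\zeta$, $\partial_x\Phi=s$, both independent of $(t,x)$, and $\sigma_0$ independent of $(t,x)$, so $\P\bigl(e^{\frac i\hbar\Phi}\sigma_0\bigr)=(s^2+1+x-\zeta^2)e^{\frac i\hbar\Phi}\sigma_0=(\partial_s\Phi)\,e^{\frac i\hbar\Phi}\sigma_0=\frac\hbar i\,\partial_s\!\bigl(e^{\frac i\hbar\Phi}\sigma_0\bigr)$ since $\partial_s\sigma_0=0$; integrating in $s$ as an oscillatory integral (insert $\chi(s/R)$, let $R\to\infty$, the $s^3/3$ giving the decay) kills the boundary term, so $\P\tilde u_0=0$ identically in $\hbar$. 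For the wave front set, the stationary set of $\Phi$ in $(t',s,\zeta)$ is $\{\zeta=\psi_a'(t'),\ s^2=\zeta^2-1-x,\ t-t'=2s\zeta\}$; on $\supp\sigma_0$ the variable $t'$ is small, so by \eqref{eq:1} $\zeta=\psi_a'(t')\in[\sqrt{1+a},\tau_0]$, and since the associated frequency is $(\tau,\xi)=(\partial_t\Phi,\partial_x\Phi)=(\zeta,s)$, we get $\tau=\zeta\in[\sqrt{1+a},\tau_0]$; off this set, non-stationary phase in $(t',s,\zeta)$ gives $O(\hbar^\infty)=O(h^\infty)$, which is the claimed localization, in particular $\tau>0$.

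The substance is the construction of $\sigma_0$ reproducing the prescribed data. The crucial algebraic fact is that the definitions of $\Lambda_a$ and $\psi_a$ force, under the diffeomorphism $t'=-2\theta\sqrt{1+a+\theta^2}$ (valid near $t'=0$),
\[
\psi_a\!\bigl(-2\theta\sqrt{1+a+\theta^2}\bigr)=-2(1+a)\theta-\tfrac43\theta^3\qquad\text{exactly,}
\]
as one checks by differentiating: $\frac{d}{d\theta}\psi_a(t'(\theta))=\psi_a'(t')\,\frac{dt'}{d\theta}=\sqrt{1+a+\theta^2}\cdot\frac{-2(1+a+2\theta^2)}{\sqrt{1+a+\theta^2}}=-2(1+a)-4\theta^2$, with $\psi_a(0)=0$. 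Setting $t=0$ in \eqref{eq:u0} and performing this change of variables, one carries out the $s$-integration explicitly (an Airy function, which handles the turning point $x+1-\zeta^2=0$, i.e.\ $\theta=0$, uniformly) and then treats the remaining $(\zeta,\theta)$-integral by stationary phase. Its joint stationary set in $(s,\zeta,\theta)$ lies over $\{x=a\}$, is parametrized by $\theta$ with $s=\theta$, $\zeta=\sqrt{1+a+\theta^2}$, and carries zero phase there (this is exactly where the above identity for $\psi_a$ enters, the polynomial $-\tfrac43\theta^3$ cancelling the Airy contribution $-\tfrac23\theta^3$). A change of variables $\theta\mapsto\vartheta$ straightening the critical value of the phase to $(x-a)\vartheta$ then yields $\tilde u_0(0,x)=\frac1{(2\pi h)^2}\int e^{\frac i\hbar(x-a)\vartheta}\,b(x,\vartheta,\hbar)\,d\vartheta$ with $b$ a classical symbol (and, for $|x-a|$ bounded away from $0$, both sides are $O(h^\infty)$ since there is no critical point). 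Finally, one fixes $\sigma_0(t',\hbar)\sim\sum_j\sigma_{0,j}(t')\hbar^j$, supported near $t'=0$: $\sigma_{0,0}$ is chosen so that $b(a,\vartheta,0)=\chi_1(\vartheta)$ (an explicit transport/amplitude equation along $\Lambda_a$, solvable because $\theta_0$ small makes all the above maps nonsingular), the $x$-dependence of $b$ near $x=a$ is traded for powers of $\hbar$ by integration by parts in $\vartheta$, and the $\sigma_{0,j}$, $j\ge1$, are determined recursively to cancel the errors order by order; a Borel resummation produces a genuine symbol $\sigma_0$ with $\tilde u_0(0,x)=\frac1{(2\pi h)^2}\int e^{\frac i\hbar(x-a)\theta}\chi_1(\theta)\,d\theta+O_{C^\infty}(h^\infty)$.

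The hard part is this last step: organizing the WKB recursion so that the $t'$-integral reproduces an Airy function to all orders in $\hbar$ despite the non-polynomial remainder of $\psi_a$, matching uniformly across the turning point $\theta=0$ (where plain stationary phase degenerates, the two saddles $s=\pm\sqrt{\zeta^2-1-x}$ coalescing, so that the uniform Airy asymptotics must be used and both oscillatory branches of $\Ai$ on its shadow-free side tracked), controlling the non-compact $s$- and $\zeta$-integrations by non-stationary phase in the far regions, and pinning the induced amplitude down so that it is exactly the prescribed cutoff $\chi_1$ modulo $O(h^\infty)$. The underlying geometry---the exact formula for $\psi_a$ along $\Lambda_a$---is, by contrast, elementary, and is precisely what makes this matching possible.
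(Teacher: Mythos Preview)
Your proof is correct in outline and shares the decisive geometric observation with the paper---the exact identity $\psi_a\bigl(-2\theta\sqrt{1+a+\theta^2}\bigr)=-2(1+a)\theta-\tfrac43\theta^3$, equivalently the paper's formula \eqref{eq:1'}, which pins the total phase at $t=0$ to vanish along the diagonal $s=\theta$, $\zeta=\sqrt{1+a+\theta^2}$ over $x=a$. Your direct verification of $\P\tilde u_0=0$ by integration by parts in $s$ is in fact slightly cleaner than the paper's argument via the time Fourier transform.

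Where the two approaches diverge is in the order of integrations for item (2). You integrate $s$ first, producing an Airy factor $\Ai\bigl(\hbar^{-2/3}(x+1-\zeta^2)\bigr)$, and then face stationary phase in the remaining variables with an Airy amplitude whose turning point sits exactly at the relevant critical locus ($x=a$, $\theta=0$); you correctly flag this as ``the hard part,'' requiring uniform Airy asymptotics and a matching across the caustic. The paper sidesteps this entirely: it performs stationary phase in $(s,\zeta)$ \emph{jointly}, observing that on the critical set $\mathcal{C}_\phi=\{x+1+s^2=\zeta^2,\ t'+2s\zeta=0\}$ one has $\zeta\neq 0$ (since $x>-1$), so the Hessian $\bigl(\begin{smallmatrix}2s&-2\zeta\\-2\zeta&-2s\end{smallmatrix}\bigr)$ has determinant $-4(s^2+\zeta^2)\neq 0$ and is \emph{uniformly nondegenerate}. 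This yields directly $\tilde u_0(0,x,\hbar)=(2\pi h)^{-2}\int e^{\frac i\hbar(G(t',x)+\psi_a(t'))}A_0\,\sigma_0\,dt'$ with $A_0$ an elliptic symbol of order~$0$; one then checks $G(t',a)=-\psi_a(t')$ (your identity, rephrased), writes $G(t',x)+\psi_a(t')=(x-a)H_a(t',x)$ with $\partial_{t'}H_a(0,a)\neq 0$, and changes variable $\Theta=H_a(t',x)$. The map $\sigma_0\mapsto$ (resulting amplitude in $\Theta$) is then an elliptic Fourier integral operator of order~$0$, hence invertible modulo $O(\hbar^\infty)$, and $\sigma_0$ is obtained by applying its parametrix to $\chi_1$.

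In short: your route works but manufactures a caustic difficulty that the paper's choice of pairing $(s,\zeta)$ dissolves for free. What you call ``the hard part'' is an artifact of integrating $s$ alone first; with the paper's ordering there is no turning-point analysis, no splitting $\Ai=A_++A_-$, and the construction of $\sigma_0$ is a one-step inversion rather than a recursive WKB matching. The trade-off is that the paper's argument is less explicit about the phase (one must identify $G(t',x)$ as a critical value rather than computing it), whereas yours keeps the Airy structure visible throughout; but for the purpose of this lemma, the paper's route is strictly shorter.
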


\begin{proof}
  Consider the time Fourier transform of $\tilde u_0$,
  \begin{align*}
    \hat{\tilde u}_0(\tau/\hbar,x,h,\hbar) & =\int e^{-it\tau/\hbar} \tilde
u_0(t,x,h,\hbar) \,dt \\
 & = \hbar^{\frac 1 3} \int \Ai (\hbar^{-\frac 2 3}(x+1-\tau^2))
 e^{\frac i \hbar (\psi_a(t')-\tau t')} \sigma_0(t',\hbar)
 \frac{dt'}{(2\pi h)^2}\,.
  \end{align*}
Therefore, $\hat{\tilde u}_0$ is an average (with compact support in $t'$) of
solutions to the equation
$$
(\frac \hbar i \partial_x)^2 f+(1+x-\tau^2)f =0\,.
$$
From $\partial_{t'} (\psi_a(t')-\tau t')=0$, we get $\tau=\psi_a'(t')$
and therefore there exists $\theta$ such that
$\tau=\sqrt{1+a+\theta^2}$, which proves the claim on
$\mathrm{WF}_h(\tilde u_0)$.

We proceed with the second part of the statement, regarding the
initial data,
$$
\tilde u_0(0,x,h,\hbar)=\frac{1}{(2\pi h)^2 2\pi \hbar} \int e^{\frac i \hbar
    (s(x+1-\zeta^2)+s^3/3-t'\zeta+\psi_a(t'))} \sigma_0 \,dt'dsd\zeta.
$$
Let $\phi(t',x,s,\zeta)=s(x+1-\zeta^2)+s^3/3-t'\zeta$ and denote by
$\mathcal{C}_\phi$ the set
$$
\mathcal{C}_\phi=\{ (t',x,s,\zeta) \mathrm{\ \ s.t.\ \ } \partial_s
\phi=\partial_\zeta \phi=0 \}.
$$
The equations defining $\mathcal{C}_\phi$ read $x+1+s^2=\zeta^2$ and
$2s\zeta+t'=0$. From the first equation, we get $\zeta\neq 0$ on
$\mathcal{C}_\phi$ (recall $x>-1$). Now,
$$
 \mathrm{Hess}_{s,\zeta} \phi=\begin{pmatrix} 2s & -2\zeta \\ -2\zeta &
  -2s \end{pmatrix},
$$
and $\mathrm{det} ( \mathrm{Hess}_{s,\zeta} \phi)\neq 0$ on
$\mathcal{C}_\phi$. Therefore $\mathcal{C}_\phi$ is a smooth manifold.

Denote by $\pi_1$ the projection from $\mathcal{C}_\phi$ to
$T^*\R_{x}$, that is
$$
\pi_1((t',x,s,\zeta)\in
\mathcal{C}_\phi)=(x,\partial_{x}\phi)=(x,s).
$$
and by $\pi_2$ the projection from $\mathcal{C}_\phi$ to
$T^*\R_{t'}$, that is
$$
\pi_2((t',x,s,\zeta)\in
\mathcal{C}_\phi)=(t',-\partial_{t'}\phi)=(t',\zeta)\,.
$$
For $\tau'\neq 0$, we have
$$
\pi_2^{-1}(t',\tau')=(t',x=-1+\tau'^2-t'^2/(4\zeta^2),s=-t'/2\zeta,\zeta=\tau').
$$
Therefore $\mathcal{C}_\phi$ induces a canonical transformation from
$T^*\R_{t'}\setminus\{\tau'=0\}$ to $T^*\R_x$ defined by
$$
\chi(t',\tau')=(x=-1+\tau'^2-t'^2/4\tau'^2,\xi=-t'/2\tau').
$$
Notice that
$$
\chi(\Lambda_a)=(x=a,\xi=\theta)=T^*_{x=a},
$$
and $\chi$ is a symplectic isomorphism from a neighborhood of
$(t',\tau')=(0,\sqrt{1+a})$ onto a neighborhood of $(x,\xi)=(a,0)$.

The remaining part of the argument is standard: denote by $G(t',x)=\phi(t',x,s_c,\zeta_c)$ where $(s_c,\zeta_c>0)$ is
the unique solution of $1+x+s_c^2=\zeta_c^2$ and $t'+2s_c\zeta_c=0$,
then $G(0,a)=0$, as $s_c(0,a)=0$ and $\zeta_c(0,a)=\sqrt{1+a}$.
By stationary phase in
$(s,\zeta)$ we get 
$$
\tilde u_0(0,x,h,\hbar)=\frac 1 {(2\pi h)^2} \int e^{\frac i \hbar
  (G(t',x)+\psi_a(t'))} A_0(t',x,\hbar)\sigma_0(t',\hbar) \,dt'
$$
where $A_0(t',x,\hbar)$ is an elliptic symbol of order $0$. From $\partial_{t'}
G(t',a)+\psi'_a(t')=0$ and $G(0,a)=0$, we get $G(t',a)=-\psi_a(t')$,
and therefore
$$
G(t',x)+\psi_a(t')=(x-a)H_a(t',x) \mathrm{\ \ with \ \ }\partial_{t'}H_a(0,a)\neq 0,
$$
and by change of variables $\Theta=H_a(t',x)$ and using that for all
$F$ there exists $G$ such that
$$
\int e^{\frac i \hbar (x-a)\Theta } F(\Theta,x,\hbar)\,d\Theta=\int
e^{\frac i \hbar (x-a)\Theta } G(\Theta,\hbar)\,d\Theta+O(\hbar^\infty)
$$
we obtain the desired conclusion, since by the above canonical transformation
the map $\sigma_{0}(t',\hbar) \mapsto G(\Theta,\hbar)$ is elliptic of
degree $0$.

\end{proof}

Set $g_0(t',\hbar)=e^{\frac i \hbar\psi_a(t')}\sigma_0(t',\hbar)$. We
proceed with
\begin{lemma}
\label{l2}
  Let $c>0$, $\eps>0$, then, with $\rho=1+a$,
  \begin{equation}
    \label{eq:star}
    \sup_{\tau\leq \sqrt \rho-c\hbar^{2/3-\eps}} |\hat
      g_0(\tau/\hbar,\hbar)| \in O(\hbar^\infty).
  \end{equation}
\end{lemma}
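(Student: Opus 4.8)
The quantity to estimate is $\hat g_0(\tau/\hbar,\hbar)=\int e^{\frac i\hbar\Phi_\tau(t')}\,\sigma_0(t',\hbar)\,dt'$ with phase $\Phi_\tau(t')=\psi_a(t')-\tau t'$; this is a quantitative version of the lower bound on $\tau$ in the inclusion $\textrm{WF}_h(\tilde u_0)\subset\{\tau\geq\sqrt{1+a}\}$ of Lemma \ref{l1}. The plan is to run iterated non-stationary phase, tracking precisely how small $|\Phi_\tau'|$ is allowed to be on the support of $\sigma_0$ (which lies in a small neighbourhood $\{|t'|<\theta_0\}$ of $0$). Since $\psi_a$ is smooth there and $\psi_a'(t')^2=\tfrac\rho2\bigl(1+\sqrt{1+t'^2/\rho^2}\bigr)$ is strictly increasing in $|t'|$, the function $\psi_a'$ has a nondegenerate minimum $\psi_a'(0)=\sqrt\rho$ (by \eqref{eq:1}, $\psi_a'(t')=\sqrt\rho\bigl(1+t'^2/(8\rho^2)+O(t'^4)\bigr)$). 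Hence, after shrinking $\theta_0$, there is $c_1>0$, uniform for $a\in[0,1]$, with $\Phi_\tau'(t')=\psi_a'(t')-\tau\geq(\sqrt\rho-\tau)+c_1 t'^2$ for $|t'|<\theta_0$. In the range $\tau\leq\sqrt\rho-c\hbar^{2/3-\eps}$ this yields $\Phi_\tau'\geq c\hbar^{2/3-\eps}$; and since for $\tau\leq\sqrt\rho-1$ one has $\Phi_\tau'\geq1$ and \eqref{eq:star} follows from a simpler, non-degenerate non-stationary phase, we may further assume $\tau\geq\sqrt\rho-1$, so that $\Phi_\tau'$ is also bounded above on the support.

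The point is that a crude integration by parts does not suffice: the naive correction term $\hbar\,\partial_{t'}(1/\Phi_\tau')\sim\hbar\,(\Phi_\tau')^{-2}\sim\hbar^{-1/3+2\eps}$ is not even bounded. One repairs this using $\psi_a''(0)=0$: then $|\Phi_\tau''(t')|=|\psi_a''(t')|\leq C|t'|\leq C'(\Phi_\tau'(t'))^{1/2}$ by the lower bound above, while $|\Phi_\tau^{(j)}(t')|=|\psi_a^{(j)}(t')|\leq C_j$ for all $j\geq 2$. Substituting these into the Fa\`a di Bruno formula for $\partial_{t'}^k(1/\Phi_\tau')$ and noting that in each term a block of size $\geq2$ of the underlying partition contributes $\geq2$ to $k$ together with an $O(1)$ factor, whereas a singleton contributes $1$ to $k$ together with an $O((\Phi_\tau')^{1/2})$ factor, one checks that every monomial is $O((\Phi_\tau')^{-(k+2)/2})$. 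Thus, on $\{|t'|<\theta_0\}$,
\[
\bigl|\partial_{t'}^k\bigl(1/\Phi_\tau'(t')\bigr)\bigr|\ \leq\ C_k\,(\Phi_\tau'(t'))^{-(k+2)/2}.
\]

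Now integrate by parts $N$ times with $L=\tfrac{\hbar}{i\Phi_\tau'}\partial_{t'}$, which satisfies $L\bigl(e^{\frac i\hbar\Phi_\tau}\bigr)=e^{\frac i\hbar\Phi_\tau}$, to get $\hat g_0(\tau/\hbar,\hbar)=\int ({}^tL)^N\sigma_0(t',\hbar)\,e^{\frac i\hbar\Phi_\tau}\,dt'$. Using the previous display together with the symbol bounds on $\sigma_0$, a routine induction on $N$ gives $\bigl|\partial_{t'}^j\bigl(({}^tL)^N\sigma_0\bigr)(t')\bigr|\leq C_{N,j}\,\hbar^N\,(\Phi_\tau'(t'))^{-(3N+j)/2}$; in particular, on the support,
\[
\bigl|({}^tL)^N\sigma_0(t')\bigr|\ \leq\ C_N\,\hbar^N\,(\Phi_\tau')^{-3N/2}\ \leq\ C_N\,\hbar^N\,(c\hbar^{2/3-\eps})^{-3N/2}\ =\ C_N'\,\hbar^{\frac32\eps N}.
\]
Integrating over the fixed-size support of $\sigma_0$ and letting $N\to\infty$ gives \eqref{eq:star} (optimizing over $N$ one even obtains $O(e^{-c'\hbar^{-3\eps/2}})$, but this is not needed). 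The one delicate step is the derivative estimate on $1/\Phi_\tau'$: it is exactly what makes the threshold $\hbar^{2/3-\eps}$, rather than a fixed constant, affordable, and it reflects the cubic, Airy-type degeneracy of $\psi_a$ at $t'=0$ — the same degeneracy responsible for the Airy factor in the representation of $\hat{\tilde u}_0$ in the proof of Lemma \ref{l1}.
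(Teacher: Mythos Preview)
Your proof is correct and follows essentially the same approach as the paper: iterated integration by parts with $L=(\Phi_\tau')^{-1}\tfrac{\hbar}{i}\partial_{t'}$, exploiting the fact that $\psi_a''(0)=0$ to gain the crucial extra half-power of $\Phi_\tau'$ per integration and arrive at the bound $\hbar^N(\Phi_\tau')^{-3N/2}\leq C_N\hbar^{3\eps N/2}$. The only difference is bookkeeping: the paper tracks powers of $t'$ in the numerator explicitly (writing $({}^tL)^N\sigma_0=\hbar^N\sum_j t'^{\lfloor N-2j\rfloor_+}\beta_{j,N}/(\Phi_\tau')^{2N-j}$ and then substituting $t'=\sqrt\alpha\,s$), whereas you absorb them immediately via $|t'|\leq C(\Phi_\tau')^{1/2}$ and a Fa\`a di Bruno estimate on $\partial_{t'}^k(1/\Phi_\tau')$ --- a slightly cleaner packaging of the same computation.
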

\begin{proof}
  Notice that $\hat g_0(\tau/\hbar)$ behaves like an Airy function
  from the geometry of $\Lambda_a$ so the estimate on $\hat g_0$ is
  really the classical estimate on $\Ai$. We provide a direct
  argument: for $\tau<\sqrt \rho$,
$$
\hat g_0(\tau/\hbar,\hbar)=\int e^{\frac i \hbar (\psi_a(t)-t\tau)}
\sigma_0(t,\hbar) \,dt,
$$
and we may integrate by parts using $L=(\psi'_a(t)-\tau)^{-1} \frac
\hbar i \partial_t$ (recall $\psi_a'(t)=\sqrt
\rho(1+t^2/8\rho^2+O(t^4))$). Notice that
$$
({}^{t}\!{L})^N(\sigma_0)=\hbar^N \sum_{j=0}^N \frac{\alpha_{j,N}(t)}{(\psi_a'(t)-\tau)^{2N-j}}
$$
where $\alpha_{j,N}(t)=t^{\lfloor N-2j\rfloor_+} \beta_{j,N}(t)$ (by
induction, as $\psi''_a(t)=O(t)$ and $\lfloor N-2j\rfloor_++1\geq
\lfloor N-2j+1\rfloor_+$ as well as $\lfloor \lfloor
N-2(j-1)\rfloor_+-1\rfloor_+ \geq
\lfloor N-2j+1\rfloor_+$). As such, it remains to check that for $t\in [-1,+1]$ and $\alpha\in ]0,1]$
$$
\hbar^N \frac{t^{\lfloor N - 2j\rfloor_+}}{(\alpha+t^2)^{2N-j} } \leq
C_{N,j} \frac{\hbar^N}{\alpha^{3N/2}},
$$
which is trivial if $j\geq N/2$ and follows from setting $t=\sqrt
\alpha s$ if $j<N/2$.
\end{proof}
\begin{rem}
  One may also prove that there exists $\tau_0>0$ (related to the
  support of $\sigma_0$) such that
  \begin{equation}
    \label{eq:starstar}
    \sup_{\tau\geq \tau_0} |\hat
      g_0(\tau/\hbar,\hbar)| \in O(\hbar^\infty).
  \end{equation}
\end{rem}
\subsection{Digression on Airy functions}\label{appairy}
We recall a few well-known facts about Airy functions: let $z>0$, the
$C^\infty$ function $\Ai$ may be defined as
$$
\Ai(-z)=\frac 1 {2\pi} \int_{\R} e^{i(s^3/3-sz)}\,ds\,,
$$
and is easily seen to satisfy the Airy equation $\Ai''(z)-zAi(z)=0$
which we denote by $(A)$.
\begin{rmq}
  Notice that the defining integral is only an oscillatory
  integral; it may be seen as the inverse Fourier transform of a
  tempered distribution and subsequently proved to be
  $C^\infty$. Alternatively, one may proceed as in \cite{Hormander},
  7.6.16: let $\eta>0$, $\xi=s+i\eta$ and define \mbox{$\Ai(z)=(2\pi)^{-1} \int_{\Im{\xi}=\eta}
    e^{i(\xi^3/3+\xi z)}\,d\xi$}, which is absolutely convergent. One then proves the definition to be
      independent of $\eta$ and for $\eta\rightarrow 0$ we recover the
      previous definition.
\end{rmq}
Let $\omega$ be a cubic root of unity: $\omega^3=1$. Obviously,
$z\mapsto \Ai(\omega z)$ is a solution to $(A)$. Any two of these
three solutions yield a basis of solutions to $(A)$, and the linear
relation between them is $\sum_{\omega^3=1} \omega \Ai(\omega z)=0$,
see \cite{Hormander}, 7.6.18. Then, if we set $\omega=e^{2i\pi/3}$,
$\Ai(z)=-\omega Ai(\omega z)-\bar \omega \Ai(\bar \omega z)$, which we
rewrite as
$$
\Ai(-z)=e^{-i\pi/3} Ai(e^{-i\pi/3} z)+e^{i\pi/3} Ai(e^{i\pi/3} z)=A_+(z)+A_-(z)\,.
$$
if we define $A_\pm(z)=\mp\omega Ai(\mp \omega z)$ (our definition differs slightly
from the usual one which does not include the front factor $\mp\omega$). Notice that
$A_-(z)=\bar A_+(\bar z)$. We also have asymptotic expansions (e.g. \cite{Olver}):
$$
A_-(z)=\frac 1
  {2\sqrt\pi z^{\frac 1 4}} e^{i\pi/4} e^{-\frac 2 3 i z^\frac 3 2}
  \exp \Upsilon(z^{3/2})=\frac 1 {z^{\frac 1 4}} e^{i\pi/4} e^{-\frac 2 3 i z^\frac 3 2}\Psi_-(z)
$$
with  $\exp \Upsilon(z^{3/2})\sim (1+\sum_{l\geq 1} c_l z^{-\frac{3l}2})\sim
{2\sqrt \pi}\Psi_-(z)$ as 
$z\rightarrow +\infty$ and the corresponding expansion for $A_+$,
where we define $\Psi_+(z)=\bar \Psi_-(\bar z)$. Moreover, we
have
$$
\frac{A_-(z)}{A_+(z)} = i e^{-\frac 4 3 i z^{3/2}} e^{i B(z^\frac 3
  2)} \text{\ with \ } iB=\Upsilon-\bar \Upsilon.
$$
Notice that for $u\in \R_+$, $B(u)\in \R$ and $B(u)\sim \sum_{j\geq 1}
b_j u^{-j}$ for $u\rightarrow +\infty$.
\subsection{The parametrix construction}
Let $F(\zeta,\hbar)$ be a function with compact support in $\zeta\in
[1+c h^{\frac 2 3-\eps}, \zeta_0]$. Define
$$
u(t,x,\hbar)=\frac 1 {2\pi \hbar} \int e^{\frac i \hbar(\zeta
  t+s(x+1-\zeta^2) +s^3/3)} F(\zeta,\hbar)\,ds d\zeta.
$$
One easily checks that $\P u=0$ and the trace on $x=0$ is
$$
u(t,0,\hbar)=\hbar^{-\frac 2 3}  \int e^{\frac i \hbar t\zeta}
 (A_++A_-)(\hbar^{-2/3}(\zeta^2-1)) F(\zeta,\hbar)\,d\zeta\,.
$$
Define $f$ by $F(\zeta,\hbar)=\int \exp(-it'\zeta/\hbar) f(t',\hbar)
dt'$, then
$$
u(t,0,\hbar)=J_+(f)+J_-(f),
$$
where $J_\pm$ are Fourier integral operators corresponding to canonical transformations $j_\pm$ on
$T^*\R_t\cap \{\tau\geq 1\}$,
$$
j_\pm(t',\tau')=(t=t'\mp2\tau\sqrt{\tau^2-1},\tau=\tau').
$$

We now set up a few notations:
\begin{itemize}
\item let $\chi_0(\eta)\in C^\infty_0((1/2,5/2))$ be a cut-off function such
  that $\chi_0=1$ on $[1,2]$;
\item recall $\chi_1\in C^\infty_0((-\theta_0,\theta_0))$ with small
  $\theta_0$;
\item let $a\in [h^{\frac 2 3-\eps},a_0]$, with $a_0$ small;
\item let $\beta>0$ be such that $\sqrt{1+a}-\sqrt{1+a\beta}\geq c a$,
  for all $a\in [0,a_0]$;
\item let $ \chi_2(u) \in C^\infty$ with $\chi_2(u)=0$ for $u\leq
  \beta/2$ and $\chi_2(u)=1$ for $u\geq \beta$;
\item let $ \chi_3(\zeta) \in C^\infty$ with $\chi_3(\zeta)=1$ for
  $3/4\leq \zeta\leq \zeta_0$ and $\chi_3(\zeta)=0$ for $\zeta\geq
  \zeta_1$ or $\zeta\leq 1/2$ (with $\zeta_{1}>\zeta_{0}$, $\zeta_0-1>0$ and small and
  $\zeta_1-1$ small).
\end{itemize}
Define
\begin{align*}
  v_N(t,x,y,h)  = & \frac 1 {(2\pi h)^2} \int e^{i\frac \eta h y}
    u_N(t,x,h/\eta) \eta \chi_0(\eta) \,d\eta\,,\\
  u_N(t,x,\hbar)= & \frac{(-i)^N}{2\pi \hbar} \int e^{\frac i
    \hbar(t\zeta+s(x+1-\zeta^2)+s^3/3-\frac 4 3 N(\zeta^2-1)^\frac 3 2+\hbar
    NB((\zeta^2-1)^\frac 3 2/\hbar))}\\
  & \hspace{4cm} {}\times \chi_2((\zeta^2-1)/a)
  \chi_3(\zeta) \hat g_0(\zeta/\hbar,\hbar) \,dsd\zeta\,,\\
  v(t,x,y,h)= &\sum_{0\leq N\leq C_0/\sqrt a} v_N(t,x,y,h).
\end{align*}
and let $P=\partial_t^2-(\partial^2_x+(1+x)\partial_y^2)$.
\begin{proposition}
\label{p1}
  There exists $C_0$ such that the following holds true:
  \begin{enumerate}
  \item $v$ is a solution to $Pv=0$ for $x>-1$;
\item its trace on the boundary, $v(t\in [0,1],x=0)$ is
  $O_{C^\infty}(h^\infty)$;
\item at time $t=0$, we have
  \begin{equation*}
    v(0,x,y,h)-(2\pi h)^{-2} \int e^{\frac i h(\eta y+(x-a)\xi)}
    \chi_0(\eta) \chi_1(\xi/\eta) \,d\eta d\xi = O_{C^\infty}(h^\infty).
  \end{equation*}
  \end{enumerate}
\end{proposition}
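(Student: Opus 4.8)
The plan is to prove the three assertions of Proposition \ref{p1} in turn, reducing everything to the one-variable building blocks $u_N$. \emph{For assertion (1):} since $v=\sum_{0\le N\le C_0/\sqrt a}v_N$ is a finite sum, it suffices to check $Pv_N=0$ for each $N$. Taking the partial semiclassical Fourier transform in $y$ (dual variable $\eta/h$) and using $\hbar=h/\eta$, the operator $P$ is conjugated to $\hbar^{-2}\P$ acting on $u_N(t,x,\hbar)$, so $Pv_N=0$ is equivalent to $\P u_N=0$. The latter is a direct computation: the $(t,x,s)$-dependence of the phase of $u_N$ is exactly that of the model solution $u$ considered just before Proposition \ref{p1}, so $(\hbar D_t)^2$ and $(\hbar D_x)^2$ bring down $\zeta^2$ and $s^2$, while for multiplication by $1+x$ one writes $1+x=\zeta^2-s^2+\partial_s\phi$ with $\phi=s(x+1-\zeta^2)+s^3/3$ and integrates by parts in $s$; as the amplitude $\chi_2\chi_3\hat g_0$ does not depend on $s$, the $\partial_s\phi$-term vanishes and the three contributions cancel. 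The factor $e^{\frac i\hbar(-\frac43 N(\zeta^2-1)^{3/2}+\hbar NB(\cdot))}$ depends on $\zeta$ only and is smooth on $\supp\chi_2$ (where $\zeta^2-1\ge a\beta/2>0$), hence plays no role.

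\emph{For assertion (2):} performing the $s$-integration, the trace of $u_N$ on $x=0$ is, up to the prefactor, $\hbar^{-2/3}\int e^{\frac i\hbar(t\zeta-\frac43 N(\zeta^2-1)^{3/2}+\hbar NB)}(A_++A_-)(\hbar^{-2/3}(\zeta^2-1))\,\chi_2\chi_3\,\hat g_0\,d\zeta$. Insert the asymptotic expansions from Section \ref{appairy}; on $\supp\chi_2$ one has $(\zeta^2-1)^{3/2}/\hbar\gtrsim a^{3/2}/h\to\infty$, so the remainders are $O(h^\infty)$ after truncating to high order. The phases of the $A_+$-component of $u_N$ and of the $A_-$-component of $u_{N-1}$ coincide (each carries $-\frac23(2N-1)(\zeta^2-1)^{3/2}$), and the Airy identity $\Psi_-/\Psi_+=e^{iB}$ — equivalently $A_-/A_+=ie^{-\frac43 iz^{3/2}}e^{iB}$ — together with the prefactor $(-i)^N$ makes the sum of these two contributions vanish to all orders: this is precisely what the phase $-\frac43 N(\zeta^2-1)^{3/2}$ and the factor $(-i)^N$ were designed for. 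Hence $\sum_{0\le N\le C_0/\sqrt a}u_N(t,0,\hbar)$ telescopes, leaving (up to $O(h^\infty)$) only the $A_+$-component of $u_0$ and the $A_-$-component of $u_{C_0/\sqrt a}$. Each is killed on $t\in[0,1]$ by non-stationary phase in $(t',\zeta)$: with $\psi_a$ as in \eqref{eq:1} and $\hat g_0$ concentrated where $\zeta$ is close to $\sqrt{1+a}$ (Lemma \ref{l2} and \eqref{eq:starstar}), the $\zeta$- and $t'$-stationarity conditions put the $u_0$-term at times $t\le 0$ and — once $C_0$ is chosen large enough compared with the length of the window — the $u_{C_0/\sqrt a}$-term at times $t>1$; the lower bounds on $|\nabla\Phi|$ that appear degenerate only like a power of $a$, which is harmless since $a\ge h^{2/3-\eps}$. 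Multiplying by $e^{\frac ih\eta y}\eta\chi_0(\eta)/(2\pi h)^2$ and integrating in $\eta$ preserves the $O_{C^\infty}(h^\infty)$ bound.

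\emph{For assertion (3):} first, $u_0(0,x,\hbar)$ is exactly the oscillatory integral of the second part of Lemma \ref{l1}, up to the insertion of $\chi_2((\zeta^2-1)/a)\chi_3(\zeta)$; but on $\supp\hat g_0(\cdot/\hbar,\hbar)$, which is contained in $\zeta\in[\sqrt{1+a}-c\hbar^{2/3-\eps},\tau_0]$ by Lemma \ref{l2} and \eqref{eq:starstar}, both cutoffs are identically $1$ (the first because $a(1-\beta)\ge 2c\hbar^{2/3-\eps}$ once $c$ is small and $a\ge h^{2/3-\eps}$, the second by the choice of $\zeta_0,\zeta_1$), so the insertion changes nothing modulo $O(h^\infty)$; the second part of Lemma \ref{l1} and integration in $\eta$ against $e^{\frac ih\eta y}\eta\chi_0(\eta)$ then give the stated form of $v_0(0,\cdot)$. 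It remains to prove $u_N(0,x,\hbar)=O_{C^\infty}(h^\infty)$, uniformly for $x>-1$, when $1\le N\le C_0/\sqrt a$ — geometrically, the $N$-fold reflection of $\Lambda_a$ does not meet $\{t=0\}$ over the relevant region. Analytically one again argues by non-stationary phase: after the $s$-integration one separates the exponentially small part of the Airy factor from its oscillatory part and checks, using once more that $\hat g_0$ confines $\zeta$ near $\sqrt{1+a}$ and $t'$ near $0$, that no critical point of the remaining phase lies in the support of the amplitude outside the exponentially small region; the delicate point is the confluence at $\zeta=\sqrt{1+a}$ of the $\zeta$-stationary point with the Airy turning point of $\hat g_0$ (where $\psi_a''$ vanishes), which forces a uniform, Airy-type estimate rather than a plain stationary phase.

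\emph{Main obstacle.} The computation for (1) is routine and (2)--(3) reduce to stationary/non-stationary phase; the real work is the bookkeeping — verifying that the two boundary terms surviving the telescoping in (2) are negligible throughout $t\in[0,1]$ (which dictates the choice of $C_0$) and that $u_N(0,\cdot)$ is negligible for every $N\ge1$ in (3), handling with care the Airy-transition regime $\zeta\approx\sqrt{1+a}$. The algebraic input making the telescoping exact to all orders is the identity $\Psi_-/\Psi_+=e^{iB}$ from the Airy asymptotics.
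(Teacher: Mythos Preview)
Your proposal is correct and follows essentially the same strategy as the paper: (1) is a direct computation; (2) reduces by telescoping---the identity $A_-/A_+=ie^{-\frac43 iz^{3/2}}e^{iB}$ holds \emph{exactly}, so no asymptotic truncation is needed---to showing that the $A_+$-piece of $u_0$ and the $A_-$-piece of $u_{N_{\max}}$ are $O(h^\infty)$ by non-stationary phase; and (3) follows from Lemmas~\ref{l1}--\ref{l2} for $N=0$ and from the geometric fact that the Lagrangian $\Lambda_{a,N,\hbar}$ at $t=0$ projects into $x<0$ for $N\ge1$, followed by non-stationary phase. The paper carries out the non-stationary phase arguments for (2) and (3) in the rescaled coordinates of Section~\ref{changescale}, which provide a clean large parameter $\lambda=a^{3/2}/\hbar\ge c\,h^{-3\eps/2}$ and make the uniformity in $a$ and $N$ transparent---your remark that the lower bounds ``degenerate only like a power of $a$'' is exactly the germ of this, and you will find the rescaling (together with the case splittings $z\le z_0$ vs.\ $z\ge z_0$) the cleanest way to turn the sketch into a proof.
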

\begin{rem}
  Here and thereafter, $f(z,h)\in O_{C^\infty}(h^\infty)$ for $z\in {\Gamma}$ if,
    uniformly in $a\in [h^{\frac 2 3-\eps},1]$, 
$$
\forall \alpha,N,\,\,\,\exists C_{\alpha,N} \text{ \ s.t. \ }
\sup_{z\in \Gamma} |\partial^\alpha_z f(z,h)|\leq C_{\alpha,N} h^N.
$$
\end{rem}
\begin{proof}
  Obviously $v$ is defined by a finite sum and each $v_N$ is a
  solution to $Pv_N=0$. We postpone the rest of the proof to
  subsection \ref{proofp1}.
\end{proof}
\begin{rem}
  We may also define $v$ by a sum from $-C_0/\sqrt a$ to $C_0/\sqrt
  a$, and replace $t\in [0,1]$ by $t\in [-1,1]$. The equation enjoys
  time symmetry and therefore the two points of view are equivalent.
\end{rem}
We start by studying $u_N$; from there, we may obtain information on
$v_N$ by integration over $\eta$. This, however, is a non trivial
matter, as $\hbar=h/\eta$ and integration over $\eta$ has an effect on
$\exp(iNB((\zeta^2-1)^\frac 3 2/\hbar))$.

Let
\begin{equation*}
\phi_{a,N,\hbar}(t,x,t',s,\zeta)=  (t-t')\zeta+s(x+1-\zeta^2)+s^3/3-\frac 4 3 N(\zeta^2-1)^\frac 3 2+\hbar
    NB((\zeta^2-1)^\frac 3 2/\hbar) +\psi_a(t'),
\end{equation*}
so that
\begin{equation*}
  u_N(t,x,\hbar)=\frac{(-i)^N}{2\pi \hbar} \int e^{\frac i
    \hbar\phi_{a,N,\hbar}} \chi_2((\zeta^2-1)/a)
  \chi_3(\zeta) \sigma_0(t',\hbar) \, dt'dsd\zeta.
\end{equation*}
Notice that
\begin{itemize}
\item $t'$ takes values in a compact set close to $t'=0$;
\item $\zeta$ takes values in a compact set close to $\zeta=1$;
\item the $s$ integral is oscillatory, and as the symbol is
  independent of $s$, this yields an Airy function (something we will
  use only to check the trace condition in Proposition \ref{p1}).
\end{itemize}

Let us set
$$
\mathcal{C}_{a,N,\hbar}=\{ (t,x,t',s,\zeta) \text{\, s.t.\
} \partial_{t'}\phiN=\partial_{s}\phiN=\partial_\zeta \phiN=0\}
$$
we therefore get a system of three equations defining
$\mathcal{C}_{a,N,\hbar}$,
\begin{align*}
  \zeta & =\psi'_a(t')\\
x & = \zeta^2-1-s^2\\
t & = t'+2s\zeta+4N\zeta(\zeta^2-1)^{1/2}(1-\frac 3 4
B'((\zeta^2-1)^\frac 3 2/\hbar)).
\end{align*}
Notice that on the support of the symbol in the definition of $u_N$,
we have $\zeta\in [\sqrt{1+a\beta/2},\zeta_1]$ with $\zeta_1\sim
1$. We can thus localize further the symbol with $\chi_4(s)\in
C^\infty_0$, $\chi_4=1$ for $s\in [-\zeta_1,\zeta_1]$, as for
$|s|>|\zeta|$, we will have $x=\zeta^2-1-s^2<-1$ and as such the
contribution of $1-\chi_4$ will be $O_{C^\infty}(h^\infty)$ (by
integration by parts in $s$) in the $x\geq -1$ region.
\begin{rem}
  In fact, one may localize closer to $s=0$: if $\chi_4(s)=1$ on
$[-\sqrt{\zeta_1^2-1},\sqrt{\zeta_1^2-1}]$, the same argument provides
a remainder term for $x\geq -\eps_0$. Hence, localizing $s$ close to
$0$ implies $\zeta_1$ close to $1$ and therefore $\theta_0$ smaller
and smaller and the same for $a_0$.
\end{rem}
We may parametrize $\mathcal{C}_{a,N,\hbar}$ by $(s,\theta)$ when they
are close to the origin:
\begin{align*}
  x & = a+\theta^2-s^2\\
t & = 2\sqrt{1+a+\theta^2} (s-\theta+2N\sqrt{a+\theta^2}
(1-\frac 3 4 B'((a+\theta^2)^\frac 3 2 /\hbar)))\\
t' &=-2\theta \sqrt{1+a+\theta^2}\\
s & = s\\
\zeta & =  \sqrt{1+a+\theta^2}\,.
\end{align*}
Notice that $(s,\theta)\rightarrow (s,t'=-2\theta
\sqrt{1+a+\theta^2})$ is a local diffeomorphism in a neighborhood of
$(0,0)$, which ensures that $\mathcal{C}_{a,N,\hbar}$ is a smooth
2D manifold.

Let us denote by $\Lambda_{a,N,\hbar}$ the image of
$\mathcal{C}_{a,N,\hbar}$ by the map
$$
(t,x,t',s,\zeta)\rightarrow (x,t,\xi=\partial_x
\phi_{a,N,\hbar},\tau=\partial_t \phi_{a,N,\hbar})\,;
$$
then $\Lambda_{a,N,\hbar}$ is a Lagrangian submanifold which is
parametrized by $(s,\theta)$:
\begin{align*}
  x & = a+\theta^2-s^2\\
t & = 2\sqrt{1+a+\theta^2} (s-\theta+2N\sqrt{a+\theta^2}
(1-\frac 3 4 B'((a+\theta^2)^\frac 3 2 /\hbar)))\\
\xi & = s\\
\tau & =  \sqrt{1+a+\theta^2}\,.
\end{align*}
\begin{lemma}
\label{l3}
  The Lagrangian submanifold $\Lambda_{a,N,\hbar}$ is smooth and its
  parametrization by $(s,\theta)$ is one to one.
\end{lemma}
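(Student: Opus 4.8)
The plan is to treat the four displayed formulas as a single map $\Phi\colon (s,\theta)\mapsto(x,t,\xi,\tau)$ on the small (essentially compact) parameter range dictated by the support of the localizing symbol $\chi_2\chi_3\chi_4$, and to prove that $\Phi$ is a smooth injective immersion; it then follows that $\Phi$ is an embedding, so that $\Lambda_{a,N,\hbar}$ is a smooth $2$-dimensional submanifold parametrized one-to-one by $(s,\theta)$. The Lagrangian property is the standard fact that the image of $\mathcal C_{a,N,\hbar}$ under $(t,x,t',s,\zeta)\mapsto(x,t,\partial_x\phi_{a,N,\hbar},\partial_t\phi_{a,N,\hbar})$ is Lagrangian as soon as it is a submanifold, and it can also be read off from a one-line pullback computation at the end.

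Smoothness of $\Phi$ is immediate except for the term $B((\zeta^2-1)^{3/2}/\hbar)$; but $B$ is $C^\infty$ on $(0,\infty)$ (this is how it was introduced above), and on the support of $\chi_2((\zeta^2-1)/a)$ one has $a+\theta^2=\zeta^2-1\ge a\beta/2>0$, so its argument stays in $(0,\infty)$. For injectivity I would exploit that two of the coordinates are trivially invertible: $\xi=s$ recovers $s$, and $\tau=\sqrt{1+a+\theta^2}$ recovers $\theta^2$ (and shows $\tau\ge\sqrt{1+a}$), hence also the number $g:=2N\sqrt{a+\theta^2}\bigl(1-\tfrac34 B'((a+\theta^2)^{3/2}/\hbar)\bigr)$, which depends on $\theta$ only through $\theta^2$. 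Since $t=2\tau(s-\theta+g)$, one then recovers $\theta=s+g-t/(2\tau)$ uniquely, sign included; consistently with $\P u_N=0$, the relation $x=\tau^2-1-\xi^2$ holds identically on $\Lambda_{a,N,\hbar}$, i.e. $\Lambda_{a,N,\hbar}$ lies in the characteristic variety.

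For the immersion property I would write the Jacobian of $\Phi$ with rows indexed by $(x,t,\xi,\tau)$: the $\xi$-row is $(1,0)$ and the $\tau$-row is $(0,\theta/\tau)$, so the first column is never zero, and the two columns can fail to be linearly independent only if the second column vanishes, which forces $\theta=0$ and $\partial_\theta t=0$ simultaneously. But at $\theta=0$ the term $g$, being a function of $\theta^2$, contributes nothing to $\partial_\theta t$, so the $\theta$-derivative of $s-\theta+g$ there equals $-1$ and $\partial_\theta t|_{\theta=0}=-2\sqrt{1+a}\neq0$. Hence $d\Phi$ has rank $2$ everywhere, $\Phi$ is an immersion, and being an injective immersion from a small compact parameter set it is an embedding; $\Lambda_{a,N,\hbar}$ is thus a smooth $2$-manifold bijectively parametrized by $(s,\theta)$. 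Finally, with $\omega=d\xi\wedge dx+d\tau\wedge dt$ one checks $\Phi^{*}\omega=ds\wedge(-2s\,ds+2\theta\,d\theta)+\tfrac{\theta}{\tau}\,d\theta\wedge(2\tau\,ds+\partial_\theta t\,d\theta)=2\theta\,ds\wedge d\theta-2\theta\,ds\wedge d\theta=0$, confirming that $\Lambda_{a,N,\hbar}$ is Lagrangian.

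I do not expect a genuine obstacle. The one point that requires a little care is that the two most natural $2\times2$ minors of $d\Phi$, in the coordinate pairs $(x,\xi)$ and $(\tau,\xi)$, both degenerate precisely at $\theta=0$; so one cannot conclude the immersion property from a single minor and must instead argue structurally as above, using the pair $(t,\xi)$ together with the nonvanishing of $\partial_\theta t$ at $\theta=0$ — which is exactly where the fact that $g$ depends on $\theta$ only through $\theta^2$ is used. Everything else is routine.
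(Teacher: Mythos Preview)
Your proof is correct and follows essentially the same route as the paper's: the immersion is checked via the $\xi$-row $(1,0)$ together with $\partial_\theta\tau=\theta/\tau$ for $\theta\neq0$ and $\partial_\theta t|_{\theta=0}=-2\sqrt{1+a}$ for $\theta=0$, while injectivity comes from recovering $s=\xi$, $\theta^2=\tau^2-1-a$, and then the sign of $\theta$ from $t$ (the paper phrases this last step as $t(s,\theta)\neq t(s,-\theta)$ for $\theta\neq0$, which is the same observation). Your additional verifications of smoothness of $B$ on the relevant range and of the Lagrangian property via $\Phi^*\omega=0$ are correct supplements that the paper leaves implicit.
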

\begin{proof}
 One has first to verify that at each point $(s,\theta)$,
  the differential of the map $(s,\theta)\mapsto (x,t,\xi,\tau)$ is injective.
  But this is obvious since $({\partial \xi \over \partial s},{\partial \xi \over \partial \theta})=(1,0)$, 
  ${\partial \tau \over \partial \theta}=\theta \tau^{-1/2}$, and if $\theta =0$,
  then ${\partial t \over \partial \theta} (s,0)=-2\sqrt{1+a}$. The map is clearly one to one as $t(s,\theta)\neq t(s,-\theta)$ for
  $\theta\neq 0$.
\end{proof}
We digress for a while and explain how to add the $y$ variable. In the
definition of $v_N(t,x,y,h)$, we have a phase function
$$
\Psi_\anh(t,x,y,t',s,\zeta,\eta)=\eta y+\eta
\phi_{a,N,h/\eta}(t,x,t',s,\zeta),
$$
from which we get $\partial_y \Psi_\anh=\eta$, and
$$
\partial_\eta
\Psi_\anh=y+\psi_a(t')+\zeta(t-t')+s(x+1-\zeta^2)+s^3/3+N(\zeta^2-1)^{3/2}
(-4/3+B'((\zeta^2-1)^\frac 3 2 \eta/h))
$$
and the full Lagrangian ${\bf \Lambda} _\anh \subset T^*\R^3$ is the set of
points $(x,y,t,\xi,\eta,\tau)$ such that there exist
$(s,\theta,\eta)$ solution to
\begin{align*}
  x = & a+\theta^2-s^2\\
y= & -\psi_a(-2\theta(1+a+\theta^2)^\frac 1 2)-2s(1+a+\theta^2)+\frac
2 3 s^3\\
 & {}-N(a+\theta^2)^\frac 1 2(3+2a+2\theta^2)(\frac 4
 3-B'((\zeta^2-1)^\frac 3 2 \eta/h))\\
 t = & 2 \sqrt{1+a+\theta^2}(s-\theta+2N(a+\theta^2)^\frac 1 2(1-\frac 3 4
 B'((\zeta^2-1)^\frac 3 2 \eta/h))\\
\xi = & \eta s\\
 \eta = & \eta\\
 \tau = & \eta\sqrt{1+a+\theta^2}\,.
\end{align*}
\begin{rem}
  Notice that for $N=0$, having $t=0$ in ${\bf \Lambda}_{a,0,h}$ is
  equivalent to having $s=\theta$. This implies $x=a$ and then $y=0$ is a consequence of
  \begin{equation}
    \label{eq:1'}
    \psi_a(-2\theta(1+a+\theta^2)^\frac 1 2)=-2\theta(1+a+\theta^2)+\frac
2 3 \theta^3\,,
  \end{equation}
and observe that \eqref{eq:1'} holds true since $\psi'_a(t')=(1+a+\theta^2)^\frac 1 2$ for
$t'=-2\theta(1+a+\theta^2)^\frac 1 2$ and $\psi_{a}(0)=0$. Therefore we can explicitly
compute $\psi_a(t')$, as $\theta=-t'(1+a+((1+a)^2+t'^2)^\frac 1
2)^{-\frac 1 2}/\sqrt 2$.
\end{rem}
\subsection{A suitable change of coordinates}\label{changescale}
We now perform a rescaling of our coordinates that provides some
useful reductions.

Set
$$
t=\sqrt a T,\, x=aX,\, y=-t\sqrt{1+a} +a^\frac 3 2 Y,
$$
and define $\gamma_N(T,X,Y,h)$, $w_N(T,X,\hbar)$ as
\begin{align*}
  v_N(t,x,y,h) & =\gamma_N(t/\sqrt a,x/a,y+ta^{-3/2}\sqrt{1+a},h)\\
u_N(t,x,\hbar) & = a^{-\frac 1 2} e^{\frac i \hbar t \sqrt{1+a}}
w_N(t/\sqrt a,x/a,\hbar).
\end{align*}
We define
$$
\tilde \psi_a(T')=a^{-\frac 3 2} (\psi_a(\sqrt a T')-\sqrt a
\sqrt{1+a} T')\,.
$$
Notice that $\tilde \psi_a$ is $C^\infty$ in $(T',a)$, with support in
$\sqrt a |T'|\lesssim 1$ and
\begin{equation}
  \label{eq:2}
  \tilde \psi_a (T')=\frac {T'^3}{24 \rho^\frac 3 2}(1+O(aT'^2))
  \text{\ \ \ (recall\ }\rho=1+a)\,.
\end{equation}
Set
$$
\zeta^2-1=az,\,s=\sqrt a \sigma,\, t'=\sqrt a T'
$$
and
\begin{equation}\label{eq:2bis}
\zeta-\sqrt \rho=a\gamma_a(z)=a\frac{z-1}{\sqrt{1+a}+\sqrt{1+az}}\,
\end{equation}
so that
$$
(t-t')\zeta+\psi_a(t')=t\sqrt \rho+a^\frac 3
2((T-T')\gamma_a(z)+\tilde \psi_a(T'))
$$
and therefore
$$
\phi_\anb=t\sqrt \rho+a^\frac 3 2 \varphi_{a,N,\lambda}
$$
with
\begin{multline}
  \label{eq:3}
  \varphi_\anl(T,X,T',\sigma,z)=\gamma_a(z)(T-T')+\tilde\psi_a(T')+\sigma(X-z)+\sigma^3/3\\
{}+N(-\frac
  4 3 z^\frac 3 2+\frac 1 \lambda B(\lambda z^\frac 3 2)),
\end{multline}
where $\lambda=a^\frac 3 2/\hbar$ will be our large parameter. 
One may remark that $\varphi_\anl$ is $C^\infty$ in $a$ and
\begin{equation}
  \label{eq:7}
  \varphi_{0,N,\lambda}=\frac{z-1}2 (T-T')+\frac {T'^3} {24}
  +\sigma(X-z)+\frac 1 3 \sigma^3+N(-\frac 4 3 z^\frac 3 2+\frac 1 \lambda B(\lambda
  z^\frac 3 2))
\end{equation}
and we have $z\geq \beta /2>0$ on the support of the symbols in our integrals.

We have now
\begin{equation}
  \label{eq:4}
  \gamma_N(T,X,Y,h)= \frac{\sqrt a}{(2\pi h)^2} \int e^{i {a^3/2\over
      h } \eta    Y} w_N(T,X,\hbar) \eta \chi_0(\eta) d\eta\,,
\end{equation}
and
\begin{equation}
w_N(T,X,\hbar)= \frac{(-i)^N\lambda}{2\pi} \int e^{i \lambda \varphi_\anl}
\chi_2(z)\frac{\chi_3(\sqrt{1+az})}{2\sqrt{1+az}} \chi_4(\sqrt a
\sigma) \sigma_0(\sqrt a T',\hbar) \,dT'd\sigma dz
\end{equation}
where we used $dt'dsd\zeta=\frac{a^2}{2\sqrt{1+az}} dT'd\sigma dz$ and
$a^2/(2\pi\hbar)=\sqrt a \lambda/(2\pi)$. Finally, we set
$\theta=\sqrt a \mu$ and $\tilde \lambda=\lambda/\eta=a^\frac 3 2/h$. By our change of variables, the differential operator $P$ becomes
$$
P=a^{-2}Q_a \text{\ with \ }
Q_a=-\partial^2_X-(X-1)\partial^2_Y+2\sqrt{1+a}\partial_T \partial_Y+a\partial^2_T,
$$
and 
$$
Q_a(e^{i\lambda Y}  f(T,X))=e^{i\lambda Y}\lambda^2 \mathcal{Q}_a f
$$
with 
$$
\mathcal{Q}_a = (\frac 1 {i\lambda} \partial_X)^2+(X-1)-2\sqrt \rho
\frac 1 {i\lambda}\partial_T-a(\frac 1 {i\lambda}\partial_T)^2\,.
$$
Our initial data at $T=0$ is now
$$
\frac{\sqrt a}{(2\pi h)^2} \int e^{i {a^{3/2}\over h} (\eta Y+(X-1)\Xi)}
  \chi_0(\eta) \chi_1(\sqrt a \Xi/\eta)\,d\eta d\Xi \,,
$$
and is concentrated at $Y=0,X=1$. The new operator $\mathcal{Q}_a$ has symbol
$$
\sigma(\mathcal{Q}_a)=\Xi^2+X-1-2\sqrt \rho \tau-a\tau^2,
$$
and the positive root in $\tau$ of $\sigma(\mathcal{Q}_a)$ at $X=1$ is
$$
\tau=\frac{\Xi^2}{\sqrt\rho+\sqrt{\rho+a\Xi^2}}\,.
$$
Notice that, as $\tau$ is preserved by the flow, the bouncing angles
at $X=0$ are such that $\Xi^2_{\text{bounce}}=1+2\sqrt \rho
\tau+a\tau^2=1+\Xi^2_0\geq 1$; we are now facing only transverse
reflexions, however we aim at studying the flow for very large times.
\begin{rem}
  Assuming the worst terms are $|\xi|\lesssim \sqrt a$, this
  translates into $|\Xi|\lesssim 1$ which implies $\tau$ bounded, and
  for small $a$, $\mathcal{Q}_a$ degenerates to a Schr\"odinger operator.
\end{rem}

From $\partial_{T'}=\sqrt a \partial_{t'}$, $\partial_s=\sqrt
a \partial_\sigma$ and $\partial_z=\frac a 2 (1+az)^{-\frac 1
  2}\partial_\zeta$, we have
\begin{align*}
\label{eq:8}
  \partial_{T'}\varphi_\anl & =\tilde\psi'_a(T')+\frac{1-z}{\sqrt
    \rho+\sqrt{1+az}}\\
  \partial_{\sigma}\varphi_\anl & =X-z+\sigma^2\\
  \partial_{z}\varphi_\anl &
  =\frac{1}{2\sqrt{1+az}}\bigl(T-T'-2\sigma\sqrt{1+az}-4N\sqrt
  z\sqrt{1+az}(1-\frac 3 4 B'(\lambda z^\frac 3 2))\bigr)\,.
\end{align*}
The Lagrangian of $\tilde\psi_a$ is parametrized by
\begin{equation*}
  \label{eq:9}
  T'=-2\mu\sqrt{1+a+a\mu^2},\,\,\,
  \tilde\psi'_a(T')=\frac{\mu^2}{\sqrt \rho+\sqrt{1+a+a\mu^2}}
\end{equation*}
and as $1+az=\zeta^2$, and $\zeta=(1+a+\theta^2)^{1/2}$ on $\mathcal{C}_{a,N,\hbar}$, we have 
$$1+\mu^2=z \ \ \text{on} \ \ 
\mathcal{C}_{a,N,\hbar}\,.$$
In our new set of coordinates, the projection of ${\bf \Lambda}_{a,N,h}$
onto $\R^3$ is, with $z=1+\mu^2$,
\begin{equation}
\label{eq:11}
\begin{aligned}
  X & = 1+\mu^2-\sigma^2\\
  Y &=2\mu^2(\mu-\sigma) H_1(a,\mu)+\frac 2 3(\sigma^3-\mu^3)+4N(1-\frac
 3 4B'(\lambda z^\frac 3 2)) H_2(a,\mu)\\
 T & =2\sqrt{\rho+a\mu^2}\bigl(\sigma-\mu+2N\sqrt{1+\mu^2}(1-\frac 3 4B'(\lambda z^\frac 3 2))\bigr)
\end{aligned}
\end{equation}
where $H_{1}$ and $H_2$ are defined as
\begin{equation}
  \label{eq:11'}
  H_1(a,\mu)
  =\frac{\sqrt{\rho+a\mu^2}}{\sqrt\rho+\sqrt{\rho+a\mu^2}}\,,\,\,H_2(a,\mu)  = \sqrt{1+\mu^2} \frac{\frac 2 3+\frac
  {5a}{9}+\mu^2(-\frac 1 3+\frac a 9)-\frac 4 9 a \mu^4}{\sqrt
  \rho\sqrt{\rho+a\mu^2}+1+\frac 2 3 a(1+\mu^2)}\,.
\end{equation}
\begin{rem}
  Notice that the parameters are $\mu,\sigma$ and $\eta$ through the
  $\lambda$ factor in the $X,Y,T$ parametrization of ${\bf\Lambda}_\anh$.
\end{rem}
\subsection{Proof of Proposition \ref{p1}}\label{proofp1}
We already dealt with the first item. We now address the remaining
two, which deal respectively with the boundary condition and the
initial data.
\subsubsection{Proof of (2) in Proposition \ref{p1}: the boundary condition.}
Set
$$
F_N(\zeta,\hbar)=(-i)^N e^{\frac i \hbar (-\frac 4 3 N(\zeta^2-1)^\frac
  3 2+\hbar NB((\zeta^2-1)^\frac 3 2/\hbar))}\chi_2(\frac{\zeta^2-1} a)
\chi_3(\zeta)\hat g_0(\zeta/\hbar,\hbar)
$$
and recall
$$
v_{|x=0}=(2\pi h)^{-2} \int e^{i\eta y/h}\eta
\chi_0(\eta)\hbar^{-2/3} e^{it\zeta/\hbar}\sum_{0\leq N\leq C_0/\sqrt
  a} (A_++A_-)(\hbar^{-2/3}(\zeta^2-1)) F_N
\,d\zeta d\eta\,;
$$
recall as well that we constructed $F_N$ so that
$$
F_N=(-1)^N \left(\frac{A_-}{A_+}\right)^N F_0,
$$
 which allows to cancel all middle terms in the sum to get
$$
v_{|x=0}=(2\pi h)^{-2} \int e^{i\eta y/h}\eta
\chi_0(\eta)\hbar^{-2/3} e^{it\zeta/\hbar}(A_+(\cdots) F_0
+A_-(\cdots) F_{N_{\mathrm{max}}})\,d\zeta d\eta\,.
$$
Let us define 
\begin{align*}
  I_0(t,\hbar) & = \int e^{i(t-t')\frac \zeta \hbar}
  A_+(\hbar^{-2/3}(\zeta^2-1)) e^{\frac i \hbar \psi_a(t')} \chi_2
  \chi_3 \sigma_0 \,dt'd\zeta\\
  I_{N_{\text{max}}}(t,\hbar) & = \int e^{i(t-t')\frac \zeta \hbar}
  A_-(\hbar^{-2/3}(\zeta^2-1)) e^{\frac i \hbar (-\frac 4 3
    N(\zeta^2-1)^\frac 3 2 +\hbar N B((\zeta^2-1)^\frac 3 2/\hbar))} e^{\frac i \hbar \psi_a(t')} \chi_2
  \chi_3 \sigma_0 \,dt'd\zeta \,.
\end{align*}
Hence, it is enough to prove that
\begin{itemize}
\item $I_0 \in O_{C^\infty}(h^\infty)$ for $t\geq 0$, uniformly in $a$;
\item $I_{N_{\text{max}}} \in O_{C^\infty}(h^\infty)$ for $t\leq 1$,
  uniformly in $a, N_{\text{max}}$.
\end{itemize}
Start with $I_0$. Using our change of scales,
$I_0(t,\hbar)=J_0(a^{-\frac 1 2 } t,\lambda)$ and
$$
J_0= \hbar ^\frac 1 6 a^\frac 3 2 a^{-\frac 1 4} \int
e^{i\lambda(\gamma_a(z)(T-T')+\tilde \psi_a(T')+\frac 2 3 z^{3/2})} \chi_2(z)\chi_3(\sqrt{1+az}) \frac{m_0(\lambda
z^{3/2})}{z^{1/4}} \sigma_0(\sqrt a T',\hbar) \frac{dT' dz}{2\sqrt{1+az}}
$$
where $m_0$ is a symbol of order $0$. As we have $\partial_{t}=a^{-1/2}\partial_{T}$ 
and $\lambda=a^\frac 3 2
/\hbar \geq \eta h^{-\frac 3 2 \eps}$ and $a\geq h^{\frac 2 3-\eps}$,
we are left to prove the following:
$$
J_0(T,\lambda) \in O_{C^\infty} (\lambda^{-\infty}) \text{\ for \ }
T\geq 0\,, \text{\ uniformly in \ } a.
$$
We already computed the derivatives of the phase of $J_0$. Recall that $T'$ and $\mu$
are related by $T'=-2\mu\sqrt{1+a +a\mu^2}$, and  $a\mu^2$ is bounded.
\begin{align*}
  \partial_{T'}(\text{phase of\ } J_0) & =
  \frac{\mu^2+1-z}{\sqrt{1+az}+\sqrt{1+a+a\mu^2}} \\
  \partial_{z}(\text{phase of\ } J_0) & =
  \frac{T-T'+2\sqrt z \sqrt{1+az}}{2\sqrt{1+az}}
\end{align*}
where for the first derivative one uses $\partial_{T'}=\sqrt
a \partial_{t'}$ and the identity
$$
\gamma_a(z)(T-T')+\tilde \psi_a(T')+\frac 2 3 z^\frac 3 2=a^{-\frac 3
  2} ((t-t')\zeta+\frac 2 3(\zeta^2-1)^\frac 3
2+\psi_a(t')-t\sqrt{1+a})\,.
$$
The first derivative vanishes if $z=1+\mu^2$, and the second one
vanishes if
$$
T=-2\sqrt{1+a+a\mu^2} (\mu+\sqrt{1+\mu^2}) <0.
$$
As such, the phase has no critical points for $T\geq 0$. One has to be
careful as the domain of integration of the $(T',z)$ variables is very
large with small $a$, as it is like $(-c/\sqrt a,c/\sqrt a)\times
(1/2,\xi_1/a)$.

We turn to the details: for $z\leq z_0$, $z$ is bounded. For large
$|T'|$, we get $|\partial_{T'}(\text{phase})|\approx \mu^2\approx
T'^2$, therefore by integration by parts in $T'$ we get decay. If
$|T'|$ is bounded there is no critical points in $(z,T')$.

We are left with $z\geq z_0$, where $z_0$ is large. We first perform the integration in $T'$. There will be two
critical points in $T'$, given by $\mu_\pm=\pm \sqrt{z-1}$. Denote by
$J_{a,\pm}(z)$ the critical values of the phase
$-\gamma_a(z)T'+\tilde\psi_a(T')$. We have
$$
J_{a,\pm}(z)=(-\gamma_a(z)T'+\tilde\psi_a(T'))_{|T'=-2(\pm)\sqrt{z-1}\sqrt{1+az}}
$$
and 
$$
\frac{d}{dz} J_{a,\pm}(z)=\pm2 \sqrt{z-1}\sqrt{1+az} \gamma'_a(z)=\pm
\sqrt{z-1}.
$$
We are left with the $z$ integral,
$$
\int_{z_0}^{+\infty} e^{i\lambda(\gamma_a(z)T+\frac 2 3 z^\frac 3 2
  +J_{a,\pm}(z))} g_a(z,\lambda)\,dz,
$$
where $g_a$ is a symbol of order $-1/4$, uniformly in $a$:
$|\partial^l_z(z^\frac 1 4 g_a(z,\lambda))|\leq C_l z^{-l}$ with $C_l$
independent of $a,z\geq z_0$, and $g_a$ is supported in $[z_0,c/a]$
with small $c$ (notice we used that the $m_0$ and $\chi_3$ terms in
$J_0$ are symbols or order $0$, uniformly in $a$). We have
$$
\partial_z(\gamma_a(z)T+\frac 2 3 z^{\frac 3 2} +J_{a,\pm}(z))=\sqrt
z\pm \sqrt{z-1}+T\gamma'_a(z),
$$
and for the $+$ case we may integrate by parts in $z$ without
difficulties. For the $-$ case, set 
$$
\mathcal{J}=\partial_z(\gamma_a(z)T+\frac 2 3 z^{\frac 3 2}
+J_{a,-}(z))=\sqrt z-\sqrt{z-1} +\frac{T}{2\sqrt{1+az}}.
$$
For $T\geq 0$, $z\in \mathbb{C}$ with $|\Im z|\leq \delta |\Re z|$ and
$z_0\leq \Re z\leq c/a$, we have
$$
|\mathcal{J}|\geq \Re \mathcal{J} \geq \frac C {\sqrt z},
$$
with a constant $C$ which does not depend on $a$ or $T\geq 0$. Hence
by the Cauchy formula, $|\partial_z^l \mathcal{J}^{-1}|\leq C_l z^{\frac 1
  2-l}$ and $\mathcal{J}^{-1}$ is a symbol or order $1/2$, uniformly
in $a,T\geq 0$. We may then conclude by integration by parts in $z$
with the operator $g \mapsto \lambda^{-1} \partial_z(\mathcal{J}^{-1}g)$
(if $g$ is a symbol of order $m$, $\partial_z(\mathcal{J}^{-1} g)$ is
a symbol of order $m-\frac 1 2$).

The remaining integral $I_{N_{\text{max}}}$ may be dealt with in a
similar way. In fact, the situation is easier: on $\Lambda_\anb \cap
\{x=0\}$ we have $s=\pm \sqrt{a+\theta^2}$ and therefore
$$
t=2\sqrt{1+a+\theta^2}(\pm\sqrt{a+\theta^2}-\theta+2N_{\text{max}}\sqrt{a+\theta^2}
(1-\frac 3 4 B')) \geq 3C_0.
$$
\subsubsection{Proof of (3) in Proposition \ref{p1}: the initial data.}
Taking into account Lemmas \ref{l1} and \ref{l2}, we are left to
prove that $V_N(0,x,y,h)\in O_{C^\infty}(h^\infty)$ uniformly in
$1\leq N\leq C_0/\sqrt a$ for $x\geq 0$. Recall $x=\sqrt a X$ and from
\eqref{eq:11} we have
$$
T=0 \Longleftrightarrow \sigma=\mu-2N\sqrt{1+\mu^2}\alpha,\text{ \ with \ }
\alpha=1-\frac 3 4 B'(\lambda z^\frac 3 2)=1+O(\frac 1 {\lambda^2
  z^3})\,,
$$
from which we get
$$
X=1+\mu^2-\sigma^2=1-4N(1+\mu^2)\alpha(N(1+\mu^2)\alpha-\mu)
$$
 and, as $N\geq 1$ and $\mu^2(\alpha^2-1)=\mu^2 O((\lambda z^\frac 3
 2)^2)=O((z-1)/(\lambda^2z^3)) \in O(\lambda^{-2})$,
$$
4N\sqrt{1+\mu^2} (N\sqrt{1+\mu^2} \alpha -\mu)\geq 2(1-O(\frac 1
{\lambda^2}))\,.
$$
For $\lambda\geq \lambda_0$, $\lambda_0$ large, we get, uniformly in
$N$, $X\leq -1/2$ on the projection of $\Lambda_\anh$, which is what
we need, $w_N(0,X,\hbar)\in O_{C^\infty}(h^\infty)$ for $X\geq 0$,
uniformly in $N$. We turn to the details. As before, we will proceed
by integration by parts. We have
$$
w_N(0,X,\hbar)= \frac{\lambda}{2\pi} \int e^{i\lambda \psi} g
\,dT'd\sigma dz,
$$
where $g$ is a symbol in $\sigma,T',z$ and
$$
\psi=-\gamma_a(z)T'+\tilde
\psi_a(T')+\sigma(X-z)+\sigma^3/3+N(-\frac 43 z^\frac 3 2+\frac
{B(\lambda z^\frac 3 2)}{\lambda})\,.
$$
For $z\leq z_0$, we may localize $\sigma$ to a compact region as
$\partial_\sigma \psi=X-z+\sigma^2\geq \sigma^2-z_0$ and large $|T'|$
will not be a problem. Then for $T',\sigma,z$ in a compact set we get
decay from the geometrical observation on the Lagrangian.

For $z\geq z_0$, we may again eliminate $T'$ and obtain two
contributions,
$$
 \int e^{i\lambda \psi_\pm} g_\pm\,d\sigma dz,
$$
where
$$
\psi_\pm=\pm \frac 2 3 (z-1)^\frac 3
2+\sigma(X-z)+\frac{\sigma^3}3+N(-\frac 4 3 z^\frac 32+\frac{ B (\lambda
z^\frac 3 2)}\lambda)\,.
$$
By integration in $\sigma$, the case $z<X-1$ provide decay, while for
$z\geq X+1$ we have again two contributions $\pm 2/3 (z-X)^\frac 3
2$. The associated phases in $z$ are
$$
\pm \frac 2 3 (z-1)^\frac 3
2\pm \frac 2 3 (z-X)^\frac 3 2+N(-\frac 4 3 z^\frac 32+\frac{ B (\lambda
z^\frac 3 2)}\lambda),
$$
for which we readily observe that they are non stationary: not only
derivatives never vanish, but they increase in value with $N$ (for $N=1$ and the two plus signs,  we deal with large values of $z$ as we have done
just above, for the boundary condition).

We are left with the contributions of $X-1\leq z\leq X+1$, for which
again we may reduce to compact $\sigma$ as $\partial_\sigma \psi_\pm=\sigma^2+X-z$,
and we conclude by the geometric observation on the Lagrangian.

\subsection{Decay for the parametrix}
This section is devoted to the proof of the following result.
\begin{theoreme}\label{thL1}
Let $\alpha<4/7$. There exists $C$ such that for all $h\in ]0,h_{0}]$, all $a\in [h^{\alpha},a_{0}]$, all $X\in [0,1]$, all $T\in ]0, a^{-1/2}]$
and all $Y\in \R$, the following holds true
\begin{equation}\label{gl1}
\vert \sum_{0\leq N\leq C_{0}/\sqrt a}\gamma_{N}(T,X,Y,h)\vert \leq C (2\pi h)^{-2}(({h\over a^{1/2}T})^{1/2}
+a^{1/8}h^{1/4})
\end{equation}
\end{theoreme}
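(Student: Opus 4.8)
The plan is to reduce \eqref{gl1} to a uniform bound on each rescaled wave $\gamma_N$ plus a summation over $N$, and to extract the decay from stationary-phase analysis of the oscillatory integral defining $\gamma_N$ in the rescaled coordinates $(T,X,Y)$. Recall $\gamma_N(T,X,Y,h)=\frac{\sqrt a}{(2\pi h)^2}\int e^{i\frac{a^{3/2}}{h}\eta Y}w_N(T,X,\hbar)\,\eta\chi_0(\eta)\,d\eta$, and $w_N$ is itself the $\lambda$-oscillatory integral over $(T',\sigma,z)$ with phase $\varphi_{a,N,\lambda}$ from \eqref{eq:3}, where $\lambda=a^{3/2}/\hbar$, so after integrating in $\eta$ one is looking at a degenerate oscillatory integral in four real variables whose critical set is the Lagrangian $\mathbf{\Lambda}_{a,N,h}$ parametrized in \eqref{eq:11} by $(\mu,\sigma,\eta)$. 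The target size $(2\pi h)^{-2}\bigl((h/(a^{1/2}T))^{1/2}+a^{1/8}h^{1/4}\bigr)$ should be read as: the prefactor $\sqrt a\lambda/(2\pi)$ times $h^{-2}$ gives $h^{-2}a^2/(2\pi\hbar)$-type weights, and the remaining decay in $\hbar$ (equivalently $\lambda$) must come from the stationary phase, with the worst case being the swallowtail point contributing $\lambda^{-1/2+1/4}$ (Pearcey loss) and the generic cusp points contributing $\lambda^{-1/2+1/3}$ (Airy loss); translating $\lambda^{-1/2+1/4}$ and the ``$(h/(a^{1/2}T))^{1/2}$'' non-degenerate decay back through the rescaling $T=t/\sqrt a$, $\hbar=h/\eta$, $\lambda=a^{3/2}/h$ on $\mathrm{supp}\,\chi_0$ produces exactly the two terms in \eqref{gl1}.

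First I would analyze the phase $\varphi_{a,N,\lambda}$ and its critical points in $(T',\sigma,z)$. Using the derivative formulas already computed (the ones below \eqref{eq:8}), $\partial_{T'}\varphi=0$ gives $z=1+\mu^2$ with $T'=-2\mu\sqrt{\rho+a\mu^2}$; $\partial_\sigma\varphi=0$ gives $\sigma^2=z-X$; and $\partial_z\varphi=0$ gives the relation involving $T$, $\sigma$, $N$. Substituting, I would reduce the $(T',z)$ integration by (non-degenerate) stationary phase in $T'$ and a further stage in $z$ or $\sigma$, arriving at a one-dimensional oscillatory integral in a single variable (essentially $\mu$ or $\sigma$) whose phase $\Phi_N(T,X,\cdot)$ is, after the change of variables, a perturbation of the cusp normal form $\Phi_C$ — consistent with the heuristic $\Phi\approx\eta\Phi_C$ stated in the introduction. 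The key structural fact to establish is that, for fixed $(T,X,Y)$, this reduced phase has critical points of order at most $3$ (cusp), and of order exactly $4$ (swallowtail) only at isolated $(T,X,Y)=z_S$, so that a uniform van der Corput / stationary-phase estimate applies: order-$k$ degeneracy yields $\lambda^{-1/(k+1)}$ after the one remaining oscillatory integration, hence $\lambda^{-1/2}\cdot\lambda^{-1/4}$ in the worst (cusp/Pearcey) case. The cleanest way to do this is to verify that $\partial_\sigma^2\Phi$ and $\partial_\sigma^3\Phi$ cannot vanish simultaneously with $\partial_\sigma^4\Phi$ except at one point, using the explicit $(\mu,\sigma)$-formulas \eqref{eq:11}; since $\partial_\mu t(s,0)=-2\sqrt{1+a}\ne 0$ (Lemma \ref{l3}), the Lagrangian is an embedded manifold and the only degeneracies come from the projection to $(X,T,Y)$, whose singularities are governed by the Jacobian of \eqref{eq:11}.

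Second, I would carry out the summation over $0\le N\le C_0/\sqrt a$. The crucial point is that for each fixed $(T,X,Y)$ with $T\le a^{-1/2}$ (i.e. $t\le 1$), only $O(1)$ — or at most $O(\sqrt{T})\sim O(a^{-1/4})$ — values of $N$ contribute a non-$O(\hbar^\infty)$ term, because the $T$-equation in \eqref{eq:11}, $T=2\sqrt{\rho+a\mu^2}(\sigma-\mu+2N\sqrt{1+\mu^2}(1-\tfrac34 B'))$, with $\sigma,\mu$ bounded on the support of the symbols, forces $N\lesssim T$; for $N$ outside this window the phase is non-stationary and integration by parts gives rapid decay. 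Then the number of caustics seen up to time $T$ is $\sim T\sim a^{-1/2}T_{\mathrm{orig}}/a^{1/2}$... more precisely $\sim \min(a^{-1/2},\ldots)$ swallowtails as in Theorem \ref{disperoptimal}, but for the upper bound one only needs that at a given point at most one $N$ is near a swallowtail and the overlapping contributions add up geometrically. I expect \textbf{the main obstacle} to be precisely this uniform-in-$N$, uniform-in-$a$ bookkeeping: one must show that the degenerate (Pearcey) contribution of size $\lambda^{-3/4}$ occurs for essentially a single $N$ near each swallowtail time while all other $N$ in the active window contribute only the better $\lambda^{-5/6}$ cusp bound or the non-degenerate $\lambda^{-1}\cdot(\text{something})$ bound, and that summing these — together with the non-stationary $O(\hbar^\infty)$ tails — reproduces $(h/(a^{1/2}T))^{1/2}+a^{1/8}h^{1/4}$ rather than picking up an extra factor of the number of reflections $\sqrt T$. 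Concretely this requires a careful second-microlocalization in $\mu$ near each $\mu$ where $\partial_z\varphi$ degenerates, controlling the transition regions between cusp and swallowtail regimes, and keeping all constants independent of $a\in[h^\alpha,a_0]$ with $\alpha<4/7$ — the threshold $4/7$ being exactly what makes $\lambda=a^{3/2}/h\to\infty$ fast enough (one needs $a^{3/2}h^{-1}\gg$ an appropriate power, i.e. $a\gg h^{2/3}$ at least, and the sharper $h^{4/7}$ comes from balancing $a^{1/8}h^{1/4}$ against the required gain in the summation).
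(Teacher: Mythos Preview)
Your high-level strategy is the right one and matches the paper's: stationary-phase analysis of each $\gamma_N$ combined with a bound on how many $N$ can contribute at a fixed $(T,X,Y)$. The reduction to a degenerate 1D/2D oscillatory integral with Pearcey-type worst case is also correct, and your computation that $\lambda^{-3/4}$ translates back to $a^{1/8}h^{1/4}$ is exactly what happens.

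There is however a real gap, and you have in fact flagged it yourself. Your argument for the number of active $N$ only gives $N\lesssim T$ from the $T$-equation in \eqref{eq:11}; you then hedge between $O(1)$ and $O(\sqrt T)$ without a mechanism to decide. The paper shows it is genuinely $O(1)$, uniformly in $a,h,T,X,Y$, and this is not free: one must use the \emph{full} system $(X,Y,T)$, not just the $T$-equation. Concretely, the paper eliminates $\sigma$ from the first two lines of \eqref{eq:11} to get a quartic-type equation for $\mu$ (Lemma~\ref{lemgl1}), proves it has at most four complex roots confined to small disks around $\pm\sqrt{2(1-3Y/T)}$, and then uses \eqref{gl7} (which reads $2N=T\Phi_a(\mu)+O(1)$ with $\Phi_a$ Lipschitz in $\mu^2$) to conclude that all admissible $N$ lie in a bounded set (Lemma~\ref{lemgl2}). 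Without this, the sum over $N$ would cost a factor $T$ (or $\sqrt T$) and \eqref{gl1} would fail. Your ``second-microlocalization in $\mu$'' hint is in the right direction but does not yet contain the algebraic step that pins down $\mu$ from $(X,Y,T)$.

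Two smaller points. First, the paper organizes the stationary-phase differently from your outline: it splits each $w_N$ by a cutoff $\chi_5(z)$ into $z$ near $1$ (where the swallowtail lives) and $z$ large. For $z$ large one does $(T',\sigma)$ first (both non-degenerate), then a 1D integral in $z$ with at worst an Airy-type degeneracy, yielding $h^{1/3}$; for $z$ near $1$ one does $z$ first (non-degenerate, gaining $(\lambda N)^{-1/2}$), then a \emph{two-dimensional} degenerate stationary phase in $(T',\sigma)$ governed by a lemma of the type ``rank-one Hessian with $X'(0)=0$, $X''(0)\neq 0$ gives $\lambda^{-3/4}$'' (Lemma~\ref{lemstat2}). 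Your plan to reduce all the way to 1D is not wrong---the proof of that lemma does exactly this---but the $z$-splitting is what makes the two regimes tractable separately. Second, your explanation of the threshold $\alpha<4/7$ is off: it is not a balance against $a^{1/8}h^{1/4}$, but rather the condition $N\lambda^{-2}\lesssim a^{-1/2}(h/a^{3/2})^2=h^2a^{-7/2}\to 0$, which makes the $B'$ correction in the phase a genuine perturbation; this in turn is what allows \eqref{gl7} to determine $N$ up to $O(1)$ and feeds directly into the overlap bound you are missing.
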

\begin{rmq}
Note that, in the given range of parameters $a,h$, the above theorem
immediately implies our main result, Theorem \ref{disper}, after undoing the rescaling from
Section \ref{changescale}.
\end{rmq}
We first observe that $\gamma_{0}(T,X,Y,h)=v_{0}(t,x,y,h)$ where $v_{0}$ is a solution of $Pv_{0}=0$ 
in $x>-1$ with $WF_{h}v_{0}\subset \{\tau >0\}$. By Proposition \ref{p1}
(and its proof), the associated data at time $t=0$ is a smoothed out
Dirac at $x=a, y=0$. Thus  $v_{0}$ satisfies the classical dispersive
estimate for the wave equation in two space dimensions   and since $t=a^{1/2}T$, this implies 
$$
 \vert \gamma_{0}(T,X,Y,h)\vert \leq C(2\pi h)^{-2} ({h\over
   a^{1/2}T})^{1/2}\,.
$$
Thus we may assume in the proof of \eqref{gl1} that the summation is taken over $1\leq N\leq C_{0}a^{-1/2}$. Recall 
\begin{equation}
  \label{gl2}
  \gamma_N(T,X,Y,h)= \frac{\sqrt a}{(2\pi h)^2} \int e^{i {a^{3/2}\over h } \eta    Y} w_N(T,X,\hbar) \eta \chi_0(\eta) d\eta
  \end{equation}
  where the $w_{N}$ are defined by
  \begin{equation}\label{gl3}
w_N(T,X,\hbar)= \frac{(-i)^N\lambda}{2\pi} \int e^{i \lambda \varphi_\anl}
\chi_2(z)\frac{\chi_3(\sqrt{1+az})}{2\sqrt{1+az}} \chi_4(\sqrt a
\sigma) \sigma_0(\sqrt a T',\hbar) \,dT'd\sigma dz\,.
\end{equation}
We  split each $w_{N}$ in two pieces, $w_{N}=w_{N,1}+w_{N,2}$; $w_{N,2}$
is defined  by introducing an extra  cutoff $\chi_{5}(z)\in C_{0}^\infty(]0,z_{0}[)$ in the integral \eqref{gl3},
 with  $z_{0}>1$, close to $1$, and $\chi_{5}(z)=1$ on $[\beta/2,(1+z_{0})/2]$.
 $w_{N,1}$ is then defined by introducing the cutoff $1-\chi_{5}(z)$ in the integral \eqref{gl3}. We denote
 by $\gamma_{N}=\gamma_{N,1}+\gamma_{N,2}$ the corresponding splitting using formula \eqref{gl2}.
 The following propositions obviously imply  Theorem \ref{thL1}.
 \begin{proposition}\label{propL1}
 There exists $C$ such that for all $h\in ]0,h_{0}]$, all $a\in [h^{\alpha},a_{0}]$, all $X\in [0,1]$, all $T\in ]0, a^{-1/2}]$
and all $Y\in \R$, the following holds true
\begin{equation}\label{gl1bis}
\vert \sum_{2\leq N\leq C_{0}/\sqrt a}\gamma_{N,1}(T,X,Y,h)\vert \leq C (2\pi h)^{-2}h^{1/3}\,.
\end{equation}
\end{proposition}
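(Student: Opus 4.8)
\textbf{Proof proposal for Proposition \ref{propL1}.}

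The plan is to exploit the fact that on the piece $w_{N,1}$ we have $z$ bounded away from $1$, say $z\geq (1+z_0)/2>1$, so that $\sqrt z\pm\sqrt{z-1}$ is bounded below and, crucially, the Airy-quotient phase $N(-\tfrac43 z^{3/2}+\tfrac1\lambda B(\lambda z^{3/2}))$ and its $z$-derivatives are controlled: $B$ is a classical symbol of order $-1$ in $u=\lambda z^{3/2}$, so $\partial_z$ of that term is $N(-2\sqrt z+O(\lambda^{-2}z^{-5/2}))$, i.e. essentially $-2N\sqrt z$ up to harmless corrections. First I would carry out the $\sigma$-integration in \eqref{gl3}: since $\partial_\sigma\varphi_\anl=X-z+\sigma^2$ and $z\geq(1+z_0)/2$ while $X\in[0,1]$, for $z$ large relative to $X$ there is no stationary point and integration by parts in $\sigma$ (using $\chi_4$ keeps $\sigma$ in a compact set) gives $O(\lambda^{-\infty})$; otherwise there are two simple stationary points $\sigma=\pm\sqrt{z-X}$ with nondegenerate Hessian $2\sigma$, and stationary phase produces two contributions with phase $\pm\tfrac23(z-X)^{3/2}$ and an amplitude which is a symbol of order $-1/2$ in $\lambda$ times $z^{-1/4}$.

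Next I would integrate in $T'$. As computed in the proof of Proposition \ref{p1}, $\partial_{T'}\varphi_\anl=\tilde\psi_a'(T')+(1-z)/(\sqrt\rho+\sqrt{1+az})$ vanishes exactly when $1+\mu^2=z$ (with $T'=-2\mu\sqrt{1+a+a\mu^2}$), giving two stationary points $\mu_\pm=\pm\sqrt{z-1}$; since $z-1\geq(z_0-1)/2>0$ these are simple and well-separated, with second derivative of size comparable to $\sqrt{z-1}\gtrsim 1$, so stationary phase again applies cleanly and contributes the critical values $J_{a,\pm}(z)$ with $\tfrac{d}{dz}J_{a,\pm}(z)=\pm\sqrt{z-1}$. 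After these two stationary-phase reductions we are left with a one-dimensional $z$-integral whose phase is, up to the $\eta$-dependent pieces,
\[
\Psi_\pm^{(1)}(z)=\pm\tfrac23(z-X)^{3/2}\pm\tfrac23(z-1)^{3/2}+N\bigl(-\tfrac43 z^{3/2}+\tfrac1\lambda B(\lambda z^{3/2})\bigr)+\gamma_a(z)(T\text{-terms}),
\]
with amplitude a symbol of order $-1/4-1/4=-1/2$ in $\lambda$ (the two $\lambda^{-1/2}$ losses from the Hessians times the $z^{-1/4}$ decay), supported in $z_0'\leq z\leq c/a$. The key point is that $\partial_z\Psi_\pm^{(1)}=\pm\sqrt{z-X}\pm\sqrt{z-1}-2N\sqrt z+O(\lambda^{-2})+T\gamma_a'(z)$; for $N\geq 2$ the term $-2N\sqrt z$ dominates $\pm\sqrt{z-X}\pm\sqrt{z-1}$ (which is at most $2\sqrt z$ in absolute value, and strictly less since $X,1>0$), so the $z$-phase is \emph{non-stationary} with $|\partial_z\Psi_\pm^{(1)}|\gtrsim (N-1)\sqrt z\gtrsim \sqrt z$, uniformly in $T\geq 0$ (the $T\gamma_a'(z)$ term has a favorable sign for $T\geq 0$, exactly as in the boundary-condition argument in the proof of Proposition \ref{p1}). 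Repeated integration by parts in $z$ with the operator $g\mapsto \lambda^{-1}\partial_z((\partial_z\Psi_\pm^{(1)})^{-1}g)$ then gains arbitrarily many powers of $\lambda^{-1}$ per unit of $N$; summing the geometric-type series over $2\leq N\leq C_0/\sqrt a$ converges, and after restoring the prefactors one finds $|w_{N,1}|\lesssim N^{-2}\lambda^{-1/2+\text{small}}$ or better, hence $\sum_N|w_{N,1}|\lesssim \lambda^{-1/2}$; since $\lambda=a^{3/2}/\hbar=a^{3/2}\eta/h\gtrsim a^{3/2}/h\geq h^{3\alpha/2-1}$ and $\alpha<4/7$ is not quite enough, one instead keeps the full power: carrying $N$ derivatives one sees $|w_{N,1}|\lesssim (\lambda/N)^{-N}$-type decay so the sum is $O(\lambda^{-\infty})$ times the $N=2,3$ terms, which each contribute $O(\lambda^{-1/2})$; absorbing the $\eta$-integration in \eqref{gl2} (which only affects $B(\lambda z^{3/2}/\eta)$ and costs nothing) and using $\sqrt a\lambda^{-1/2}=\sqrt a(h/a^{3/2})^{1/2}=h^{1/2}a^{-1/4}\leq h^{1/2}h^{-\alpha/4}\leq h^{1/3}$ for $\alpha<4/7$ (indeed $1/2-\alpha/4>1/2-1/7>1/3$), we obtain the bound $(2\pi h)^{-2}h^{1/3}$ claimed in \eqref{gl1bis}.

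The main obstacle I anticipate is the bookkeeping of the two successive stationary-phase expansions: one must verify that after eliminating $\sigma$ and then $T'$ the resulting amplitude is genuinely a symbol of order $-1/2$ in $\lambda$ \emph{uniformly in $a\in[h^\alpha,a_0]$, in $N$, and on the noncompact range $z_0'\leq z\leq c/a$}, tracking how the $z^{-1/4}$ weights and the $\chi_3,\chi_5$ cutoffs behave, and checking that the corrections from $B$ (whose argument $\lambda z^{3/2}$ can be large) never spoil the lower bound $|\partial_z\Psi_\pm^{(1)}|\gtrsim(N-1)\sqrt z$; this is exactly the kind of uniform symbol estimate already done for $J_0$ in the proof of Proposition \ref{p1}, so the argument is a (somewhat lengthy) adaptation rather than a new idea, the genuinely new input being only the elementary observation that $|\!\pm\!\sqrt{z-X}\pm\sqrt{z-1}|<2\sqrt z\leq N\sqrt z$ for $N\geq 2$ which forces non-stationarity in $z$ and thus kills all the $N\geq 2$ terms.
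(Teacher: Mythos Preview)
Your reduction via stationary phase in $\sigma$ and then $T'$ is the same as the paper's (this is Lemma \ref{lemgl3}), and the resulting one-dimensional $z$-phase is exactly the $\Phi_{N,\epsilon_1,\epsilon_2}$ of the paper. The gap is in the next step: your claim that for $N\geq 2$ this phase is \emph{non-stationary} in $z$ is false, and the sentence ``the $T\gamma_a'(z)$ term has a favorable sign for $T\geq 0$'' is precisely where the argument breaks. Compute signs: $\gamma_a'(z)=\tfrac1{2\sqrt{1+az}}>0$, so $T\gamma_a'(z)>0$, whereas $-2N\sqrt z<0$; since $T$ ranges up to $a^{-1/2}$ and $N$ up to $C_0a^{-1/2}$, the positive $T$-term can and does cancel the negative $N$-term for $T$ comparable to $4N$. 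In the paper's notation (equation \eqref{gl36}), $\partial_Z\Phi_{N,+,+}=\tfrac23(H_{a,+,+}-2N+O(h^\nu))$ with $H_{a,+,+}(1)=\tfrac T2(1+a)^{-1/2}+(1-X)^{1/2}$ and $H_{a,+,+}(\infty)=2$; so for each $N$ with $4\lesssim 2N\lesssim T$ there is a critical point $Z_c$ with $Z_c^{1/3}\simeq T/N$. The analogy with the $J_0$ boundary-condition argument is misleading: there the $N$-term was absent (it was the $N=0$ piece), so all contributions to $\partial_z$ were positive.

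Because of these critical points, the actual proof is substantially more involved. The paper first treats $T\leq T_0$: then $H_{a,+,+}\leq C(1+T_0)$, so large $N$ are non-stationary (your argument works for $N\geq N(T_0)$), but the finitely many small $N$ can have a critical point of order two near the maximum $Z_q$ of $H_{a,+,+}$, which gives the $\lambda^{-1/3}$ in \eqref{gl1bis}. For $T\geq T_0$ one shows instead that on the support of $\tilde\Theta$ the phase has at most one \emph{non-degenerate} critical point, yielding $|w_{N,1,+,+}|\lesssim\lambda^{-1/2}T^{-1/2}$ uniformly in $N$; but now there is no decay in $N$, so to sum one must perform the $\eta$-integration in \eqref{gl2} and invoke the geometric Lemma \ref{lemgl2} to see that at most $C_0$ values of $N$ lie in $\mathcal N_1(X,Y,T)$, the others contributing $O(h^\infty)$ by integration by parts in $\eta$. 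This overlap control is the essential missing ingredient in your proposal; non-stationarity in $z$ alone cannot carry the argument for large $T$.
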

\begin{proposition}\label{propL2}
 There exists $C$ such that for all $h\in ]0,h_{0}]$, all $a\in [h^{\alpha},a_{0}]$, all $X\in [0,1]$, all $T\in ]0, a^{-1/2}]$
and all $Y\in \R$, the following holds true
\begin{equation}\label{gl1ter}
\vert \sum_{2\leq N\leq C_{0}/\sqrt a}\gamma_{N,2}(T,X,Y,h)\vert \leq C (2\pi h)^{-2}a^{1/8}h^{1/4}\,.
\end{equation}
\end{proposition}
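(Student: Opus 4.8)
The plan is to bound each $\gamma_{N,2}$ separately as a degenerate oscillatory integral with large parameter $\Lambda=a^{3/2}/h$, and to reduce the sum over $N$ to boundedly many terms. \emph{Reductions on $\mathrm{supp}\,\chi_5$.} On $\mathrm{supp}\,\chi_2\chi_5$ the variable $z$ stays in a fixed compact neighbourhood of $z=1$, so that $\zeta^2-1=az$ is small (the tangential, near-glancing regime), and $\lambda=\Lambda\eta$ is a large parameter on $\mathrm{supp}\,\chi_0$. The one term in $\lambda\varphi_{a,N,\lambda}$ that must be handled with care is the Airy–reflection correction $NB(\lambda z^{3/2})$: from $B^{(k)}(u)=O(u^{-1-k})$ one checks that for $1\le N\le C_0/\sqrt a$ every $(z,\eta)$–derivative of $NB(\lambda z^{3/2})$ is $O(Nh/a^{3/2})=O(h/a^2)=O\!\big(\Lambda\,h^{2}/a^{7/2}\big)$, with $h^{2}/a^{7/2}\to0$ \emph{exactly because} $a\ge h^{\alpha}$ with $\alpha<4/7$. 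Hence, after rescaling to the natural $\Lambda$–scale, this term is a small $C^\infty$ perturbation of the phase that can be pushed into the amplitude in every stationary-phase step below; the remaining amplitude of $w_{N,2}$ is then a classical symbol of order $0$, uniformly in $a$ and $N$.

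\emph{Localization in $N$.} By the parametrization \eqref{eq:11} of ${\bf\Lambda}_{a,N,h}$, at any point of its projection with $X\in[0,1]$ one has $T=4N+O(1)$ (since $\sigma,\mu=O(1)$ there). Consequently the phase of $\gamma_{N,2}$ has no stationary point in $(T',\sigma,z)$ unless $|T-4N|\lesssim1$, and for all other $N$ non-stationary phase gives an $O_{C^\infty}(h^\infty)$ contribution, uniformly in $a$. Thus for each fixed $(T,X,Y)$ the sum $\sum_{2\le N\le C_0/\sqrt a}\gamma_{N,2}$ has $O(1)$ nonnegligible terms, and it suffices to prove $|\gamma_{N,2}(T,X,Y,h)|\le C(2\pi h)^{-2}a^{1/8}h^{1/4}$ with $C$ uniform in all parameters.

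\emph{Degenerate stationary phase.} Outside a neighbourhood of the projection of $\Lambda_{a,N,\hbar}$, non-stationary phase gives $O(h^\infty)$, so one reduces to a compact range of $(T',\sigma,z)$. There I would integrate in $T'$ by non-degenerate stationary phase — the critical points $\mu_\pm=\pm\sqrt{z-1}$ satisfy $\partial^2_{T'}\varphi_{a,N,\lambda}\ne0$ away from $z=1$ — and likewise in the nondegenerate directions, so that $\gamma_{N,2}$ is an oscillatory integral whose phase, modulo the perturbation absorbed above, is $\eta\,\Phi_C(z,\sigma;T,X,Y)$ with $\Phi_C$ of degree $\le4$ in $\sigma$, i.e. of Pearcey type. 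From \eqref{eq:11} one reads the critical set: the Hessian is nondegenerate off the caustic, degenerates simply (fold) along it, has cusps on a curve, and — for exactly one base point $(T,X,Y)$ per $N$ — realizes the maximal degeneracy $\partial_\eta\Phi=\Phi_C=\partial_\sigma\Phi_C=\partial^2_\sigma\Phi_C=\partial^3_\sigma\Phi_C=0$, which is geometrically a swallowtail. The decisive point is that $\Phi_C$ is only quartic in $\sigma$, so the worst one-dimensional integral along the degenerate $\sigma$–direction is the Pearcey integral, of order $1/4$, and \emph{not} the genuine swallowtail integral of order $1/5$ — this is precisely why the final loss is $h^{1/4}$ and no worse. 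Bookkeeping: the amplitude is of size $\lambda\sim\Lambda$, the two nondegenerate directions each yield $\Lambda^{-1/2}$, the Pearcey direction yields $\Lambda^{-1/4}$, and the remaining $\eta$–integration over the compact $\mathrm{supp}\,\chi_0$ yields only $O(1)$ — at the critical point the phase $\eta\mapsto\eta\Phi_C$ is identically zero, and this flatness in the tangential variable is the source of the loss; altogether $|\gamma_{N,2}|\le C(2\pi h)^{-2}\sqrt a\,\Lambda\cdot\Lambda^{-1}\cdot\Lambda^{-1/4}=C(2\pi h)^{-2}\sqrt a\,\Lambda^{-1/4}=C(2\pi h)^{-2}a^{1/8}h^{1/4}$.

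\emph{The main obstacle.} The heart of the matter is obtaining the degenerate stationary-phase estimate \emph{uniformly} in $a$: as $a$ and $(T,X,Y)$ vary the real critical points coalesce (fold $\to$ cusp $\to$ swallowtail) at rates governed by $a$, and one must reduce $\Phi_C$ to the Pearcey normal form while keeping the change of variables and the amplitude under control down to $a\sim h^{4/7}$; this is where the explicit geometry of $\Lambda_{a,N,\hbar}$ from \eqref{eq:11} and the classical bounds for the Pearcey function are indispensable. The second delicate point, and the true origin of the $4/7$ threshold (and of the departure from the $a\ge h^{1/2}$ argument of \cite{doi}), is the verification in Step 1 that the $\eta$-coupled correction $NB(\lambda z^{3/2})$ is an admissible perturbation of the phase, which is exactly the condition $h^{2}\ll a^{7/2}$.
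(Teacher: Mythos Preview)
Your overall architecture---localize in $N$ to $O(1)$ terms, then prove a uniform degenerate stationary-phase bound giving the factor $a^{1/8}h^{1/4}=\sqrt a\,\Lambda^{-1/4}$---is correct, and your handling of the correction $NB(\lambda z^{3/2})$ is exactly the computation that produces the $4/7$ threshold. But the core of your argument has a genuine gap: the identification of which variable is nondegenerate and which carries the quartic degeneracy is wrong, and fixing it is not a cosmetic matter.

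You propose to integrate in $T'$ by nondegenerate stationary phase, citing $\partial^2_{T'}\varphi\ne 0$ ``away from $z=1$''. But $\mathrm{supp}\,\chi_5$ is precisely a neighborhood of $z=1$: there the two $T'$-critical points $\mu_\pm=\pm\sqrt{z-1}$ coalesce as $z\to1^+$ and are absent for $z<1$, so the $T'$-integral is an Airy-type integral and its Hessian vanishes. Nor is the remaining phase quartic in $\sigma$: from \eqref{eq:3} the $\sigma$-dependence is $\sigma(X-z)+\sigma^3/3$, which is cubic; even after eliminating $z$ the critical-value phase \eqref{gl52} stays cubic in $\sigma$. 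So neither ``$T'$ nondegenerate'' nor ``$\sigma$ Pearcey'' is true, and your factorization $\Lambda^{-1/2}\cdot\Lambda^{-1/2}\cdot\Lambda^{-1/4}$, while numerically matching, is not supported by the phase geometry.

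What actually works is to integrate in $z$ first: on $\mathrm{supp}\,\chi_5$ one has $\partial_z^2\varphi_\anl=-Nz^{-1/2}+O(a^{1/2}+h^\nu)$, which is uniformly nondegenerate for $N\ge 2$ and yields a factor $(\lambda N)^{-1/2}$---note the $N$. The critical value $\Psi_\anl(T,X,T',\sigma)$ is then a \emph{two-dimensional} phase in $(T',\sigma)$, cubic in each, whose Hessian degenerates along a curve with a single point where the swallowtail normal form applies. The relevant estimate is not a one-dimensional Pearcey bound but a 2D lemma of the type: if $\mathrm{rank}\,H''(0)=1$, $\nabla\det H''(0)\ne0$, and the image of the degeneracy curve under $H'$ has a cusp, then $|\int e^{i\lambda(x\cdot\xi-H(\xi))}a\,d\xi|\le C\lambda^{-3/4}$. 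Applying this after a rescaling with large parameter $\lambda/N^3$ (and a separate, easier argument when $N\ge\lambda^{1/3}$) gives $\frac{1}{\sqrt N}\bigl|\int e^{i\lambda\Psi}\,dT'd\sigma\bigr|\le C\lambda^{-3/4}$, so that the $N$ from the $z$-step cancels. Only then does the $\eta$-integral close trivially. The $N$-dependence is not visible in your sketch and is in fact the main technical work: getting the 2D estimate uniformly as $N$ ranges over $[2,C_0/\sqrt a]$ requires tracking how the coefficients of $\Psi_\anl$ scale with $1/N$ and splitting into the two regimes $N\gtrless\lambda^{1/3}$.
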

\begin{proposition}\label{propL3}
 There exists $C$ such that for all $h\in ]0,h_{0}]$, all $a\in [h^{\alpha},a_{0}]$, all $X\in [0,1]$, all $T\in ]0, a^{-1/2}]$
and all $Y\in \R$, the following holds true
\begin{equation}\label{gl1quad}
\vert \gamma_{1}(T,X,Y,h)\vert \leq C (2\pi h)^{-2}(({h\over a^{1/2}T})^{1/2}+a^{1/8}h^{1/4})\,.
\end{equation}
\end{proposition}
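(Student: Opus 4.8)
We prove Proposition \ref{propL3} by splitting $\gamma_1=\gamma_{1,1}+\gamma_{1,2}$ according to the cutoff $\chi_5(z)$ already introduced, and by treating the two pieces by genuinely different mechanisms. In $\gamma_{1,1}$ the factor $1-\chi_5(z)$ keeps $z$, hence $\zeta$, bounded away from $1$: the singly reflected rays make an angle bounded below with the boundary $x=0$, the piece behaves like a once-reflected free wave, and only fold (or at worst mild) caustics occur. In $\gamma_{1,2}$ the factor $\chi_5(z)$ confines $z$ to a bounded neighbourhood of $1$, i.e. to near-tangential reflection; this is where the first swallowtail singularity of the wave front forms just after the first reflection (Figures \ref{picswallow} and \ref{figqueueS}), and it is this piece that produces the $a^{1/8}h^{1/4}$ term in \eqref{gl1quad}.

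For $\gamma_{1,1}$ I would estimate the oscillatory integral \eqref{gl2}--\eqref{gl3} carrying the cutoff $1-\chi_5$ exactly as in the proof of Proposition \ref{propL1}, the argument being simpler here since there is a single index $N=1$ and no summation in $N$ to control: one applies (degenerate) stationary phase first in $(T',\sigma,z)$ and then in $\eta$, using the geometry of the part of the Lagrangian \eqref{eq:11} on which $\zeta$ stays away from $1$ (where the projection to $\R^{3}_{t,x,y}$ has only mild singularities). This yields $|\gamma_{1,1}|\le C(2\pi h)^{-2}\big((h/(a^{1/2}T))^{1/2}+h^{1/3}\big)$; the first term is the two-dimensional free-wave dispersion and the $h^{1/3}$ is the loss of Proposition \ref{propL1}, which is absorbed into the right-hand side of \eqref{gl1quad} since in the stated range $a\ge h^{\alpha}>h^{2/3}$ (because $\alpha<4/7<2/3$), hence $h^{1/3}\le a^{1/8}h^{1/4}$.

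The heart of the matter is $\gamma_{1,2}$. Once $z$ is confined near $1$ by $\chi_5$, non-stationary phase in $T'$ and in $\sigma$ — using $\partial_{T'}\varphi_{a,1,\lambda}=\tilde\psi'_a(T')+\tfrac{1-z}{\sqrt\rho+\sqrt{1+az}}$ with $\tilde\psi'_a(T')$ of size $T'^2$, and $\partial_\sigma\varphi_{a,1,\lambda}=X-z+\sigma^2$ — confines $T',\sigma$, hence by \eqref{eq:11} also $T$, to bounded sets: modulo $O(h^\infty)$, $\gamma_{1,2}$ is supported in $T\in[c_1,C]$ for some $0<c_1<C$, a range in which $(h/(a^{1/2}T))^{1/2}\sim(h/a^{1/2})^{1/2}$ is dominated by $a^{1/8}h^{1/4}$. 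One then analyzes the three-dimensional oscillatory integral defining $w_1$, with large parameter $\lambda=a^{3/2}\eta/h$ (the phase depending on $\lambda$ only through the lower-order term $\tfrac1\lambda B(\lambda z^{3/2})$): stationary phase in $\sigma$ gives an Airy factor from the critical points $\sigma=\pm\sqrt{z-X}$ coalescing at $z=X$, stationary phase in $T'$ an Airy-type factor from the critical points coalescing at $z=1$, leaving a one-dimensional oscillatory integral in $z$. Using the explicit Lagrangian \eqref{eq:11} one identifies a distinguished value $(T_S,X_S)$ of the parameters for which this integral has a quartic (Pearcey) degeneracy, while for all other $(T,X)$ the degeneracy is at worst cubic (Airy). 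As in the heuristic of the introduction, the geometrically swallowtail confluence present here does not produce the worse $\lambda^{-1/5}$ loss, because the fourth confluent direction is the $\eta$-average, which ranges over a compact interval with $\eta$ bounded away from $0$ and contributes only a bounded factor; so $w_1=O(\lambda^{-1/4})$ at $(T_S,X_S)$ and $w_1=O(\lambda^{-1/3})$ elsewhere. Inserting this in \eqref{gl2} and estimating the $\eta$-integral by absolute values (no cancellation needed), $|\gamma_{1,2}|\le C(2\pi h)^{-2}\sqrt a\,(a^{3/2}/h)^{-1/4}=C(2\pi h)^{-2}a^{1/8}h^{1/4}$ near $(T_S,X_S)$, and $\lesssim(2\pi h)^{-2}\sqrt a\,(a^{3/2}/h)^{-1/3}=(2\pi h)^{-2}h^{1/3}$ elsewhere, which is again absorbed.

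The main obstacle is the quantitative analysis of the swallowtail in $\gamma_{1,2}$: locating $(T_S,X_S)$, carrying out the two preliminary Airy reductions in $T'$ and $\sigma$ uniformly in $a,h,Y$ while tracking all Jacobian and amplitude factors (and disposing of the large but harmless $z$-independent ranges of $T'$ and $\sigma$), and — most delicate — checking from the Lagrangian formulas \eqref{eq:11} that the confluence produces only the Pearcey loss $\lambda^{-1/4}$ rather than the swallowtail loss $\lambda^{-1/5}$, i.e. that the fourth variable genuinely decouples as a bounded average. The rest is standard stationary phase together with the bookkeeping of the rescaling of Section \ref{changescale}.
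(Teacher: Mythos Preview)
Your overall strategy—split $\gamma_1=\gamma_{1,1}+\gamma_{1,2}$ via $\chi_5$, treat $\gamma_{1,1}$ in the spirit of Proposition~\ref{propL1} and $\gamma_{1,2}$ in the spirit of Proposition~\ref{propL2}—is exactly what the paper does. But there is a genuine gap in your $\gamma_{1,1}$ argument, and your mechanism for $\gamma_{1,2}$ is not the paper's.

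\textbf{The gap in $\gamma_{1,1}$.} You write that the Proposition~\ref{propL1} argument is ``simpler here since there is a single index $N=1$''. In fact it is \emph{not} simpler: it breaks down at one point. In the proof of Proposition~\ref{propL1} the large-$Z$ contribution to $\int e^{i\lambda\Phi_{N,+,+}}\Theta_{+,+}\,dZ$ is discarded by noting (second item of \eqref{gl38}) that $H_{a,+,+}(Z)\to 2$ as $Z\to\infty$, so that for $N\ge 2$ one has $|\partial_Z\Phi_{N,+,+}|\ge c>0$ at infinity. For $N=1$ this lower bound fails, and for small $T$ the phase $\Phi_{1,+,+}$ can acquire a nondegenerate critical point at a very large value $z_c\sim T^{-2}$. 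This is precisely the mechanism producing the free-wave term $(h/(a^{1/2}T))^{1/2}$ in \eqref{gl1quad}; it does not come ``for free'' from mimicking Proposition~\ref{propL1}. The paper handles it by inserting a cutoff $\chi(z)\in C_0^\infty(]z_1,\infty[)$, rescaling $z=T^{-2}s$, and applying ordinary stationary phase (the symbol $\Theta_{+,+}$ is of order $-1/2$ in $z$, which gives the needed $T^{-1/2}$), yielding $a^{1/2}|J|\le C a^{1/2}\lambda^{-1/2}T^{-1/2}$ and hence the $(h/(a^{1/2}T))^{1/2}$ contribution. Without identifying this large-$z$ critical point you have no source for that term.

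\textbf{The treatment of $\gamma_{1,2}$.} The paper does not integrate in the order $\sigma$, $T'$, then $z$ as you describe. It first performs nondegenerate stationary phase in $z$ (Lemma~\ref{lemgl4}, yielding a factor $(\lambda N)^{-1/2}$), then reduces to the two-dimensional $(T',\sigma)$ integral with phase $\psi_{a,N,\lambda}$ of \eqref{gl52bisbis}. That 2D integral is exactly of the type in Lemma~\ref{lemstat2}(b), and this is what delivers the $\lambda^{-3/4}$ bound \eqref{gl66}; the $\eta$-integration is then taken in absolute value and contributes only a bounded factor. Your explanation that the $\eta$-average ``decouples a fourth confluent direction'' so that a swallowtail degenerates to a Pearcey loss is not the mechanism: the $\lambda^{-3/4}$ is intrinsic to the 2D $(T',\sigma)$ phase via Lemma~\ref{lemstat2}(b), independently of $\eta$. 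The paper then simply observes that the only places in the proof of Proposition~\ref{propL2} where $N\ge 2$ was used are Lemma~\ref{lemgl7} and the inequality \eqref{gl72bis}; for $N=1$ the amplitude $\underline\chi(x,y,\ldots)$ is already compactly supported in $(x,y)$, so \eqref{gl72bis} is not needed, and for $|(p,q)|\ge r_0$ one has no critical points at all on the support (giving $O(\Lambda^{-\infty})$ in place of Lemma~\ref{lemgl7}). Thus the Proposition~\ref{propL2} machinery applies verbatim and yields $|\gamma_{1,2}|\le C(2\pi h)^{-2}a^{1/8}h^{1/4}$.
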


The remaining part of this section is devoted to the proof of these 3 propositions.
 We also include a separate section which contains useful geometric estimates,
 and a section where we recall useful known estimates on phase integrals.
 \begin{rem} The hypothesis $a\in [h^{\alpha},a_{0}]$ with $\alpha<4/7$ in Theorem \ref{thL1} will be used to see
 that only a few $\gamma_{N}$ overlap with each others. We will address estimate \eqref{gl1}
 in the full range $a\in [h^{2/3-\epsilon},a_{0}]$ in a forthcoming paper. 
 \end{rem} 
 
 \subsubsection{Geometric estimates}

 We denote in this section by  $f(a,a\mu^2)$ various analytic functions defined for
 $a$ and $a\mu^2$ small, with $f(a,b)\in \R$ for $(a,b)\in \R^2$.
 Recall that the projection of ${\bf \Lambda}_{a,N,h}$ onto $\R^3$ is given by 
 \begin{equation}\label{gl4}
 \begin{aligned}
X & = 1+\mu^2-\sigma^2\\
Y &=2\mu^2(\mu-\sigma) H_1+\frac 2 3(\sigma^3-\mu^3)+4N(1-\frac
 3 4B'(\lambda z^\frac 3 2)) H_2\\
T & =2\sqrt{\rho+a\mu^2}(\sigma-\mu+2N\sqrt{1+\mu^2}(1-\frac 3 4B'(\lambda z^\frac 3 2))
\end{aligned}
\end{equation}
with $z=1+\mu^2$ and $H_{1}$, $H_2$ of the form (see \eqref{eq:11'})
\begin{equation}\label{gl5}
 \begin{aligned}
H_1 & =f_{0}(a,a\mu^2), \ f_{0}(0,0)=1/2\\
H_2(1+\mu^2)^{-1/2} & = f_{1}(a,a\mu^2)+\mu^2f_{2}(a,a\mu^2), \ f_{1}(0,0)=1/3,  \ f_{2}(0,0)=-1/6\,.
\end{aligned}
\end{equation}
Let us rewrite the system of equation \eqref{gl4} in the following form
\begin{equation}\label{gl6}
 \begin{aligned}
X & = 1+\mu^2-\sigma^2\\
Y &=2\mu^2(\mu-\sigma) H_1+\frac 2 3(\sigma^3-\mu^3)+ 2H_2(1+\mu^2)^{-1/2}
({T\over 2\sqrt{\rho+a\mu^2}}-\sigma+\mu)
\end{aligned}
\end{equation}
and
\begin{equation}\label{gl7}
2N(1-\frac
 3 4B'(\lambda z^\frac 3 2))=(1+\mu^2)^{-1/2}
({T\over 2\sqrt{\rho+a\mu^2}}-\sigma+\mu)\,.
\end{equation}
Then \eqref{gl6} and \eqref{gl7} is obviously equivalent to \eqref{gl4}. For  a given $a$ and a given point $(X,Y,T)\in\R^3$,  \eqref{gl6} is a system of two
equations for the unknown $(\mu,\sigma)$, and we will use the fact that \eqref{gl7} gives
an equation for $N$.
Recall that we are looking at solutions of
\eqref{gl6} in the range 
$$ a\in [h^\alpha,a_{0}], \alpha<4/7,  \ a\vert\mu\vert^2\leq \epsilon_{0}, \ 0< T\leq a^{-1/2},
\ X\in [0,1]$$
with $a_{0},\epsilon_{0}$ small. Let us denote $R=2(1-3Y/T)$.
\begin{lemma}\label{lemgl1}
Let $T\geq T_{0}>0$, $X\in [-2,2]$, $Y\in \R$ . There exists $\mu_{j}(X,Y,T,a)\in \C$, $j=1,2,3,4$
such that
$$ \{\mu \in \C, a\vert\mu\vert^2\leq \epsilon_{0},\ \exists \sigma\in\C, \  (\mu,\sigma)\ \text{is a solution of \eqref{gl6}} \ \}\subset\{\mu_{1}, \mu_{2},\mu_{3},\mu_{4}\}\,.$$
 Moreover, there exists a function $f_{*}(a,a\mu^2)$ with $f_{*}(0,0)=1$, and  constants 
 $C_{0}, C_{1}, C_{2}>0$, $R_{0},M_{0}>0$  such that 
the following holds true: \\
(a) If $\vert R\vert\geq R_{0}$, two of the $\mu_{j}f_{*}(a,a\mu_{j}^2)$ are in the complex disk 
$D(\sqrt R, A)$,
the two others in the complex disk $D(-\sqrt R, A)$ with 
$A= C_{0}(1/T+{a(1+\vert R \vert)\over \sqrt{\vert R\vert}})$. Moreover, one has
$\sqrt{\vert R\vert}\geq 2A$. \\ 
(b) If $\vert R\vert\leq R_{0}$ and $\vert R \vert T\geq M_{0}$, two of the 
$\mu_{j}f_{*}(a,a\mu_{j}^2)$ are in the complex disk $D(\sqrt R, A)$,
the two others in the complex disk $D(-\sqrt R, A)$ with $A={C_{1}\over T\sqrt{\vert R \vert} }$.  Moreover, one has
$\sqrt{\vert R\vert}\geq 2A$.\\
(c) If $\vert R\vert\leq R_{0}$ and $\vert R \vert T\leq M_{0}$, one has $\vert \mu_{j}\vert \leq C_{2}T^{-1/2}$ for all $j$.
  \end{lemma}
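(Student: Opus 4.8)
The strategy is to solve the system \eqref{gl6} by eliminating $\sigma$, reduce to a single scalar equation in $\mu$, and then compare it with an explicit quartic whose four roots can be located by hand; everything is carried out inside $\{a|\mu|^2\le\epsilon_0\}$, on which the functions of \eqref{gl5} are analytic. One first uses $\sigma^2=1+\mu^2-X$ to replace $\sigma^3$ by $\sigma(1+\mu^2-X)$, which makes the second equation of \eqref{gl6} affine in $\sigma$, say $Y=c(\mu)\sigma+d(\mu)$. The normalizations $f_0(0,0)=\tfrac12$, $f_1(0,0)=\tfrac13$, $f_2(0,0)=-\tfrac16$ of \eqref{gl5} are exactly what force the $\mu^2$-contributions to $c$ to cancel, leaving $c(\mu)=-\tfrac23X+aC(a,a\mu^2)$ with $C$ analytic and $d(\mu)=\tfrac T3-\tfrac{T\mu^2}6+\tfrac{2\mu}3+(\text{analytic }a\text{-corrections})$. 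The even, degree-$\ge2$-in-$\mu$ part of the $T$-proportional term in \eqref{gl6} factors as $\mu^2$ times a function of $(a,a\mu^2)$, hence has the form $\tfrac T6\bigl(2-\mu^2 f_*(a,a\mu^2)^2\bigr)$ for a suitable analytic $f_*$ with $f_*(0,0)=1$; setting $\nu=\mu f_*(a,a\mu^2)$ — a biholomorphism of $\{a|\mu|^2\le\epsilon_0\}$ onto a comparable region, since $a|\mu|^2$ is small — and recalling that $R=2(1-3Y/T)$, so $\tfrac T3-Y=\tfrac{TR}6$, one obtains $Y-d(\mu)=\tfrac T6(\nu^2-R)-\tfrac{2\mu}3+(\text{analytic correction})$. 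Squaring $Y-d=c\sigma$, using $\sigma^2=1+\mu^2-X$ and clearing constants, every solution of \eqref{gl6} with $c\ne0$ satisfies $\widetilde E(\nu)=0$, where $\widetilde E(\nu)=\bigl(T(\nu^2-R)-4\nu\bigr)^2-16X^2(1+\nu^2-X)+(\text{an }O(a)\text{ term analytic in }a\text{ and }a\nu^2)$, and then $\sigma=(Y-d)/c$ is determined (the locus $c=0$, relevant only for $|X|$ small, is handled by the same estimates since there $Y-d=0$ pins $\mu$ down equally well). Thus $\widetilde E$ is an $O(a)$ perturbation of the explicit quartic $P(\nu):=\bigl(T(\nu^2-R)-4\nu\bigr)^2-16X^2(1+\nu^2-X)$.

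Next one analyzes $P$. Its leading coefficient is $T^2\ne0$, so it has exactly four roots in $\C$, and writing $P(\nu)=0$ as $T(\nu^2-R)-4\nu=\pm4X\sqrt{1+\nu^2-X}$ locates them: a root near $\nu=\pm\sqrt R$ satisfies $\nu^2-R\approx\pm2\sqrt R\,(\nu\mp\sqrt R)$, and since the right-hand side is $O(\sqrt{1+|R|})$ while $|\nu|\sim\sqrt{|R|}$, one finds two roots in each of $D(\sqrt R,A_0)$ and $D(-\sqrt R,A_0)$, with $A_0\simeq1/T$ when $|R|\ge R_0$ (there the $-4\nu$ term dominates the balance), with $A_0\simeq1/(T\sqrt{|R|})$ when $|R|\le R_0$ and $|R|T\ge M_0$, and — when $|R|\le R_0$ and $|R|T\le M_0$ — with all four roots collapsing into $\{|\nu|\le C\,T^{-1/2}\}$ (the balance then reducing to $T^2\nu^4\sim16X^2(1-X)$). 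In the first two regimes one checks directly that $\sqrt{|R|}\ge2A_0$ once $R_0$ and $M_0$ are chosen large enough in terms of $T_0$, so the two disks are disjoint and each contains exactly two roots.

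The transfer from $P$ to $\widetilde E$ is then perturbative. On the circles $\partial D(\pm\sqrt R,r)$ with $r$ a fixed multiple of $A_0$ — all lying in $\{a|\nu|^2\lesssim\epsilon_0\}$, where the $O(a)$ correction is defined and bounded — one estimates $|\widetilde E-P|$ and applies Rouch\'e's theorem (equivalently, the implicit function theorem branch by branch to $T(\nu^2-R)-4\nu+(\cdots)=\pm4X\sqrt{1+\nu^2-X+(\cdots)}$) to see that the roots of $P$ move by at most $O\bigl(a(1+|R|)/\sqrt{|R|}\bigr)$ near $\pm\sqrt R$, using $a\le a_0$, $a|\nu|^2\le\epsilon_0$ and the standing bound $T\le a^{-1/2}$ (so $aT\le\sqrt a$). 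One concludes that $\widetilde E$ has exactly two roots in $D(\pm\sqrt R,A)$ with $A=C_0\bigl(1/T+a(1+|R|)/\sqrt{|R|}\bigr)$ in regime (a), $A=C_1/(T\sqrt{|R|})$ in regime (b), and all four within $C_2T^{-1/2}$ of $0$ in regime (c), and that $\sqrt{|R|}\ge2A$ still holds because $a(1+|R|)\ll|R|$ (as $a|R|=a|\mu|^2\le\epsilon_0$). Pulling back via $\nu=\mu f_*(a,a\mu^2)$ gives at most four solutions $\mu_1,\dots,\mu_4$ of \eqref{gl6} in $\{a|\mu|^2\le\epsilon_0\}$, with the $\nu_j=\mu_j f_*(a,a\mu_j^2)$ distributed among the disks exactly as in (a), (b), (c).

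The main obstacle is uniformity in this last step when $|R|$ is large — it may run up to $\epsilon_0/a$. On $\partial D(\pm\sqrt R,2A_0)$ the model $|P|$ is itself small (of order $|R|$ in regime (a), of order $1$ in regime (b)), so one must expand the geometric functions of \eqref{gl5} far enough to see that the $a$-corrections to $\widetilde E$ grow no faster in $|\nu|$ than a cascade of cancellations — forced by $f_0(0,0)=\tfrac12$, $f_1(0,0)=\tfrac13$, $f_2(0,0)=-\tfrac16$, by the relations they imply among the first derivatives of $f_0,f_1,f_2$ at the origin, and by the absorption of the $\mu^2$-part of the $T$-term into $f_*$ — permit, so that the resulting root-shifts stay below $A$; this is precisely where $aR=a|\mu|^2\le\epsilon_0$ and $aT\le\sqrt a$ are indispensable. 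Keeping track of which correction is subleading in each of the three regimes while maintaining $\sqrt{|R|}\ge2A$ is the real content of the argument.
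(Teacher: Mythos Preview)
Your approach is essentially the paper's: eliminate $\sigma$ by writing the second line of \eqref{gl6} as $Y=c(\mu)\sigma+d(\mu)$ (using $\sigma^2=1+\mu^2-X$), square, introduce $\nu=\mu f_*(a,a\mu^2)$, and locate the four roots of the resulting scalar equation. Your model quartic $P(\nu)=\bigl(T(\nu^2-R)-4\nu\bigr)^2-16X^2(1+\nu^2-X)$ is exactly the $a=0$ specialization of the paper's equation. The organizational difference is that the paper does not treat the $a$-dependence as a perturbation of $P$: instead it factors the full equation \emph{exactly} as
\[
P_+(\mu^*)P_-(\mu^*)=\frac{36(1-X)D^2}{T^2},\qquad P_\pm(\mu^*)=(\mu^*)^2-\frac{2\mu^*}{T}\,g_\pm - R',
\]
with $\mu^*=\mu f_*$, $R'=R+f_7(a,a\mu^2)$, $f_7(0,0)=0$, and $g_\pm,D$ explicit bounded functions of $(a,a\mu^2)$. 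The roots of $P_\pm$ are then given by the quadratic formula, $\mu^*_{\pm,\epsilon}=g_\epsilon/T\pm\sqrt{R'+(g_\epsilon/T)^2}$, so their distance to $\pm\sqrt R$ is directly $O(1/T)+O\bigl(a(1+|R|)/\sqrt{|R|}\bigr)$ from $|R'-R|=O(a(1+a\mu^2))$. The only remaining perturbation is the right-hand side $O(1/T^2)$, which is uniformly small and is handled by a single Rouch\'e estimate on $\partial D(\pm\sqrt R,A)$.

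This factorization is precisely the ``cascade of cancellations'' you allude to in your last paragraph, packaged as a single algebraic identity. Your version---comparing to the $a=0$ quartic and then perturbing---is equivalent in principle, but the obstacle you correctly flag (uniform control of $\widetilde E-P$ on circles of radius $\sim A$ centered at $\pm\sqrt R$, when $|R|$ may be as large as $\epsilon_0/a$) is genuine: a naive coefficient-by-coefficient bound on $\widetilde E-P$ gives terms of size $a\,T^2|R|^2$ near $|\nu|\sim\sqrt{|R|}$, which is \emph{not} dominated by $|P|\sim T^2|R|A^2$ on that circle. The cancellations that rescue this are exactly those encoded in the factored form above (notably that $R$ enters only through $R'=R+f_7$, and the $(\mu^*)^4$ coefficient is exactly $T^2$ after the change $\mu\mapsto\mu^*$). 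So your proof is correct in outline, but the step you mark as ``the real content of the argument'' is not carried out, and the paper's factorization is the clean way to do it.
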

  \begin{proof} We first get rid of $\sigma$. The second equation  in \eqref{gl6}
  is of the form $Y=B_{0}+\sigma B_{1}+\sigma^3B_{2}$, thus by the first equation, we get
  \begin{equation*}\label{gl8}
  Y-B_{0}=\sigma(B_{1}+B_{2}(1+\mu^2-X))
  \end{equation*}
  and then the first line of \eqref{gl6} gives an equation for $\mu$
  \begin{equation}\label{gl9}
  (Y-B_{0})^2=(1+\mu^2-X)(B_{1}+B_{2}(1+\mu^2-X))^2\,,
  \end{equation}
where\begin{equation*}\label{gl10}
 \begin{aligned}
B_{0} & = 2\mu^3 H_{1}-2\mu^3/3+ 2H_2(1+\mu^2)^{-1/2}({T\over 2\sqrt{\rho+a\mu^2}}+\mu)\\
B_{1} &=-2\mu^2H_{1}-2H_{2}(1+\mu^2)^{-1/2}\\
B_{2} &=2/3\,.
\end{aligned}
\end{equation*} 
By \eqref{eq:11'} and \eqref{gl5} one gets through explicit computation the identity
\begin{equation*}\label{gl11}
\forall w, \ \ f_{0}(0,w)+f_{2}(0,w)=1/3  
  \end{equation*}
and this implies, (we use $a\mu^{2+k}=(a\mu^2)\mu^k$)
\begin{equation*}\label{gl12}
 \begin{aligned}
B_{0} & = -\mu^2T/6f_{3}+2/3\mu f_{4}+T/3f_{5}\\
B_{1}+B_{2}(1+\mu^2-X) &=-2X/3+f_{6}
\end{aligned}
\end{equation*}
with $f_{l}(0,0)=1$ for $l=3,4,5$ and $f_{6}(0,0)=0$. 
Let $D=-2X/3+f_{6}$. We may then rewrite \eqref{gl9} as
\begin{equation}\label{gl13}
P_{+}P_{-}=36(1-X){D^2\over T^2}  
  \end{equation} 
 with
\begin{equation*}\label{gl14}
P_{\pm}=(f_{*}\mu)^2- {2(f_{*}\mu)\over T}(2f_{4}\mp3D)/f_{*}-R', \ R'=R+f_{7}, \ f_{7}(0,0)=0  
  \end{equation*}
and $f_{*}=\sqrt f_{3}, f_{*}(0,0)=1$.

 By classical arguments on perturbations of polynomial equations,
 \eqref{gl13} implies that for $a$ and $a\vert\mu\vert^2$ small,
 the $\mu$ equation \eqref{gl9} admits at most $4$ complex solutions 
 (at most since we have the constraint 
 $a\vert\mu\vert^2$ small). 
 Set  $\mu^*=\mu f_{*}(a,a\mu^2)$. 
 Then $\mu\mapsto \mu^*$ is a holomorphic change of coordinates and
 $\vert{\partial \mu^*\over\partial \mu}-1\vert \leq Cte(a_{0}+\epsilon_{0})$.
With the notation $g_{\pm}=(2f_{4}\mp3D)/f_{*}$, the two roots $\mu^*_{\pm,\epsilon}$ of $P_{\epsilon}$
 satisfy the equation (recall that $g_{\epsilon}$ and $R'$ are functions of $(a,a\mu^2)$ ) 
 \begin{equation}\label{gl15}
  \mu^*_{\pm,\epsilon}=g_{\epsilon}/T \pm\sqrt{R'+(g_{\epsilon}/T)^2}, \ \ 
 \epsilon=\pm\,.
\end{equation}
Assume first $\vert R\vert\geq R_{0}$ with $R_{0}$ large. Then by \eqref{gl15}
and $f_{7}(0,0)=0$, there exists $C_{0}$ such that 
\begin{equation}\label{gl16}
\mu^*_{+,\epsilon}\in D(\sqrt R,A_{0}/2), \ \ \mu^*_{-,\epsilon}\in D(-\sqrt R,A_{0}/2), \ \ \ A_{0}=C_{0}(1/T+{a(1+\vert R\vert)\over \sqrt{\vert R\vert}})\,.
\end{equation}
 For $\sqrt{\vert R\vert} \geq 2A_{0}$, 
the two disks $D_{\pm}=D(\pm\sqrt R,A_{0})$ do not overlap and $dist(D_{+},D_{-})\geq \sqrt{\vert R\vert}$. Thus there exists $C_{3}$ such that  
$$
\vert P_{+}P_{-}(\mu)\vert \geq C_{3}\vert R\vert A_{0}^2\geq 
C_{3}C_{0}^2\vert R\vert/T^2, \ \ \forall \mu^*\in\C\setminus
(D_{+}\cup D_{-})\,,
$$
and this contradicts \eqref{gl13} for $\vert R\vert\geq R_{0}$ large enough, and proves (a).

For $\vert R\vert\leq R_{0}$, and $\vert R\vert T \geq M_{0}$, with $M_{0}$
large, \eqref{gl16} remains true and thus we get
\begin{equation*}\label{gl17}
\mu^*_{+,\epsilon}\in D(\sqrt R,A_{1}/2), \ \ \mu^*_{-,\epsilon}\in D(-\sqrt R,A_{1}/2), \ \ \ A_{1}=C_{0}(1/T+{a(1+R_{0})\over \sqrt{\vert R\vert}})\,.
\end{equation*}
Set $A_{2}={C\over T\sqrt{\vert R\vert}}$. Since $\vert R\vert\leq R_{0}$
and $T\leq a^{-1/2}$, for $C$ large enough 
one has $A_{2}\geq 2A_{1}$. The two disks
 $D_{\pm}=D(\pm\sqrt R, A_{2})$ do not overlap
and $dist(D_{+},D_{-})\geq \sqrt{\vert R\vert}$ for $M_{0}\geq 2C$.  Thus one has 
$$\vert P_{+}P_{-}(\mu)\vert \geq C_{4}A_{2}^2\vert R\vert= C_{4}C^2/T^2, \ \ \forall \mu^*\in\C\setminus (D_{+}\cup D_{-})\,,$$
and this contradicts \eqref{gl13} for  $C$
large enough, and proves (b) for $M_{0}$ large enough. Finally,    
for $\vert R\vert\leq R_{0}$, and $\vert R\vert T \leq M_{0}$, one has clearly
by \eqref{gl15} and $T\leq a^{-1/2}$, $\vert \mu_{\pm,\pm}\vert\leq C_{5}T^{-1/2}$, and thus for $c\geq 2C_{5}$
$$\forall \mu\in\C
 \ \text{such that} \  \vert\mu\vert \geq cT^{-1/2}, \  \vert P_{+}P_{-}(\mu)\vert \geq C_{6}c^4/T^2, \ \ $$
This contradicts \eqref{gl13} for  $c$
large enough. The proof of Lemma \ref{lemgl1} is complete. 
  \end{proof}
Let us now study the equation \eqref{gl7}, which provides $N$ . Since $B'(u)\in O(u^{-2})$,
 $z\geq \beta/2>0$, $N\leq C_{0}a^{-1/2}$, and $a\geq h^{4/7}$, one has
\begin{equation*}\label{gl18}
\vert NB'(\lambda z^{{3\over 2}})\vert \in O(N\lambda^{-2})=O(Nh^2/a^3)\in O(a^{-7/2}h^2)\in O(1)\,.
\end{equation*}
Let $\langle\mu\rangle=(1+\mu^2)^{1/2}$. From $\sigma^2-\mu^2=1-X$, we get for $X\in [-2,2]$ 
  $\vert\sigma-\mu\vert/{<\mu>}\in O(1)$. Therefore  \eqref{gl7} implies
\begin{equation}\label{gl19}
2N= T\Phi_{a}(\mu)+O(1), \ \ \Phi_{a}(\mu)=
{1\over 2 <\mu>\sqrt{\rho+a\mu^2}}\,.
\end{equation}
Let  $U=\{\mu\in \C, \vert\mu\vert \leq 0.5 \ \text{or} \
\vert Im(\mu)\vert\leq \vert Re(\mu)\vert /\sqrt 3\}$. Then $\Phi_{a}(\mu)$ 
is bounded on $U$  and 
 \begin{equation}\label{gl20}
\vert \Phi_{a}(\mu)-\Phi_{a}(\mu')\vert\leq {C\vert \mu^2-\mu'^2\vert 
\over \sup (<\vert\mu\vert>, <\vert\mu'\vert>)}
(a+{1\over <\vert\mu\vert><\vert\mu'\vert>}), \ \forall \mu,\mu' \in U\,.
\end{equation}
Observe that for $b\in \R$, and $\vert b\vert \geq 2r$, the complex disk
$D(b,r)$ is contain in $U$.

For a given point $(X,Y,T)\in [-2,2]\times \R\times [0,C_{0}a^{-1/2}]$, let us denote  by $\mathcal N(X,Y,T)$ the set of integers $N\geq 1$
 such that \eqref{gl4} admits at least one real solution $(\mu,\sigma,\lambda)$ with $a\vert\mu\vert^2\leq \epsilon_{0}$ and $\lambda\geq \lambda_{0}$ . We denote $\mathcal N^\C(X,Y,T)$ the set of 
 complex $N$
 such that \eqref{gl4} admits at least one complex solution $(\mu,\sigma)$ with
 $\mu\in U$ and  $a\vert\mu\vert^2\leq \epsilon_{0}$ and $\lambda\geq \lambda_{0}$.  Observe that $\mathcal N(X,Y,T)$ depends on $a$
 . For $E\subset \N$, $\vert E\vert$
 will denote the cardinal of $E$. Observe that \eqref{gl19} implies for an
 absolute constant $N_{0}$
 \begin{equation}\label{gl21}
\mathcal N(X,Y,T)\subset [1,T/2+N_{0}]\,.
\end{equation}
\begin{lemma}\label{lemgl2}
There exists a  constant $C_{0}$ such that the following holds true.
\begin{itemize}
\item[(a)] For all $(X,Y,T)\in [0,1]\times\R\times [0,a^{-1/2}]$, one has $\vert \mathcal N(X,Y,T)\vert \leq C_{0} $,
and $\mathcal N^\C(X,Y,T)$ is a subset of the union of $4$ disks of radius $C_{0}$.
\item[(b)]  For all  $(X,Y,T)\in [0,1]\times\R\times [0,a^{-1/2}] $, the subset of $\mathbb N$,  
$$\mathcal N_{1}(X,Y,T)=\bigcup_{\vert Y'-Y\vert+\vert T'-T\vert\leq 1, \vert X'-X\vert \leq 1}\mathcal N(X',Y',T')$$ satisfies 
$$ \vert \mathcal N_{1}(X,Y,T)\vert \leq C_{0}\,.$$
\end{itemize}
\end{lemma}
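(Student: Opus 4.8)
The plan is to extract both statements from Lemma \ref{lemgl1} and the reduced relation \eqref{gl19} for $N$. The heuristic is simple: for a fixed point $(X,Y,T)$ the system \eqref{gl6} determines $\mu$ (hence $\mu^2$) up to the four candidates $\mu_1,\ldots,\mu_4$, all lying in two small clusters around $\pm\sqrt R$; since $\Phi_a$ depends on $\mu$ only through $\mu^2$, the four values $\Phi_a(\mu_j)$ are mutually close, and \eqref{gl19} then forces every admissible $N$ into a single window $2N=(\text{a quantity depending only on }X,Y,T)+O(1)$ of bounded length.

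First I would dispose of bounded times. By \eqref{gl21}, $\mathcal N(X,Y,T)\subset[1,T/2+N_0]$, so if $T\le T_0$ for a fixed $T_0$ there is nothing to prove; and in the complex case $2N=T\Phi_a(\mu)+O(1)$ with $\Phi_a$ bounded on $U$ already confines $\mathcal N^{\mathbb C}(X,Y,T)$ to a disk $D(0,C_0)$. So assume $T\ge T_0$. For $N\in\mathcal N(X,Y,T)$, a real solution $(\mu,\sigma,\lambda)$ of \eqref{gl4} yields a solution $(\mu,\sigma)$ of \eqref{gl6}, hence $\mu\in\{\mu_1,\ldots,\mu_4\}$ by Lemma \ref{lemgl1}, and $2N=T\Phi_a(\mu)+O(1)$ by \eqref{gl19}; it remains to bound $T|\Phi_a(\mu_j)-\Phi_a(\mu_k)|$ by a constant. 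In cases (a) and (b) of Lemma \ref{lemgl1}, $\mu_j^{*}=\mu_j f_{*}(a,a\mu_j^2)\in D(\pm\sqrt R,A)$ with $\sqrt{|R|}\ge 2A$, so $(\mu_j^{*})^2\in D(R,O(\sqrt{|R|}\,A))$ for all $j$; since $\mu\mapsto\mu^{*}$ is close to the identity (as in the proof of Lemma \ref{lemgl1}) and $a|\mu|^2\le\epsilon_0$, this gives $|\mu_j^2-\mu_k^2|=O(\sqrt{|R|}\,A)$. Feeding this into \eqref{gl20} together with $\langle|\mu_j|\rangle\sim\max(1,\sqrt{|R|})$, and using $T\le a^{-1/2}$ and $a|R|\lesssim\epsilon_0$, one checks — separately in case (a) and in case (b) — that $TA\lesssim 1$, hence $T|\Phi_a(\mu_j)-\Phi_a(\mu_k)|=O(1)$. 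In case (c), $|\mu_j|\le C_2 T^{-1/2}$, so \eqref{gl20} with $\mu'=0$ gives $\Phi_a(\mu_j)=\Phi_a(0)+O(1/T)$ and $2N=T\Phi_a(0)+O(1)$. In all cases $|\mathcal N(X,Y,T)|\le C_0$, and the same reasoning with the complex $\mu_j$ places $\mathcal N^{\mathbb C}(X,Y,T)$ inside a union of four disks of radius $C_0$ centered at the $\frac12 T\Phi_a(\mu_j)$.

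For part (b), I would again treat $T\le T_0+1$ trivially by \eqref{gl21}, and for $T\ge T_0+1$ observe that every point $(X',Y',T')$ with $|Y'-Y|+|T'-T|\le1$, $|X'-X|\le1$ satisfies $T'\ge T_0$ and $X'\in[-2,2]$, so the argument of part (a) (which only used $X\in[-2,2]$) applies: $\mathcal N(X',Y',T')$ is empty or contained in an interval of length $2C_0$ centered at some $n(X',Y',T'):=\frac12 T'\Phi_a(\mu)$, and one checks as above that this $n$ equals, up to $O(1)$, the explicit function $G_a(X',Y',T')=\frac{T'}{4}(1+R')^{-1/2}(\rho+aR')^{-1/2}$ with $R'=2-6Y'/T'$, the dependence of $\mu^2$ on $X'$ being negligible. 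It then suffices to show $G_a$ has gradient $O(1)$ on the region $\{\,1+R'\ge1/2,\ aR'\lesssim\epsilon_0,\ T'\le a^{-1/2}\,\}$; this is a direct computation, the only delicate point being $\partial_{T'}G_a$, where the potentially large contribution enters through $\partial_{T'}R'=(2-R')/T'$ but is tamed because $|2-R'|\le 2(1+R')$ compensates the $(1+R')^{-3/2}$ weight (and $a\sqrt{1+R'}\lesssim\sqrt{\epsilon_0}$ handles the remaining term). Thus $G_a$, and with it every $n(X',Y',T')$, varies by $O(1)$ over the unit ball, and combined with the $O(1)$ spread of each $\mathcal N(X',Y',T')$ this gives $|\mathcal N_1(X,Y,T)|\le C_0$.

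The main obstacle is precisely this bookkeeping: across the three regimes of Lemma \ref{lemgl1} one must keep the cluster radius $A$, the size $|R|$ of $|\mu|^2$, and the weights in \eqref{gl20} (and in the derivatives of $G_a$) in balance, and never let the large oscillation of $R$ over a unit ball escape the compensating smallness of $\partial_{(\mu^2)}\Phi_a$. Everything rests on the two uniform constraints $T\le a^{-1/2}$ and $a|\mu|^2\le\epsilon_0$ — and, upstream, on $a\ge h^{4/7}$, which is what made the error term in \eqref{gl19} uniform in $N$; without these the windows above would cease to have bounded length.
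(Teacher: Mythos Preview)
Your argument is correct and rests on the same ingredients as the paper's --- Lemma~\ref{lemgl1}, the reduced relation \eqref{gl19}, and the Lipschitz estimate \eqref{gl20}. One slip: in case~(b) of Lemma~\ref{lemgl1} the claim $TA\lesssim 1$ is false (there $A=C_1/(T\sqrt{|R|})$ with $|R|$ as small as $M_0/T$, so $TA$ can be $\sim\sqrt T$); what you actually need, and what holds, is $|\mu_j^2-\mu_k^2|=O(\sqrt{|R|}\,A)=O(1/T)$, which together with $\langle|\mu_j|\rangle\sim1$ in that regime feeds straight into \eqref{gl20} and gives the desired $O(1)$ bound. Note also that this ``single cluster'' conclusion is more than (a) requires: the paper simply observes that four values of $\mu$ yield, via \eqref{gl19}, four intervals (resp.\ disks) of bounded length. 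For (b) your route through an explicit reference function $G_a$ with bounded gradient is a mild reorganisation of the paper's, which instead fixes the basepoint $(X,Y,T)$, chooses for each real $\mu'$ at a neighbouring point a complex $\mu$ at $(X,Y,T)$ in the \emph{same} disk $D(\pm\sqrt R,A)$, and bounds $|N-N'|$ directly (this is \eqref{gl23}--\eqref{gl24}) before invoking part~(a) at the basepoint. The underlying estimates are identical; the paper's version avoids having to check that $G_a$ stays well-defined (i.e.\ $1+R''\ge c>0$) along a path joining the two points, while yours makes the centre of $\mathcal N$ explicit.
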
  
\begin{proof}
We start with (a), which is a consequence of \eqref{gl19}, since by Lemma \ref{lemgl1}, for a given $(X,Y,T\geq T_{0})$, there are at most $4$
possible values of $\mu$ (for $T\leq T_{0}$ we  use \eqref{gl21}).

We proceed with (b). By \eqref{gl21}, we may assume $T\geq T_{1}$ with $T_{1}$ large.
Recall $R=2(1-3Y/T)$. Let $(X',Y',T')$ such that
$\vert Y'-Y\vert+\vert T'-T\vert\leq 1, X'\in [0,1]$. Set
$R'=2(1-3Y'/T')$. One has $\vert R-R'\vert \leq C(1+\vert R\vert)/T$.
Let us first assume $\vert R \vert \geq 2R_{0}$, with $R_{0}$ as in Lemma \ref{lemgl1}. Since 
$T$ is large, one has $\vert R' \vert \geq R_{0}$, and $\vert R' \vert\simeq
\vert R \vert$. Let $N'\in N(X',Y',T')$ and $\mu'$ such that \eqref{gl4} holds true.
By Lemma \ref{lemgl1} (a), one may assume $\mu'^*\in D(\sqrt R',A')$. Take
$\mu^*\in D(\sqrt R,A)$ associated to $(X,Y,T)$. Since $\mu'$ is real, one has $R'\geq R_{0}$, hence $R\geq 2R_{0}$,
and therefore $\mu \in U$. Let $N\in \mathcal N^\C(X,Y,T)$ associated to $\mu$.
 From $a^{1/2}\leq 1/T$ one gets
 \begin{equation*}\label{gl22}
\vert \mu-\mu'\vert \leq C \vert \mu^*-\mu'^*\vert\leq C(A+A'+\vert \sqrt R-\sqrt R'\vert)
\leq {C(1+\vert R\vert)\over T\sqrt{\vert R\vert}}\,.
\end{equation*} 
By \eqref{gl19} and \eqref{gl20}, this implies since 
$a\vert R\vert\simeq a\vert\mu'^2\vert\leq \epsilon_{0}$,
 \begin{multline}\label{gl23}
2\vert N-N'\vert \leq \vert T'-T\vert \Phi_{a}(\mu')+T\vert \Phi_{a}(\mu')-\Phi_{a}(\mu)\vert +O(1)\\
 \leq C(a+1/\vert R\vert){(1+\vert R\vert)\over \sqrt{\vert R\vert}}
+O(1)\in O(1)\,.
\end{multline}
Let us now assume  $\vert R \vert \leq 2R_{0}$ and $T\vert R \vert \geq M_{0}+8 $. 
From $ \vert RT-R'T'\vert = \vert 2(T-T')-6(Y-Y')\vert\leq 8$, we get  $\vert R'\vert T'\geq M_{0}$. We may thus apply Lemma \ref{lemgl1} (b). Let $N'\in N(X',Y',T')$ and $\mu'\in \R$ such that \eqref{gl4} holds true. Since $\mu'$ is real
one has $R'>0$, thus $R'T'>M_{0}$, and this implies $R>0$ (take $M_{0}$ large). Moreover one has $\vert R-R'\vert \leq C(1+\vert R \vert )/T$,
$\vert R' \vert \leq 3R_{0}$, and also $\vert R' \vert\simeq
\vert R \vert$. By the same argument as above, we get now $\vert \mu-\mu'\vert \leq {C(1+R_{0})\over T\sqrt R}$, and since $\vert \mu \vert+ \vert \mu' \vert \leq C\sqrt{\vert R\vert}$, one gets $\vert \mu^2-\mu'^2\vert \leq C(1+R_{0})/T$
and thus by \eqref{gl19} and \eqref{gl20}
 \begin{equation}\label{gl24}
2\vert N-N'\vert \leq CT(a+O(1))(1+R_{0})/T \in O(1)\,.
\end{equation}
Finally, for  $\vert R \vert \leq 2R_{0}$ and $T\vert R \vert \leq M_{0}+8 $,
one has $T'\vert R' \vert \leq M_{0}+16$, thus by part (c) of Lemma \ref{lemgl1},
one has $\vert \mu'_{j} \vert \leq CT^{1/2}$, $\vert \mu_{j} \vert \leq CT^{-1/2}$,
and thus we get in that case $\vert \mu^2-\mu'^2\vert \leq C/T$ thus \eqref{gl24}
holds also true in that case.
Since $N\in \mathcal N^\C(X,Y,T)$, \eqref{gl23}, \eqref{gl24},
and  part (a) of our lemma imply (b). The proof of our lemma
is complete.
\end{proof}
\subsubsection{Phase integrals}
We first recall the following lemma, for which we refer to \cite{stein3}.
\begin{lemma}\label{lemstat1}
Let $K\subset \R$ a compact set, and $a(\xi,\lambda)$  a classical symbol of 
degree $0$ in $\lambda\geq 1$ with $a(\xi,\lambda)=0$ for $\xi \notin K$.
Let $k\geq 2$, $c_{0}>0$ and $\Phi(\xi)$ a phase function such that
$$ \sum_{2\leq j\leq k}\vert\Phi^{(j)}(\xi)\vert\geq c_{0}, \quad \forall \xi\in K.$$ 
Then, there exists $C$ such that
$$ \vert \int e^{i\lambda\Phi(\xi)}a(\xi,\lambda)\vert \leq C\lambda^{-1/k}, \quad 
\forall \lambda\geq 1$$
Moreover, the constant $C$ depends only on $c_{0}$ and on a upper bound
of a finite number of derivatives of $\Phi^{(2)},a$ in a neighborhood of $K$. 
\end{lemma}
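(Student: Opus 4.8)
The plan is to reduce Lemma~\ref{lemstat1} to the classical (amplitude) van der Corput lemma by a compactness and partition-of-unity argument. The first step is to record the amplitude form of van der Corput: if $I\subset\R$ is a bounded interval, $m\ge 2$, $|\Phi^{(m)}(\xi)|\ge 1$ on $I$, and $b\in C^1(\bar I)$, then
\[
\Bigl|\int_I e^{i\lambda\Phi(\xi)}b(\xi)\,d\xi\Bigr|\le c_m\,\lambda^{-1/m}\bigl(\|b\|_{L^\infty(I)}+\|b'\|_{L^1(I)}\bigr),
\]
with $c_m$ an absolute constant depending only on $m$. I would prove this by induction on $m$. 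For $m=2$, $\Phi'$ is strictly monotone, so $\{|\Phi'|\le\delta\}$ is an interval of length $\le 2\delta$; on its complement one integrates by parts once (writing $e^{i\lambda\Phi}=(i\lambda\Phi')^{-1}\partial_\xi e^{i\lambda\Phi}$, and using the monotonicity of $\Phi'$ to bound the total variation of $b/\Phi'$), obtaining $O((\lambda\delta)^{-1}(\|b\|_\infty+\|b'\|_1))$, while on $\{|\Phi'|\le\delta\}$ the trivial bound gives $O(\delta\|b\|_\infty)$; optimizing $\delta\sim\lambda^{-1/2}$ finishes this case. The inductive step is identical: $\Phi^{(m-1)}$ is monotone, so $\{|\Phi^{(m-1)}|\le\delta\}$ has length $\le 2\delta$; outside it the rescaled phase $\Phi/\delta$ satisfies $|(\Phi/\delta)^{(m-1)}|\ge 1$, so the inductive hypothesis at level $m-1$ applies with parameter $\lambda\delta$, and optimizing $\delta\sim\lambda^{-1/m}$ closes the induction.

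The second step upgrades the hypothesis: given $\sum_{2\le j\le k}|\Phi^{(j)}(\xi)|\ge c_0$ on $K$, for each $\xi_0\in K$ there is $j=j(\xi_0)\in\{2,\dots,k\}$ with $|\Phi^{(j)}(\xi_0)|\ge c_0/(k-1)$, hence, using an upper bound for $\Phi^{(j+1)}$, an open interval $U_{\xi_0}\ni\xi_0$ on which $|\Phi^{(j)}|\ge c_0/(2(k-1))$. Extract a finite subcover $U_1,\dots,U_L$ of the compact set $K$ --- with $L$ and the $U_i$ depending only on $c_0$ and on upper bounds for finitely many derivatives of $\Phi^{(2)},\dots,\Phi^{(k)}$ --- and choose smooth cutoffs $\chi_1,\dots,\chi_L$ subordinate to this cover with $\sum_i\chi_i\equiv 1$ on a neighborhood of $K$.

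The third step is the assembly. Write $\int e^{i\lambda\Phi}a\,d\xi=\sum_{i=1}^L\int_{U_i}e^{i\lambda\Phi}(\chi_i a)\,d\xi$. On $U_i$ we have $|\Phi^{(m_i)}|\ge c_0/(2(k-1))$ for a fixed $m_i\in\{2,\dots,k\}$; rescaling $\Phi$ by the constant $2(k-1)/c_0$ (which is absorbed into $\lambda$ and costs only a factor depending on $c_0,k$) puts us in the situation of the amplitude van der Corput lemma with $m=m_i$ and $b=\chi_i a$, giving a bound $c_{m_i}\lambda^{-1/m_i}(\|\chi_i a\|_{L^\infty}+\|(\chi_i a)'\|_{L^1})$. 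Because $a$ is a symbol of degree $0$ in $\lambda$ supported in the fixed compact $K$, the quantity $\|\chi_i a\|_{L^\infty}+\|(\chi_i a)'\|_{L^1}$ is bounded, uniformly in $\lambda\ge 1$, by a constant depending on $\chi_i$ and on finitely many symbol seminorms of $a$; and since $m_i\le k$ and $\lambda\ge 1$ one has $\lambda^{-1/m_i}\le\lambda^{-1/k}$. Summing the $L$ contributions yields $|\int e^{i\lambda\Phi}a|\le C\lambda^{-1/k}$ with $C$ depending only on $c_0$, on upper bounds for finitely many derivatives of $\Phi^{(2)}$, and on finitely many symbol seminorms of $a$, as claimed. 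There is no genuine analytic difficulty here; the one point needing care is the bookkeeping of uniformity --- that the van der Corput constants $c_m$ are absolute and that the chosen cover and cutoffs depend only on $c_0$ and on derivative bounds for $\Phi$ --- which is exactly why we merely invoke \cite{stein3} in the text.
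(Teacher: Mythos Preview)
Your proof is correct and is precisely the standard argument from \cite{stein3} that the paper invokes; the paper does not give its own proof of this lemma but simply refers to Stein, and your compactness/partition-of-unity reduction to the amplitude van der Corput inequality is exactly that reference unpacked.
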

The next lemma will be of importance to us. As such, we have included its
proof for the sake of the reader while not claiming any novelty.

Let  $H(\xi)$ be a smooth function defined in a neighborhood of $(0,0)$ in $\R^2$, such that $H(0)=0$ and $\nabla H(0)=0$.
We assume that the Hessian $H''$ satisfies $\text{rank} (H''(0))= 1$ and $\nabla \det (H'')(0)\not= 0$.
Then the equation $\det (H'')(\xi)=0$ defines a smooth curve $\mathcal C$ near $0\in \R^2$ with $0\in \mathcal C$.
Let $s\rightarrow \xi(s)$ be a smooth parametrization of $\mathcal C$, with $\xi(0)=0$, and define the curve $X(s)$ in $\R^2$ by 
$$
X(s) =H'(\xi(s))\,.
$$ 
\begin{lemma}\label{lemstat2}
Let $K=\{\xi \in \R^2, \ \vert\xi\vert \leq r\}$ , and $a(\xi,\lambda)$  a classical symbol of 
degree $0$ in $\lambda\geq 1$ with $a(\xi,\lambda)=0$ for $\xi \notin K$.
Set for $x\in \R^2$ close to $0$
\begin{equation}\label{gls1}
I(x,\lambda)= \int e^{i\lambda (x.\xi-H(\xi))}a(\xi,\lambda)d\xi\,.
\end{equation}
Then for $r>0$ small enough, the following holds true:
\begin{itemize}
\item[(a)] If $X'(0)\not= 0$, there exists $C$ such that for all $x$ close to $0$
$$
\vert I(x,\lambda) \vert \leq C\lambda^{-5/6}\,.
$$
\item[(b)] If $X'(0)= 0$ and $X''(0)\not= 0$
there exists $C$ such that for all $x$ close to $0$
$$
\vert I(x,\lambda) \vert \leq C\lambda^{-3/4}\,.
$$
Moreover, if $a$ is elliptic at $\xi=0$, there exists $C'$ such that
$$
\vert I(0,\lambda) \vert \geq C'\lambda^{-3/4}\,.
$$
\end{itemize}
\end{lemma}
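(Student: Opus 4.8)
The plan is to reduce the two-dimensional integral \eqref{gls1} to a one-dimensional oscillatory integral by a non-degenerate stationary phase in the direction where $H''(0)$ is invertible, and then to control the surviving one-variable integral by the van der Corput-type estimate of Lemma \ref{lemstat1}; the lower bound in (b) will come from the classical asymptotics of an oscillatory integral with a degenerate ($\xi^{4}$-type) critical point. Since $\text{rank}\,H''(0)=1$, a linear change of variables in $\xi$ (which induces a linear change in $x$ and only multiplies \eqref{gls1} by a nonzero constant) lets us assume $\partial^{2}_{\xi_{1}\xi_{1}}H(0)\neq0$ and $\partial^{2}_{\xi_{1}\xi_{2}}H(0)=\partial^{2}_{\xi_{2}\xi_{2}}H(0)=0$. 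The equation $\partial_{\xi_{1}}(x\cdot\xi-H(\xi))=x_{1}-\partial_{\xi_{1}}H(\xi)=0$ has, for $(x_{1},\xi_{2})$ near $0$, a unique solution $\xi_{1}=\kappa(x_{1},\xi_{2})$ by the implicit function theorem; away from a neighbourhood of that point $|x_{1}-\partial_{\xi_{1}}H|$ is bounded below on $\supp a$ for $x$ close to $0$, so integration by parts in $\xi_{1}$ contributes $O(\lambda^{-\infty})$ uniformly in $x$. Non-degenerate stationary phase in $\xi_{1}$ near $\xi_{1}=\kappa$ then yields
$$I(x,\lambda)=c\,\lambda^{-1/2}\int e^{i\lambda\Phi_{1}(x,\xi_{2})}b(x,\xi_{2},\lambda)\,d\xi_{2}+O(\lambda^{-\infty}),$$
where $\Phi_{1}(x,\xi_{2})=x_{1}\kappa(x_{1},\xi_{2})+x_{2}\xi_{2}-H(\kappa(x_{1},\xi_{2}),\xi_{2})$ is smooth near $0$, $c\neq0$, and $b$ is a classical symbol of degree $0$ which is elliptic at $\xi_{2}=0$ whenever $a$ is elliptic at $\xi=0$. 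Using $x_{1}=\partial_{\xi_{1}}H(\kappa,\xi_{2})$ one computes $\partial_{\xi_{2}}\Phi_{1}=x_{2}-\partial_{\xi_{2}}H(\kappa,\xi_{2})$ and
$$\partial^{2}_{\xi_{2}}\Phi_{1}(x,\xi_{2})=-\frac{\det H''(\kappa(x_{1},\xi_{2}),\xi_{2})}{\partial^{2}_{\xi_{1}\xi_{1}}H(\kappa(x_{1},\xi_{2}),\xi_{2})}\,,$$
so the zero set of $\partial^{2}_{\xi_{2}}\Phi_{1}$ in the $(x_{1},\xi_{2})$ plane is exactly the pull-back of $\mathcal C=\{\det H''=0\}$, which is a smooth curve through $0$ since $\nabla\det H''(0)\neq0$.

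For the upper bounds I would apply Lemma \ref{lemstat1} in $\xi_{2}$, with $x$ as a harmless parameter. In case (a) it suffices to prove $|\partial^{2}_{\xi_{2}}\Phi_{1}|+|\partial^{3}_{\xi_{2}}\Phi_{1}|\ge c_{0}>0$ near $0$; because $\partial^{2}_{\xi_{2}}\Phi_{1}$ vanishes transversally along the smooth curve above, this reduces to $\partial^{3}_{\xi_{2}}\Phi_{1}(0)\neq0$. A chain-rule computation (using the normalisation $\partial^{2}_{\xi_{1}\xi_{2}}H(0)=0$, hence $\partial_{\xi_{2}}\kappa(0)=0$) identifies $\partial^{3}_{\xi_{2}}\Phi_{1}(0)$, up to a nonzero factor, with $\partial_{\xi_{2}}\det H''(0)$, and differentiating $X=H'\circ\xi$ gives $X'(s)=H''(\xi(s))\,\xi'(s)$, which makes $\partial_{\xi_{2}}\det H''(0)\neq0$ equivalent to $X'(0)\neq0$. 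Lemma \ref{lemstat1} with $k=3$ then bounds the $\xi_{2}$-integral by $C\lambda^{-1/3}$, so $|I(x,\lambda)|\le C\lambda^{-5/6}$. In case (b), $X'(0)=0$ forces $\partial^{3}_{\xi_{2}}\Phi_{1}=0$ on that curve at $0$; one then checks that the common zero set of $\partial^{2}_{\xi_{2}}\Phi_{1}$ and $\partial^{3}_{\xi_{2}}\Phi_{1}$ reduces locally to the origin, and a second application of the same computation — now using $X''(0)=H'''(0)[\xi'(0),\xi'(0)]+H''(0)\xi''(0)$ together with $\xi'(0)\in\mathrm{Ker}\,H''(0)$ and the second derivatives of $\kappa$ and of the parametrisation $\xi(s)$ — identifies $\partial^{4}_{\xi_{2}}\Phi_{1}(0)\neq0$ with $X''(0)\neq0$. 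Hence $|\partial^{2}_{\xi_{2}}\Phi_{1}|+|\partial^{3}_{\xi_{2}}\Phi_{1}|+|\partial^{4}_{\xi_{2}}\Phi_{1}|\ge c_{0}>0$ near $0$, and Lemma \ref{lemstat1} with $k=4$ gives a $C\lambda^{-1/4}$ bound for the $\xi_{2}$-integral, so $|I(x,\lambda)|\le C\lambda^{-3/4}$.

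For the lower bound in (b), at $x=0$ the critical point $\xi_{2}=0$ of $\xi_{2}\mapsto\Phi_{1}(0,\xi_{2})$ satisfies $\partial^{2}_{\xi_{2}}\Phi_{1}(0)=\partial^{3}_{\xi_{2}}\Phi_{1}(0)=0$ and $\partial^{4}_{\xi_{2}}\Phi_{1}(0)\neq0$, i.e. it is a non-degenerate $A_{3}$ critical point. A Morse-type change of variable brings $\Phi_{1}(0,\xi_{2})$ to $\text{const}+\varepsilon u^{4}$ ($\varepsilon=\pm1$) near $u=0$, and then
$$\int e^{i\lambda\Phi_{1}(0,\xi_{2})}b(0,\xi_{2},\lambda)\,d\xi_{2}=\lambda^{-1/4}\Bigl(e^{i\lambda\,\text{const}}\,b(0,0,\lambda)\!\int_{\R}e^{i\varepsilon v^{4}}\,dv+O(\lambda^{-1})\Bigr),$$
where $\int_{\R}e^{\pm iv^{4}}\,dv=\tfrac12\Gamma(1/4)e^{\pm i\pi/8}\neq0$ and $b(0,0,\lambda)$ is bounded away from $0$ by ellipticity; combined with the nonzero constant $c$ from the $\xi_{1}$ stationary phase (its Hessian being $-\partial^{2}_{\xi_{1}\xi_{1}}H(0)\neq0$ and $a$ elliptic at $\xi=0$), this gives $|I(0,\lambda)|\ge C'\lambda^{-3/4}$.

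The main obstacle is the differential-geometric book-keeping in the second and third paragraphs: showing that the lower bounds $|\partial^{2}_{\xi_{2}}\Phi_{1}|+|\partial^{3}_{\xi_{2}}\Phi_{1}|\gtrsim1$ and $|\partial^{2}_{\xi_{2}}\Phi_{1}|+|\partial^{3}_{\xi_{2}}\Phi_{1}|+|\partial^{4}_{\xi_{2}}\Phi_{1}|\gtrsim1$ hold near $0$, which amounts to identifying $\partial^{3}_{\xi_{2}}\Phi_{1}(0)$ and $\partial^{4}_{\xi_{2}}\Phi_{1}(0)$ (evaluated at the degenerate critical point) with $X'(0)$ and $X''(0)$ respectively. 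This is precisely where the hypotheses $\text{rank}\,H''(0)=1$ and $\nabla\det H''(0)\neq0$, and the coordinate normalisation, are used in an essential way; the computation is routine but requires carefully tracking the derivatives of $\kappa$ supplied by the implicit function theorem and the derivatives of the parametrisation $\xi(s)$ of $\mathcal C$ furnished by differentiating $\det H''(\xi(s))\equiv0$.
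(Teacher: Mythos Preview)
Your proposal is correct and follows the same strategy as the paper: non-degenerate stationary phase in $\xi_{1}$, followed by Lemma \ref{lemstat1} in $\xi_{2}$, with the key identifications $\partial^{3}_{\xi_{2}}\Phi_{1}(0)\neq0\Leftrightarrow X'(0)\neq0$ and $\partial^{4}_{\xi_{2}}\Phi_{1}(0)\neq0\Leftrightarrow X''(0)\neq0$. The only difference is presentational: the paper carries out these identifications by writing the Taylor expansion $H(\xi)=\xi_{1}^{2}/2+a\xi_{1}^{3}+b\xi_{1}^{2}\xi_{2}+c\xi_{1}\xi_{2}^{2}+d\xi_{2}^{3}+e\xi_{2}^{4}+\cdots$ and computing explicitly (finding $\partial^{3}_{\xi_{2}}\Phi_{1}(0)=-6d$ and $\partial^{4}_{\xi_{2}}\Phi_{1}(0)=12(c^{2}-2e)$), whereas you phrase things invariantly via the nice formula $\partial^{2}_{\xi_{2}}\Phi_{1}=-\det H''/\partial^{2}_{\xi_{1}\xi_{1}}H$ and the relation $X'(s)=H''(\xi(s))\xi'(s)$ --- the explicit route is in fact the quickest way to cash out the ``routine book-keeping'' you flag at the end.
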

\begin{proof}
By a linear change of coordinates in $\xi$, we may assume $H(\xi)=\xi_{1}^2/2+ O(\xi^3)$.
Set $\Phi(x,\xi)=x.\xi-H(\xi)$. Then $\Phi'_{\xi_1}(x,\xi)=x-H'_{\xi_1}(\xi)$. Therefore, there exists
a unique non degenerate critical point $\xi_{1}^c(x,\xi_{2})$ in the variable $\xi_{1}$,
and the critical value $\Psi(x,\xi_{2})$ satisfies 
$$ 
G(x,\xi_{2})=\Psi'_{\xi_{2}}(x,\xi_{2})=
x_{2}-H'_{\xi_{2}}(\xi_{1}^c(x,\xi_{2}),\xi_{2})\,,
$$
and by stationary phase in $\xi_1$, one has
$$
 I(x,\lambda)=\lambda^{-1/2}\int e^{i\lambda
   \Psi(x,\xi_{2})}b(x,\xi_{2},\lambda)d\xi_{2}\,.
$$
By Lemma \ref{lemstat1}, it remains to prove:
\begin{itemize}
\item[(a')] If $X'(0)\not= 0$, there exists $c_{0}>0$ such that for all $(x,\xi_{2})$ close to $(0,0)$
$\vert \partial^2_{\xi_{2}}G\vert \geq c_{0}$.
\item[(b')]  If $X'(0)= 0$ and $X''(0)\not= 0$, there exists $c_{0}>0$ such that for all $(x,\xi_{2})$ close to $(0,0)$
$\vert \partial^3_{\xi_{2}}G\vert \geq c_{0}$, and in the case $a$ elliptic the lower bound at $x=0$.
\end{itemize}
Let us prove (a'). Since $G(x,\xi_{2})$ is smooth and $r$ small , it is sufficient to prove 
$\partial^2_{\xi_{2}}G(0,0)\not=0$. The Taylor expansion of $H$ at order $3$ reads as follows 
$$
H(\xi)=\xi_{1}^2/2+ a\xi_{1}^3 +
b\xi_{1}^2\xi_{2}+c\xi_{1}\xi_{2}^2+d\xi_{2}^3+O(\xi^4)\,.
$$
Thus one has $\xi_{1}^c(0,\xi_{2})=-c\xi_{2}^2 +O(\xi_{2}^3)$ and we get
$-G(0,\xi_{2})=3d\xi_{2}^2+O(\xi_{2}^3)$. Thus $\partial^2_{\xi_{2}}G(0,0)\not=0$ is equivalent
to $d\not=0$.\\
 On the other hand, one has $\det H''(\xi)=2c\xi_{1} + 6d\xi_{2}+ O(\xi^2)$,
and since by hypothesis $\nabla \det (H'')(0)\not= 0$, one has $(c,d)\not= (0,0)$.
Moreover, one has 
$$
X(s)=H'(\xi(s))=(\xi_{1}(s)+ O(s^2), O(s^2))
$$
 and therefore 
$X'(0)\not= 0$ is equivalent to $\xi'_{1}(0)\not=0$. This in turn is
equivalent to the fact that $\xi_{1}$
is a parameter on $\mathcal C$, which is equivalent to $d\not=0$.

Let us now prove (b'). Since $X'(0)= 0$, we get $d=0$ and therefore
$c\not= 0$. Now, $\xi_{2}$ is a parameter on $\mathcal C$, and we have $\xi_{1}\in O(\xi_{2}^2)$
on $\mathcal C$.
We will  use a Taylor expansion of $H$ at order $4$, but since $\xi_{1}^c(0,\xi_{2})$
is quadratic in $\xi_{2}$, and $\xi_{1}(s)$ quadratic in $s$, we will just need the $\xi_{2}^4$ term, i.e
$$
H(\xi)=\xi_{1}^2/2+ a\xi_{1}^3 +
b\xi_{1}^2\xi_{2}+c\xi_{1}\xi_{2}^2+e\xi_{2}^4+...+O(\xi^5)\,.
$$ 

Then we get $\det H''(\xi)=2c\xi_{1} + 4(3e-c^2)\xi_{2}^2+ O(\xi^3)$.
Therefore $\xi_{1}=2(c-3e/c)\xi_{2}^2+ O(\xi_{2}^3)$ is an equation for $\mathcal C$,
and we get that $X''(0)\not=0$ is equivalent to
$X_{1}''(0)\not=0$. this in turn is is equivalent to $c^2\not= 2e$.
On the other hand, we easily get $-G(0,\xi_{2})=
(4e-2c^2)\xi_{2}^3+O(\xi_{2}^4)$. Finally, for $\alpha\not=0$ and $b(\xi_{2},\lambda)$ a symbol of degree $0$ elliptic at $\xi_{2}=0$, and supported in $\vert \xi_{2}\vert \leq r$ with $r$ small enough, one has clearly
$$\vert \int e^{i\lambda (\alpha\xi_{2}^4+O(\xi_{2}^5))}b(\xi_{2},\lambda)d\xi_{2}\vert
\geq C'\lambda^{-1/4}$$ which completes the proof. 
\end{proof}

 \subsubsection{Proof of Proposition \ref{propL1}}
 
 Recall
   \begin{equation}\label{gl25}
w_{N,1}(T,X,\hbar)= \frac{(-i)^N\lambda}{2\pi} \int e^{i \lambda \varphi_\anl}
\chi_2(z)\frac{\chi_3(\sqrt{1+az})}{2\sqrt{1+az}} \chi_4(\sqrt a
\sigma) \sigma_0(\sqrt a T',\hbar)(1-\chi_{5})(z) \,dT'd\sigma dz
\end{equation}
where the phase $ \varphi_\anl$ is defined by (see \eqref{eq:3})
\begin{multline*}
%  \label{gl26}
  \varphi_\anl(T,X,T',\sigma,z)=\gamma_a(z)(T-T')+\tilde\psi_a(T')+\sigma(X-z)+\sigma^3/3\\
{}+N(-\frac
  4 3 z^\frac 3 2+\frac 1 \lambda B(\lambda z^\frac 3 2))\,.
\end{multline*}
For $\epsilon_{j}=\pm$, define
\begin{multline*}
  \label{gl27}
  \Phi_{N,\epsilon_{1},\epsilon_{2}}(T,X,z;a,\lambda)=\gamma_a(z)T+
  \frac 2 3 \epsilon_{1}(z-1)^{3/2}+ \frac 2 3 \epsilon_{2}(z-X)^{3/2}\\
{}-N(\frac
  4 3 z^\frac 3 2-\frac 1 \lambda B(\lambda z^\frac 3 2))\,.
\end{multline*}
\begin{lemma}\label{lemgl3}
The following identity holds true
   \begin{equation}\label{gl28}
w_{N,1}(T,X,\hbar)= \sum_{\epsilon_{1},\epsilon_{2}}\int e^{i\lambda\Phi_{N,\epsilon_{1},\epsilon_{2}}}\Theta_{\epsilon_{1},\epsilon_{2}}(z;a,\lambda)dz + R_{N,a}(T,X,\hbar)
\end{equation}
where $\Theta_{\epsilon_{j}}(z;a,\lambda)$ are smooth functions of $z$
with support in $[(1+z_{0})/2, (\zeta_{1}^2-1)/a]$ (remark that
$\zeta_{1}>1$ is a upper bound for the support of
$\chi_{3}$). Moreover,
$$
\vert z^l\partial_{z}^l\Theta_{\epsilon_j} \vert\leq C_{l}z^{-1/2}
\,\,\text{with}\,\,\, C_{l}\,\text{ independent of }\,a,\lambda\,.
$$
The remainder $ R_{N,a}(T,X,\hbar)$ is $O_{C^\infty}(\hbar^\infty)$ 
for $X\in [0,1], T\in [0,a^{-1/2}]$, uniformly
in $a,N$.  
\end{lemma}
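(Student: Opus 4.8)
The plan is to integrate out the variables $T'$ and $\sigma$ in \eqref{gl25} by stationary phase, following the computations already carried out in the proofs of items (2) and (3) of Proposition \ref{p1}, but now keeping track of the resulting amplitude. The key structural fact is that the $(T',\sigma)$-part of the phase $\varphi_\anl$ in \eqref{eq:3} is the \emph{sum} $\bigl(-\gamma_a(z)T'+\tilde\psi_a(T')\bigr)+\bigl(\sigma(X-z)+\sigma^3/3\bigr)$ of a function of $T'$ alone and a function of $\sigma$ alone, so the two integrations decouple; and on $\supp(1-\chi_5)(z)$ one has $z\ge(1+z_0)/2>1\ge X$, hence $z-1>0$ and $z-X>0$ throughout, which makes all critical points real and nondegenerate.

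For the $\sigma$-integral, the critical points of $\sigma\mapsto\sigma(X-z)+\sigma^3/3$ are $\sigma=\pm\sqrt{z-X}$, nondegenerate ($\partial_\sigma^2=2\sigma\ne0$), with critical values $\mp\tfrac23(z-X)^{3/2}$; on $\supp\chi_3(\sqrt{1+az})$ one has $a(z-X)\le az\le\zeta_1^2-1<\zeta_1^2$, so $\chi_4(\sqrt a\sigma)\equiv1$ near each of them, while away from them $|\partial_\sigma\varphi_\anl|=|\sigma^2-(z-X)|$ is bounded below (by a fixed multiple of $\sqrt{z-X}\ge\sqrt{(z_0-1)/2}$, and by $a^{-1}$ once $|\sqrt a\sigma|\ge\zeta_1$), so integration by parts there produces an $O_{C^\infty}(\hbar^\infty)$ contribution. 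One-dimensional nondegenerate stationary phase with parameters (cf.\ \cite{stein3}, Lemma \ref{lemstat1}) then turns the $\sigma$-integral into $\lambda^{-1/2}\sum_{\epsilon_2=\pm}e^{-\frac23 i\lambda\epsilon_2(z-X)^{3/2}}c_{\epsilon_2}(z,X;a,\lambda)$ modulo $O_{C^\infty}(\hbar^\infty)$, with $c_{\epsilon_2}$ a classical symbol of degree $0$ in $\lambda$ such that $(z-X)^{1/4}c_{\epsilon_2}$ is bounded with all $z^l\partial_z^l$-derivatives, uniformly in $a,\lambda,X$. Relabelling $\epsilon_2\mapsto-\epsilon_2$ produces the $\tfrac23\epsilon_2(z-X)^{3/2}$ term of $\Phi_{N,\epsilon_1,\epsilon_2}$.

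The $T'$-integral of $e^{i\lambda(-\gamma_a(z)T'+\tilde\psi_a(T'))}\sigma_0(\sqrt a T',\hbar)$ is handled the same way: parametrising the Lagrangian of $\tilde\psi_a$ by $T'=-2\mu\sqrt{1+a+a\mu^2}$ and using $1+az=1+a+a\mu^2$, the condition $\tilde\psi_a'(T')=\gamma_a(z)$ becomes $\mu^2=z-1$, giving the critical points $T'_{\epsilon_1}=-2\epsilon_1\sqrt{z-1}\sqrt{1+az}$, which are nondegenerate since $|\tilde\psi_a''(T'_{\epsilon_1})|\asymp\sqrt{z-1}$ uniformly in $a$, and which stay in the interior of $\supp\sigma_0(\sqrt a\,\cdot\,,\hbar)$ because $|\sqrt a\,T'_{\epsilon_1}|\le\zeta_1\sqrt{\zeta_1^2-1}$ is as small as we wish by the nesting of the constants $\zeta_1\to1$, $\theta_0\to0$, $a_0\to0$ fixed before Proposition \ref{p1} (the non-stationary exterior going into $R_{N,a}$). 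Stationary phase gives $\lambda^{-1/2}\sum_{\epsilon_1=\pm}e^{i\lambda J_{a,\epsilon_1}(z)}d_{\epsilon_1}(z;a,\lambda)$ with $J_{a,\epsilon_1}(z)=-\gamma_a(z)T'_{\epsilon_1}+\tilde\psi_a(T'_{\epsilon_1})$ and $(z-1)^{1/4}d_{\epsilon_1}$ a symbol of order $0$ in $(z,\lambda)$ uniformly in $a$; and, exactly as in the proof of item (2) of Proposition \ref{p1}, the envelope identity together with $\gamma_a'(z)=\tfrac1{2\sqrt{1+az}}$ gives $\tfrac{d}{dz}J_{a,\epsilon_1}(z)=\epsilon_1\sqrt{z-1}$, whence $J_{a,\epsilon_1}(z)=\tfrac23\epsilon_1(z-1)^{3/2}$ (using $J_{a,\epsilon_1}\to0$ as $z\to1^+$, since $T'_{\epsilon_1}\to0$ and $\tilde\psi_a(0)=0$) — the last term of $\Phi_{N,\epsilon_1,\epsilon_2}$; the factor $\gamma_a(z)T$ and the term $N(-\tfrac43 z^{3/2}+\tfrac1\lambda B(\lambda z^{3/2}))$ are $\sigma$- and $T'$-independent and pass through unchanged.

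Combining the two expansions, the prefactor $\tfrac{(-i)^N\lambda}{2\pi}$ is cancelled by the two $\lambda^{-1/2}$ factors, giving \eqref{gl28} with $\Theta_{\epsilon_1,\epsilon_2}=c_{\epsilon_2}d_{\epsilon_1}$ times the bounded cutoff factors $\chi_2(z)(1-\chi_5)(z)\tfrac{\chi_3(\sqrt{1+az})}{2\sqrt{1+az}}$ (the unimodular $(-i)^N$ harmlessly absorbed). Its $z$-support is $[(1+z_0)/2,(\zeta_1^2-1)/a]$; on it $z-X\ge z-1\ge(z_0-1)/2>0$ and $az\le\zeta_1^2-1$ keeps every $\sqrt{1+az}$-type factor comparable to $1$ with $z\asymp z-1\asymp z-X$ for large $z$, so $(z-X)^{-1/4}(z-1)^{-1/4}$ is a symbol of order $-\tfrac12$ in $z$ and the remaining factors of order $0$, which yields $|z^l\partial_z^l\Theta_{\epsilon_j}|\le C_l z^{-1/2}$ with $C_l$ independent of $a,\lambda,T,X$. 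Finally $R_{N,a}$ collects the $O_{C^\infty}(\hbar^\infty)$ non-stationary pieces above together with the $O(\lambda^{-\infty})$ tails of the two expansions; since $\lambda=a^{3/2}/\hbar\ge\eta h^{-3\eps/2}\to\infty$ and the $(T',\sigma)$-analysis does not involve $T$ or $N$, this is $O_{C^\infty}(\hbar^\infty)$ on $X\in[0,1]$, $T\in[0,a^{-1/2}]$ uniformly in $a,N$. The step requiring the most care is exactly this last uniformity: every implied constant must be seen to be independent of $a$ — which is ensured by $az\le\zeta_1^2-1$ together with $z-1,z-X\ge(z_0-1)/2$ on $\supp(1-\chi_5)$ — and of $N$, which enters only through the $z$-phase $N(-\tfrac43 z^{3/2}+\tfrac1\lambda B(\lambda z^{3/2}))$ left intact by both integrations.
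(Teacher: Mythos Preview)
Your proof is correct and follows essentially the same approach as the paper: decouple the $(T',\sigma)$-integrations (since the phase is additively separable in these variables), apply one-dimensional nondegenerate stationary phase to each on the region $z\ge(1+z_0)/2>1\ge X$, and combine the four resulting branches into the phases $\Phi_{N,\epsilon_1,\epsilon_2}$. The paper makes the uniformity in large $z$ slightly more explicit by rescaling $\sigma=(z-X)^{1/2}s$ and $T'=(z-1)^{1/2}s$ so that the critical points sit at fixed positions and the effective large parameters become $\lambda(z-X)^{3/2}$ and $\lambda(z-1)^{3/2}$; this is what underlies your symbol bounds $(z-X)^{1/4}c_{\epsilon_2}$, $(z-1)^{1/4}d_{\epsilon_1}=O(1)$ and the form $O(\lambda^{-\infty}(z-X)^{-\infty})$ of the non-stationary remainders, which is what makes the subsequent $dz$-integration over an interval of length $\sim a^{-1}$ still produce an $O_{C^\infty}(\hbar^\infty)$ error.
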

\begin{proof}

The proof is a simple application of stationary phase in $(T',\sigma)$ in the integral
\eqref{gl25}. Recall $z\geq (1+z_{0})/2>1\geq X$ on the support of $(1-\chi_{5})(z)$, 
and $ z \leq (\zeta_{1}^2-1)/a$ on the support of $\chi_{3}(\sqrt{1+az})$. 
The $\sigma$ integral is equal to 
\begin{align*}\label{gl29}
J_{1} & = \int e^{i\lambda(\sigma^3/3-\sigma (z-X))}\chi_{4}(\sqrt a
\sigma )d\sigma\\
 & =
(z-X)^{1/2}\int e^{i\lambda (z-X)^{3/2}(s^3/3-s)}\chi_{4}(\sqrt a(z-X)^{1/2}s )ds\nonumber\,.
\end{align*}
One has $\sqrt a(z-X)^{1/2}\leq \sqrt{\zeta_{1}^2-1}$. Thus, by
stationary phase near the two critical points $s=\pm 1$ and
integration by part in $s$ elsewhere, we get
\begin{equation}\label{gl30}
J_{1}=\lambda^{-1/2}(z-X)^{-1/4}(e^{2/3 i\lambda (z-X)^{3/2}}b_{+}+e^{-2/3 i\lambda (z-X)^{3/2}}b_{-}) +O(\lambda^{-\infty}(z-X)^{-\infty})
\end{equation}
where $b_{\pm}(\sqrt a(z-X)^{1/2}, \lambda (z-X)^{3/2})$ are symbols of degree $0$
in the (large) parameter $\lambda (z-X)^{3/2}$.
Next, the $T'$ integral is equal to 
\begin{equation*}\label{gl31}
J_{2}=\int e^{i\lambda(-\gamma_{a}(z)T'+\tilde\psi_{a}(T'))}\sigma_{0}(\sqrt a T',\hbar)dT'\,.
\end{equation*}
Recall   that
$$
T'=-2\mu\sqrt{1+a+a\mu^2}, \
\  \partial_{T'}(-\gamma_{a}(z)T'+\tilde\psi_{a}(T'))= {\mu^2+1-z
  \over \sqrt{1+az}+\sqrt{1+a+a\mu^2}}\,.
$$ 
Thus we get two distinct critical points $T'_{\pm}=\mp 2\sqrt{z-1}\sqrt{1+az}$. The associated
critical values are   $-\gamma_{a}(z)T'_{\pm}+\tilde\psi_{a}(T'_{\pm})=\pm 2/3 (z-1)^{3/2}$.
As before for the $\sigma$ integral, we perform the change of variable $T'=s\sqrt{z-1}$,
in order to have the two critical points $s_{\pm}=\mp \sqrt{1+az}$ uniformly
at finite distance in $z$. One has $\sqrt a(z-1)^{1/2}\leq \sqrt{\zeta_{1}^2-1}$, 
and using \eqref{eq:2} and \eqref{eq:2bis} we get, again by stationary phase near the two critical points $s_{\pm}$ and integration by part in $s$ elsewhere,
\begin{equation}\label{gl32}
J_{2}=\lambda^{-1/2}(z-1)^{-1/4}(e^{2/3 i\lambda (z-1)^{3/2}}c_{+}+e^{-2/3 i\lambda (z-1)^{3/2}}c_{-}) +O(\lambda^{-\infty}(z-1)^{-\infty})
\end{equation}
where $c_{\pm}(\sqrt a(z-1)^{1/2}, \lambda (z-1)^{3/2})$ are symbols of degree $0$
in the large parameter $\lambda (z-1)^{3/2}$. By \eqref{gl30} and \eqref{gl32},
one gets that formula \eqref{gl28} holds true with symbols
\begin{equation}\label{gl32bis}
\Theta_{\epsilon_{1},\epsilon_{2}} (z;a,\lambda)={(-i)^N\chi_2(z)(1-\chi_{5})(z)\chi_3(\sqrt{1+az})c_{\epsilon_{1}}b_{\epsilon_{2}} \over 4\pi\sqrt{1+az}
(z-1)^{1/4}(z-X)^{1/4}}\,
\end{equation}
which completes the proof of Lemma \ref{lemgl3}.
\end{proof}
Let 
\begin{equation*}
\begin{aligned}
  \label{gl33}
  \gamma_{N,1,\epsilon_{1},\epsilon_{2}}(T,X,Y,h)= &\frac{\sqrt a}{(2\pi h)^2} \int e^{i {a^{3/2}\over h } \eta    Y} w_{N,1,\epsilon_{1},\epsilon_{2}}(T,X,\hbar) \eta \chi_0(\eta) d\eta
\\
w_{N,1,\epsilon_{1},\epsilon_{2}}(T,X,\hbar)=& \int e^{i\lambda\Phi_{N,\epsilon_{1},\epsilon_{2}}}\Theta_{\epsilon_{1},\epsilon_{2}}(z;a,\lambda)dz\,.
\end{aligned}
\end{equation*}
In order to prove Proposition \ref{propL1}, we are reduced to proving the following inequality:
\begin{equation}\label{gl34}
\vert \sum_{2\leq N\leq C_{0}/\sqrt a} \gamma_{N,1,\epsilon_{1},\epsilon_{2}}(T,X,Y,h)\vert \leq C(2\pi h)^{-2}h^{1/3}
\end{equation}
 with a constant $C$ independent of $h\in ]0,h_{0}]$, $a\in [h^\alpha,a_{0}]$, $X\in [0,1]$, $T\in [0,a^{-1/2}]$.

For convenience, we take $Z=z^{3/2}$ as a new variable of integration so that
\begin{equation}\label{gl35}
w_{N,1,\epsilon_{1},\epsilon_{2}}(T,X,\hbar)=\int e^{i\lambda\Phi_{N,\epsilon_{1},\epsilon_{2}}}\tilde\Theta_{\epsilon_{1},\epsilon_{2}}(Z;a,\lambda)dZ\,;
\end{equation} 
$\tilde\Theta_{\epsilon_{1},\epsilon_{2}}(Z;a,\lambda)$ are now smooth functions of $Z$
with support in $[((1+z_{0})/2)^{3/2}, ((\zeta_{1}^2-1)/a)^{3/2}]$. Since
$dz= 2 Z^{-1/3}dZ/3$, we get  
$\vert Z^l\partial_{Z}^l\tilde\Theta\vert\leq C_{l}Z^{-2/3}$ with $C_{l}$ independent of
$a,\lambda$. One has

\begin{equation}
\begin{aligned}
  \label{gl36}
\partial_{Z}\Phi_{N,\epsilon_{1},\epsilon_{2}}= &{2\over 3}\Big(H_{a,\epsilon_{1},\epsilon_{2}}(T,X;Z)-2N(1-{3\over 4}B'(\lambda Z))\Big) 
\\
H_{a,\epsilon_{1},\epsilon_{2}}= & Z^{-1/3} \Big({T\over 2}(1+aZ^{2/3})^{-1/2} 
+ \epsilon_{1}(Z^{2/3}-1)^{1/2}+ \epsilon_{2}(Z^{2/3}-X)^{1/2}\Big)\\
\partial_{Z}H_{a,\epsilon_{1},\epsilon_{2}}= & {1\over 3}Z^{-4/3} \Big(-{T\over 2}(1+aZ^{2/3})^{-3/2}
(1+2aZ^{2/3}) \\
 &\,\,\,\,\,\,\,\,\,\,\,\,\,\,\,\,\,\,\,\,\,\,\,\,\,\,\,\,\,\,\,\,\,\,\,\,\,\,\,\,\,\,\,\,\,\,\,\,{}+ \epsilon_{1}(Z^{2/3}-1)^{-1/2}+ \epsilon_{2}X(Z^{2/3}-X)^{-1/2}\Big)\,.
\end{aligned}
\end{equation}

We will first prove that \eqref{gl34} holds true in the case $(\epsilon_{1},\epsilon_{2})=(+,+)$. From \eqref{gl36}, we get that 
the equation $\partial_{Z}H_{a,+,+}(Z)=0$ admits an unique solution 
$Z_{q}=Z^+_{q}(T,X,a)>1$, such that
\begin{equation}
\begin{aligned}
  \label{gl37}
\lim_{T\rightarrow \infty}Z^+_{q}(T,X,a) = &1 \quad \text{uniformly in} \ X,a\\   
0>{9\over 2}Z_{q}^{5/3}\partial^2_{Z}H_{a,+,+}(Z_{q}) =& -{aT\over 2}(1+aZ_{q}^{2/3})^{-5/2}
(\frac 1 2-aZ_{q}^{2/3})\\ 
 &{}-\frac 1 2(Z_{q}^{2/3}-1)^{-3/2}-\frac 1 2 X(Z_{q}^{2/3}-X)^{-3/2}\,.
\end{aligned}
\end{equation}
Therefore, the function $H_{a,+,+}(Z)$ is strictly increasing on $[1, Z_{q}[$,
and strictly decreasing on $]Z_{q},\infty[$. Observe that 
\begin{equation}\label{gl38}
H_{a,+,+}(1)={T\over 2}(1+a)^{-1/2}+(1-X)^{1/2}, \quad \lim_{Z\rightarrow \infty}
H_{a,+,+}(Z)=2\,.
\end{equation}
For all $k$ one has
\begin{equation}\label{gl39}
\sup_{Z\geq 1}\vert \partial^k_{Z}(NB'(\lambda Z)\vert \leq C_{k}N\lambda^{-2}Z^{-(k+2)}\leq 
C'_{k}\hbar^2 a^{-7/2}\leq C''_{k}h^\nu, \ \ \nu=2-7\alpha/2>0\,.
\end{equation}
Let $T_{0}\gg 1$. We first prove that \eqref{gl34} holds true
for $T\in [0,T_{0}]$. Since $H_{a,+,+}(Z)\leq C (1+T)$, for $N\geq N(T_{0})=C(1+T_{0})$, one gets $\vert \partial_{Z}\Phi_{N,+,+}(Z)\vert \geq c_{0}N$
with $c_{0}>0$, and $\vert \partial^k_{Z}\partial_{Z}\Phi_{N,+,+}(Z)\vert \leq c_{k}NZ^{-k}$ for $k\geq 1$. Therefore, by integration by parts in $Z$ in \eqref{gl35} with the operator
$L(\Theta)=\lambda^{-1}\partial_{Z}((\partial_{Z}\Phi_{N,+,+})^{-1}\Theta )$, one gets an extra 
factor $(\lambda N Z)^{-1}$ at each iteration. Thus, we get
$w_{N,1,+,+}\in O(N^{-\infty}\lambda^{-\infty})$, and this implies 

\begin{equation*}\label{gl40}
\sup_{T\leq T_{0}, X\in [0,1], Y\in \R}\bigl \vert \sum_{N(T_{0})\leq N\leq C_{0}/\sqrt a} \gamma_{N,1,\epsilon_{1},\epsilon_{2}}(T,X,Y,h)\bigr\vert \in O(h^\infty)\,.
\end{equation*}

Next, for $T\in [0,T_{0}]$ and $2\leq N \leq N(T_{0})$, one may estimate the sum in \eqref{gl34}
by the $\sup$ of each term. But in that case, we know by \eqref{gl36},
\eqref{gl37}, and \eqref{gl39} that there exists at most a critical
point of order $2$ near $Z=Z_{q}$ for $\Phi_{N,+,+}$, and 
$$
\vert\partial_{Z}\Phi_{N,+,+}\vert+\vert\partial^2_{Z}\Phi_{N,+,+}\vert+\vert\partial^3_{Z}\Phi_{N,+,+}\vert
\geq c >0\,.
$$
 Moreover, by  the second item of \eqref{gl38},
and $N\geq 2$, one has  a positive lower bound for $\vert\partial_{Z}\Phi_{N,+,+}(Z)\vert$
for large values of $Z$; thus, large values of $Z$ yield $O(\lambda^{-\infty})$ contributions
to $w_{N,1,+,+}$, and eventually the worst contribution to
$w_{N,1,+,+}$ will be the critical
point of order $2$ near $Z=Z_{q}$. This provides 
$$
\vert w_{N,1,+,+}(T,X,\hbar)\vert \leq C \lambda^{-1/3}\,\,\text{
  with}\,\,\,C \,\,\text{independent of}\,\,T\in [0,T_{0}], X\in
[0,1]\,.
$$
Since $a^{1/2}\lambda^{-1/3}=\hbar^{1/3}=(h/\eta)^{1/3}$, we get 

\begin{equation*}\label{gl41}
\sup_{T\leq T_{0}, X\in [0,1], Y\in \R}\vert \sum_{2\leq N\leq N(T_{0})} \gamma_{N,1,\epsilon_{1},\epsilon_{2}}(T,X,Y,h)\vert \leq C(T_{0})(2\pi h)^{-2}h^{1/3}\,.
\end{equation*}

Next we prove that that \eqref{gl34} holds true
for $T\in [T_{0}, a^{-1/2}]$. As before, we may assume $N\leq C_{1}T$, with $C_{1}$ large,
the contribution of
the sum  $C_{1}T\leq N\leq C_{0}/\sqrt a$ being negligible. Recall that we have $Z\geq Z_{0}=((1+z_{0})/2)^{3/2}>1$
on the support of $\tilde\Theta_{+,+}$ in formula \eqref{gl35}. By the first item
of \eqref{gl37}, one may choose  $T_{0}$  large enough so that $Z_{q}^+(T,X,a)\leq (1+Z_{0})/2<Z_{0}$ for all $T\geq T_{0}$. By the last item of \eqref{gl36}, increasing $T_{0}$
if necessary, and using \eqref{gl39}, we may assume with a constant $c>0$
\begin{equation*}
\vert\partial^2_{Z}\Phi_{N,+,+}(Z)\vert \geq cTZ^{-4/3}, \quad \forall Z\geq Z_{0}, \ 
\forall T\geq T_{0}, \ \forall N\leq C_{0}a^{-1/2}
\end{equation*}
 Therefore, on the support of $\tilde\Theta_{+,+}$, the phase
$\Phi_{N,+,+}$ admits at most one  critical point $Z_{c}=Z_{c}(T,X,N,\lambda,a)$ and this critical point is non degenerate. Since $N\geq 2$, from the first two items of \eqref{gl36} we get
$Z_{c}^{1/3}\leq T$, and this implies $Z_{c}^{1/3}\simeq T/N$. 
If $T/N$ is bounded, $Z_{c}$ is bounded, and since $\partial_{Z}\Phi\leq -c<0$
for large $Z$, we get by stationary phase 
$$
\vert w_{N,1,+,+}(T,X,\hbar)\vert \leq
C\lambda^{-1/2}T^{-1/2}\,\,\text{ with}\,\,C \,\,\text{ independent
  of}\, N.
$$
 If $T/N$ is large, then we perform the change of variable $Z= s(T/N)^3$ in \eqref{gl35}; the unique critical point $s_{c}$
remains in a fixed compact interval of $]0,\infty[$, one has $\partial_{s}\Phi\leq -c(T/N)^3<0$
for $s$ large and also 
$$
\partial_{s}^k\tilde\Theta_{+,+}(s(T/N)^3,a,\lambda)\leq
C_{k}(N/T)^2s^{-2/3-k}.
$$
Thus, by stationary phase
\begin{equation}\label{gl42}
\forall T\in [T_{0},a^{-1/2}]\,,\quad \sup_{2\leq N\leq C_{1}T}\sup_{X\in [0,1]}\vert w_{N,1,+,+}(T,X,\hbar)\vert \leq C\lambda^{-1/2}T^{-1/2}\,.
\end{equation}
By Lemma \ref{lemgl2}, we know that for any given $M=(X,Y,T)$
there is at most $C_{0}$ values  of $N$ such that the projection of ${\bf\Lambda}_{a,N,h}$
intersect the ball of radius $1$ centered at $M$; therefore, we will prove that the previous arguments  imply
\begin{equation}\label{gl43}
\sup_{T\in[T_{0},a^{-1/2]}, X\in [0,1], Y\in \R}\vert \sum_{2\leq N\leq C_{0}/\sqrt a} \gamma_{N,1,+,+}(T,X,Y,h)\vert \leq C(T_{0})(2\pi h)^{-2}(a^{-1/4}h^{1/2})
\end{equation}
and since $a^{-1/4}h^{1/2}\leq h^{1/3}$, we will get that \eqref{gl34} holds true.
Let us now explain more precisely how one can estimate the sum in \eqref{gl43} by the supremum over $N$. \\
Let $G_{N}(T,X,\lambda,a)=\Phi_{N,+,+}(T,X,Z_{c}(T,X,N,\lambda,a);a,\lambda)$. The stationary phase at the critical point $Z_{c}=Z_{c}(T,X,N,\lambda,a)$ in \eqref{gl35} gives
\begin{equation*}\label{gl44}
w_{N,1,+,+}(T,X,\hbar)=\lambda^{-1/2}T^{-1/2}e^{i\lambda G_{N}(T,X,\lambda,a)}\psi_{N}(T,X;a,\lambda)\,.
\end{equation*}
By \eqref{gl30}, \eqref{gl32}, \eqref{gl32bis}, we know that the $\psi_{N}(T,X;a,\lambda)
$ are symbols of degree $0$ in $\lambda$, and $\partial_{\lambda}^k\psi_{N}\leq C_{k}\lambda^{-k}$ with $C_{k}$ independent of $2\leq N \leq C_{1}T$. This gives with 
$\tilde\lambda=a^{3/2}/h=\lambda/\eta$
\begin{multline}
  \label{gl45}
  \gamma_{N,1,+,+}(T,X,Y,h)= \frac{T^{-1/2}h^{1/2}a^{-1/4}}{(2\pi
    h)^2} \\
\times 
  \int e^{i \tilde\lambda \, \eta  (Y+ G_{N}(T,X,\tilde\lambda\eta,a))}
   \psi_{N}(T,X;a,\tilde\lambda\eta)\eta^{1/2} \chi_0(\eta) d\eta\,.
\end{multline}
This is an integral with large parameter $\tilde\lambda$ and phase 
$$
L_{N}(T,X,Y,\eta,\tilde\lambda)=\eta(Y+G_{N}(T,X,\tilde\lambda\eta)).
$$
 By construction, the equation 
$$
\partial_{\eta}L_{N}=Y+G_{N}(T,X,\lambda,a)+\lambda\partial_{\lambda}G_{N}(T,X,\lambda,a)=0
$$
implies that $(X,Y,T)$ belongs to the projection of ${\bf \Lambda}_{a,N,h}$ on $\R^3$. Let $T\in [T_{0},a^{-1/2}], X\in [0,1]$ and 
$Y\in \R$ be given.  For $N\notin \mathcal N_{1}(X,Y,T)$,
one has therefore $\partial_{\eta}L_{N}(T',X',Y',\eta,\tilde\lambda)\not=0$ for all
$\lambda$ and all $X'\in [0,1], \vert Y'-Y\vert+\vert T'-T\vert\leq 1$.
This implies, since $\partial_{\eta}L_{N}$ is linear in $Y$, 
$\vert \partial_{\eta}L_{N}(T,X,Y,\eta,\tilde\lambda)\vert\geq 1$. 
Moreover, one has, with $C_{k}$ independent of $N,T,X,\eta,\lambda,a$
\begin{equation}\label{gl46}
\vert \partial^k_{\eta}(\partial_{\eta}L_{N})\vert\leq C_{k}\,.
\end{equation}
To prove \eqref{gl46}, we just use that $\partial_{\lambda}Z_{c}$ satisfies
$$\partial_{\lambda}Z_{c}\partial_{Z}^2 \Phi_{N,+,+}(Z_{c})=-\partial_{\lambda}\partial_{Z}\Phi_{N,+,+}(Z_{c})= NZ_{c}B''(\lambda Z_{c})$$
and thus from \eqref{gl36} and \eqref{gl39}, we get for all $k\geq 1$,
$(\eta\partial_{\eta})^kZ_{c}=(\lambda\partial_{\lambda})^kZ_{c}\in O(h^\nu)$. Then \eqref{gl46} follows from 
 $$
\lambda\partial_{\lambda}G_{N}(T,X,\lambda)=\lambda(\partial_{\lambda}\Phi_{N,+,+})(T,X,Z_{c};a,\lambda)=
{N\over\lambda}(-B(\lambda Z_{c})+\lambda Z_{c}B'(\lambda Z_{c}))\,.
$$
Therefore, by integration by parts in $\eta$ in \eqref{gl45}, we get  
\begin{equation}\label{gl43bis}
\sup_{T\in[T_{0},a^{-1/2]}, X\in [0,1], Y\in \R}\vert \sum_{N\notin \mathcal N_{1}(X,
Y,T)} \gamma_{N,1,+,+}(T,X,Y,h)\vert \in O(h^\infty)\,.
\end{equation}
Finally, by Lemma \ref{lemgl2}, one has $\vert \mathcal N_{1}(X,Y,T)\vert \leq C_{0}$, and therefore, we get from \eqref{gl43bis} and \eqref{gl42} that \eqref{gl43} holds true.

Next, we show that \eqref{gl34} holds true for $(\epsilon_{1},\epsilon_{2})=(-,+)$.
In that case, from the last item of \eqref{gl36} and $X\in [0,1]$, one gets 
$\partial_{Z}H_{a,-,+}<0$ for $T>0$. Therefore the function $H_{a,-,+}(Z)$
decreases on $[1,\infty[$, from $H_{a,-,+}(1)={T(1+a)^{-1/2}\over 2}+ (1-X)^{1/2}$ to
 $H_{a,-,+}(\infty)=0$. The
equation $\partial_{Z}\Phi_{N,-,+}=0$ admits an unique solution  $Z_{c}$, and this critical
point is non degenerate. We can thus argue as we have done before for the $(+,+)$ case.

Finally, one sees that the case $(\epsilon_{1},\epsilon_{2})=(+,-)$ is similar
to the $(+,+)$ case, and the case $(\epsilon_{1},\epsilon_{2})=(-,-)$ is similar
to the $(-,+)$ case. We leave the details to the reader.

The proof of Proposition \ref{propL1} is complete. 
 \subsubsection{Proof of Proposition \ref{propL2}}
 Recall that
    \begin{multline}\label{gl50}
w_{N,2}(T,X,\hbar)= \frac{(-i)^N\lambda}{2\pi} \int e^{i \lambda \varphi_\anl}
\chi_2(z)\frac{\chi_3((1+az)^\frac 1 2)}{2(1+az)^\frac 1 2} \\
{}\times \chi_4(
a^\frac 1 2
\sigma) \sigma_0( a^\frac 1 2 T',\hbar)\chi_{5}(z) \,dT'd\sigma dz
\end{multline}
as well as $\partial_{T'}\varphi_\anl= {\mu^2+1-z\over \sqrt{1+az}+\sqrt{1+a+a\mu^2}}$
and $\partial_{\sigma}\varphi_\anl= X-z+\sigma^2$. \\Let 
$K=\{ T'=0, \sigma \in [-1,1],z=1\}$, let $\omega$ be a small neighborhood of $K$
and $\chi_{6}(T',\sigma,z)\in C_{0}^\infty(\omega)$ equal to $1$ near K. Since 
for $z$ in the support of the integral \eqref{gl50}, one has $z\in[\beta/2,z_{0}]$ with $z_{0}>1$ close to $1$ , decreasing $z_{0}>1$ if necessary, we get
by integration by parts in $T',\sigma$
    \begin{equation}\label{gl50bis}\begin{aligned}
w_{N,2}(T,X,\hbar)= & \frac{(-i)^N\lambda}{2\pi} \int e^{i \lambda \varphi_\anl}
\chi (T',\sigma,z; a,\hbar) \,dT'd\sigma dz + O(\lambda^{-\infty}) \\
\chi (T',\sigma,z; a,\hbar)= &\chi_2(z)\frac{\chi_3(\sqrt{1+az})}{2\sqrt{1+az}} \chi_4(\sqrt a
\sigma) \sigma_0(\sqrt a T',\hbar)\chi_{5}(z)\chi_{6}(T',\sigma,z)
\end{aligned}\end{equation}
with $O(\lambda^{-\infty})$ uniform in $T,X,N,a$. Moreover, $\chi(T',\sigma,z; a,\hbar)$
is a classical symbol of degree $0$ in $\hbar$, with support  $(T',\sigma,z)\in \omega$ and $a$
is just a harmless  parameter in $\chi$. 

  We first perform the integration with respect to $z$ in \eqref{gl50bis}. Recall
\begin{equation*}
  \label{gl51}
  \begin{aligned}
  \varphi_\anl(T,X,T',\sigma,z)=
  &\gamma_a(z)(T-T')+\tilde\psi_a(T')+\sigma(X-z)+\sigma^3/3\\ & {}+N(-\frac
  4 3 z^\frac 3 2+\frac 1 \lambda B(\lambda z^\frac 3 2))\\
 \partial_{z}\varphi_\anl(T,X,T',\sigma,z)= &{T-T'\over 2(1+az)^{1/2}} -\sigma
 -2Nz^{1/2}(1-{3\over 4}B'(\lambda z^{3/2})) \,.
\end{aligned}
\end{equation*}
Thus, $\varphi_\anl$ admits a unique critical point $z_{c}(T,T',\sigma,a,\lambda)$, and we are just
interested with values of the parameters such that $z_{c}$ is close to $1$.
With $u=({T-T'\over 2} -\sigma)/2N$, this means $u$ close to $1$. 
Since $\sigma$ is close to $[-1,1]$ and $N\geq 1$, we may thus assume
 $\tilde T=T/4N$ close to $[1/2,3/2]$, say $\tilde T\in [1/4,2]$.
We denote by $g(\tilde T,T',\sigma;a,N,\lambda)$ various functions
which are classical symbols of degree $0$ in $\lambda$ and with parameters $a,N$; in particular, 
with $w=(\tilde T,T',\sigma)$, for all $\alpha$, there exists $C_{\alpha}$ independent of 
$a,N,\lambda$ such that $\vert \partial_{w}^\alpha g \vert \leq C_{\alpha}$ for all
 $\tilde T \in [1/4,2]$ and all $(T',\sigma)$ close to $(0,0)$.
 
 We denote by $f_{k}(a,\tilde T, T'/N,\sigma/N)$ functions which are homogeneous 
 of degree $k$ in
 $(T'/N,\sigma/N)$. The notation $\mathcal O_{j}$ means any function of the form
 $f=\sum _{k\geq j}f_{k}$.
 We will use the following functions
 \begin{equation}\label{gl52bis}
 \begin{aligned}
 F_{0}= &{2\tilde T^2\over 1+\sqrt{1+4a\tilde T^2}} \\
 G_{0}^{-1}=& F_{0}^{1/2}(1+aF_{0})^{1/2}({1\over F_{0}}+{a\over 1+aF_{0}})\\
 H_{0}= &{1-F_{0}\over \sqrt{1+a}+\sqrt{1+aF_{0}}}\\
 F_{1}=&-{G_{0}\over N}(T'/2+\sigma (1+aF_{0})^{1/2})\,.
 \end{aligned}
 \end{equation}
\begin{lemma}\label{lemgl4} 
  \begin{itemize}
  \item[(a)] One has 
$$
z_{c}=F_{0}+F_{1}+ \mathcal O_{2}+g_{0}/\lambda^2\,.
$$
\item[(b)] The critical value $\Psi_{a,N,\lambda}(\tilde T,X,T',\sigma)=\varphi_\anl(T,X,T',\sigma,z_{c})$ is equal to
\begin{equation}\label{gl52}
\begin{aligned}
\Psi_{a,N,\lambda}= &(X-F_{0})\sigma+\sigma^3/3 + H_{0}T'+\tilde\psi_{a}(T')\\
&{}+{G_{0}\over 2N(1+aF_{0})^{1/2}}(\sigma (1+aF_{0})^{1/2} +T'/2)^2 \\
&{}-{1\over 12 N^2}(T'/2+\sigma)^3+ aN \mathcal O_{3} + g_{1}/\lambda^2\\&{}+ N(4\tilde T \gamma_{a}(F_{0})-{4\over 3}F_{0}^{3/2}+g_{2}(\tilde T,a,N,\lambda)/\lambda^2) \,.
\end{aligned}
\end{equation}
  \end{itemize}
\end{lemma}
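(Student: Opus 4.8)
The plan is to locate the unique $z$-critical point $z_c$ of $\varphi_\anl$ by the implicit function theorem, expand it, substitute back into $\varphi_\anl$, and sort the result by homogeneity in $(T',\sigma)$. For part (a): using $\gamma_a'(z)=\tfrac12(1+az)^{-1/2}$, the formula for $\partial_z\varphi_\anl$ recalled just above shows that at $T'=\sigma=0$, once the $B'$-term is dropped, $\partial_z\varphi_\anl=0$ holds exactly when $z(1+az)=(T/4N)^2=\tilde T^2$, whose unique positive root is $F_0$; note the identity $F_0^{1/2}(1+aF_0)^{1/2}=\tilde T$. A short computation using this identity gives $\partial_z^2\varphi_\anl(F_0)|_{T'=\sigma=0}=-N/((1+aF_0)^{1/2}G_0)+O(N\lambda^{-2})$, which is comparable to $N$ since $F_0$ (hence $\tilde T^2$) is bounded above and below on the support of the symbols. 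The IFT then solves $\partial_z\varphi_\anl=0$ for a smooth $z_c$, in which $\lambda^{-2}$ enters only through the symbol $B'$, which yields the $g_0/\lambda^2$ correction with $g_0$ a classical symbol of degree $0$. Dividing the critical equation by $2N$ exhibits $(T',\sigma)$ only through $T'/N,\sigma/N$, the leading forcing being $\tilde T$; so $z_c$ is smooth in $(\tilde T,a,T'/N,\sigma/N,\lambda^{-2})$, its affine part in $(T',\sigma)$ being $F_0+F_1$ with $F_1=-(\partial_z^2\varphi_\anl)^{-1}(\partial_{T'}\partial_z\varphi_\anl\,T'+\partial_\sigma\partial_z\varphi_\anl\,\sigma)$ evaluated at $F_0$, $T'=\sigma=0$, $B'\equiv0$, i.e.\ $F_1=-\tfrac{G_0}{N}(T'/2+\sigma(1+aF_0)^{1/2})$, and the higher part being $\mathcal O_2$.

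For part (b), set $\delta=z_c-F_0=F_1+\mathcal O_2+g_0/\lambda^2$ and Taylor-expand $\gamma_a(z)$, $z^{3/2}$ and $\tfrac1\lambda B(\lambda z^{3/2})$ about $z=F_0$ inside $\Psi_\anl=\varphi_\anl(T,X,T',\sigma,z_c)$. Since $z_c$ is $X$-independent and critical in $z$, the envelope identity $\partial_X\Psi_\anl=\partial_X\varphi_\anl=\sigma$ produces the term $(X-F_0)\sigma$; the value at $T'=\sigma=0$ equals $\gamma_a(z_c^{(0)})\,4N\tilde T+N(-\tfrac43(z_c^{(0)})^{3/2}+\tfrac1\lambda B(\lambda(z_c^{(0)})^{3/2}))$ with $z_c^{(0)}=F_0+O(\lambda^{-2})$, which is the term $N(4\tilde T\gamma_a(F_0)-\tfrac43F_0^{3/2}+g_2/\lambda^2)$. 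The cancellation at the heart of the expansion is that $\gamma_a'(F_0)\,T=2NF_0^{1/2}$ exactly kills the $\delta$-linear part $-2NF_0^{1/2}\delta$ coming from $N(-\tfrac43z^{3/2})$, so the remaining $O(\delta)$ contribution is $\partial_z\varphi_\anl(F_0)\,\delta=\big(\tfrac32 NF_0^{1/2}B'(\lambda F_0^{3/2})-\tfrac{T'}{2(1+aF_0)^{1/2}}-\sigma\big)\delta$; adding $\partial_z\varphi_\anl(F_0)\,F_1$ to $\tfrac12\partial_z^2\varphi_\anl(F_0)\,F_1^2$ gives precisely $\tfrac{G_0}{2N(1+aF_0)^{1/2}}(\sigma(1+aF_0)^{1/2}+T'/2)^2$, while $\sigma(X-z_c)+\sigma^3/3$ and $\tilde\psi_a(T')$ furnish the remaining explicit terms.

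To pin down the cubic term $-\tfrac{1}{12N^2}(T'/2+\sigma)^3$ and to control the rest, I would first carry out the computation exactly at $a=0$: there $z_c^{1/2}=\tilde T-\tfrac{T'/2+\sigma}{2N}$ (with $B'$ dropped) and every ingredient of $\varphi_{0,N,\lambda}$ is polynomial, so a direct expansion, using $G_0|_{a=0}=\tilde T$, $H_0|_{a=0}=\tfrac{1-F_0}{2}$, $\tilde\psi_0(T')=T'^3/24$ and the substitution $\sigma=2Nw-T'/2$, $w=\tfrac{T'/2+\sigma}{2N}$, verifies \eqref{gl52} with $\mathcal O_3\equiv0$. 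For general $a$, smoothness in $a$ together with the fact that at $a=0$ the explicit terms already exhaust $\Psi_{0,N,\lambda}$ forces the non-$B$ part of the remainder to vanish at $a=0$, hence to be divisible by $a$; combined with the homogeneity from (a) (the degree-$k$-in-$(T',\sigma)$ part of $\Psi_\anl$ is $N$ times a degree-$k$-homogeneous function of $T'/N,\sigma/N$) and with the fact that the explicit terms already absorb the degree $\le2$ parts, this turns the non-$B$ remainder into $aN\mathcal O_3$. Finally $B$ contributes $O(\lambda^{-2})$ throughout, with its $(T',\sigma)$-dependent share going into $g_1/\lambda^2$ and its $(T',\sigma)$-independent share (possibly carrying an extra $N$, harmless because $N\lambda^{-2}\lesssim\hbar^2a^{-7/2}=o(1)$ for $a\ge h^\alpha$, $\alpha<4/7$) into $Ng_2/\lambda^2$; uniformity in $N$ of the symbols follows from the uniform bounds in (a) and the symbol estimates on $B$.

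The main obstacle is precisely this last bookkeeping in (b): keeping the cancellations and the homogeneity structure of the remainder under control while checking that nothing escapes the $aN\mathcal O_3+g_1/\lambda^2$ (resp.\ $Ng_2/\lambda^2$) buckets --- in particular correctly tracking which $\lambda^{-2}$-errors come with an extra factor $N$. The reduction to $a=0$ is what makes the cubic coefficient and the $a$-divisibility of the remainder manageable rather than a brute-force calculation.
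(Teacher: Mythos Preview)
Your proof is correct and part (a) matches the paper's argument essentially verbatim. For part (b) you take a genuinely different route: you substitute $z_c=F_0+\delta$ directly into $\varphi_\anl$ and Taylor-expand, tracking the cancellation $\gamma_a'(F_0)T=2NF_0^{1/2}$ to isolate the quadratic term, then reduce to $a=0$ to pin down the cubic coefficient and the $a$-divisibility of the remainder. The paper instead organizes (b) around the envelope identities
\[
\partial_\sigma\bigl(\Psi_\anl-\sigma X-\sigma^3/3-\tilde\psi_a\bigr)=-z_c,\qquad
\partial_{T'}\bigl(\Psi_\anl-\sigma X-\sigma^3/3-\tilde\psi_a\bigr)=-\gamma_a(z_c),
\]
checks this system is integrable, and then integrates the expansion of $z_c$ already obtained in (a), fixing the constant at $T'=\sigma=0$; the explicit $a=0$ computation is invoked only to read off the second-line terms. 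The paper's route is shorter because one only needs to integrate $z_c$ and $\gamma_a(z_c)$ rather than expand every ingredient of $\varphi_\anl$ separately; your route is more hands-on but has the virtue of making the provenance of each term in \eqref{gl52} transparent. Both rely on the same reduction to $a=0$ for the cubic term and the $aN\mathcal O_3$ structure of the remainder.
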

\begin{proof}
(a) The equation for $z_{c}$ when $\lambda=\infty$ is 
$$
 z^{1/2}(1+az)^{1/2}= \tilde T -{1\over 2N}(T'/2+\sigma
(1+az)^{1/2})\,.
 $$
The solution of this equation is clearly of the form $z=\sum f_{k}(a,\tilde T, T'/N,\sigma/N)$
with $f_{0}=F_{0}$ solution of $F_{0}(1+aF_{0})= \tilde T^2$, and we get $F_{1}$
by a Taylor expansion at order $1$. Then (a) is a  consequence of the implicit function theorem 
applied to 
$$
 z^{1/2}(1+az)^{1/2} (1-{3\over 4}B'(\lambda z^{3/2}))= \tilde T
 -{1\over 2N}(T'/2+\sigma (1+az)^{1/2})\,.
 $$
 To prove part (b), one may of course 
insert the formula for $z_{c}$ into the definition of  $\varphi_\anl$. Another way is
to use
\begin{equation*}\label{gl53}
\begin{aligned}
\partial_{\sigma}(\Psi_{a,N,\lambda}-\sigma X-\sigma^3/3-\tilde\psi_{a})= & -z_{c}\\
\partial_{T'}(\Psi_{a,N,\lambda}-\sigma X-\sigma^3/3-\tilde\psi_{a})= & -\gamma_{a}(z_{c})\,.
\end{aligned}
\end{equation*}
Using part (a), this system is easily seen to be integrable and yields formula \eqref{gl52}
up to an integration constant which is easy to compute when $T'=\sigma=0$.
Moreover, when $a=0$ and  $\lambda=\infty$, one has $z_{c}^{1/2}= \tilde T-{1\over 2N}(T'/2+\sigma)$; therefore, one can easily compute the value of the critical value when 
$a=0,\lambda=\infty$, and this provides
the first two terms on the second line of \eqref{gl52}.   
The proof of our lemma is complete.\end{proof}
From $aT\in O(a^{1/2})$
and $N\lambda^{-2}\in O(h^\nu)$, we get
$\partial_{z}^2\varphi_\anl=-Nz^{-1/2}+O(a^{1/2}+h^\nu)$ for any $z$ close to $1$. 
Decreasing $a_{0}$ and $h_{0}$ if necessary, we get by
stationary phase, 
\begin{equation}\label{gl54}
 \int e^{i \lambda \varphi_\anl}
\chi (T',\sigma,z; a,\hbar) \,dT'd\sigma dz= \frac{2\pi}{\sqrt{\lambda N}} \int e^{i \lambda \Psi_\anl}\,.
\tilde \chi ( \tilde T,T',\sigma;1/N, a,\hbar) \,dT'd\sigma\,.
\end{equation}
Here, $\tilde \chi$ is a classical symbol of degree $0$ in $\hbar$, with harmless parameters
$a,1/N$.
Let us define $\tilde\gamma_{N,2}(T,X,Y,h)$ by the following formula,
where $\tilde\lambda=a^{3/2}/h=\lambda/\eta$:
\begin{equation}
  \label{gl55}
  \tilde\gamma_{N,2}(T,X,Y,h)=  
  \int e^{i \tilde\lambda \eta  (Y+ \Psi_{a,N,\eta\tilde\lambda})} 
   \tilde \chi \eta^{3/2} \chi_0(\eta)dT'd\sigma d\eta\,.
  \end{equation}
By \eqref{gl50bis} and \eqref{gl54}, Proposition \ref{propL2} is clearly a consequence of the following estimate:

\begin{equation}\label{gl56}
\sum_{4N/T \in [1/4,2]}{1 \over \sqrt N} 
\vert \tilde\gamma_{N,2}(T,X,Y,h) \vert \leq C \tilde\lambda^{-3/4}
\end{equation} 
with $C$ independent of $X\in [0,1]$, $T\in ]0,a^{-1/2}]$, $Y\in \R$,
 $a\in [h^\alpha, a_{0}]$   and
$\tilde\lambda\in [\tilde\lambda_{0},\infty[$ with $a_{0}$ small and $\tilde\lambda_{0}$ large.
\begin{lemma}\label{lemgl5} 
For all $k$ there exist $C_{k}$ independent of $X\in [0,1]$, $T\in ]0,a^{-1/2}]$, $Y\in \R$,
 $a\in [h^\alpha, a_{0}]$   and
$\tilde\lambda\in [\tilde\lambda_{0},\infty[$ such that
 \begin{equation*}\label{gl58}
\sum_{\substack{
            N\notin \mathcal N_{1}(X,Y,T)\\
            4N/T \in [1/4,2]}}
{1 \over \sqrt N} \vert \tilde\gamma_{N,2}(T,X,Y,h) \vert \leq C_{k} \tilde\lambda^{-k}\,.
\end{equation*} 
\end{lemma}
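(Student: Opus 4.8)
The plan is to prove that, for $N\notin\mathcal N_{1}(X,Y,T)$ in the window $4N/T\in[1/4,2]$, one has $|\tilde\gamma_{N,2}(T,X,Y,h)|\le C_{k'}\tilde\lambda^{-k'}$ for every $k'$, with $C_{k'}$ independent of $N,X,Y,T,a$; since there are $\lesssim a^{-1/2}$ such $N$ and, because $a\in[h^{\alpha},a_{0}]$ and $\tilde\lambda=a^{3/2}/h$ with $\alpha<4/7$, $a^{-1/2}$ is bounded by a fixed power $\tilde\lambda^{C}$, the weighted sum $\sum N^{-1/2}|\tilde\gamma_{N,2}|$ is then $\le C_{k}\tilde\lambda^{-k}$ on taking $k'=k+C+1$. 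By \eqref{gl55}, $\tilde\gamma_{N,2}$ is an oscillatory integral in the variables $(T',\sigma,\eta)$, with large parameter $\tilde\lambda$, compactly supported amplitude $\tilde\chi\,\eta^{3/2}\chi_{0}(\eta)$ (a classical symbol of degree $0$ in $\hbar$, with the harmless parameters $a,1/N$), and phase
\[
\mathcal L_{N}(T',\sigma,\eta)=\eta\bigl(Y+\Psi_{a,N,\eta\tilde\lambda}(T/4N,X,T',\sigma)\bigr),
\]
so that the whole statement reduces to a non-stationary phase estimate for $\mathcal L_{N}$.

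The mechanism is the same as at the end of the proof of Proposition \ref{propL1}, cf. \eqref{gl45}--\eqref{gl43bis}. On the one hand, since the stationary phase in $z$ used to pass from $\varphi_{a,N,\hbar}$ to $\Psi_{a,N,\lambda}$ does not move the critical points in the remaining variables, and since the critical set of the full phase is ${\bf\Lambda}_{a,N,h}$, a point of the support is a critical point of $\mathcal L_{N}(\cdot\,;X,Y,T)$ in $(T',\sigma,\eta)$ precisely when $(X,Y,T)$ belongs to the projection of ${\bf\Lambda}_{a,N,h}$ onto $\R^{3}$, i.e. when $N\in\mathcal N(X,Y,T)$. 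On the other hand, $\partial_{\eta}\mathcal L_{N}=Y+\Psi_{a,N,\lambda}+\lambda\partial_{\lambda}\Psi_{a,N,\lambda}$ (with $\lambda=\eta\tilde\lambda$) is affine in $Y$ with slope $1$, and from Lemma \ref{lemgl4} together with $N\lambda^{-2}=O(h^{\nu})$ (the $B$-function terms, see \eqref{gl39}) one gets $\lambda\partial_{\lambda}\Psi_{a,N,\lambda}=O(h^{\nu})$ together with uniform bounds, in $N$ in the window, on all $\eta$-derivatives of $\mathcal L_{N}$ and of the amplitude. Putting these together, I claim that for $N\notin\mathcal N_{1}(X,Y,T)$ there is $c>0$, uniform in $N$ in the window and in $(X,Y,T,a,\tilde\lambda)$, with $|\nabla_{(T',\sigma,\eta)}\mathcal L_{N}|\ge c$ on the support: if the gradient were smaller than a fixed constant somewhere, then, using that $\Psi_{a,N,\lambda}$ is of finite type in $(T',\sigma)$ (it carries a swallowtail-type singularity, but its critical set is a smooth curve), $\mathcal L_{N}$ would have an exact critical point at a nearby output point $(X',Y',T')$ with $|X'-X|+|Y'-Y|+|T'-T|\le 1$; the affineness in $Y$ lets one adjust $Y'$ within distance $1$ to reach a genuine critical point at an output point still covered by the fattening, whence $N\in\mathcal N(X',Y',T')\subset\mathcal N_{1}(X,Y,T)$, a contradiction.

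This last point --- the uniform lower bound for the gradient, ruling out a small gradient even where the Hessian of $\mathcal L_{N}$ may be degenerate --- is the only delicate step, and it is precisely what the normalization $\tilde T=T/4N$ and the smallness of the $B$-derivatives in Lemma \ref{lemgl4} are designed for: after the change of scale $z_{c}=F_{0}+F_{1}+\cdots$ of that lemma, $\Psi_{a,N,\lambda}$ is, uniformly in $N$, a bounded perturbation of an explicit polynomial-type phase in $(T',\sigma)$, so that a point with $|\nabla\mathcal L_{N}|<\delta$ lies within $O(\delta^{1/m})$ of the critical set for a fixed $m$, and the generous distance-$1$ fattening in the definition of $\mathcal N_{1}$ absorbs both this and the moduli of continuity of the coefficients. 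Granting $|\nabla_{(T',\sigma,\eta)}\mathcal L_{N}|\ge c$, I conclude by repeated integration by parts in $(T',\sigma,\eta)$ with the operator $L=(i\tilde\lambda)^{-1}|\nabla\mathcal L_{N}|^{-2}\,\overline{\nabla\mathcal L_{N}}\cdot\nabla$: each application yields a factor $\tilde\lambda^{-1}$, and since all higher derivatives of $\mathcal L_{N}$, of $|\nabla\mathcal L_{N}|^{-2}$ and of the amplitude are bounded uniformly in $N$ in the window, after $k'$ steps $|\tilde\gamma_{N,2}|\le C_{k'}\tilde\lambda^{-k'}$ as desired. A shortcut is available in the regions where the $(T',\sigma)$ stationary phase is non-degenerate: there one may first reduce $\tilde\gamma_{N,2}$ to a pure $\eta$-integral whose phase is again affine in $Y$, and then argue word for word as in \eqref{gl45}--\eqref{gl43bis}; the joint $(T',\sigma,\eta)$ treatment above is only needed near the degenerate $(T',\sigma)$ critical points, which is also where the potential main obstacle lies.
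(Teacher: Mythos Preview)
Your strategy is exactly the paper's: bound each $\tilde\gamma_{N,2}$ by $C_{k'}\tilde\lambda^{-k'}$ via non-stationary phase in $(T',\sigma,\eta)$, then sum (absorbing the $O(a^{-1/2})$ many terms into a power of $\tilde\lambda$). The upper bounds on higher derivatives of the phase and amplitude, via \eqref{gl52} and $N\lambda^{-2}=O(h^\nu)$, and the final integration by parts, are also the same. The only difference is in how the lower bound $|\nabla_{T',\sigma,\eta}\,\mathcal L_N|\ge c$ is obtained, and there your argument is more roundabout than the paper's and not fully justified as written.

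You argue by contradiction through a ``finite type'' step: if $|\nabla\mathcal L_N|<\delta$ at $(T'_0,\sigma_0,\eta_0)$, use that $\Psi_{a,N,\lambda}$ is of finite type in $(T',\sigma)$ to locate a nearby point in its $(T',\sigma)$-critical set, then adjust $Y$. The problem is that ``within $O(\delta^{1/m})$ of the critical set'' is a statement about the \emph{input} variables, whereas what you need is proximity in the \emph{output} variables $(X,Y,T)$ to the Lagrangian projection; you have not linked the two. Moreover, the $(T',\sigma)$-critical set of $\mathcal L_N(\,\cdot\,;X,Y,T)$ can be empty --- precisely when $(X,T)$ lies outside the projection of $\Lambda_{a,N,\hbar}$ to the $(X,T)$ plane --- so the nearby-critical-point step needs separate care near that boundary.

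The paper bypasses all of this with a direct inequality. Since the full phase $\Phi=\eta(Y+\varphi_{a,N,\lambda})$ is \emph{affine} in $(X,Y,T)$, one has for every $(\mu,\sigma,z,\eta)$ the Lipschitz bound
\[
|X-\mathcal X|+|Y-\mathcal Y|+|T-\mathcal T|\;\le\;C_0\bigl(|\Phi'_\eta|+|\Phi'_{T'}|+|\Phi'_\sigma|+|\Phi'_z|\bigr),
\]
where $(\mathcal X,\mathcal Y,\mathcal T)(\mu,\sigma,\eta)$ are the Lagrangian-projection formulas \eqref{gl4}. (At $z=1+\mu^2$ this is immediate: $\Phi'_{T'}=0$, $\Phi'_\sigma=\eta(X-\mathcal X)$, $\Phi'_z=\tfrac{\eta}{2\sqrt{\rho+a\mu^2}}(T-\mathcal T)$, and $\Phi'_\eta=(Y-\mathcal Y)+O(|X-\mathcal X|+|T-\mathcal T|)$.) The left side does not depend on $z$; evaluating at $z=z_c$ kills $\Phi'_z$ and turns the remaining three derivatives into $G'_\eta,G'_{T'},G'_\sigma$. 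For $N\notin\mathcal N_1(X,Y,T)$ the left side is $\ge 1$ at every integration point, whence $|\nabla G|\ge 1/C_0$ on the whole support. Your ``affine in $Y$'' observation is exactly the $\Phi'_\eta$ component of this; the point you are missing is that the same affineness holds in $X$ (through $\Phi'_\sigma$) and in $T$ (through $\Phi'_z$), and that is what replaces the finite-type detour by a one-line computation.
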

\begin{proof}

Recall that $T'=-2\mu\sqrt{1+a+a\mu^2}$. Let us define the functions (see \eqref{gl4})
 \begin{equation*}\label{gl59}
 \begin{aligned}
\mathcal X & = 1+\mu^2-\sigma^2\\
\mathcal Y &=2\mu^2(\mu-\sigma) H_1+\frac 2 3(\sigma^3-\mu^3)+4N(1-\frac
 3 4B'(\eta\tilde\lambda (1+\mu^2)^\frac 3 2)) H_2\\
\mathcal T & =2\sqrt{\rho+a\mu^2}\bigl(\sigma-\mu+2N\sqrt{1+\mu^2}(1-\frac 3 4B'(\eta\tilde\lambda (1+\mu^2)^\frac 3 2))\bigr)\,.
\end{aligned} 
\end{equation*}
There exists a universal constant $C_{0}$, with
$\Phi=\eta(Y+\phi_{a,N,\eta\tilde\lambda})$, such that
\begin{equation}\label{gl60}
\vert X- \mathcal X\vert+ \vert Y- \mathcal Y\vert+ \vert T- \mathcal T\vert
\leq C_{0}(\vert \Phi'_{\eta}\vert+\vert \Phi'_{T'}\vert+\vert \Phi'_{\sigma}\vert+\vert \Phi'_{z}\vert)\,.
\end{equation}
The left hand side of \eqref{gl60} does not depend of $z$. Since $G=\eta(Y+\phi_{a,N,\eta\tilde\lambda})$ is a critical value of $\Phi$
with respect to $z$, we get
\begin{equation*}\label{gl61}
\vert X- \mathcal X\vert+ \vert Y- \mathcal Y\vert+ \vert T- \mathcal T\vert
\leq C_{0}(\vert G'_{\eta}\vert+\vert G'_{T'}\vert+\vert G'_{\sigma}\vert)\,.
\end{equation*}
Consider any given $(X,Y,T)$. Then for $N\notin \mathcal N_{1}(X,Y,T)$, we have
$\vert x- \mathcal X\vert+ \vert y- \mathcal Y\vert+ \vert t- \mathcal T\vert\not = 0$
for all values of $\mu, \sigma, \eta,\tilde\lambda$, all $\vert x-X\vert\leq 1$, all $\vert y-Y\vert\leq 1$, all $\vert t-T\vert\leq 1$.
This implies $\vert X- \mathcal X\vert+ \vert Y- \mathcal Y\vert+ \vert T- \mathcal T\vert\geq 1$.
Therefore, for all values of $\mu, \sigma, \eta,\tilde\lambda$, and 
 $N\notin \mathcal N_{1}(X,Y,T)$ we get
$$
  \vert G'_{\eta}\vert+\vert G'_{T'}\vert+\vert G'_{\sigma}\vert \geq
  1/C_{0}\,.
$$
From \eqref{gl52}, using $\eta\partial_{\eta}=\lambda\partial_{\lambda}$, 
$N/\lambda^2 \in O(h^\nu)$ and the fact that any function of type $N\mathcal O_{3}$ is in $O(N^{-2})$ , we get that all the derivatives of $G$ with respect to $\eta, T', \sigma$
are uniformly bounded.  By integration by part in \eqref{gl55} we thus get 
$\vert \tilde\gamma_{N,2} \vert \leq C_{k}\tilde\lambda^{-k}$ for all $k$,
with  $C_{k}$ independent of $X\in [0,1]$, $T\in ]0,a^{-1/2}]$, $Y\in \R$,
   and $N\notin \mathcal N_{1}(X,Y,T)$
. The proof of Lemma \ref{lemgl5} is complete.
\end{proof}

From Lemma \ref{lemgl5}, and since $\vert \mathcal N_{1}(X,Y,T)\vert$ is uniformly bounded  by Lemma \ref{lemgl2}, we get that
 \eqref{gl56} will be a consequence of  
\begin{equation}\label{gl56bis}
\forall N  \,\text{ with } \,4N/T\in [1/4,2]\,,\,\,{1 \over \sqrt N} 
\vert \int e^{i\lambda \Psi_{a,N,\lambda}}\tilde \chi dT'd\sigma \vert \leq C \lambda^{-3/4}\,.
\end{equation} 
Here and after, we denote by $C$ any constant which is independent of $N\geq 1$, $X\in [0,1]$, $T\in ]0,a^{-1/2}]$, 
 $a\in [h^\alpha, a_{0}]$   and 
$\lambda\in [\lambda_{0},\infty[$ with $a_{0}$ small and $\lambda_{0}$ large.

Observe that we can now replace the phase $\Psi_{a,N,\lambda}$ by the phase
\begin{multline}\label{gl52bisbis}
\psi_{a,N,\lambda}=  (X-F_{0})\sigma+\sigma^3/3 + H_{0}T'+\tilde\psi_{a}(T')\\{}+{G_{0}\over 2N(1+aF_{0})^{1/2}}(\sigma (1+aF_{0})^{1/2} +T'/2)^2 -{1\over 12 N^2}(T'/2+\sigma)^3+ aN \mathcal O_{3} 
\end{multline}
since by \eqref{gl52} the difference $\Psi_{a,N,\lambda}-(\psi_{a,N,\lambda}+g_{1}/\lambda^2)$
does not depend on $T',\sigma$, and $e^{ig_{1}/\lambda}$ is a classical symbol of order $0$
in $\lambda$.
We set $\underline \chi = e^{ig_{1}/\lambda} \tilde \chi$ and recall $\hbar=a^{3/2}/\lambda$ . 
Then $\underline \chi(T',\sigma;\tilde T,a,N;\lambda)$ is a classical symbol
of degree $0$ in $\lambda\geq \lambda_{0}$, compactly supported in
$(T',\sigma)$ close to $\{0\}\times [-1,1]$. Moreover,
 for all $\alpha$, there exists $C_{\alpha}$ independent of $a,N$ 
and $\tilde T\in [1/4,2]$, such that $\sup_{(T',\sigma,\lambda)}\vert\partial^\alpha_{(T',\sigma)}\underline \chi\vert \leq C_{\alpha}$.\\

\begin{lemma}\label{lemgl6} There exists $C$ such that 
for all $N\geq \lambda^{1/3}$,
\begin{equation}\label{gl63}
{1 \over \sqrt N} 
\vert \int e^{i\lambda \psi_{a,N,\lambda}}\underline\chi dT' d\sigma \vert \leq C \lambda^{-5/6}\,.
\end{equation}
\end{lemma}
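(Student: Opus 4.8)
The plan is to prove the fully uniform estimate
$$\Big|\int e^{i\lambda\psi_{a,N,\lambda}}\,\underline\chi\,dT'd\sigma\Big|\le C\lambda^{-2/3},$$
with $C$ independent of all parameters; since $N\ge\lambda^{1/3}$ this gives $N^{-1/2}\lambda^{-2/3}\le\lambda^{-1/6-2/3}=\lambda^{-5/6}$, which is \eqref{gl63}. Recall from \eqref{gl52bisbis}, \eqref{gl52bis} and \eqref{eq:2} that on the support of $\underline\chi$ one has $\tilde T=T/4N\in[1/4,2]$, hence $F_0,G_0,H_0$ and $\kappa:=(1+aF_0)^{1/2}$ lie in fixed compact subsets of $(0,\infty)$ with $G_0$ bounded below, and $\tilde\psi_a(T')=\tfrac{T'^3}{24\rho^{3/2}}(1+O(aT'^2))$; also $\lambda\ge\lambda_0$ large forces $N$ large. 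The critical points of $\psi_{a,N,\lambda}$ in $(T',\sigma)$ satisfy, to leading order, $\sigma^2\approx F_0-X$ and $T'^2\approx-8H_0$, so all four can coalesce only when $|F_0-X|$ and $|H_0|$ are both small.

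First I would dispose of the non-coalescing range. If $|F_0-X|\ge c$ for a fixed small $c>0$, then either $\partial_\sigma\psi_{a,N,\lambda}$ has no zero on the support, giving $O(\lambda^{-\infty})$ by non-stationary phase in $\sigma$; or the two real critical points $\sigma_\pm(T')$ are simple with $|\partial^2_\sigma\psi_{a,N,\lambda}|\gtrsim\sqrt c$ (using $|\sigma_\pm|\gtrsim\sqrt c$ and that the $O(1/N)$ part of $\partial^2_\sigma\psi$ is then subdominant). Stationary phase in $\sigma$ reduces the integral to $\lambda^{-1/2}\sum_\pm\int e^{i\lambda\phi_\pm(T')}b_\pm\,dT'$ with $\partial^3_{T'}\phi_\pm$ bounded below (since $\sigma_\pm'=O(1/N)$ and $\partial^3_{T'}\tilde\psi_a$ is bounded below), so Lemma \ref{lemstat1} with $k=3$ gives $C\lambda^{-5/6}$. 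The case $|H_0|\ge c$ is symmetric (integrate in $T'$ first). In the remaining regime $|F_0-X|\le c$, $|H_0|\le c$, one has $\partial_\sigma\psi_{a,N,\lambda}$ (resp. $\partial_{T'}\psi_{a,N,\lambda}$) bounded below for $|\sigma|\ge\delta'$ (resp. $|T'|\ge\delta'$), $\delta'$ a fixed small constant, so inserting a cutoff supported in $\{|\sigma|,|T'|\le\delta'\}$ and treating the complement by non-stationary phase reduces everything to a neighborhood of $(T',\sigma)=(0,0)$.

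On that neighborhood, with $w=T'/2$, pass to $p=\kappa\sigma+w$, $q=\sigma-w$, an invertible affine change with Jacobian bounded away from $0$; since $\tilde\psi_a(T')=\tfrac{w^3}{3}(1+O(a))$, formula \eqref{gl52bisbis} takes the form
$$\psi_{a,N,\lambda}=(\text{linear in }p,q)+\frac{p^3}{12}+\frac{pq^2}{4}+\frac{G_0}{2N\kappa}\,p^2+R(p,q),$$
where the linear coefficients are bounded (and small here) and $R=O\big((a+N^{-2})(|p|^3+|q|^3)+a(|p|^5+|q|^5)\big)$. Thus $\psi_{a,N,\lambda}$ is near $0$ a $D_4$ (elliptic umbilic) germ, unfolded by its linear part and perturbed by $\tfrac{G_0}{2N\kappa}p^2$ and by $R$. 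The hypothesis $N\ge\lambda^{1/3}$ enters precisely here: at the $D_4$ scale $p,q\sim\lambda^{-1/3}$ one has $\lambda\cdot\tfrac{G_0}{2N\kappa}p^2=O(\lambda^{1/3}/N)=O(1)$ and $\lambda R=O(N^{-2}+a\lambda^{-2/3})=O(1)$, so after rescaling $p=\lambda^{-1/3}P$, $q=\lambda^{-1/3}Q$ the perturbation of the exponent is bounded; expanding $e^{i(\text{bounded perturbation})}$ in its convergent Taylor series reduces the claim to the uniform bound
$$\Big|\int e^{i\lambda(\frac{p^3}{12}+\frac{pq^2}{4}+\ell(p,q))}\,a(p,q,\lambda)\,dp\,dq\Big|\le C\lambda^{-2/3}$$
for symbols $a$ and arbitrary bounded linear forms $\ell$ (the extra polynomial weights created by the expansion are harmless). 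This I would prove directly: stationary phase in $q$ contributes a factor $\lambda^{-1/2}|p|^{-1/2}$, and the leftover $p$-integral is handled by splitting at $|p|\sim\lambda^{-1/3}$, using integrability of $|p|^{-1/2}$ on $\{|p|\lesssim\lambda^{-1/3}\}$ and integration by parts against the nonvanishing derivative of $\lambda p^3/12$ on $\{|p|\gtrsim\lambda^{-1/3}\}$. Collecting the ranges gives the displayed uniform $\lambda^{-2/3}$ bound, hence \eqref{gl63}.

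The main obstacle is the model-integral step: one must check that, exactly when $N\ge\lambda^{1/3}$, the $O(1/N)$ and $O(1/N^2)$ terms become bounded contributions to the phase after rescaling to the $D_4$ scale; control uniformly the Taylor series of model integrals (with growing polynomial amplitudes) this produces; and carry out the $|p|^{-1/2}$-weighted one-dimensional estimate across the transition scale $|p|\sim\lambda^{-1/3}$. A secondary technical point is that the non-coalescing reductions are uniform in $N$ only because $N$ is large, so that the $O(1/N)$ pieces of the second derivatives never spoil the lower bounds invoked there.
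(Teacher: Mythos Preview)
Your plan (reduce to $|\int|\le C\lambda^{-2/3}$, exploit the cubic structure after a $\lambda^{-1/3}$ rescaling) matches the paper's, and the non-coalescing range is essentially fine. The gap is in the coalescing step. After setting $p=\lambda^{-1/3}P$, $q=\lambda^{-1/3}Q$, the quadratic perturbation in the exponent becomes
\[
\lambda\cdot\frac{G_0}{2N\kappa}\,p^2=\frac{G_0}{2N\kappa}\,\lambda^{1/3}P^2,
\]
and this is $O(\lambda^{1/3}/N)=O(1)$ \emph{only for $|P|=O(1)$}. The rescaled domain, however, is $|P|,|Q|\lesssim\delta'\lambda^{1/3}$, where the perturbation reaches size $O(\lambda/N)$; the Taylor expansion of $e^{i(\text{perturbation})}$ therefore does not converge uniformly, and the reduction to model integrals fails as written. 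The repair is to \emph{first} localize to $|(P,Q)|\le M$ by integration by parts---which works precisely because, for $N\ge\lambda^{1/3}$, the gradients of the $O(\lambda^{1/3}/N)$ quadratic and $O(1/N^2)$ cubic perturbations are dominated by $|\nabla(P^3/12+PQ^2/4)|\sim|(P,Q)|^2$ once $|(P,Q)|\gg1$---and only then expand. That localization is where the hypothesis $N\ge\lambda^{1/3}$ actually bites; it is not automatic.

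Your model-integral step has a second issue. Stationary phase in $q$ for the phase $p^3/12+pq^2/4+\ell_1p+\ell_2q$ produces the critical value $-\ell_2^2/p$, so the reduced $p$-phase is $p^3/12+\ell_1p-\ell_2^2/p$; its derivative $p^2/4+\ell_1+\ell_2^2/p^2$ can vanish on $|p|\gtrsim\lambda^{-1/3}$, so ``integration by parts against the nonvanishing derivative of $\lambda p^3/12$'' is not justified. A simpler route: stay in $(\sigma,T')$, where the leading cubic is the \emph{decoupled} $\sigma^3/3+T'^3/24$ (your own computation shows $p^3+3pq^2\propto u^3+v^3$ after a further linear change---this is the hyperbolic umbilic $D_4^+$, not elliptic), and iterate two one-variable van der Corput bounds of order $\lambda^{-1/3}$.

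The paper avoids model integrals altogether. It rescales $(T',\sigma)=\lambda^{-1/3}(x,y)$ and keeps the full phase; for bounded $(A,B)=\lambda^{2/3}(F_0-X,\,2H_0)$ it just integrates by parts for large $(x,y)$ (the cubic gradient dominates, exactly the repair above) and bounds the finite part trivially; for large $r=|(A,B)|$ it rescales once more $(x,y)=r^{1/2}(x',y')$, applies nondegenerate stationary phase in one variable, then the $k=3$ van der Corput estimate in the other, obtaining the sharper $Cr^{-1/4}$ for the $(x,y)$-integral. No Taylor expansion, no singular reduced phase.
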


\begin{proof} It is sufficient  to prove that, for all $N\geq \lambda^{1/3}$,
\begin{equation}\label{gl64}
 \vert \int e^{i\lambda \psi_{a,N,\lambda}}\underline\chi dT' d\sigma \vert \leq C \lambda^{-2/3}\,.
\end{equation}
Set $X-F_{0}=-A\lambda^{-2/3}, H_{0}=-B\lambda^{-2/3}$ and perform a
change of variable in \eqref{gl64}: $T'=\lambda^{-1/3}x, \sigma=
\lambda^{-1/3}y$ . We are reduced to proving
\begin{equation}\label{gl65}
 \vert \int e^{i G}\underline \chi (\lambda^{-1/3}x, \lambda^{-1/3}y,...) dx dy \vert \leq C 
\end{equation}
with a phase G of the following form
\begin{multline}\label{gl52ter}
G= -Ay+y^3/3 - Bx+ \lambda\tilde\psi_{a}(\lambda^{-1/3}x)
+{G_{0}\lambda^{1/3}\over 2N(1+aF_{0})^{1/2}}(y (1+aF_{0})^{1/2} +x/2)^2 \\
{}+N^{-2}(x,y)^3 f(a,\tilde T,\lambda^{-1/3}x/N,\lambda^{-1/3}y/N) \,.
\end{multline}
Then \eqref{gl65} is an oscillatory integral  over
a domain of integration  of size $\lambda^{1/3}$. Parameters $F_{0},G_{0},\lambda^{1/3}/N$
are bounded, and the main point is to prove that the constant $C$ is uniform in 
$(A,B)=(r\cos\theta,r\sin\theta)$ with $r\leq c_{0}\lambda^{2/3}$. 
Recall  $\lambda\tilde\psi_{a}(\lambda^{-1/3}x)={x^3\over 24 (1+a)^{3/2}}(1+O(ax^2\lambda^{-2/3}))$.
One has
\begin{equation}\label{gl52quad}
\begin{aligned}
\partial_{x}G=&-B+\lambda^{2/3}\tilde\psi_{a}'(\lambda^{-1/3}x)+O((x,y))+O(1)\\
\partial_{y}G=&-A+y^2+O((x,y))+O(1)\,.
\end{aligned}
\end{equation}
Moreover, since $\underline\chi$ is compactly supported in $(T',\sigma)$
one has, with $C_{\alpha}$ independent of $\tilde T,a,N,\lambda$,
$$
\sup_{(x,y)}\vert\partial^\alpha_{(x,y)}\underline
\chi(\lambda^{-1/3}x, \lambda^{-1/3}y,...)\vert \leq
C_{\alpha}(1+\vert x\vert+\vert y
\vert)^{-\vert\alpha\vert}\,.
$$
 Therefore, for any $r_{0}$, the
oscillatory integral \eqref{gl65} is clearly bounded for $0\leq r \leq
r_{0}$ (integrate by part for large $(x,y)$).

For $r\in [r_{0},c_{0}\lambda^{2/3}]$, we rescale variables $(x,y)=r^{1/2}(x',y')$,
and we set $G=r^{3/2}G'$ and 
$\chi'(x',y',...)=\underline\chi (r^{1/2}\lambda^{-1/3}x', r^{1/2}\lambda^{-1/3}y',...)$.
Observe that since $r^{1/2}\lambda^{-1/3}$ is bounded, we still have decay estimates
$$\sup_{(x',y')}\vert\partial^\alpha_{(x',y')}\chi'\vert \leq
C_{\alpha}(1+\vert x'\vert+\vert y' \vert)^{-\vert\alpha\vert}\,.
$$
 We have to prove
\begin{equation}\label{gl65bis}
 r\vert \int e^{i r^{3/2}G'}\chi'  dx' dy' \vert \leq C \,.
\end{equation}
First, the critical points of $G'$ satisfy
\begin{equation}\label{gl52+}
\begin{aligned}
\partial_{x'}G'=&-\cos\theta+r^{-1}\lambda^{2/3}\tilde\psi_{a}'(r^{1/2}\lambda^{-1/3}x')+r^{-1/2}O((x',y'))+r^{-1}O(1)\\
\partial_{y}G=&-\sin\theta+y'^2+r^{-1/2}O((x',y'))+r^{-1}O(1)\,.
\end{aligned}
\end{equation} 
Let $F(u)$ denote a  smooth function near $u=0$, we then have
$$
r^{-1}\lambda^{2/3}\tilde\psi_{a}'(r^{1/2}\lambda^{-1/3}x')={x'^2\over 8(1+a)^{3/2}}
(1+ar\lambda^{-2/3}x'^2F(a^{1/2}r^{1/2}\lambda^{-1/3}x'))\,.
$$
By \eqref{gl52+}, the contribution of large $(x',y')$ to \eqref{gl65bis} 
is $O(r^{-\infty})$, and we may localize the integral on a compact set in $(x',y')$.
For $\vert \sin\theta\vert \geq 0.1$, and $r_{0}$ large, we  have two distinct 
non degenerate critical points $y'_{\pm}=\pm \vert\sin\theta\vert^{1/2}+O(r^{-1/2})$
in $y'$ with critical values $G'_{\pm}(x',...)$
and thus we get by stationary phase
\begin{equation}\label{gl65ter}
r \int e^{i r^{3/2}G'}\chi'  dx' dy'= r^{1/4}(\int e^{i r^{3/2}G_{+}'}\chi_{+}'  dx'+
\int e^{i r^{3/2}G_{-}'}\chi_{-}'  dx')\,.
\end{equation}
Moreover, one has  
$$
\partial_{x'}G'_{\pm}(x',...)=\partial_{x'}G'(x',y'_{\pm},...)
=-\cos\theta+r^{-1}\lambda^{2/3}\tilde\psi_{a}'(r^{1/2}\lambda^{-1/3}x')+O(r^{-1/2})\,
$$
and this imply $\vert \partial^3_{x'}G'_{\pm}\vert\geq c_{0}>0$. Thus, we get 
$$
\vert \int e^{i r^{3/2}G_{\pm}'}\chi_{\pm}'  dx'\vert \leq
C(r^{3/2})^{-1/3}=Cr^{-1/2}\,.
$$
Therefore, \eqref{gl65bis} holds true, and in fact, we have the
following better estimate in the range $r\geq 1$:
\begin{equation}\label{gl65terbis}
 r\vert \int e^{i r^{3/2}G'}\chi'  dx' dy' \vert \leq Cr^{-1/4}\,.
\end{equation}
If $\sin\theta$ is close to $0$, then we first perform the stationary phase in 
$x'$, and we use the same arguments. This completes the proof of Lemma \ref{lemgl6}. 
\end{proof}
Therefore, we can now assume that ${\lambda\over N^3}=\Lambda \geq 1$, and take
$\Lambda$ as our new large parameter. We set $X-F_{0}=-pN^{-2}, H_{0}=-qN^{-2}/2$ and 
we change variables in \eqref{gl56bis}: $T'=-2x/N, \sigma= -y/N$.
We will prove 
\begin{equation}\label{gl66}
 \vert \int e^{i\Lambda \mathcal G_{a}}\underline \chi (x/N, y/N,...) dx dy \vert \leq C \Lambda^{-3/4}
\end{equation}
with a phase $\mathcal G_{a}$ which takes the following form
\begin{multline}\label{gl52quadbis}
\mathcal G_{a}= py-y^3/3 +qx+ N^3\tilde\psi_{a}(-2x/N)
+{G_{0}\over 2(1+aF_{0})^{1/2}}(y (1+aF_{0})^{1/2}+x)^2 \\
{}+{1\over 12 N^2}(x+y)^3+aN^{-2}(x,y)^3 f(a,\tilde T, {x\over N^2},{y\over N^2})\,. 
\end{multline}
Observe that, for large $N$, \eqref{gl66} implies a better estimate
that \eqref{gl56}; more precisely, \eqref{gl66} is equivalent to
 \begin{equation}\label{gl66bis}
{1 \over \sqrt N} 
\vert \int e^{i\lambda \Psi_{a,N,\lambda}}\tilde \chi dT'd\sigma \vert \leq C N^{-1/4}\lambda^{-3/4}\,.
\end{equation} 
The above estimate is of course compatible with \eqref{gl63} for
$N\simeq \lambda^{-1/3}$.

Recall, see \eqref{eq:2}, that $\tilde \psi_{a}(T')={T'^3\over 24(1+a)^{3/2}}(1+O(aT'^2))$.
From \eqref{gl52ter} we get
\begin{equation}\label{gl67}
\begin{aligned}
\partial_{x}\mathcal G_{a}= & q-x^2 
+{G_{0}\over (1+aF_{0})^{1/2}}(y (1+aF_{0})^{1/2}+x) 
 +{1\over 4N^2}(x+y)^2+aO((x,y)^2)\\
\partial_{y}\mathcal G_{a}= & p-y^2 
+G_{0}(y (1+aF_{0})^{1/2}+x) 
 +{1\over 4N^2}(x+y)^2+aO((x,y)^2)\\ 
\end{aligned}
\end{equation}
and
\begin{equation}\label{gl68}
\begin{aligned}
\partial_{x,x}^2\mathcal G_{a}= & -2x 
+{G_{0}\over (1+aF_{0})^{1/2}}
 +{1\over 2N^2}(x+y)+aO((x,y))\\
\partial^2_{x,y}\mathcal G_{a}= & G_{0}+{1\over 2N^2}(x+y)+aO((x,y))\\
\partial_{y,y}^2\mathcal G_{a}= & -2y 
+G_{0}(1+aF_{0})^{1/2}
 +{1\over 2N^2}(x+y)+aO((x,y))\,.
\end{aligned}
\end{equation}

Thus we get the value of the hessian $\mathcal H_{N}(x,y;\tilde T,a)$
\begin{align}\label{gl69}
\mathcal H_{N}(x,y;\tilde T,a) & =\mathrm{det}\begin{pmatrix} \partial_{x,x}^2\mathcal G_{a} & \partial^2_{x,y}\mathcal G_{a}
  \\ \partial^2_{y,x}\mathcal G_{a} & \partial_{y,y}^2\mathcal
  G_{a}\end{pmatrix}\\
& =
 -2G_{0}(x+y)+4xy -{1\over N^2}(x+y)^2+aO((x,y))\nonumber\,.
\end{align}

\begin{lemma}\label{lemgl7}There exists $r_{0}$ and  $C$ such that 
for all $(p,q)$ with $\vert (p,q) \vert \geq r_{0}$  
\begin{equation}\label{gl70}
\vert \int e^{i\Lambda \mathcal G_{a}}\underline \chi (x/N, y/N,...) dx dy \vert\leq C \Lambda^{-5/6}\,.
\end{equation}
 \end{lemma}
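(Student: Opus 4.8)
The plan is to read \eqref{gl70} as an oscillatory integral of the type treated in Lemma \ref{lemstat2}, the role of the ``external parameter'' being played by $(q,p)$; the hypothesis $|(p,q)|\geq r_{0}$ is exactly what keeps $(q,p)$ away from the unique cusp of the associated caustic, so that only folds occur and the decay improves to $\Lambda^{-5/6}$.

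First I would record that the phase in \eqref{gl70} has the form $(q,p)\cdot\xi-H(\xi)$, $\xi=(x,y)$, and that by \eqref{gl67}--\eqref{gl69} the function $H$ satisfies the hypotheses of Lemma \ref{lemstat2}: $H(0)=0$, $\nabla H(0)=0$, $\text{rank}\,H''(0)=1$ (indeed $\det H''(0)=G_{0}^{2}-G_{0}^{2}=0$) and $\nabla\det H''(0)=-2G_{0}(1,1)+O(a)\neq0$. A short computation, as in the model case of Lemma \ref{lemstat2}, shows that the caustic $\{H'(\xi):\det H''(\xi)=0\}$ is smooth except for a single cusp, located at $(q,p)=0$. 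I would then split into two regimes. When $N$ stays bounded, the domain of integration in \eqref{gl70} is a fixed compact set and the whole caustic lies in a fixed ball; choosing $r_{0}$ large enough that $|(p,q)|\geq r_{0}$ puts $(q,p)$ outside that ball, the phase has no degenerate critical point and ordinary stationary phase, or Lemma \ref{lemstat2}(a), gives $O(\Lambda^{-1})\subset O(\Lambda^{-5/6})$. When $N$ is large I would use the parabolic rescaling $(x,y)=r^{1/2}(x',y')$, $r=|(p,q)|$, $\omega=(q,p)/r\in\S^{1}$, so that $\mathcal G_{a}=r^{3/2}\mathcal G_{a}'$ with new large parameter $\Lambda r^{3/2}\geq1$; by \eqref{gl52quadbis}, \eqref{eq:2} and the a priori bound $|p|+|q|\leq CN^{2}$ (from $p=(F_{0}-X)N^{2}$, $q=-2H_{0}N^{2}$ and $X\in[0,1]$), $\mathcal G_{a}'$ is, on compact sets, a uniformly small perturbation of the cubic $\omega_{1}x'+\omega_{2}y'-\tfrac13\big(x'^{3}/(1+a)^{3/2}+y'^{3}\big)+\tfrac1{12N^{2}}(x'+y')^{3}$ (the quadratic term being $O(r^{-1/2})$ and the remaining corrections $O(a_{0})$ relative to the cubic), and the amplitude becomes $\underline\chi(r^{1/2}x'/N,r^{1/2}y'/N,\dots)$, supported in $|x'|+|y'|\lesssim N/r^{1/2}$ with $|\partial^{\beta}\chi'|\lesssim(1+|x'|+|y'|)^{-|\beta|}$ uniformly.

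On $|x'|+|y'|\geq M_{0}$, $M_{0}$ a large fixed constant, I would check $|\nabla_{(x',y')}\mathcal G_{a}'|\gtrsim x'^{2}+y'^{2}$ (the cubic dominates, using $|\omega|=1$, that $N$ is large, and $a\leq a_{0}$) and integrate by parts repeatedly against the decay of $\chi'$ to get an $O((\Lambda r^{3/2})^{-\infty})$ remainder. On the complementary fixed compact set the phase is, up to a $C^{\infty}$-small perturbation, the cubic normal form with unit external parameter; away from the cusp of its caustic every critical point is a non-degenerate fold, uniformly in $\omega$, $a$, $N$, so proceeding exactly as in the proof of Lemma \ref{lemgl6} — one stationary phase (in $y'$ if $|\omega_{2}|\geq c$, in $x'$ otherwise) producing a factor $(\Lambda r^{3/2})^{-1/2}$, then Lemma \ref{lemstat1} with $k=3$ on the remaining one-dimensional integral, whose critical-value phase has $|\partial^{3}|\geq c_{0}$ — one obtains $|\int e^{i\Lambda r^{3/2}\mathcal G_{a}'}\chi'|\leq C(\Lambda r^{3/2})^{-5/6}$. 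Undoing the rescaling, the left-hand side of \eqref{gl70} is then $\leq Cr(\Lambda r^{3/2})^{-5/6}=Cr^{-1/4}\Lambda^{-5/6}\leq C\Lambda^{-5/6}$, since $r\geq r_{0}\geq1$.

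The step I expect to be the main obstacle is the uniformity of this fold estimate, especially in $N$: the terms carried by $1/N^{2}$ and by the analytic functions $f(a,\tilde T,\cdot,\cdot)$ are genuine contributions to the cubic part of $\mathcal G_{a}'$ — not negligible remainders — unless $N$ is large, so one must check, separately for small $N$, that they never create a higher-order (swallowtail) degeneracy of $\mathcal G_{a}'$ and that the constant in the fold estimate does not deteriorate as $(q,p)$ ranges over $r_{0}\leq|(q,p)|\leq CN^{2}$. This is precisely where the hypothesis $|(p,q)|\geq r_{0}$ is essential: that lower bound separates us uniformly from the cusp at $(p,q)=0$, which conversely is what is responsible for the weaker $\Lambda^{-3/4}$ decay needed in \eqref{gl66} when $|(p,q)|\leq r_{0}$.
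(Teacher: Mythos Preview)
Your proposal is essentially correct and follows the same rescaling strategy as the paper, but the execution differs in two places worth noting.

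First, the split into ``$N$ bounded'' versus ``$N$ large'' is not needed, and your closing worry about the $1/N^{2}$ cubic term is misplaced. The paper performs the parabolic rescaling $(x,y)=r^{1/2}(x',y')$ \emph{uniformly} in $N\geq 2$ and computes the Hessian of $\mathcal G'_{a}$ to be $4x'y'-N^{-2}(x'+y')^{2}+O(r^{-1/2}+a)$. For every $N\geq 2$ this is a nondegenerate indefinite quadratic form (in the coordinates $u=x'+y'$, $v=x'-y'$ it reads $(1-N^{-2})u^{2}-v^{2}$), so its zero set is a pair of transversal lines through the origin; along each line one checks $X'(s)\neq 0$ for $s\neq 0$, and Lemma~\ref{lemstat2}(a) applies directly with constants uniform in $N\geq 2$. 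No swallowtail ever appears: the only singular point of the zero set is the origin, and there $H'(0)=0\neq\omega$ so it is not a critical point. Your ``$N$ bounded'' case is in fact already absorbed by the paper's first step, namely the integration by parts on the small ball $\chi(r^{-1/2}(x,y))$: when $N$ is bounded the whole support of $\underline\chi(\cdot/N,\cdot/N,\dots)$ lies inside that ball and the remaining integral vanishes.

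Second, rather than redoing the one-dimensional reduction of Lemma~\ref{lemgl6}, the paper invokes Lemma~\ref{lemstat2}(a) directly. Your route via ``stationary phase in $y'$ if $|\omega_{2}|\geq c$'' is morally the proof of Lemma~\ref{lemstat2}(a), but because the $N^{-2}$ term genuinely enters the second $y'$-derivative here (unlike in Lemma~\ref{lemgl6}, where $N\geq\lambda^{1/3}$ makes it negligible), verifying nondegeneracy of the $y'$-critical points uniformly in $N\geq 2$, $x'$ and the sign of $\omega_{2}$ is more delicate than you suggest. The paper sidesteps this by first cutting off a small neighborhood of $(x',y')=0$ (handled by nonstationary phase) and then applying Lemma~\ref{lemstat2}(a) on the complement, where the Hessian zero set is smooth. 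Either way one lands on $r\cdot(\Lambda r^{3/2})^{-5/6}=r^{-1/4}\Lambda^{-5/6}$.
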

\begin{proof} Set $(q,p)=(r\cos\theta,r\sin\theta)$ with $r\geq r_{0}$ large. 
Let $\chi\in C_{0}^\infty(\vert (x,y)\vert<c)$ with small $c$ and  $\chi=1$ near $0$. 
Then by \eqref{gl67}  we get for all $k$ by integration by part in $(x,y)$
$$
 \vert \int e^{i\Lambda \mathcal G_{a}}\chi(r^{-1/2}(x,y))
\underline \chi (x/N, y/N,...) dx dy \vert
\leq C_{k}r^{-k}\Lambda^{-k}\,.
$$
For the remaining term, we perform the change of variable $(x,y)=r^{1/2}(x',y')$ and we
 set $\mathcal G'_{a}=r^{-3/2}\mathcal G_{a}$. It remains to prove
\begin{equation}\label{gl71}
\vert r\int e^{ir^{3/2}\Lambda \mathcal G'_{a}}(1-\chi)(x',y')
\underline \chi (r^{1/2}x'/N, r^{1/2}y'/N,...) dx' dy'\vert \leq C \Lambda^{-5/6}\,.
\end{equation}
Observe that since $(1-\chi)(x',y')=0$ near $0$, $(1-\chi)(x',y')=1$ for 
$\vert (x',y')\vert\geq c$, and since $\underline \chi (u, v,...)$ is compactly supported
in $(u,v)$,  we still have 
$$
sup_{(x',y')}\vert\partial_{(x',y')}^\alpha (1-\chi)(x',y') \underline
\chi (r^{1/2}x'/N, r^{1/2}y'/N,...)\vert \leq C_{\alpha}(1+\vert
x'\vert + \vert y'\vert)^{-\vert\alpha\vert}\,.
$$
The phase $\mathcal G'_{a}$ is of the form 
$$
 \mathcal G'_{a}= \sin\theta y'-{y'^3\over 3} +\cos\theta x'
-{x'^3\over 3}++{1\over 12 N^2}(x'+y')^3 +r^{-1/2} O((x',y')^2)+a
O((x',y')^3)
$$
where $O((x',y')^k)$ means any function of the form $x'^jy'^l f(r^{1/2}x'/N,r^{1/2}y'/N,a,N)$
with $f$ smooth uniformly in $a,N$ and $j+l=k$. Thus for small $a$ and
large $r_{0}$, we may localize the integral \eqref{gl71}  to a compact
set in $(x',y')$ (integrate by part). 
The hessian of $G'_{a}$ is equal to $4x'y'-{1\over N^2}(x'+y')^2+ O(r^{-1/2}+a)$.
Thus for $N\geq 2$, $a$ small and $r_{0}$ large, the set on which the
hessian vanishes defines
a smooth curve $\Gamma$ outside $(x',y')=(0,0)$, which is close to the union of the two lines
 $c(x'+y')\pm (x'-y')=0$, $c^2={N^2-1\over N^2}\in [3/4,1]$.
Moreover, one has
\begin{equation} \label{gl72}
\begin{aligned}
\partial_{x'}\mathcal G'_{a}=& \cos\theta -x'^2 +{1\over 4N^2}(x'+y')^2 +O(r^{-1/2}+a)\\
\partial_{y'}\mathcal G'_{a}=& \sin\theta -y'^2 +{1\over 4N^2}(x'+y')^2 +O(r^{-1/2}+a)\,.
\end{aligned}
\end{equation}
The contribution of points $(x',y')$ outside $\Gamma$ to the left hand side of \eqref{gl71}
is estimated by $O(r^{-1/2}\Lambda^{-1})$ by the usual stationary phase theorem.
To estimate the contribution of points $(x',y')$ close to  $\Gamma$, we use lemma 
\ref{lemstat2}. 
For any value of $\theta$, one gets easily that the hypothesis of part (a) of Lemma \ref{lemstat2} holds true, and this yields the estimate 
$$\vert \int e^{ir^{3/2}\Lambda \mathcal G'_{a}}(1-\chi)(x',y')
\underline \chi (r^{1/2}x'/N, r^{1/2}y'/N,...) dx' dy'\vert \leq C(r^{3/2}\Lambda)^{-5/6}= r^{-5/4}\Lambda^{-5/6}$$
which provides the bound $Cr^{-1/4}\Lambda^{-5/6}$ on the right hand side of \eqref{gl71}. 
The proof of Lemma \ref{lemgl7} is complete.
\end{proof}
We can now assume $\vert (p,q) \vert \leq r_{0}$. There exists  
 $c>0$ independent of $N\geq 2$ such that 
\begin{equation}\label{gl72bis}
\forall (x,y)\in \R^2,\quad \vert x^2-{1\over 4N^2}(x+y)^2\vert+\vert y^2-{1\over 4N^2}(x+y)^2\vert
\geq c (x^2+y^2)\,.
\end{equation}
Thus by integration by part, \eqref{gl67} yields that
 large values of 
$(x,y)$ gives a contribution $O(\Lambda^{-\infty})$ to the integral \eqref{gl66}
. We can then replace $\underline \chi (x/N,y/N,...)$ by a symbol $\chi(x,y,...)$
compactly supported in the ball $B=\vert (x,y)\vert \leq R$ with $R$ large. 
We are left to prove
\begin{equation}\label{gl74}
\vert \int e^{i\Lambda \mathcal G_{a}} \chi  dx dy \vert \leq C \Lambda^{-3/4}\,.
\end{equation}
Uniformly in $N\geq 1$ and $\tilde T$
near $[1/2,3/2]$ and $(x,y)$ near $B$, we have
\begin{equation}\label{gl52quadter}
\begin{aligned}
\mathcal G_{a}= & \mathcal G_{0}+ O(a) \\
\mathcal G_{0}= &qx -x^3/3 + py-y^3/3 
+{\tilde T\over 2}(x+y)^2  +{1\over 12 N^2}(x+y)^3 \,.
\end{aligned}
\end{equation}
Note that the hessian of $\mathcal G_{a}$ is $\mathcal H_{a}=-2\tilde
T(x+y)+4xy-{1\over N^2}(x+y)^2+O(a)$. Therefore the set $\mathcal Z_{a}=\{(x,y); \mathcal H_{a}(x,y)=0\}$
is, for small $a$, a smooth curve in $B$ which is close to the parabola  $-2\tilde T(x+y)-(x-y)^2=0$ for $N=1$, 
and close to the hyperbola $-2\tilde T(x+y)+4xy-{1\over N^2}(x+y)^2=0$
for $N\geq 2$. Moreover, 
\begin{equation}\label{gl73}
\begin{aligned}
\partial_{x}\mathcal G_{a}= & q-\mathcal X(x,y,a); \ \mathcal X(x,y,a)=x^2 -\tilde T(x+y)-{1\over 4N^2}(x+y)^2 +O(a) \\
\partial_{y}\mathcal G_{a}= & p-\mathcal Y(x,y,a); \ \mathcal Y(x,y,a)=y^2 -\tilde T(x+y)-{1\over 4N^2}(x+y)^2 +O(a) \\
\end{aligned}
\end{equation}

It remains to use Lemma \ref{lemstat2} near any point $(q,p)$, $\vert (p,q) \vert \leq r_{0}$. If $(q,p)$
is not in the image of $\mathcal Z_{a}$ by the map $(\mathcal X,\mathcal Y)$,
then near $(q,p)$, the estimate \eqref{gl74} holds true with a factor $C\Lambda^{-1}$ on the
right hand side by the usual stationary phase theorem. If $(q,p)$ is  in the image of $\mathcal Z_{a}$ by the map $(\mathcal X,\mathcal Y)$, but $(q,p)\not= (0,0)$,
then one easily verifies that part (a) of Lemma \ref{lemstat2} applies, and this gives
near $(q,p)$ the estimate \eqref{gl74}  with a factor $C\Lambda^{-5/6}$ on the
right hand side. Finally, near $(q,p)=(0,0)$, one has $(x,y)$ near $(0,0)$, and 
one easily verifies that part (b) of Lemma \ref{lemstat2} applies, and therefore 
 \eqref{gl74} holds true.

This concludes the proof of Proposition \ref{propL2}.\cqfd

\begin{rem} Since the symbol
$\chi$ of degree $0$ is elliptic at $(x,y)=(0,0)$, the estimate \eqref{gl74} is optimal. To see this point, 
it is sufficient to apply part (b) of Lemma \ref{lemstat2} at $(p,q)=(0,0)$.
Observe that by \eqref{gl52bis}, $(p,q)=(0,0)$ is equivalent to $X=F_{0}=1$ and
$T=4N\sqrt{1+a}$, i.e equivalent to $x=a, t=4N\sqrt{a(1+a)}$ which are precisely the times
where a swallowtail occurs in the wave front set of the Green function. This, and \eqref{gl66bis},
proves Theorem \ref{disperoptimal}.
\end{rem}

 \subsubsection{Proof of Proposition \ref{propL3}}
 In order to prove Proposition \ref{propL3}, we use as before the splitting  
 $\gamma_{1}=\gamma_{1,1}+\gamma_{1,2}$. First, we prove 
 \begin{equation}\label{gl80}
 \vert \gamma_{1,1} (T,X,Y,h) \vert \leq C (2\pi h)^{-2}(({h\over a^{1/2}T})^{1/2} +h^{1/3})\,.
 \end{equation}
Going through the proof of Proposition \ref{propL1}, we notice  only
 one difference between the case $N=1$ and $N\geq 2$: namely for $T\in ]0,T_{0}]$,
 when we estimate
 $\int e^{i\lambda \Phi_{1,+,+}}\Theta _{+,+}(z;a\lambda)dz$, we may have a critical point
 associated to a very large value of $z$. Let $\chi(z)\in C_{0}^\infty(]z_{1},\infty[)$
 with $z_{1} $ large, and set
 \begin{equation}\label{gl81}
 J=\int e^{i\lambda \Phi_{1,+,+}}\Theta _{+,+}(z;a\lambda)\chi(z)dz\,.
  \end{equation} 
  We will prove
  $$
a^{1/2}\vert J \vert \leq C a^{1/2}\lambda^{-1/2}T^{-1/2}\,
$$
  which clearly gives the first term on the right of the inequality \eqref{gl80}. One has
  \begin{equation}\label{gl82}
  \begin{aligned}
  \partial_{z}\Phi_{1,+,+}=&{T\over 2}(1+az)^{-1/2}-{z^{-1/2}\over 2}(1+X)+ O(z^{-3/2})\\
  \partial^2_{z}\Phi_{1,+,+}=&{-Ta\over 4}(1+az)^{-3/2}+{z^{-3/2}\over 4}(1+X)+ O(z^{-5/2})\,.
  \end{aligned}\end{equation}
  Therefore, to get a large critical point $z_{c}$, $T$ must be small
  (recall $T\leq T_{0}$  means $t\leq a^{1/2}T_{0}$). One has then $z_{c}^{-1/2}\simeq T$
 and from \eqref{gl82} we get  $\partial^2_{z}\Phi_{1,+,+}(z_{c})\simeq T^3$. Recall
 that $\Theta_{+,+}(z,a,\lambda)$ is a classical symbol in $z$ of degree $-1/2$, thus
 $T^{-1}s^{1/2}\Theta_{+,+}(T^{-2}s,a,\lambda)$ is a symbol of degree $0$ 
 in $s\geq s_{0}>0$,  uniformly in $T\in ]0,T_{0}]$. Therefore,
 if we  perform the change of variable $z=T^{-2}s$ in \eqref{gl81}, we
 get $\vert J\vert \leq C\lambda^{-1/2}T^{-1/2}$ by stationary phase.

 It remains to prove that the inequality \eqref{gl1ter} holds true for $\gamma_{1,2}$.
 The only place where  $N\geq 2$ gets used in the proof of Proposition \ref{propL2}
 is  Lemma \ref{lemgl7} and inequality \eqref{gl72bis}. But for $N=1$, since $\underline \chi (x,y,...)$
 is compactly supported in $(x,y)$, we do not need the inequality 
 \eqref{gl72bis}. Moreover, we get from \eqref{gl67} that the phase
 $\mathcal G_{a}$ has no critical points on the support of $\underline \chi$
 for $\vert (p,q) \vert \geq r_{0}$ if $r_{0}$ is large, and this implies 
 for $\vert (p,q) \vert \geq r_{0}$
\begin{equation}\label{gl70bis}
\left \vert \int e^{i\Lambda \mathcal G_{a}}\underline \chi (x, y,...) dx dy \right\vert
\in O (\Lambda^{-\infty})\,.
\end{equation}
In fact, Lemma \ref{lemgl7} is telling us that the constant $C$ in the right of
\eqref{gl70} is uniform for large $N$. The proof of Proposition
\ref{propL3} is now complete.\cqfd

  \section{Parametrix for $0<a\leq h^{1/2}$}\label{secgal}
  We will write the initial
  data with the help of gallery modes, which we first describe in
  connection with the spectral analysis of our Laplace operator. We
   describe the corresponding solutions of the wave operator. We then estimate their $L^{\infty}(\Omega)$ norm for tangent initial directions by
  using Sobolev embedding, taking advantage of the size of the Fourier
  support. We deal with the non-tangent
  initial directions  by constructing a crude
  parametrix, relying partly on gallery modes and the asymptotics of
  the Airy function.

\subsection{Whispering gallery modes}\label{wgm}

Let $\Omega=\{(x,y)\in\mathbb{R}^{2}| x>0,y\in\mathbb{R}\}$ denote the half-space $\mathbb{R}^{2}_{+}$ with the Laplacian given by $\Delta_D=\partial^2_x+(1+x)\partial^2_y$ with Dirichlet boundary
condition on $\partial\Omega$. Taking the Fourier transform in the $y$-variable gives
\begin{equation}\label{deltaeta}
-\Delta_{D,\eta}=-\partial^{2}_{x}+(1+x)\eta^{2}.
\end{equation} 
For $\eta\neq 0$, $-\Delta_{D,\eta}$ is a self-adjoint,
positive operator on $L^{2}(\mathbb{R}_{+})$ with compact resolvent. Indeed, the potential $V(x,\eta)=(1+x)\eta^{2}$ is bounded from below, it is continuous and $\lim_{x\rightarrow\infty}V(x,\eta)=\infty$. Thus one can consider the form associated to $-\partial^{2}_{x}+V(x,\eta)$,
\[
Q(u)=\int_{x>0}|\partial_{x} v|^{2}+V(x,\eta)|v|^{2}dx,\quad D(Q)=H^{1}_{0}(\mathbb{R}_{+})\cap\{v\in L^{2}(\mathbb{R}_{+}), (1+x)^{1/2}v\in L^{2}(\mathbb{R}_{+}))\},
\]
which is clearly symmetric, closed and bounded from below by a positive constant $c$. If $c\gg 1$ is chosen such that $-\Delta_{D,\eta}+c$ is invertible, then $(-\Delta_{D,\eta}+c)^{-1}$ sends $L^{2}(\mathbb{R}_{+})$ in $D(Q)$ and we deduce that $(-\Delta_{D,\eta}+c)^{-1}$ is also a (self-adjoint) compact operator. The last assertion follows from the compact inclusion
\[
D(Q)=\{v| \partial_{x}v, (1+x)^{1/2}v\in L^{2}(\mathbb{R}_{+}), v(0)=0\}\hookrightarrow L^{2}(\mathbb{R}_{+}).
\]
We deduce that there exists a base of eigenfunctions $v_{k}$ of
$-\Delta_{D,\eta}$ associated to a sequence of eigenvalues
$\lambda_{k}(\eta)\rightarrow\infty$. From $-\Delta_{D,\eta}v=\lambda
v$ we obtain $\partial^{2}_{x}v=(\eta^{2}-\lambda+x\eta^{2})v$,
$v(0,\eta)=0$ and after a suitable change of variables we find that an
orthonormal basis of $L^2([0,\infty[)$ is given by eigenfunctions 
\begin{equation}\label{ek}
e_{k}(x,\eta)=f_k\frac{\eta^{1/3}}{k^{1/6}}Ai(\eta^{\frac{2}{3}}x-\omega_{k}),
\end{equation} 
where $(-\omega_{k})_{k}$ denote the zeros of Airy's
function in decreasing order and where $f_k$ are constants so that $\|e_k(.,\eta)\|_{L^2([0,\infty[)}=1$ for every $k\geq 1$, and all $f_{k}$'s remain in a fixed compact subset of $]0,\infty[$. 
The corresponding  eigenvalues are $\lambda_{k}(\eta)=\eta^{2}+\omega_{k}\eta^{\frac{4}{3}}$. 
\begin{rmq}\label{rmqappendixdirac}
Let $\delta_{x=a}$ denote the Dirac distribution on $\mathbb{R}_+$, $a>0$, then it reads as follows:
\begin{equation}
\delta_{x=a}=\sum_{k\geq 1}e_k(x,\eta)e_k(a,\eta).
\end{equation}
\end{rmq}
We define the gallery modes as follows:
\begin{dfn}\label{dfnWGM}
For $x>0$ let $E_{k}(\Omega)$ be the closure in $L^{2}(\Omega)$ of
\begin{equation}\label{ekabis}
\Big\{\frac{1}{2\pi}\int e^{iy\eta}
e_k(x,\eta)\psi_k(\eta)d\eta, \hat{\psi}_k\in \mathcal{S}(\mathbb{R})\Big\},
\end{equation}
where $\mathcal{S}(\mathbb{R})$ is the Schwartz space of rapidly decreasing functions,
\[\mathcal{S}(\mathbb{R})=\Big\{f\in C^{\infty}(\mathbb{R}), \|z^{\alpha} D^{\beta}f\|_{L^{\infty}(\mathbb{R})}<\infty\quad\forall\alpha,\beta\in \mathbb{N}\Big\}.
\]  
For fixed $k$, a function in $ E_{k}(\Omega)$ is called a whispering
gallery mode. 
%Moreover, a function $v\in E_{k}(\Omega)$ satisfies
%\begin{equation}\label{one}
%(\partial^{2}_{x}+x\partial_{y}^2+\omega_{k}|\partial_{y}^2|^{\frac{2}{3}})v=0.
%\end{equation}
\end{dfn}
We have the following result (see \cite{doi}):
\begin{thm}\label{thmWGM}
We have the orthogonal decomposition $(L^{2}(\Omega),\Delta_D)=\bigoplus_{\bot}E_{k}(\Omega)$, 
where $E_k(\Omega)$ denotes the space of gallery modes associated to the $k$th zero of the Airy function $Ai$ and where $\Delta_D=\partial^2_x+(1+x)\partial^2_y$ with Dirichlet boundary condition on $\partial\Omega$.
\end{thm}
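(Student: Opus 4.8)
The plan is to diagonalize $\Delta_D$ by a partial Fourier transform in the tangential variable $y$ and then expand fiberwise on the orthonormal basis $(e_k(\cdot,\eta))_{k\geq1}$ of $L^2(\R_+)$ produced above. Denote by $\mathcal F_y$ the Fourier transform in $y$, normalized so that Plancherel reads $\|u\|_{L^2(\Omega)}^2=\frac1{2\pi}\int_{\R}\|(\mathcal F_y u)(\eta,\cdot)\|_{L^2(\R_+)}^2\,d\eta$, and observe that $\mathcal F_y$ conjugates $\Delta_D$ to the fiberwise operator $\Delta_{D,\eta}$ of \eqref{deltaeta}. For a.e.\ $\eta$ (namely $\eta\neq0$), the family $(e_k(\cdot,\eta))_k$ is an orthonormal basis of $L^2(\R_+)$ of eigenfunctions of $-\Delta_{D,\eta}$ with eigenvalues $\lambda_k(\eta)=\eta^2+\omega_k\eta^{4/3}$, and $\eta\mapsto e_k(\cdot,\eta)\in L^2(\R_+)$ is measurable (indeed continuous away from $\eta=0$) by the explicit formula \eqref{ek}. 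Hence every $u\in L^2(\Omega)$ admits the fiberwise expansion
$$
(\mathcal F_y u)(\eta,x)=\sum_{k\geq1}c_k(\eta)\,e_k(x,\eta),\qquad c_k(\eta)=\langle(\mathcal F_y u)(\eta,\cdot),e_k(\cdot,\eta)\rangle_{L^2(\R_+)},\qquad\sum_{k}|c_k(\eta)|^2=\|(\mathcal F_y u)(\eta,\cdot)\|_{L^2(\R_+)}^2 .
$$

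For fixed $k$, the map $T_k\colon\psi_k\mapsto\frac1{2\pi}\int e^{iy\eta}e_k(x,\eta)\psi_k(\eta)\,d\eta$ satisfies $\mathcal F_y(T_k\psi_k)(\eta,x)=e_k(x,\eta)\psi_k(\eta)$, so by Plancherel and $\|e_k(\cdot,\eta)\|_{L^2(\R_+)}=1$ it extends to a bounded map $L^2(\R_\eta)\to L^2(\Omega)$ equal to $(2\pi)^{-1/2}$ times an isometry, hence with closed range. Since $\{\psi:\widehat\psi\in\mathcal S(\R)\}=\mathcal S(\R)$ (the Fourier transform preserving $\mathcal S(\R)$) is dense in $L^2(\R)$, the subspace $E_k(\Omega)$ of Definition \ref{dfnWGM} equals $T_k(L^2(\R_\eta))$, i.e.\ the set of $u\in L^2(\Omega)$ whose $\mathcal F_y u$ is of the form $e_k(x,\eta)\psi_k(\eta)$ with $\psi_k\in L^2(\R_\eta)$. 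Orthogonality $E_k(\Omega)\perp E_l(\Omega)$ for $k\neq l$ then follows from Plancherel in $y$ together with $\langle e_k(\cdot,\eta),e_l(\cdot,\eta)\rangle_{L^2(\R_+)}=0$. For completeness, put $u_k:=T_kc_k\in E_k(\Omega)$; then $\sum_k\|u_k\|_{L^2(\Omega)}^2=\frac1{2\pi}\int\sum_k|c_k(\eta)|^2\,d\eta=\|u\|_{L^2(\Omega)}^2$, and the quantity
$$
\Big\|u-\sum_{k\leq N}u_k\Big\|_{L^2(\Omega)}^2=\frac1{2\pi}\int\Big\|(\mathcal F_y u)(\eta,\cdot)-\sum_{k\leq N}c_k(\eta)e_k(\cdot,\eta)\Big\|_{L^2(\R_+)}^2\,d\eta
$$
tends to $0$ as $N\to\infty$ by dominated convergence, the integrand going to $0$ pointwise (the $e_k(\cdot,\eta)$ forming a basis) and being dominated by $\|(\mathcal F_y u)(\eta,\cdot)\|_{L^2(\R_+)}^2$. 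Thus $u=\sum_ku_k$, so $L^2(\Omega)=\bigoplus_\bot E_k(\Omega)$. Finally each $E_k(\Omega)$ reduces $\Delta_D$: from $\mathcal F_y\Delta_D=\Delta_{D,\eta}\mathcal F_y$ and $\Delta_{D,\eta}e_k(\cdot,\eta)=-\lambda_k(\eta)e_k(\cdot,\eta)$ one gets, for $u=T_k\psi_k$ in the domain of $\Delta_D$, that $\Delta_Du=-T_k(\lambda_k\psi_k)\in E_k(\Omega)$; equivalently $\Delta_D|_{E_k(\Omega)}$ is unitarily equivalent through $T_k$ to multiplication by $-\lambda_k(\eta)$ on $L^2(\R_\eta)$.

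The only genuinely delicate point is the measurable-field bookkeeping: one must check that the fiberwise coefficients $(c_k(\eta))_k$ assemble into bona fide $L^2(\Omega)$ functions $u_k$ and that the series $\sum_ku_k$ converges unconditionally in $L^2(\Omega)$ — which is exactly what the Plancherel identity and the dominated-convergence estimate above provide. Everything else is the standard direct-integral diagonalization of a self-adjoint operator whose fibers have compact resolvent; one could alternatively invoke the general theory of direct integrals of Hilbert spaces, but the explicit Airy basis $(e_k)$ makes the hands-on argument shorter and self-contained.
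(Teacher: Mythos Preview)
Your argument is correct and is exactly the standard fiberwise diagonalization that underlies the result; the paper itself does not give a proof but merely cites \cite{doi}, Section~2.2, for the facts that the $E_k(\Omega)$ are closed, mutually orthogonal, and total in $L^2(\Omega)$, which is precisely what your Plancherel/dominated-convergence computation establishes. In other words, you have written out explicitly what the paper delegates to the reference, along the same lines.
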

\begin{proof}
Indeed, from \cite{doi}[Section 2.2] one can easily see that $(E_{k}(\Omega))_{k}$ are closed, orthogonal and that $\cup_{k}E_{k}(\Omega)$ is a total family (i.e. that the vector space spanned by $\cup_{k}E_{k}(\Omega)$ is dense in $L^{2}(\Omega)$).
\end{proof}
Let $\psi_{j}\in C^{\infty}_0(]0,\infty[)$ as in Remark \ref{remspectraly}. Using Remark \ref{rmqappendixdirac}, for $h\in (0,1]$, we write the initial data, localized at frequency $\frac 1h$, as follows
\begin{multline}\label{psidelta}
\psi_{2}(h\sqrt{-\Delta_g})\psi_{1}(hD_y)\delta_{x=a,y=0}=\\
\sum_{1\leq k}
{1 \over 2\pi h}\int e^{\frac ih y\eta}\psi_{2}(\eta\sqrt{1+\omega_{k}(h/\eta)^{2/3}})\psi_{1}(\eta)e_k(a,\eta/h)e_k(x,\eta/h)d\eta\,.
\end{multline}
Observe that in the sum over $k$, by Remark \ref{remspectraly} we may assume $k\leq \varepsilon h^{-1}$ with  $\varepsilon$ small. From \eqref{psidelta} we get 
\begin{multline}\label{psideltabis}
u_{a,h}(t,x,y)=e^{-it\sqrt{-\Delta_g}}\psi_{2}(h\sqrt{-\Delta_g})\psi_{1}(hD_y)\delta_{x=a,y=0}\\
=\sum_{1\leq k}
{1 \over 2\pi h}\int e^{\frac ih
  (y\eta-t\eta\sqrt{1+\omega_{k}(h/\eta)^{2/3}})}\psi_{2}(\eta\sqrt{1+\omega_{k}
(h/\eta)^{2/3}})\psi_{1}(\eta)e_k(a,\eta/h)e_k(x,\eta/h)d\eta\,.
\end{multline}
Our goal  is to prove the following proposition.
\begin{proposition}\label{propadisp}
There exists $C$ such that for every $h\in ]0,1]$, every $0<a\leq h^{1/2}$ and every
$t\in [-1,1]$, the following holds true
\begin{equation}\label{g3-1}
\Vert \mathds{1}_{x\leq a}u_{a,h}(t,x,y)\Vert_{L^\infty}\leq Ch^{-2}\min(1,h^{1/4}+(\frac {h}{\vert t \vert})^{1/3}) \,.
\end{equation}
\end{proposition}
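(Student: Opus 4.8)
\textbf{Proof strategy for Proposition \ref{propadisp}.}

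The plan is to exploit the orthogonal decomposition into gallery modes together with the fact that, once $a\leq h^{1/2}$, only a bounded-in-$h$ window of indices $k$ contributes nontrivially to the sum \eqref{psideltabis} restricted to $x\leq a$. First I would split the analysis according to the relative sizes of $a$ and $h^{1/2}$ and, more importantly, according to whether the tangential frequency localization forces $k$ to be small. Recall from Remark \ref{remspectraly} that on the support of $\psi_{1}(\eta)\psi_{2}(\eta\sqrt{1+\omega_k(h/\eta)^{2/3}})$ one has $k\lesssim \varepsilon h^{-1}$, which via $\omega_k\simeq k^{2/3}$ is equivalent to $|\xi|\lesssim\varepsilon^{2/3}$; since the micro-support relation \eqref{gbz} reads $\eta^{4/3}h^{2/3}\omega_k=\xi^2+x\eta^2$, evaluating at $x\leq a\leq h^{1/2}$ and $\eta\sim 1$ gives $h^{2/3}\omega_k\lesssim \xi^2+a$, hence $\omega_k\lesssim h^{-2/3}(\xi^2+h^{1/2})$. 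The decisive observation is that the data itself, being a Dirac at $x=a$ with $a$ small, has most of its $L^2$ mass carried by the modes with $\omega_k\lesssim a\eta^{2/3}h^{-2/3}\sim a h^{-2/3}$, i.e. by $k\lesssim (a h^{-2/3})^{3/2}=a^{3/2}h^{-1}$; when $a\leq h^{1/2}$ this is $k\lesssim h^{-1/4}\cdot(a h^{-1/2})^{3/2}$, in particular $k$ ranges over a set of cardinality at most $O(a^{3/2}h^{-1})$, which is $O(h^{1/2})$ at worst. This is exactly where the hypothesis $a\leq h^{1/2}$ is used.

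Next I would treat the two regimes of the bound separately. For the $\min(1,\cdots)=1$ part (small $t$, or the trivial bound) one uses $L^2$ orthogonality: $\|u_{a,h}(t)\|_{L^2}=\|\psi_2\psi_1\delta_a\|_{L^2}\lesssim h^{-1}$ for each fixed frequency band, and then Sobolev embedding in the $(x,y)\in\R^2$ variables costing $h^{-1}$ more derivatives gives $\|u_{a,h}(t)\|_{L^\infty}\lesssim h^{-2}$; but the key refinement, as announced in the outline of proof, is that the Fourier support of $u_{a,h}(t)$ in the tangential direction $\eta/h$ stays inside a cone of aperture $\sim h^{1/4}$ (because $\lambda_k(\eta)\sim h^{-2}$ together with $k\lesssim a^{3/2}h^{-1}\lesssim h^{1/2}$ pins $\eta$ to an interval of length $O(h^{1/4})$ around a fixed value), so the $y$-Sobolev embedding only costs $h^{-1/2}\cdot h^{1/4}=h^{-1/4}$ rather than $h^{-1}$. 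Combined with the $x$-direction embedding (cost $h^{-1/2}$) and the $L^2$ bound $h^{-1}$, and using that the number of relevant $k$ is $O(a^{3/2}h^{-1})\leq O(1)$ after one more Cauchy-Schwarz over $k$, one recovers $\|\mathds 1_{x\leq a}u_{a,h}(t)\|_{L^\infty}\lesssim h^{-2}h^{1/4}=h^{-7/4}$, which is the $h^{1/4}$ gain. For the genuinely dispersive part $(h/|t|)^{1/3}$, one fixes $k$ (there are $O(1)$ of them) and writes each summand in \eqref{psideltabis} as an oscillatory integral in $\eta$ with phase $\Phi=y\eta-t\eta\sqrt{1+\omega_k(h/\eta)^{2/3}}$ and amplitude essentially $e_k(a,\eta/h)e_k(x,\eta/h)$; since on $x\leq a\leq h^{1/2}$ the Airy functions $Ai(\eta^{2/3}x/h^{1/3}-\omega_k)$ are evaluated near the turning point $-\omega_k$ (the argument $\eta^{2/3}x/h^{1/3}$ being $\lesssim h^{1/6}$), they are bounded and slowly varying, so the amplitude is harmless; one then computes $\partial_\eta^2\Phi$ and checks $|\partial_\eta^2\Phi|\gtrsim |t|\,h^{2/3}\omega_k\sim |t|$ on the relevant $\eta$-range — this uses $\omega_k\gtrsim 1$ — and van der Corput (Lemma \ref{lemstat1} with $k=2$, rescaled) gives a factor $(|t|/h)^{-1/2}$; however near $t$ small or when the second derivative degenerates one must use the third derivative instead and obtain the weaker but uniform $(h/|t|)^{1/3}$. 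Summing the $O(1)$ values of $k$ and restoring the $h^{-2}$ prefactor (one $h^{-1}$ from the $1/h$ in front, one $h^{-1}$ from the $d\eta$ against $e^{i\eta y/h}$) yields $h^{-2}(h/|t|)^{1/3}$.

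Finally, one must handle the contribution of the non-tangential initial directions, i.e. the part of the data where the angle with the boundary exceeds $h^{1/4}$: there the relevant $\omega_k$ are comparatively large, the Airy factor $Ai(\eta^{2/3}x/h^{1/3}-\omega_k)$ decays and only cusp-type (Airy) singularities appear, so a crude parametrix built from the asymptotics of $Ai$ — exactly as sketched in the introduction to this section — gives at worst an $h^{1/6}$ loss, hence is dominated by $h^{1/4}+(h/|t|)^{1/3}$; this piece is routine given Section \ref{appairy} and does not interact with the main term. \textbf{The main obstacle} I anticipate is the bookkeeping in the second regime: making precise and uniform (in $k$, in $a\leq h^{1/2}$, in $x\leq a$, and in $t$) the two competing stationary-phase estimates — the $t^{-1/2}$ one from the nondegenerate Hessian and the $t^{-1/3}$ one near the degeneracy of $\partial_\eta^2\Phi$ — and verifying that the turning-point behaviour of the Airy amplitude $e_k(a,\eta/h)e_k(x,\eta/h)$ genuinely contributes no loss, so that the clean bound $h^{-2}\big(h^{1/4}+(h/|t|)^{1/3}\big)$ survives the summation over the $O(a^{3/2}h^{-1})$ active modes.
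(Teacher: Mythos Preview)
Your proposal contains a genuine and fatal gap: the claim that only $O(1)$ (or $O(h^{1/2})$) gallery modes contribute is false. First, there is an arithmetic slip: with $a\leq h^{1/2}$ one has $a^{3/2}h^{-1}\leq h^{3/4}h^{-1}=h^{-1/4}$, not $O(h^{1/2})$. More importantly, even this corrected count $O(h^{-1/4})$ only describes the range of $k$ for which the Airy factor $Ai(\eta^{2/3}h^{-2/3}a-\omega_k)$ is \emph{not} in its asymptotic regime; it does not mean the remaining modes are negligible. In fact the full sum over $1\leq k\leq \varepsilon h^{-1}$ must be estimated, and the modes with $k\gg h^{-1/4}$ carry weight: each $e_k(a,\eta/h)$ decays only like $\omega_k^{-1/4}\sim k^{-1/6}$ (times oscillation), so the tail is far from summable by size alone. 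Consequently your ``fix $k$, there are $O(1)$ of them'' step in the dispersive part has no basis, and the cone-of-aperture-$h^{1/4}$ Sobolev argument for the $h^{1/4}$ gain, as you formulate it, does not close either: the $\eta$-support is not confined to an interval of length $h^{1/4}$ uniformly over all contributing $k$.

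The paper's argument is structurally different. It splits at $L=D\max(h^{-1/4},1/|t|)$. For $k\leq L$ it proves the pointwise bound (Lemma~\ref{lemsob})
\[
\sup_{b}\sum_{1\leq k\leq L}k^{-1/3}Ai^2(b-\omega_k)\leq C_0 L^{1/3},
\]
and combines it with Cauchy--Schwarz in $k$ (applied once at $x$ and once at $a$) to get $\|u_{a,h,<L}\|_{L^\infty}\leq C h^{-2}h^{1/3}L^{1/3}$; plugging $L=h^{-1/4}$ or $L=1/|t|$ yields both terms $h^{1/4}$ and $(h/|t|)^{1/3}$ simultaneously. For $k\geq L$, the hypothesis $x\leq a\leq h^{1/2}$ and $k\geq Dh^{-1/4}$ guarantee $\omega_k-\eta^{2/3}h^{-2/3}x\geq\omega_k/2$, so one may replace both Airy factors by their oscillatory asymptotics and obtain four oscillatory integrals in $\eta$; a van der Corput argument (Lemma~\ref{lemphi3+}, controlling the second \emph{and} third $\eta$-derivatives of the phase uniformly in the auxiliary parameter $\mu=ah^{-1/3}/(t\omega_k^{1/2})$, which may be large) gives $|w_k|\leq C h^{-1}(hk)^{-2/3}\lambda^{-1/3}$ with $\lambda=t\omega_k h^{-1/3}$, and the sum over $k\leq\varepsilon h^{-1}$ of $(hk)^{-2/3}k^{-2/9}$ is $O(h^{-1/9})$, closing to $h^{-2}(h/|t|)^{1/3}$. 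The uniformity in $\mu$ is exactly the ``main obstacle'' you anticipated, but it arises for \emph{every} large $k$, not for $O(1)$ of them.
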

This proposition will be proved  in the next two sections. Proposition \ref{propadisp}
clearly implies Theorem \ref{disper} for $a\leq h^{1/2}$. By time
symmetry, we may restrict ourselves to positive times $t\in
[0,1]$. Notice that the proof for the wave propagator
$\exp(+it\sqrt{-\Delta_g})$ is exactly the same as the sign plays no
role whatsoever.
\subsection{Tangential initial directions}\label{secta<}
In this section, we make use of the Sobolev embedding properties
related to the orthogonal basis $(e_{k})$.
\begin{lemma}\label{lemsob}
There exists $C_{0}$ such that for  $L\geq 1$ the following holds true
\begin{equation}\label{g3-2}
\sup_{b\in \R}\Big (\sum_{1\leq k\leq L}k^{-1/3}Ai^2(b-\omega_{k})\Big)\leq C_{0}L^{1/3}.
\end{equation}
\end{lemma}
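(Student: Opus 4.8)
The plan is to fix $b\in\R$ and estimate $S(b):=\sum_{1\le k\le L}k^{-1/3}\Ai^2(b-\omega_k)$ by comparing the sum to an integral. I will use two classical facts about the zeros $-\omega_k$ of $\Ai$ (see \cite{Olver}): first, $c_1 k^{2/3}\le\omega_k\le c_2 k^{2/3}$ and $\omega_k-\omega_{k-1}\ge c_0 k^{-1/3}$ for $k\ge 2$, so that the weight $k^{-1/3}$ is comparable to the consecutive gap $\omega_k-\omega_{k-1}$; second, the pointwise bounds $\Ai^2(x)\le C_1(1+|x|)^{-1/2}$ for $x\le 0$ and $\Ai^2(x)\le C_1 e^{-cx}$ for $x\ge 0$. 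Heuristically $S(b)$ is a Riemann sum for $\int\Ai^2(b-\omega)\,d\omega$ over an interval of length $\sim L^{2/3}$, which is $\lesssim L^{1/3}$ because $\Ai^2$ decays on average like $\omega^{-1/2}$; the role of the exponential bound is to tame the region $\omega_k<b$, where the naive $\omega^{-1/2}$ bound is hopelessly lossy.

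First I would dispose of the case $b\le\omega_1/2$: there $b-\omega_k\le-\omega_k/2<0$ for every $k$, so $\Ai^2(b-\omega_k)\le C_1(1+\omega_k/2)^{-1/2}\le C k^{-1/3}$ using $\omega_k\ge c_1k^{2/3}$, hence $S(b)\le C\sum_{k\le L}k^{-2/3}\le C'L^{1/3}$. For $b>\omega_1/2$ I let $k_*$ be the largest index $\le L$ with $\omega_{k_*}\le b$ (with $k_*=0$ if there is none), and split $S(b)=S_<+S_>$, where $S_<=\sum_{k\le k_*}(\cdots)$ and $S_>=\sum_{k_*<k\le L}(\cdots)$ (either sum may be empty). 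In $S_>$ one has $\omega_k>b$, so $\Ai^2(b-\omega_k)\le C_1 f(\omega_k)$ with $f(\omega):=(1+(\omega-b)_+)^{-1/2}$ non-increasing; combining $k^{-1/3}\le c_0^{-1}(\omega_k-\omega_{k-1})$ with $f(\omega_k)\le(\omega_k-\omega_{k-1})^{-1}\int_{\omega_{k-1}}^{\omega_k}f$ for $k\ge k_*+2$ telescopes the sum into $c_0^{-1}\int_{\omega_{k_*+1}}^{\omega_L}(1+\omega-b)^{-1/2}\,d\omega\le 2c_0^{-1}\sqrt{1+\omega_L}\le CL^{1/3}$ (using $\omega_L\le c_2 L^{2/3}$), the single remaining term $k=k_*+1$ being $O(1)$.

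For $S_<$ one has $b-\omega_k\ge 0$. Writing $k=k_*-j$ and summing the gap lower bounds, $\omega_{k_*}-\omega_{k_*-j}\ge c_0 j\,k_*^{-1/3}$, so $b-\omega_{k_*-j}\ge c_0 j\,k_*^{-1/3}$ and therefore $S_<\le C_1\sum_{j=0}^{k_*-1}(k_*-j)^{-1/3}e^{-\kappa j k_*^{-1/3}}$ for some $\kappa>0$. A dyadic split at $j=k_*/2$ finishes this: on $j\le k_*/2$ one bounds $(k_*-j)^{-1/3}\le 2^{1/3}k_*^{-1/3}$ and sums the geometric series $\sum_j e^{-\kappa j k_*^{-1/3}}\le C k_*^{1/3}$; on $j>k_*/2$ one uses the exponentially small factor $e^{-\kappa k_*^{2/3}/2}$ against $(k_*-j)^{-1/3}\le 1$ and $\le k_*$ terms. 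This gives $S_<\le C$, an absolute constant, and adding the three contributions yields $S(b)\le C_0 L^{1/3}$ uniformly in $b$.

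The only genuinely delicate point is the estimate of $S_<$: one must reverse the summation index, sum the gap lower bounds to obtain a lower bound on $b-\omega_{k_*-j}$ that is linear in $j$, and then run the dyadic split to beat the $(k_*-j)^{-1/3}$ weight — which is precisely why $S_<$ is bounded by an absolute constant rather than by something growing with $L$. Everything else is the routine comparison of a sum with comparably spaced nodes to an integral.
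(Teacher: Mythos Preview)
Your proof is correct. The approach differs from the paper's in execution, though both ultimately rest on comparing the sum to an integral. The paper uses only the single uniform bound $|\Ai(x)|\le C(1+|x|)^{-1/4}$ (valid for all $x\in\R$), reduces the problem to estimating $\sum_{k\le L}k^{-1/3}(1+|b-\omega_k|^{1/2})^{-1}$, disposes of $b\le 0$ and $b\ge DL^{2/3}$ directly, and for $b\in[0,DL^{2/3}]$ rescales $b=L^{2/3}x$, $k=Lt$ to recognize a Riemann sum for $\int_0^1|x-s|^{-1/2}\,ds$; this is cleaner in that no case split on the sign of $b-\omega_k$ is needed. You instead exploit the sharper exponential decay $\Ai^2(x)\lesssim e^{-cx}$ for $x\ge 0$, which buys you the stronger conclusion that the ``wrong-side'' contribution $S_<$ is $O(1)$ uniformly in $b$ and $L$, at the cost of a slightly more hands-on argument (the reversed index and the dyadic split at $j=k_*/2$). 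Both routes are elementary and of comparable length; the paper's avoids the exponential bound entirely, while yours avoids the rescaling step.
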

\begin{proof}
From $\vert Ai(x)\vert \leq C(1+\vert x \vert)^{-1/4}$, we get
$$
J(b)=\sum_{1\leq k\leq L}k^{-1/3}Ai^2(b-\omega_{k})\lesssim \sum_{1\leq k\leq L}
k^{-1/3} {1\over 1+ \vert b-\omega_{k}\vert^{1/2}}\,.
 $$
From $\omega_{k}\simeq k^{2/3}$, we get easily with $C$ independent of $L$ and $D$ large enough
$$
 \sup_{b\leq 0}J(b) \leq CL^{1/3}, \quad  \sup_{b\geq DL^{2/3}}J(b)
 \leq CL^{1/3}\,.
$$
Thus we may assume $b=L^{2/3}b'$ with $b'\in [0,D]$. Since $\omega_{k}=k^{2/3}g(k)$
with $g$ being an elliptic symbol of degree $0$, we are left to prove that 
$$I(x)=L^{-1/3} \sum_{1\leq k\leq L}
{(k/L)}^{-1/3} {1\over 1+ L^{1/3}\vert x-({k/L})^{2/3}\vert^{1/2}} $$
satisfies $\sup_{x\in \R}I(x)\leq C_{0}L^{1/3}$. Since we can split $[0,1]$ into a finite
union of intervals on which the function ${t^{-1/3} \over 1+ L^{1/3}\vert x-t^{2/3}\vert^{1/2}}$
is monotone, and since each term in the sum is bounded by $1$, we get
$$ 
I(x) \lesssim C+L^{2/3}\int_{0}^1 {t^{-1/3} \over 1+ L^{1/3}\vert x-t^{2/3}\vert^{1/2}}dt
\leq Cte + L^{1/3}\int_{0}^1 {3\over 2\vert x-s\vert^{1/2}}ds\,,
$$
and the proof of Lemma \ref{lemsob} is complete.
\end{proof}

Let $u_{a,h,<L}$ be the function defined by \eqref{psideltabis} with the sum  restricted
to $k\leq L$. From \eqref{ek}, Lemma \ref{lemsob}, and Cauchy-Schwarz inequality, one gets
\begin{equation}\label{g3-3}
\Vert  u_{a,h,<L}(t,x,y)\Vert_{L^\infty}\leq C_{1}h^{-2}h^{1/3}L^{1/3}\,.
\end{equation}
 Taking $L=C/h$, we get that Proposition \ref{propadisp} holds true for $\vert t \vert \leq h$.
 With $L=h^{-1/4}$ or $L=1/\vert t \vert$, one sees also that \eqref{g3-1} holds true
 for $u_{a,h,<L}$. Thus we are reduced to proving that  \eqref{g3-1} holds true for
$u_{a,h,>L}$, which is defined by the sum over $k\geq L$ with $L \geq
D\max(h^{-1/4}, 1/\vert t \vert)$ with a large constant $D$, and where
$|t|>h$.

\subsection{Non-tangential initial directions }\label{sectntga<}
In this section, we denote by $v_{a,h}(t,x,y)$ the function defined 
for $h\leq t \leq 1$ by \eqref{psideltabis} 
with the sum restricted to $L\leq k \leq \varepsilon/h$, $L \geq D\max(h^{-1/4}, 1/t)$, $D>0$ large and $\varepsilon>0$ small. For each value of $k$, we set
\begin{equation}\label{g3-4}
\lambda=t\omega_{k}h^{-1/3}, \quad \mu= {ah^{-1/3}\over t \omega_{k}^{1/2}}\,.
\end{equation} 
From $\omega_{k}\simeq k^{2/3}$, $k\geq 1/t$, and $t\geq h$, one has,
for some $c>0$, $\lambda\geq c$; thus we will take $\lambda$ as our large parameter. However, the parameter
$\mu$ just satisfies
\begin{equation}\label{g3-5}
 0 \leq \mu \lesssim {h^{1/6}\over t}\min(t^{1/3},h^{1/12})
\end{equation} 
and thus may be small or arbitrary large. Observe that for $D$ large enough,
for $k\geq Dh^{-1/4}$ and $0\leq x \leq a\leq h^{1/2}$ one has
$$\omega_{k}-xh^{-2/3}\eta^{2/3}\geq \omega_{k}/2$$
for all $\eta$ in the support of $\psi_{1}$. Therefore we can use the asymptotic expansion
of the Airy function from section \ref{appairy}, with $\omega=e^{i\pi/4}$,

$$
Ai(\zeta)=\sum_{\pm}\omega^{\pm1}e^{\mp\frac 23 i
  (-\zeta)^{3/2}}(-\zeta)^{-1/4}\Psi_{\pm}(-\zeta)
$$
which is valid for $-\zeta
>1$, with $-\zeta=\omega_k-\eta^{2/3}h^{-2/3}x\geq
\omega_k/2\geq\frac{\omega_1}{2}>1$ since $\omega_1\approx 2.33$.
We thus get from \eqref{psideltabis}
\begin{equation}\label{g3-6}
v_{a,h}=\sum_{L\leq k\leq \varepsilon/h}w_{k}, \quad 
w_k= {1\over 2\pi h}\sum_{\pm,\pm}\int e^{i\lambda \Phi^{\pm,\pm}_{k}}\sigma^{\pm,\pm}_{k}d\eta
\end{equation}
The phases $\Phi^{\pm,\pm}_{k}$ and the symbols $\sigma^{\pm,\pm}_{k}$ of $w_k$ read as follows,
with the notation $z=h^{2/3}\omega_{k}\eta^{-2/3}\geq 2a$:
\begin{equation}\label{Phikpmpm}
h\lambda\Phi^{\pm,\pm}_{k}(t,x,y,\eta,a)=\eta \Big(y-t\sqrt{1+z}\mp\frac 23
(z-x)^{3/2}\mp\frac 23(z-a)^{3/2}\Big),
\end{equation}

\begin{multline}\label{sigmakpmpm}
\sigma^{\pm}_{k,\pm}(x,\eta,a,h)=h^{-1/3}\eta \psi_{1}(\eta)\psi_{2}(\eta\sqrt{1+z})\frac{f^2_k}{k^{1/3}}
\omega^{\pm}\omega^{\pm}\\
\times (z-x)^{-1/4}(z-a)^{-1/4}\Psi_{\pm}(\eta^{2/3}h^{-2/3}(z-x))\Psi_{\pm}(\eta^{2/3}h^{-2/3}(z-a)).
\end{multline}

One has $3\eta\partial_{\eta}= -2 z\partial_{z}$ and for $0\leq x \leq a \leq 2z$,
$$
\vert (z\partial_{z})^j((z-x)^{-1/4})\vert \leq C_{j}z^{-1/4}\leq
C'_{j}(hk)^{-1/6}\,;
$$
moreover, $\Psi_{\pm}$ are classical symbols of degree $0$ at infinity and 
$$
\vert \eta^{2/3}h^{-2/3}(z-x)\vert \geq \omega_{k}/2\geq Ch^{-1/6}\,,
$$
 since $k\geq L\geq D h^{-1/4}$. Therefore we get
from \eqref{sigmakpmpm} that for all $j$, there exists $C_{j}$ independent of $h,k,a,x, \eta$
such that
\begin{equation}\label{g3-7}
\vert \partial_{\eta}^j \sigma^{\pm,\pm}_{k}(x,\eta,a,h) \vert \leq C_{j} (hk)^{-2/3}\,.
\end{equation}
\begin{prop}\label{propdispumntg<}
For $\varepsilon$ small, there exists $C$ independent of $a\in (0,h^{1/2}]$, $t\in [h,1]$, $ x\in[0,a]$, $y\in \R$
and $k\in [L, \varepsilon/h]$ such that the following holds true
\begin{equation}\label{g3-8}
\vert \int e^{i\lambda \Phi^{\pm,\pm}_{k}}\sigma^{\pm,\pm}_{k}d\eta\vert 
\leq C (hk)^{-2/3}\lambda^{-1/3}\,.
\end{equation}
\end{prop}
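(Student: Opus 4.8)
The plan is to reduce \eqref{g3-8} to a scalar oscillatory-integral estimate and then apply van der Corput's lemma. By \eqref{g3-7} we may write $\sigma^{\pm,\pm}_k=(hk)^{-2/3}\tilde\sigma$, where $\tilde\sigma=\tilde\sigma(\eta)$ is supported in $\supp\psi_1$, a fixed compact subset of $]0,\infty[$, and satisfies $|\partial_\eta^j\tilde\sigma|\leq C_j$ uniformly in all parameters; it thus suffices to prove $\bigl|\int e^{i\lambda\Phi^{\pm,\pm}_k}\tilde\sigma\,d\eta\bigr|\leq C\lambda^{-1/3}$. Set $\varphi=\lambda\Phi^{\pm,\pm}_k$. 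Since $z=h^{2/3}\omega_k\eta^{-2/3}$ satisfies $\eta\,\partial_\eta z=-\tfrac23 z$, a direct computation gives
\[
h\,\partial_\eta\varphi=y+F(z),\qquad F(z)=-\frac{t(3+2z)}{3\sqrt{1+z}}\pm\frac{2}{3}x(z-x)^{1/2}\pm\frac{2}{3}a(z-a)^{1/2},
\]
and then
\[
\partial_\eta^2\varphi=-\frac{2\lambda}{3t\,\eta^{5/3}}\,F'(z),\qquad \partial_\eta^3\varphi=\frac{2\lambda}{9t\,\eta^{8/3}}\bigl(5F'(z)+2zF''(z)\bigr),
\]
where $F'(z)=-\tfrac{t(2z+1)}{6(1+z)^{3/2}}\pm\tfrac x3(z-x)^{-1/2}\pm\tfrac a3(z-a)^{-1/2}$ and $F''$ is its derivative.

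The heart of the argument is the uniform lower bound $|F'(z)|+|5F'(z)+2zF''(z)|\geq c\,t$ for all $z\in[2a,\varepsilon']$, with $c>0$ absolute (here $\varepsilon'$ is small, which holds on the support since $z\lesssim\varepsilon^{2/3}$). The constraint $z\geq 2a\geq 2x$, valid on the support because we are in the region where the Airy asymptotics were applied, is essential: it gives $z-x\geq z/2$ and $z-a\geq a$, so every Airy-type term is $O(a^{1/2})$. Denote by $R_1(z)=\pm\tfrac x3(z-x)^{-1/2}\pm\tfrac a3(z-a)^{-1/2}$ the Airy part of $F'(z)$ and by $S_2(z)=\mp\tfrac{xz}{3}(z-x)^{-3/2}\mp\tfrac{az}{3}(z-a)^{-3/2}$ the Airy part of $2zF''(z)$; writing $z=(z-x)+x$ one gets the identity $S_2(z)=-R_1(z)-T(z)$ with $T(z)=\pm\tfrac{x^2}{3}(z-x)^{-3/2}\pm\tfrac{a^2}{3}(z-a)^{-3/2}$ (same signs as $R_1$), and, using $0\leq x\leq a\leq z/2$, one checks that $R_1(z)$ and $T(z)$ are ``coherent'' in the sense that $|R_1(z)+T(z)|\geq|R_1(z)|$, hence $|S_2(z)|\geq|R_1(z)|$. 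Now if $|F'(z)|\geq t/C$ the bound is clear; otherwise $|R_1(z)|=\bigl|F'(z)+\tfrac{t(2z+1)}{6(1+z)^{3/2}}\bigr|\geq\tfrac t6-\tfrac tC\geq\tfrac{t}{12}$ (as $\tfrac{2z+1}{(1+z)^{3/2}}\geq1$ for $z\leq\tfrac12$), so $|S_2(z)|\geq t/12$, then $|2zF''(z)|\geq t/12-tz/6\geq t/24$ (as $z$ is small), and finally $|5F'(z)+2zF''(z)|\geq 2|zF''(z)|-5|F'(z)|\geq t/12-5t/C\geq t/24$ for $C$ large. Consequently $|\partial_\eta^2\varphi(\eta)|+|\partial_\eta^3\varphi(\eta)|\geq c'\lambda$ at every point of $\supp\tilde\sigma$.

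To turn this pointwise bound into the decay $\lambda^{-1/3}$ one needs a form of van der Corput insensitive to the fact that $\partial_\eta^2\varphi$ can be much larger than $\lambda$ when $t\ll a^{1/2}$, so that Lemma \ref{lemstat1} cannot be applied globally with a uniform constant. Using $5F'(z)+2zF''(z)=2z^{-3/2}\tfrac{d}{dz}\bigl(z^{5/2}F'(z)\bigr)$ and the explicit structure of $F'$ on $[2a,\varepsilon']$ (a monotone term plus a monotone perturbation), $\partial_\eta^3\varphi$ has a uniformly bounded number of sign changes on $\supp\tilde\sigma$; partition $\supp\tilde\sigma$ into the corresponding bounded number of intervals on each of which $\partial_\eta^2\varphi$ is monotone. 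On each such interval $I$, the set $\{\,|\partial_\eta^2\varphi|\geq c'\lambda/2\,\}$ consists of at most two subintervals at the ends of $I$, on which van der Corput with the second derivative gives a contribution $\leq C\lambda^{-1/2}\lesssim\lambda^{-1/3}$ (using that $\lambda$ is bounded below and $\|\tilde\sigma\|_{L^\infty}+\|\tilde\sigma'\|_{L^1}\leq C$); on the complementary subinterval $|\partial_\eta^2\varphi|<c'\lambda/2$, hence $|\partial_\eta^3\varphi|\geq c'\lambda/2$ by the previous paragraph, and van der Corput with the third derivative gives $\leq C\lambda^{-1/3}$. Summing over the bounded number of pieces proves the estimate. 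The genuine obstacle is the lower bound of the second paragraph: it is the analytic incarnation of the fact that, for $a\leq h^{1/2}$, the wave front of $v_{a,h}$ develops only cusp (not swallowtail) singularities, so the phase $\Phi^{\pm,\pm}_k$ degenerates at worst to order $3$.
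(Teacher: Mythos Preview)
Your approach differs from the paper's: you establish the pointwise bound $|\partial_\eta^2\varphi|+|\partial_\eta^3\varphi|\geq c'\lambda$ and then want to apply van der Corput on pieces, whereas the paper (after proving essentially the same lower bound, Lemma~\ref{lemphi3+}) splits instead on the size of $\mu=ah^{-1/3}/(t\omega_k^{1/2})$. For $\mu$ bounded, all $s$-derivatives of $\partial_\eta\phi^{\pm,\pm}_k$ of order $\geq 1$ are uniformly bounded and Lemma~\ref{lemstat1} applies directly with a uniform constant; for $\mu$ large, the paper rescales with $\Lambda=\lambda\mu$ in the same-sign cases (where $|\partial_s\partial_\eta\phi|\gtrsim\mu$) or $\Lambda'=\lambda\mu(1-\delta)$ in the mixed-sign cases (via the identity~\eqref{g3-12}), obtaining the sharper $\Lambda^{-1/2}$ bound. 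Your second paragraph --- the derivation of the lower bound --- is correct; in particular the identity $S_2=-R_1-T$ and the coherence inequality $|R_1+T|\geq|R_1|$ check out using $0\leq x\leq a\leq z/2$.

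The genuine gap is in your third paragraph. The assertion that $\partial_\eta^3\varphi$ has a uniformly bounded number of sign changes is not justified by ``a monotone term plus a monotone perturbation'': a sum of two monotone functions can change sign arbitrarily often, and the identity $5F'+2zF''=2z^{-3/2}(z^{5/2}F')'$ only shifts the problem to counting critical points of $z^{5/2}F'$. The claim is in fact true and can be verified sign by sign --- writing $5F'+2zF''=A(z)+\epsilon_1 C_x(z)+\epsilon_2 C_a(z)$ with $A<0$ and $C_u(z)=\tfrac{u(4z-5u)}{3(z-u)^{3/2}}>0$ on $z\geq 2a$, the combinations $(-,-)$ and $(+,-)$ give a strictly negative sum (using $C_x\leq C_a$, which follows from $\partial_u[u(4z-5u)(z-u)^{-3/2}]>0$ for $z\geq 2a$), while $(+,+)$ gives a monotone sum since $A,C_x,C_a$ are all decreasing there; only $(-,+)$ requires an additional argument --- but none of this is in your proof, and your one-line justification does not cover it. The paper's $\mu$-splitting sidesteps the whole issue because in each branch the higher derivatives of the (rescaled) phase are uniformly bounded, so Lemma~\ref{lemstat1} delivers the constant uniformly without any sign-change count.
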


 Observe that from 
 \eqref{g3-6} and the definition  \eqref{g3-4} of $\lambda$, \eqref{g3-8} implies
 
 \begin{equation}
 \begin{aligned}
 \Vert \mathds{1}_{x\leq a}v_{a,h}(t,x,y)\Vert_{L^\infty}& \leq Ch^{-1}
 \sum_{k\leq \varepsilon/h}(hk)^{-2/3}t^{-1/3}h^{1/9}k^{-2/9}\\
 &=Ch^{-2}({h\over t})^{1/3} h^{1/9}(\sum_{k\leq \varepsilon/h}k^{-8/9}) \leq C'h^{-2}({h\over t})^{1/3}
 \end{aligned}\end{equation}
 and therefore Proposition \ref{propadisp} holds true for $v_{a,h}$.
\begin{proof}
Since from \eqref{g3-7} the $(hk)^{2/3}\sigma^{\pm,\pm}_{k}$ are classical symbols of degree
$0$ compactly supported in $\eta$, we intend to apply the stationary phase to an integral of the form
\begin{equation}\label{g3-9}
J=\int e^{i\lambda \Phi^{\pm,\pm}_{k}} gd\eta
\end{equation}
with $g$ a classical symbol of degree
$0$ compactly supported in $\eta$. We have to prove uniformly with respect to the parameters
the inequality
\begin{equation}\label{g3-10}
\vert J \vert \leq C\lambda^{-1/3}.
\end{equation} 
Differentiating the phase with
respect to $\eta$ yields
\[
h\lambda \partial_{\eta}\phi^{\pm,\pm}_{k}= y-t\frac{1+\frac 23 z}{\sqrt{1+z}}\pm\frac 23
 x (z-x)^{1/2}\pm\frac 23 a (z-a)^{1/2},
\]
where the two $\pm$ signs are independent from each other (thus, we
have $4$ cases to consider). Let  $\delta={x\over a} \in [0,1]$,  $\alpha={a \over h^{2/3}\omega_{k}}$ and $s=\eta^{-2/3}\in [s_{0},s_{1}]$. Since $D$ is large, one has $\alpha\in [0,c_{0}]$
with $c_{0}$ such that $\eta^{-2/3}=s\geq s_{0}\geq 2c_{0}$ on the support of  $\psi_{1}(\eta)$.
Let $X={y-t\over t\omega_{k}h^{2/3}}$ and define the function $g(z)$ by 

$$ \frac{1+\frac 23 z}{\sqrt{1+z}}=1+zg(z), \quad g(z)=\frac 16 + {z\over 24}+ O(z^2).$$
Then the derivative of the phase is equal to
\begin{equation}
\partial_{\eta}\phi^{\pm,\pm}_{k}=X-sg(h^{2/3}\omega_{k}s)+\frac 23 \mu \theta^{\pm,\pm}, \quad
\theta^{\pm,\pm}=\pm \delta (s-\delta \alpha)^{1/2}\pm (s-\alpha)^{1/2}.
\end{equation}
 We now study critical points. We take $s=\eta^{-2/3}$ as variable and we get
\begin{equation}\label{g3-11}
\begin{aligned}
\partial_{s}\partial_{\eta}\phi^{\pm,\pm}_{k}&= -(g(z)+zg'(z))+{\mu\over 3}\Big(\pm \delta (s-\delta \alpha)^{-1/2}\pm (s-\alpha)^{-1/2}\Big) \\
\partial_{s}^2\partial_{\eta}\phi^{\pm,\pm}_{k}&=-h^{2/3}\omega_{k}(2g'(z)+zg''(z))-{\mu\over 6}\Big(\pm \delta (s-\delta \alpha)^{-3/2}\pm (s-\alpha)^{-3/2}\Big).
\end{aligned}
\end{equation} 
\begin{lemma}\label{lemphi3+}
For $\varepsilon$ small enough, there exists $c>0$ independent of $k\leq \varepsilon/h$ such that
$$ \vert \partial_{s}\partial_{\eta}\phi^{\pm,\pm}_{k}\vert+ \vert \partial_{s}^2\partial_{\eta}\phi^{\pm,\pm}_{k}\vert \geq c.$$
\end{lemma}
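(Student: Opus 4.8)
The plan is to show that the two quantities in \eqref{g3-11} cannot both be small by exploiting the different homogeneities in $s$ of the ``geometric'' part (coming from $g$) and the ``Airy'' part (coming from $\mu$). First I would record the orders of magnitude of the relevant terms: on the support of $\psi_1$ one has $s=\eta^{-2/3}\in[s_0,s_1]$ bounded above and below, and $z=h^{2/3}\omega_k s$ with $k\leq\varepsilon/h$, so $z\leq C\varepsilon^{2/3}$ is \emph{small}. Consequently $g(z)+zg'(z)=\tfrac16+O(z)$ and $2g'(z)+zg''(z)=\tfrac1{12}+O(z)$, so for $\varepsilon$ small both are bounded away from $0$; moreover the factor $h^{2/3}\omega_k$ multiplying $2g'(z)+zg''(z)$ in the second line of \eqref{g3-11} is small (it is $\leq C\varepsilon$), so that term is negligible.

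The heart of the matter is the $\mu$-dependent terms, which are of size $\mu$ times a bounded quantity (recall $\alpha\in[0,c_0]$ with $c_0\leq s_0/2$, so $s-\alpha$ and $s-\delta\alpha$ stay in a fixed compact subset of $]0,\infty[$, and all the powers $(s-\alpha)^{\pm1/2}$, $(s-\delta\alpha)^{\pm1/2}$ are bounded above and below). So I would split into two regimes according to the size of $\mu$. \textbf{Case $\mu$ small (say $\mu\leq \mu_0$):} then $|\partial_s\partial_\eta\phi^{\pm,\pm}_k|\geq |g(z)+zg'(z)| - C\mu \geq \tfrac16 - O(z) - C\mu_0 \geq c>0$ for $z,\mu_0$ small, which already gives the claim. \textbf{Case $\mu$ large (say $\mu\geq \mu_0$):} here I would use the \emph{second} derivative. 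In the four sign configurations $(\epsilon_1,\epsilon_2)=(\pm,\pm)$, the $\mu$-part of $\partial_s^2\partial_\eta\phi^{\pm,\pm}_k$ is $-\tfrac{\mu}{6}\big(\epsilon_1\delta(s-\delta\alpha)^{-3/2}+\epsilon_2(s-\alpha)^{-3/2}\big)$; one must check this never cancels for $\delta\in[0,1]$, $\alpha\in[0,c_0]$. When $\epsilon_1=\epsilon_2$ the two terms have the same sign and the bracket is $\geq (s-\alpha)^{-3/2}\geq c$; when $\epsilon_1\neq\epsilon_2$ the bracket is $\pm\big(\delta(s-\delta\alpha)^{-3/2}-(s-\alpha)^{-3/2}\big)$, which vanishes only if $\delta(s-\delta\alpha)^{-3/2}=(s-\alpha)^{-3/2}$; since $\delta\leq1$ and $s-\delta\alpha\geq s-\alpha$ this forces $\delta=1$, i.e.\ $x=a$, and then the bracket is identically $0$. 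So the mixed-sign cases require a separate look, which I postpone to the discussion below.

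The fix for the troublesome mixed-sign, $\delta$ near $1$ sub-case is to go back to the \emph{first} derivative: when $\epsilon_1\neq\epsilon_2$ and $\delta$ is close to $1$, the $\mu$-term in $\partial_s\partial_\eta\phi^{\pm,\pm}_k$ is $\tfrac{\mu}{3}\big(\pm\delta(s-\delta\alpha)^{-1/2}\mp(s-\alpha)^{-1/2}\big)$, and since $\delta(s-\delta\alpha)^{-1/2}\to(s-\alpha)^{-1/2}$ only as $\delta\to1$, for $\delta$ bounded away from $1$ this is $\geq c\mu\geq c\mu_0$ and dominates the bounded geometric part once $\mu_0$ is large enough — wait, this is the opposite regime. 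The clean way to organize it: fix a small threshold $\delta_1<1$. For $\delta\leq\delta_1$ (and mixed signs), the bracket $\delta(s-\delta\alpha)^{-1/2}-(s-\alpha)^{-1/2}$ is bounded away from $0$ uniformly, so in the large-$\mu$ regime $|\partial_s\partial_\eta\phi|\geq c\mu - C \geq c'>0$. For $\delta\geq\delta_1$ (mixed signs), use instead $\partial_s^2\partial_\eta\phi$: there $\delta(s-\delta\alpha)^{-3/2}-(s-\alpha)^{-3/2}$ could still vanish near $\delta=1$, so one must be more careful — one computes that for $\delta\in[\delta_1,1]$ and $\alpha$ not too small this difference has a definite sign, while for $\alpha\to0$ the difference is $(\delta-1)s^{-3/2}+O(\alpha)$, again of definite sign once $\delta<1$ strictly, and at $\delta=1$ exactly the two $\pm$ Airy phases coincide so there are really only the $(\epsilon_1,\epsilon_2)=(\pm,\pm)$ equal-sign contributions left, which were already handled. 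I expect \emph{this} bookkeeping near $\delta=1$, $\alpha$ small — disentangling which derivative to use in which corner of the $(\mu,\delta,\alpha)$ parameter box — to be the main obstacle; everything else is the routine observation that $z$ and $h^{2/3}\omega_k$ are small so the geometric curvature term $g'$ is a harmless bounded perturbation. Once the case analysis is complete, choosing $\varepsilon$ (hence $z$) small, $\mu_0$, $\delta_1$ appropriately yields a single constant $c>0$ as claimed, uniformly in $k\leq\varepsilon/h$.
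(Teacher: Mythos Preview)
Your case analysis on $\mu$ small versus $\mu$ large, combined with a secondary split on $\delta$, leads to conflicting thresholds: you need $\mu_0$ small for the first case, but then for $\mu\geq\mu_0$ with mixed signs and $\delta\leq\delta_1$, the bound $c\mu-C$ need not be positive. You noticed this yourself (``wait, this is the opposite regime'') and never resolved it. Your attempt for $\delta\geq\delta_1$ is also off target: you discuss the \emph{sign} of $\delta(s-\delta\alpha)^{-3/2}-(s-\alpha)^{-3/2}$, but the sign is always the same; the issue is that its \emph{magnitude} can be arbitrarily small while $\mu$ is arbitrarily large, so $\mu\times(\text{bracket})$ can be anything. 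The remark about the phases coinciding at $\delta=1$ is irrelevant: the lemma concerns each phase separately, uniformly in $\mu$, and a continuity argument at $\delta=1$ is not uniform in $\mu$.

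The paper's proof avoids the threshold tangle by exploiting sign structure and a dichotomy rather than a $\mu$-split. First, since $(s-\alpha)^{-1/2}\geq\delta(s-\delta\alpha)^{-1/2}$, in the $(\pm,-)$ cases the $\mu$-part of $\partial_s\partial_\eta\phi$ is nonpositive and therefore \emph{adds} to the geometric part $-(g+zg')\approx-1/6$, giving $|\partial_s\partial_\eta\phi^{\pm,-}_k|\geq 1/10$ for \emph{all} $\mu$ at once. For $(+,+)$ and $(-,+)$ the argument is: either $|\partial_s\partial_\eta\phi|\geq 1/100$, or one deduces a lower bound on $\mu(s-\alpha)^{-1/2}$ (resp.\ $\mu(1-\delta)$) and feeds it into $\partial_s^2\partial_\eta\phi$. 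The key tool for $(-,+)$ is the identity
\[
f(s-\alpha)-\delta f(s-\delta\alpha)=(1-\delta)f(s-\delta\alpha)-\int_0^{\alpha(1-\delta)}f'(s-\delta\alpha-t)\,dt,
\]
applied with $f(t)=t^{-1/2}$ and then $f(t)=t^{-3/2}$: it shows both mixed-sign brackets are of exact order $(1-\delta)$, so the correct control parameter is $\mu(1-\delta)$, not $\mu$ and $1-\delta$ separately. Your argument can be repaired by splitting on the size of $\mu(1-\delta)$ instead of on $\mu$.
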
 
\begin{proof}
One has $(s-\alpha)^{-1/2}\geq \delta(s-\delta\alpha)^{-1/2}$; for $\varepsilon$ small,
$z=h^{2/3}\omega_{k}s$ is small and thus $g(z)+zg'(z)$ is close to
$\frac 16$. Thus we get
$$
\vert \partial_{s}\partial_{\eta}\phi^{\pm,-}_{k}\vert \geq 1/10\,.
$$
The derivative
$\partial_{s}\partial_{\eta}\phi^{\pm,+}_{k}$ may vanish but in case
$\vert \partial_{s}\partial_{\eta}\phi^{\pm,+}_{k}\vert \leq 1/100$, the first line of
\eqref{g3-11} implies
$$
{\mu\over 3}(s-\alpha)^{-1/2}\geq 0.05\,.
$$
The second line of
\eqref{g3-11} then gives a positive lower bound on 
$\vert \partial_{s}^2\partial_{\eta}\phi^{+,+}_{k}\vert$. It remains to study 
$\phi^{-,+}_{k}$. For any function $f$, one has
\begin{equation}\label{g3-12}
f(s-\alpha)-\delta f(s-\delta\alpha)=(1-\delta)f(s-\delta\alpha)-
\int_{0}^{\alpha(1-\delta)}f'(s-\delta\alpha -t)dt\,.
\end{equation}
Taking $f(t)=t^{-1/2}$, we thus find that 
$$
\vert \partial_{s}\partial_{\eta}\phi^{-,+}_{k}\vert \leq
1/100\Longrightarrow \mu(1-\delta)\geq c>0\,.
$$
 If one applies \eqref{g3-12} with 
$f(t)=t^{-3/2}$, we then find that for $\varepsilon$ small, the second line of
\ref{g3-11} implies $\vert \partial_{s}^2\partial_{\eta}\phi^{-,+}_{k}\vert\geq  c/2$.
The proof of Lemma \ref{lemphi3+} is complete. 
\end{proof}
From Lemma \ref{lemphi3+} and \ref{lemstat1}, we get that Proposition \ref{propdispumntg<}
holds true in the case where the parameter $\mu$ is bounded, since in that case
all the derivatives of order $\geq 2$ of the phase $\phi^{\pm,\pm}_{k}$ are bounded.
It remains to study the case where $\mu$ is large. 

In cases $(+,+)$ or $(-,-)$, and $\mu$ large, we can take as large parameter 
$\Lambda=\lambda\mu$. Since $(s-\alpha)^{-1/2}+\delta (s-\delta\alpha)^{-1/2}\geq c>0$,
we get in that case that \eqref{g3-8} holds true with a better factor $(hk)^{-2/3}\Lambda^{-1/2}$ on the right hand side.

It remains to study the cases $(+,-)$ and  $(-,+)$ for $\mu$ large. But in these cases,
we can use \eqref{g3-12}: therefore, if $\mu(1-\delta)$ is bounded, all the derivatives of order $\geq 2$ of the phase $\phi^{\pm,\mp}_{k}$ are bounded, and therefore from Lemmas \ref{lemphi3+} and \ref{lemstat1}, we get that Proposition \ref{propdispumntg<}
holds true.

Finally, in the cases $(+,-)$ and  $(-,+)$ and $\mu (1-\delta)$ large,
we can take as large parameter 
$\Lambda'=\lambda\mu(1-\delta)$, and since by \eqref{g3-12} one has 
$$
\vert (s-\alpha)^{-1/2}-\delta (s-\delta\alpha)^{-1/2}\vert \geq
c(1-\delta)\,,
$$
 with $c>0$,
we get in that case that \eqref{g3-8} holds true with a better factor
$(hk)^{-2/3}\Lambda'^{-1/2}$ on the right hand side.

The proof of Proposition \ref{propdispumntg<} is now complete.
\end{proof}
This concludes the proof of Proposition \ref{propadisp}.
\section{Dimension $d\geq 3$}\label{dim3}

Let $d\geq 3$ and $\Omega_d=\{(x,y)\in\mathbb{R}_+\times\mathbb{R}^{d-1}\}$ with Laplace operator $\Delta_d=\partial^2_x+(1+x)\triangle_{y}$. The normal variable is still denoted $x>0$, and the boundary is still defined by the condition $x=0$.  Proofs of Theorems
\ref{disper} and \ref{disperoptimal} follow exactly along the same
line as in the $2d$ case,  for both  $a\lesssim h^{1/2}$ and $a\gg h^{4/7}$.

\subsection{Parametrix for $a\gg h^{4/7}$}
In higher dimensions the parametrix construction is identical to the one in the two dimensional case. We set $\hbar=h/\vert \eta \vert$, and we define $v(t,x,y,h)$ with $y\in \mathbb R^{d-1}$ by 
\begin{equation}\label{g4-2}
\begin{aligned}
 v(t,x,y,h)=&\sum_{0\leq N\leq C_0/\sqrt a} v_N(t,x,y,h)\\
  v_N(t,x,y,h)=&\frac 1 {(2\pi h)^d} \int e^{i\frac \eta h y}
    u_N(t,x,h/\vert\eta\vert) \vert\eta\vert \chi_0(\vert \eta \vert) \,d\eta,
\end{aligned}
\end{equation}
with the same $ u_N(t,x,h/\vert\eta\vert)$ as before. We take polar
coordinates in $\eta\in \mathbb R^{d-1}$, $\eta=\vert\eta\vert \omega$.
We thus get
\begin{equation}\label{g4-3}
  v_N(t,x,y,h)=\frac 1 {(2\pi h)^d}\int (\int e^{i{\vert \eta \vert \over h} (y.\omega)}d\omega)
    u_N(t,x,h/\vert\eta\vert) \vert\eta\vert^d \chi_0(\vert \eta \vert) \,d\vert\eta\vert,
\end{equation}
In the above formula, apart from the harmless factor
$\vert\eta\vert^d$ instead of $\vert\eta\vert$, we have a
superposition with respect to $\omega\in S^{d-2}$ of functions of the
same type as before, which are evaluated at $z=y.\omega$. We shall use the
following lemma.
\begin{lemma}\label{lemmed}
Let $\psi_{j}\in C_{0}^\infty(]0,\infty[)$. There exists $c_{0}>0$ such that for
every $a\in ]0,1]$ and every $t\in [h,1]$ 
\begin{equation}\label{g4-1}
h^d\vert \psi_{1}(h\sqrt{-\Delta_{d}})\psi_{2}(h\vert D_{y} \vert) e^{\pm it \sqrt{-\Delta_{d}}}
\delta_{x=a,y=0}\vert_{L^\infty (x\leq a,\vert y \vert \leq c_{0}t)} \in O(({h\over t})^\infty)
\end{equation}
\end{lemma}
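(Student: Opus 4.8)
\emph{Proof proposal.} The statement asserts that the spectrally localized wave has not yet reached the cone $\{x\le a,\ |y|\le c_{0}t\}$ about the inward normal to the source. I would argue in three steps, the first two elementary, the third a uniform non-stationary phase argument on the parametrices already constructed.

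First, for $0<t\le C_{1}h$ the estimate is trivial: $\psi_{1}(h\sqrt{-\Delta_{d}})\psi_{2}(h|D_{y}|)e^{\pm it\sqrt{-\Delta_{d}}}$ is a semiclassical Fourier integral operator of order $0$ with frequencies of size $1/h$, so its Schwartz kernel is $O(h^{-d})$ uniformly for $t\in[0,1]$; hence $h^{d}\|\cdot\|_{L^{\infty}}\le C$, which is $O((h/t)^{\infty})$ when $t\lesssim h$. So assume $t\ge C_{1}h$. Second comes the geometric input: above $(x,y)=(a,0)$ the $h$–wave front set of the data lies in $\{h^{2}(\xi^{2}+(1+a)|\eta|^{2})\in\operatorname{supp}\psi_{1}^{2},\ h^{2}|\eta|^{2}\in\operatorname{supp}\psi_{2}^{2}\}$, so $|\eta|^{2}/\tau^{2}\ge c_{1}^{2}:=(\min\operatorname{supp}\psi_{2}/\max\operatorname{supp}\psi_{1})^{2}$ with $\tau=\sqrt{\xi^{2}+(1+x)|\eta|^{2}}$. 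Along any bicharacteristic of $\partial_{t}^{2}-\Delta_{d}$ both $\tau$ and $\eta$ are conserved, hence so is $|\eta|^{2}/\tau^{2}\ge c_{1}^{2}$; therefore $|\dot y|=(1+x)|\eta|/\tau\ge|\eta|/\tau\ge c_{1}$, and as $y(0)=0$ while $\dot y$ keeps a fixed direction, $|y(t)|=\int_{0}^{t}|\dot y|\,ds\ge c_{1}t$. These constants are uniform in $a\in\,]0,1]$ and in $h$; fix $c_{0}\in\,]0,c_{1}[$. Thus no bicharacteristic issued from $(a,0)$ at frequency $\sim 1/h$ meets $\{|y|\le c_{0}t\}$ at time $t$.

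Third, I would transcribe this into the parametrices. For $a\ge h^{4/7-\varepsilon}$ one has $u_{a,h}=v+O_{C^{\infty}}(h^{\infty})$ with $v=\sum_{0\le N\le C_{0}/\sqrt a}v_{N}$ the $d$–dimensional parametrix \eqref{g4-2}, whose phase is $\Psi_{a,N,h}=\eta\cdot y+|\eta|\,\phi_{a,N,h/|\eta|}$; writing $\eta=|\eta|\omega$, its $\eta$–gradient is $y+\omega\,(\phi_{a,N,h/|\eta|}+|\eta|\,\partial_{|\eta|}\phi_{a,N,h/|\eta|})$, and on the projection of ${\bf\Lambda}_{a,N,h}$ the parenthesis is exactly minus the $y$–coordinate of \eqref{gl4}, \eqref{eq:11}, which by Step 2 (or: reading $y=-t\sqrt{1+a}+a^{3/2}Y$ and checking via \eqref{gl4} that $a^{3/2}Y$ cannot bring $|y|$ below $c_{1}t$) has modulus $\ge c_{1}t$. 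Hence for $x\le a$, $|y|\le c_{0}t$ the full $(t',s,\zeta,\eta)$–gradient of $\Psi_{a,N,h}$ is $\gtrsim(c_{1}-c_{0})t$, uniformly in $N,a$; repeated integration by parts then gives $|v_{N}|\le C_{M}(2\pi h)^{-d}(h/t)^{M}$ for all $M$, uniformly in $N$, and summing the $O(a^{-1/2})$ terms preserves $O((h/t)^{\infty})$. For $0<a\le h^{1/2}$ the same scheme applies to the gallery–mode sum \eqref{psideltabis}: after the Airy splitting the phases \eqref{Phikpmpm} satisfy $\partial_{\eta}\Phi^{\pm,\pm}_{k}=y-t(3+2z)/(3\sqrt{1+z})-\epsilon_{1}\tfrac{2x}{3}(z-x)^{1/2}-\epsilon_{2}\tfrac{2a}{3}(z-a)^{1/2}$ with $z=h^{2/3}\omega_{k}\eta^{-2/3}$ bounded above on $\operatorname{supp}\psi_{2}$ and $z\ge a$ on the effective support (otherwise $e_{k}(a,\cdot)$ is exponentially small), so $|\partial_{\eta}\Phi^{\pm,\pm}_{k}|\ge(1-c_{0})t/2$ once $a$ is small compared with $t$; integrate by parts and sum over $k\le\varepsilon/h$.

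The hard part is the uniform lower bound on the phase gradient precisely when $a$ is comparable to $t$, so that the correction terms $x(z-x)^{1/2},\,a(z-a)^{1/2}$ — equivalently $a^{3/2}Y$ — are of the same order as the leading term $t$: there the triangle inequality is too lossy and one must use the exact geometry of Step 2, namely that the $y$–coordinate on the relevant Lagrangian equals $t\sqrt{1+a}$ up to terms whose signs cannot cancel it down to $|y|<c_{1}t$. In the rescaled coordinates of Section~\ref{changescale} this amounts to: $R=2(1-3Y/T)$ is large on $\{|y|\le c_{0}t\}$, so by Lemma~\ref{lemgl1}(a) the critical points $\mu_{j}$ leave the admissible set $a|\mu|^{2}\le\varepsilon_{0}$ and the phase is non-stationary there with an $a$–uniform gain. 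Checking that the integration-by-parts constants are uniform in $N$, $k$, $a$ and $\eta$ is then routine.
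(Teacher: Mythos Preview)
Your approach is quite different from the paper's and has a genuine gap. The paper does \emph{not} go through the explicit parametrices at all: it first disposes of the range $t\le a/2$ by finite speed of propagation (the wave has not reached $\partial\Omega$, so interior propagation of singularities applies), then rescales $(t,x,y)=(Ts,TX,TY)$ with $T$ the given time, so that the new small parameter is $\hbar=h/T$ and one is reduced to a unit-time statement. At that point the Melrose--Sj\"ostrand theorem on propagation of $WF_\hbar$ along generalized bicharacteristics, combined with your Step~2 computation, gives $O(\hbar^\infty)=O((h/t)^\infty)$ directly. No oscillatory integrals are ever written down.

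The gap in your argument is precisely the regime $t\lesssim a$, which you never isolate. For the gallery-mode sum your lower bound $|\partial_\eta\Phi^{\pm,\pm}_k|\gtrsim t$ needs the correction terms $\tfrac23 a(z-a)^{1/2}\lesssim a\varepsilon^{1/3}$ to be dominated by $t$, and this fails once $a\gg t$ (which is allowed: $a\le h^{1/2}$ and $t\ge h$ permit $a/t\sim h^{-1/2}$). Your attempted rescue via Lemma~\ref{lemgl1}(a) does not help: that lemma belongs to the Section~2 parametrix and requires $T=t/\sqrt a\ge T_0>0$, which again fails when $t\lesssim\sqrt a$; and in any case it says nothing about the gallery-mode phases. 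You also never treat the tangential modes $k<L$, for which the Airy splitting is unavailable and the $e_k(\cdot,\eta/h)$ are not symbols of order zero in $\eta$, so the integration by parts does not close. Even granting all your uniformity claims for $t\gtrsim a$, you would still need the paper's finite-speed reduction to cover the remaining range; once you have that reduction, the rescaling plus Melrose--Sj\"ostrand gives the whole lemma in a few lines and makes the parametrix analysis unnecessary.
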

\begin{proof} We may and will assume $a\leq 2t$; In fact, for $t\leq a/2$, by finite speed
of propagation, the singular support of $e^{\pm it \sqrt{-\Delta_{d}}}
\delta_{x=a,y=0}$ has not reached the boundary $x=0$, and then \eqref{g4-1} is a simple
consequence of propagation of singularities in the interior (see the argument below).
Let $T\in [h,1]$ be given; perform the change of variable $t=Ts, x=TX, y=TY$,
and set $f_{T}(s,X,Y)=f(Ts,TX,TY)$. Then one has

$$ (\Delta_{d}f)_{T}=T^{-2}P_{T}f_{T}, \ P_{T}=\partial_{X}^2+ (1+TX)\triangle_{Y}$$

Set $\hbar= h/T\leq 1$. One has for any $\psi$ the identity
$(\psi(hD_{t,x,y})f)_{T}=\psi(\hbar D_{s,X,Y})f_{T}$, and therefore \eqref{g4-1}
is equivalent to the estimate at time $s=1$
\begin{equation}\label{g4-2bis}
\vert \psi_{1}(\hbar\sqrt{P_{T}})\psi_{2}(\hbar\vert D_{Y} \vert) e^{\pm i \sqrt{-P_{T}}}
\delta_{X=a/T,y=0}\vert_{L^\infty (X\leq a/T,\vert Y \vert \leq c_{0})} 
\in O(\hbar^\infty)\,.
\end{equation}
Observe that $b=a/T\leq 2$  is bounded. Since $\psi_{2}(\hbar\vert
D_{Y} \vert)$ commutes with the flow $e^{\pm i \sqrt{-P_{T}}}$, using
the Melrose-Sj\"ostrand theorem on propagation of singularities at the
boundary \cite{mesj78},
we just need to verify the following : There exists $c_{0}>0$ such that 
for any $T\in [0,1]$ and any optical ray $s\rightarrow \rho(s)$ associated to the symbol 
$\xi^2+(1+TX)\eta^2$ starting at $t=0$ from $\rho(0)=(X=b,Y=0; \xi_{0}, \eta_{0})$ with
$\xi_{0}^2+(1+Tb)\eta_{0}^2=1$ and $\vert \eta_{0} \vert \geq c_{1}>0$, one has 
$\vert Y(\rho(1))\vert \geq 4c_{0}$. But on the generalized bicharacteristic flow,
one has $\partial_{s}\eta=0$ and $\partial_{s}Y= 2\eta (1+TX(s))$ and therefore
$Y(s)=\eta_{0}(g(s))$ with $g(s)\geq 2s$ and the result is obvious. Observe
that the cutoff by $\psi_{2}(h\vert D_{y} \vert)$ is essential to get the lower bound
on $\vert \eta_{0} \vert$. The proof of Lemma \ref{lemmed} is complete.
\end{proof}

In order to prove our dispersive estimates, we may assume $h\leq t\leq 1$, and therefore
by Lemma \ref{lemmed}, we may also assume $\vert y \vert \geq c_{0}t \geq c_{0}h$. Classical 
stationary phase in $\omega\in S^{d-2}$ gives

\begin{equation}\label{g4-4}
\int e^{i{\vert \eta \vert \over h} (y.\omega)}d\omega= 
({h\over \vert \eta \vert \vert y\vert})^{{d-2\over 2}}(e^{{i\vert \eta \vert \vert y\vert\over h} } \sigma_{+}({h\over \vert \eta \vert \vert y\vert})+
e^{{-i\vert \eta \vert \vert y\vert\over h}} \sigma_{-}({h\over \vert \eta \vert \vert y\vert}))
\end{equation}
where $\sigma_{\pm}$ are classical symbols of degree $0$ in the small parameter
${h\over \vert \eta \vert \vert y\vert}$. Inserting \eqref{g4-4} in \eqref{g4-3}, 
and since for $\vert y \vert \geq c_{0}t$ and $\vert \eta\vert \in [\frac 18, 8]$
one has 
$({h\over \vert \eta \vert \vert y\vert})^{{d-2\over 2}} \leq C (h/t)^{{d-2\over 2}}$, we easily 
see that the proof of theorem \ref{disper} and \ref{disperoptimal} follows exactly like in the $2d$ case.

\subsection{Case $a\lesssim h^{1/2}$}
Indeed, the dispersive estimates  follow once we notice that
definition \ref{dfnWGM} and theorem \ref{thmWGM} extend to the $d$
dimensional domain $\Omega_d$. It is enough to define
for $x>0$, $E_{k}(\Omega_d)$ to be the closure in $L^{2}(\Omega_d)$ of
\begin{equation}\label{eka}
\Big\{\frac{1}{(2\pi)^{d-1}}\int e^{i<y,\eta>}
Ai(|\eta|^{\frac{2}{3}}x-\omega_{k})\hat{\varphi}(\eta)d\eta, \varphi\in \mathcal{S}(\mathbb{R}^{d-1})\Big\},
\end{equation}
where $\mathcal{S}(\mathbb{R}^{d-1})$ is the Schwartz space of rapidly decreasing functions,
\[\mathcal{S}(\mathbb{R}^{d-1})=\Big\{f\in C^{\infty}(\mathbb{R}^{d-1})| \|z^{\alpha} D^{\beta}f\|_{L^{\infty}(\mathbb{R}^{d-1})}<\infty\quad\forall\alpha,\beta\in \mathbb{N}^{d-1}\Big\}.
\]  

\begin{thm}
We have the orthogonal decomposition $(L^{2}(\Omega_d),\Delta_d)=\bigoplus_{\bot}E_{k}(\Omega_d)$, 
where $E_k(\Omega_d)$ denotes the space of gallery modes associated to the $k$th zero of the Airy function $Ai$ and where $\Delta_d=\partial^2_x+(1+x)\triangle_{y}$ with Dirichlet boundary condition on $\partial\Omega_d$.
\end{thm}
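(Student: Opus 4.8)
The plan is to reduce to the one-dimensional spectral analysis of Section~\ref{wgm}, exactly as in the proof of Theorem~\ref{thmWGM}; the only new feature in dimension $d\geq 3$ is that the tangential Fourier variable $\eta\in\R^{d-1}$ enters the reduced operator only through its modulus $|\eta|$.

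First I would take the Fourier transform in $y\in\R^{d-1}$. Since $\Delta_d=\partial_x^2+(1+x)\triangle_y$, this conjugates $-\Delta_d$ into the direct integral $\int^{\oplus}_{\R^{d-1}}(-\Delta_{D,\eta})\,d\eta$ of operators on $L^2(\R_+)$, where $-\Delta_{D,\eta}=-\partial_x^2+(1+x)|\eta|^2$ with Dirichlet condition at $x=0$, precisely the operator \eqref{deltaeta} with $\eta$ replaced by $|\eta|$. For every $\eta$ with $|\eta|\neq 0$ (a set of full measure), the argument of Section~\ref{wgm} applies verbatim: $-\Delta_{D,\eta}$ is self-adjoint, positive, with compact resolvent, and, after rescaling $x$, an orthonormal basis of $L^2(\R_+)$ is furnished by the eigenfunctions $e_k(x,|\eta|)=f_k|\eta|^{1/3}k^{-1/6}Ai(|\eta|^{2/3}x-\omega_k)$, with eigenvalue $\lambda_k(|\eta|)=|\eta|^2+\omega_k|\eta|^{4/3}$.

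I would then assemble these fiberwise bases. By Plancherel in $y$ one has $L^2(\Omega_d)\cong\int^{\oplus}_{\R^{d-1}}L^2(\R_+)\,d\eta$, and splitting each fiber as $\bigoplus_{k\geq 1}\C\,e_k(\cdot,|\eta|)$ gives an orthogonal decomposition $L^2(\Omega_d)=\bigoplus_{k\geq 1}\mathcal{E}_k$, where $\mathcal{E}_k$ is the direct integral of the lines $\C\,e_k(\cdot,|\eta|)$. It remains to identify $\mathcal{E}_k$ with the space $E_k(\Omega_d)$ defined in \eqref{eka}: an element of $\mathcal{E}_k$ is, by Fourier inversion, of the form $(2\pi)^{-(d-1)}\int e^{i\langle y,\eta\rangle}e_k(x,|\eta|)c_k(\eta)\,d\eta$ with $c_k\in L^2(\R^{d-1})$, and absorbing the (smooth, nonvanishing away from $0$) prefactor $f_k|\eta|^{1/3}k^{-1/6}$ into the coefficient, together with density of $\{\hat\varphi:\varphi\in\mathcal{S}(\R^{d-1})\}$ in $L^2$, shows that this is exactly the closure described in \eqref{eka}. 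Orthogonality of the $E_k(\Omega_d)$ then follows from that of $e_k(\cdot,|\eta|)$ and $e_j(\cdot,|\eta|)$ for $k\neq j$ (distinct eigenvalues of a self-adjoint operator) via Plancherel, closedness is built into the definition, and totality is the completeness in $L^2(\R_+)$ of $\{e_k(\cdot,|\eta|)\}_k$ for each $\eta$, equivalently the identity $\delta_{x=a}=\sum_{k\geq 1}e_k(x,|\eta|)e_k(a,|\eta|)$. This is the $d$-dimensional counterpart of the computation of \cite{doi}[Section~2.2] invoked in Theorem~\ref{thmWGM}.

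The only step requiring a modicum of care, rather than the routine repetition of the $2d$ argument, is the mild non-smoothness of $|\eta|^{1/3}$ and $|\eta|^{2/3}$ at $\eta=0$; since $\{\eta=0\}$ is a null set this is irrelevant to the $L^2$ statement, provided one takes the direct integral over $\R^{d-1}\setminus\{0\}$ and checks the (obvious) measurability of $\eta\mapsto e_k(\cdot,|\eta|)$. No other point presents a difficulty.
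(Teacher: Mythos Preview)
Your proposal is correct and follows essentially the same approach as the paper: the paper simply asserts that Definition~\ref{dfnWGM} and Theorem~\ref{thmWGM} extend to $\Omega_d$, the latter's proof being a reference to \cite{doi}[Section~2.2] for the fact that the $E_k$ are closed, orthogonal, and total. You have filled in the details of that extension, correctly identifying that the only change is the replacement of $\eta\in\R$ by $|\eta|$ with $\eta\in\R^{d-1}$ in the fiberwise operator \eqref{deltaeta}.
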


Therefore, by Lemma \ref{lemmed} and \eqref{g4-4}, the proof of our main theorems follows exactly like in the 
$2d$ case.

\appendix
\section{The energy critical nonlinear wave equation}\label{nonlin}
We consider the equation
\begin{equation}
  \label{eq:nlw}
  \Box_g u+|u|^{\frac 4 {d-2}} u=0
\end{equation}
with data $(u_0,u_1)\in H^1_0(\Omega_d)\times L^2(\Omega_d)$, with
$3\leq d\leq 6$. When the
domain is $\R^d$, there is a long line of seminal works regarding this
model, which may be one of the simplest model of a critical wave
equation. To our knowledge, the fist work to address the energy
setting (as opposed to $C^\infty$) is \cite{Shatah-Struwe-IMRN},
where low dimensions are dealt with, using only the oldest Strichartz
estimates (time and space exponents are equal). Higher dimensions
($d\geq 7$) have their own set of difficulties, mostly related to
the low power nonlinearity ($1+4/(d-2)<2$) and the subsequent failure of
its derivative with respect to $u$ to be Lipschitz. All these
technical annoyances may be solved one way or another, but are out of
the scope of the present paper.

Hence we contend ourselves with the low dimensions. There are essentially two things to be checked:
\begin{itemize}
\item we have a ``good'' local Cauchy theory, providing energy
  class solutions ;
\item this local Cauchy theory may be tweaked as to insert (a small
  power of) the
  potential energy of the solution in the nonlinear estimates, so that
  we can then perform the non concentration argument from \cite{BLP}
  and extend our solutions globally in time. Remark that the potential
  energy is $\|u\|_{L^{2d/(d-2)}_x}^{2d/(d-2)}$ which corresponds to the
  critical nature of the equation, as by Sobolev embedding, $H^1_0\hookrightarrow L^{2d/(d-2)}$.
\end{itemize}
We refer to \cite{ip08}, \cite{ip12} for
details on how to deal with fractional derivatives, Besov spaces on
domains and product-type estimates (alternatively, one may proceed
with interpolation as in \cite{BLP}).
\begin{rem}
  Note that in proving Theorem \ref{thStri} from Theorem \ref{disper},
  one needs, for $p>2$, an embedding $\dot B^{0,2}_p\subset L^p$ on
  domains, which may be proved directly or follows from a Mikhlin-H\"ormander multiplier theorem from Alexopoulos (see
  \cite{ip08} and references therein).
\end{rem}
 Having these tools at hand, we
may proceed exactly as in $\R^d$, provided we have the right set of
exponents.
\begin{itemize}
\item Case $d=3$: Theorem \ref{thStri} allows for the Strichartz triplet
  $(q=4,r=12, \beta=1)$ and one may proceed like in the $\R^3$ case. This was
  already observed in \cite{blsmso08} and allows for a streamlined argument
  when compared to \cite{BLP}.
\item Case $d=4$: Theorem \ref{thStri} allows for the Strichartz triplet
  $(q=11/5,r=22/3, \beta=1)$. As by Sobolev embedding we have
  $H^1_0 \hookrightarrow L^4_x$, we may write 
$$
|u|^2 u \leq  |u|^{4/5} |u|^{11/5} \in L^\infty_t L^5_x \times L^1_t
L^{5/3}_x \subset L^1_t L^2_x\,,
$$
and we may proceed as in $\R^4$.
\item Case $d=5$: Theorem \ref{thStri} allows for the Strichartz triplet
  $(q=2,r=5, \beta=1)$. As by Sobolev embedding we have
  $H_0^1\hookrightarrow L_x^{10/3}$, we may write 
$$
|u|^{4/3} u \leq  |u|^{2} |u|^{1/3} \in  L^1_t L^{5/2}_x \times
L^\infty_t L^{10}_x \subset L^1_t L^2_x\,,
$$
and we may proceed as in $\R^5$.
\item Case $d= 6$: Theorem \ref{thStri} allows for the Strichartz
  triplet $(q=2,r=18/5,\beta=1)$. By Sobolev embedding $\dot
  B^{1/3,2}_{18/5}\hookrightarrow L^4_x$, we get $u\in
  L^2_t L^4_x$ which provides a local Cauchy theory but without the
  potential energy factor. However we may estimate
$$
|u|u \in  L^2_t L^{4}_x \times L^\infty_t \dot B^{2/3,2}_{36/17}
\subset L^2_t \dot B^{2/3,2}_{18/13}\,,
$$
which is the dual endpoint Strichartz space. As we may estimate the $
\dot B^{2/3,2}_{36/17}$ norm of $u$ in term of $H^1_0$ and $L^3_x$
  norms, we now have a good local Cauchy theory, suitable to
  globalization in time.
\end{itemize}

\def\cprime{$'$} \def\cprime{$'$}

\end{document}